\documentclass{amsart}
\usepackage[all]{xy}
\usepackage{enumerate}
\usepackage{mathrsfs}
\usepackage{amssymb}
\usepackage{graphicx}

\numberwithin{equation}{subsection}

\newcommand{\nc}{\newcommand}
\nc\rnc{\renewcommand}

%%
%theorem
%%
\theoremstyle{plain}
\newtheorem{theorem}{Theorem}[section]
\newtheorem{lemma}[theorem]{Lemma}
\newtheorem{corollary}[theorem]{Corollary}
\newtheorem{proposition}[theorem]{Proposition}
\newtheorem{conjecture}[theorem]{Conjecture}
\theoremstyle{definition}
\newtheorem{definition}[theorem]{Definition}

\theoremstyle{remark}
\newtheorem{remark}[theorem]{Remark}
\newtheorem{question}[theorem]{Question}
\newtheorem{problem}[theorem]{Problem}
\newtheorem{claim}{Claim}

%%

%%
%Straight letters (operatorname)
%%
\newcommand\Aut{\operatorname{Aut}}
\newcommand\Out{\operatorname{Out}}
\newcommand\Inn{\operatorname{Inn}}
\newcommand\Hom{\operatorname{Hom}}

\newcommand\GL{\operatorname{GL}}

\newcommand\IA{\operatorname{IA}}
\newcommand\IO{\operatorname{IO}}
\newcommand\Ext{\operatorname{Ext}}
\newcommand\id{\operatorname{id}}
\newcommand\pr{\operatorname{pr}}
\newcommand\opegr{\operatorname{gr}}

\newcommand\im{\operatorname{im}}
\newcommand\Sym{\operatorname{Sym}}
\newcommand\Sh{\operatorname{Sh}}

%%
%Straight letters (Functors)
%%

\newcommand\op{\operatorname{op}}
\newcommand\ab{\operatorname{ab}}
\newcommand\sgn{\operatorname{sgn}}
\newcommand\Gr{\operatorname{Gr}}
%%

%%
%Bold letters (fields, integers)
%%
\newcommand\Z{\mathbb{Z}}
\newcommand\N{\mathbb{N}}
\newcommand\C{\mathbb{C}}

\newcommand\Q{\mathbb{Q}}
\newcommand\repS{\mathbb{S}}
%%
%categories
%%
 % ENS \ENS
 % EGLA
 % NS
 % GLA
 % FAlg \FiltAlg
 % FAlg \FiltAlg
 % GrAlg
 % FiltAugAlg
 % GrAugAlg
\newcommand\Gp{\mathbf{Gp}}

\newcommand\CCoalg{\mathbf{CCoalg}}
\newcommand\CAlg{\mathbf{CAlg}}
\newcommand\grVect{\mathbf{grVect}}
%%

%%
%Curly letters
%%

\newcommand\gpS{\mathfrak{S}}

\newcommand\I{\mathcal{I}}

%%
%Over
%%
\newcommand\ti{\tilde}
\newcommand\wti{\widetilde}

\newcommand\wheel{\mathrm{wheel}}
\newcommand\tree{\mathrm{tree}}
\newcommand\tl{\mathrm{tl}}

\newcommand\Prim{\operatorname{Prim}}
%%
%color
%%
\newcommand\red[1]{{\color{red}#1}}
\renewcommand\red{}

\newcommand\centre[1]{\begin{array}{c} #1 \end{array}}
\newcommand\centre{\input{[}}1]{\centre{\input{#1}}}

\nc\FAb{\mathbf{FAb}}

\nc\xto[1]{{\overset{#1}{\longrightarrow}}}
\nc\yto[1]{{\underset{#1}{\longrightarrow}}}
\nc\xyto[2]{{\overset{#1}{\underset{#2}{\longrightarrow}}}}
%\nc\id{\operatorname{id}}
%
%\nc\sgn{\operatorname{sgn}}
\nc\ad{{\operatorname{ad}}}
%\nc\op{{\operatorname{op}}}
%\nc\End{\operatorname{End}}
%\nc\Span{\operatorname{Span}}
\nc\even{{\operatorname{even}}}
\nc\ev{{\operatorname{ev}}}
\nc\coev{{\operatorname{coev}}}
\nc\odd{{\operatorname{odd}}}
%\nc\Aut{\operatorname{Aut}}
\nc\half{{\frac12}}
\nc\halfof[1]{{\frac{#1}2}}
\nc\onto\twoheadrightarrow
\nc\projto{\underset{\text{proj}}{\longrightarrow}}
\nc\no[1]{}
\nc\ok{\comm{ok?}}
\nc\ho{{\hat\otimes }}
\nc\plim{\varprojlim}
\nc\np{\newpage}

\nc\bfA{\mathbf{A}} \nc\bbA{\mathbb{A}} \nc\calA{\mathcal{A}}
\nc\bfB{\mathbf{B}} \nc\bbB{\mathbb{B}} \nc\calB{\mathcal{B}}
\nc\bfC{\mathbf{C}} \nc\bbC{\mathbb{C}} \nc\calC{\mathcal{C}}
\nc\bfD{\mathbf{D}} \nc\bbD{\mathbb{D}} \nc\calD{\mathcal{D}}
\nc\bfE{\mathbf{E}} \nc\bbE{\mathbb{E}} \nc\calE{\mathcal{E}}
\nc\bfF{\mathbf{F}} \nc\bbF{\mathbb{F}} \nc\calF{\mathcal{F}}
\nc\bfG{\mathbf{G}} \nc\bbG{\mathbb{G}} \nc\calG{\mathcal{G}}
\nc\bfH{\mathbf{H}} \nc\bbH{\mathbb{H}} \nc\calH{\mathcal{H}}
\nc\bfI{\mathbf{I}} \nc\bbI{\mathbb{I}} \nc\calI{\mathcal{I}}
\nc\bfJ{\mathbf{J}} \nc\bbJ{\mathbb{J}} \nc\calJ{\mathcal{J}}
\nc\bfK{\mathbf{K}} \nc\bbK{\mathbb{K}} \nc\calK{\mathcal{K}}
\nc\bfL{\mathbf{L}} \nc\bbL{\mathbb{L}} \nc\calL{\mathcal{L}}
\nc\bfM{\mathbf{M}} \nc\bbM{\mathbb{M}} \nc\calM{\mathcal{M}}
\nc\bfN{\mathbf{N}} \nc\bbN{\mathbb{N}} \nc\calN{\mathcal{N}}
\nc\bfO{\mathbf{O}} \nc\bbO{\mathbb{O}} \nc\calO{\mathcal{O}}
\nc\bfP{\mathbf{P}} \nc\bbP{\mathbb{P}} \nc\calP{\mathcal{P}}
\nc\bfQ{\mathbf{Q}} \nc\bbQ{\mathbb{Q}} \nc\calQ{\mathcal{Q}}
\nc\bfR{\mathbf{R}} \nc\bbR{\mathbb{R}} \nc\calR{\mathcal{R}}
\nc\bfS{\mathbf{S}} \nc\bbS{\mathbb{S}} \nc\calS{\mathcal{S}}
\nc\bfT{\mathbf{T}} \nc\bbT{\mathbb{T}} \nc\calT{\mathcal{T}}
\nc\bfU{\mathbf{U}} \nc\bbU{\mathbb{U}} \nc\calU{\mathcal{U}}
\nc\bfV{\mathbf{V}} \nc\bbV{\mathbb{V}} \nc\calV{\mathcal{V}}
\nc\bfW{\mathbf{W}} \nc\bbW{\mathbb{W}} \nc\calW{\mathcal{W}}
\nc\bfX{\mathbf{X}} \nc\bbX{\mathbb{X}} \nc\calX{\mathcal{X}}
\nc\bfY{\mathbf{Y}} \nc\bbY{\mathbb{Y}} \nc\calY{\mathcal{Y}}
\nc\bfZ{\mathbf{Z}} \nc\bbZ{\mathbb{Z}} \nc\calZ{\mathcal{Z}}
\nc\bfone {{\mathbf 1}}

\nc\Vect{\mathbf{Vect}}
\nc\Sets{\mathbf{Sets}}
\nc\Mod{\mathbf{Mod}}
\nc\Cat{\mathbf{Cat}}

\nc\ul{\underline}
\nc\simeqto{\overset{\simeq}{\longrightarrow }}
\nc\ct{\overset{\cong}{\longrightarrow }}
\nc\mt{\mapsto}
\nc\hr{\medskip\hrule\medskip}
\nc\trl{\triangleleft}
\nc\trr{\triangleright}

\nc\xysquare[8]{\xymatrix{
    #1 \ar[r]#5 \ar[d]#6 & #2 \ar[d]#7 \\
    #3 \ar[r]#8          & #4
  }
  }

\nc\Ob{\operatorname{Ob}}
\nc\Mor{\operatorname{Mor}}

\nc\al{\alpha}
\nc\be{\beta}
\nc\la{\lambda}
%\nc\ga{\gamma}
\nc\ot{\otimes}
\nc\ott{\ot\cdots\ot}
\nc\Sp{\operatorname{Sp}}
\nc\SO{\operatorname{SO}}

\nc\HH{\mathrm{HH}}

%\nc\ti{\tilde}
\nc\ci{\circ}
\nc\sq{\square}

\nc\incl{\mathrm{incl}}

\nc\ol{\overline}
\nc\sqcups{\sqcup\cdots\sqcup}
\nc\congto{\overset{\cong}{\to}}

\nc\hide[1]{}
\nc\blue[1]{{\textcolor[rgb]{0,0,.9}{#1}}}
\nc\bluen[1]{\blue{[[#1]]}}
\nc\bnote{\bluen}
\nc\redn[1]{\red{[[#1]]}}

\nc\comment[1]{\marginpar{\tiny #1}}
% \rnc\comment[1]{\red{[[{#1}]]}}
% \rnc\comment[1]{}
\nc\len{\operatorname{len}}

\parskip 5pt

\let\copybigwedge\bigwedge
\renewcommand\bigwedge{\copybigwedge\nolimits}
\title[Stable homology of the IA-automorphism groups of free groups]{Stable rational homology of the IA-automorphism groups of free groups}
\author{Mai Katada}
\date{August 15, 2022 (First version: July 2, 2022)}
\address{Department of Mathematics, Kyoto University, Kyoto 606-8502, Japan}
\email{katada.mai.36s@st.kyoto-u.ac.jp}

\begin{document}

\begin{abstract}
The rational homology of the IA-automorphism group $\IA_n$ of the free group $F_n$ is still mysterious.
We study the quotient of the rational homology of $\IA_n$ that is obtained as the image of the map induced by the abelianization map, which we call the Albanese homology of $\IA_n$.
We obtain a representation-stable $\GL(n,\Q)$-subquotient of the Albanese homology of $\IA_n$, which conjecturally coincides with the entire Albanese homology of $\IA_n$.
In particular, we obtain a lower bound of the dimension of the Albanese homology of $\IA_n$ for each homological degree in a stable range.
Moreover, we determine the entire third Albanese homology of $\IA_n$ for $n\ge 9$.
We also study the Albanese homology of an analogue of $\IA_n$ to the outer automorphism group of $F_n$ and the Albanese homology of the Torelli groups of surfaces.
Moreover, we study the relation between the Albanese homology of $\IA_n$ and the cohomology of $\Aut(F_n)$ with twisted coefficients.

\end{abstract}
\keywords{IA-automorphism groups of free groups, Group homology, Johnson homomorphisms, General linear groups, Torelli groups}
\subjclass[2020]{20F28, 20J06}
\maketitle

\setcounter{tocdepth}{1}
\tableofcontents

\section{Introduction}

The \emph{IA-automorphism group} $\IA_n$ of the free group $F_n$ of rank $n$ is the kernel of the canonical surjective map from the automorphism group $\Aut(F_n)$ of $F_n$ to the general linear group $\GL(n,\Z)$.
Magnus \cite{Magnus} discovered a finite set of generators for $\IA_n$.
Cohen-Pakianathan \cite{CP}, Farb \cite{Farb} and Kawazumi \cite{Kawazumi} independently determined the first homology group $H_1(\IA_n,\Z)$.
They proved that the \emph{Johnson homomorphism}
for $\Aut(F_n)$ induces an isomorphism
\begin{equation}\label{H1IAn}
    \tau:H_1(\IA_n,\Z)\xrightarrow{\cong} \Hom(H_{\Z},\bigwedge^2 H_{\Z}),
\end{equation}
where $H_{\Z}=H_1(F_n,\Z)$.
The adjoint action of $\Aut(F_n)$ on $\IA_n$ induces an action of $\GL(n,\Z)$ on $H_i(\IA_n,\Z)$, and the Johnson homomorphism preserves the $\GL(n,\Z)$-action.

Krsti\'{c}--McCool \cite{KM} showed that $\IA_3$ is not finitely presentable and after that, Bestvina--Bux--Margalit \cite{BBM} showed that $H_2(\IA_3,\Z)$ has infinite rank.
However, it is still open whether or not $\IA_n$ is finitely presentable for $n\ge 4$.
About the second cohomology of $\IA_n$, Pettet \cite{Pettet} determined the $\GL(n,\Z)$-subrepresentation of $H^2(\IA_n,\Q)$ that is obtained by using the Johnson homomorphism. This subrepresentation is regarded as the second Albanese cohomology of $\IA_n$, which we will explain below.
Day--Putman \cite{DP} obtained a finite set of generators of $H_2(\IA_n,\Z)$ as $\GL(n,\Z)$-representations.
For $n=3$, Satoh \cite{Satoh2021} detected a nontrivial irreducible subrepresentation of $H^2(\IA_3,\Q)$, which can not be detected by the Johnson homomorphism.
However, the second homology of $\IA_n$ has not been completely determined.
It is more difficult to determine higher degree homology of $\IA_n$.

For a group $G$, we consider the following
quotient group of the rational homology of $G$, which is predual to what Church--Ellenberg--Farb \cite{CEF} called the Albanese cohomology.
The canonical surjection $\pi:G\twoheadrightarrow H_1(G,\Z)$ induces a group homomorphism
$\pi_*:H_i(G,\Q)\to H_i(H_1(G,\Z),\Q)$ on homology.
Define the \emph{Albanese homology} of $G$ as
$$H^A_i(G,\Q):=\im (\pi_*:H_i(G,\Q)\to H_i(H_1(G,\Z),\Q)).$$
Since we have $H_i(H_1(G,\Z),\Q))\cong \bigwedge^i H_1(G,\Q)$, where $\bigwedge^i H_1(G,\Q)$ is the $i$-th exterior power of $H_1(G,\Q)$, it is easier to determine the Albanese homology $H^A_*(G,\Q)$ than the ordinary homology $H_*(G,\Q)$.

Let $H=H_1(F_n,\Q)$ and $U=\Hom(H,\bigwedge^2 H)$.
Then the Johnson homomorphism induces a $\GL(n,\Z)$-homomorphism
$$\tau_*:H_i(\IA_n,\Q)\to H_i(U,\Q).$$
Therefore, we have $\GL(n,\Z)$-homomorphisms
$$ H_i(\IA_n,\Q)\twoheadrightarrow  H^A_i(\IA_n,\Q)\hookrightarrow H_i(U,\Q)\cong \bigwedge^i U.$$
Since $H_i(U,\Q)$ is an algebraic $\GL(n,\Q)$-representation, the $\GL(n,\Z)$-representation structure on $H^A_i(\IA_n,\Q)$ extends to an algebraic $\GL(n,\Q)$-representation structure.
In particular, since $H^A_i(\IA_n,\Q)$ is completely reducible, any subrepresentations and quotient representations of $H^A_i(\IA_n,\Q)$ can be considered as direct summands.
As $\GL(n,\Q)$-representations, $H^A_1(\IA_n,\Q)$ is determined by \eqref{H1IAn} and $H^A_2(\IA_n,\Q)$ is determined by Pettet \cite{Pettet}.
To the best of our knowledge, the Albanese homology of $\IA_n$ of degree greater than $2$ is not known other than non-triviality \cite{CHP} and upper bounds of dimensions \cite{CEF}, which we will explain later.

Church--Farb \cite{CFrep} introduced the notion of representation stability for a sequence
$$V_0\xto{\phi_0} V_1\xto{\phi_1}V_2\to \cdots \to V_n\xto{\phi_n} V_{n+1}\to \cdots $$
of algebraic $\GL(n,\Q)$-representations, where $\phi_n: V_n\to V_{n+1}$ is a $\GL(n,\Q)$-homomorphism considering $V_{n+1}$ as a $\GL(n,\Q)$-representation via the canonical inclusion map $\GL(n,\Q)\hookrightarrow \GL(n+1,\Q)$.
Note that irreducible algebraic $\GL(n,\Q)$-representations are classified by \emph{bipartitions} with at most $n$ parts, which are pairs of partitions with total length at most $n$ (see Section \ref{irrrep} for details).
For a bipartition $\ul\lambda$, let $V_{\ul\lambda}(n)$ denote the irreducible $\GL(n,\Q)$-representation corresponding to $\ul\lambda$.
A sequence $\{V_n\}$ of algebraic $\GL(n,\Q)$-representations is \emph{representation stable}
if $\{V_n\}$ satisfies the following three conditions
for sufficiently large $n$:
\begin{itemize}
    \item $\phi_n: V_n\to V_{n+1}$ is injective,
    \item $\im \phi_n$ spans $V_{n+1}$ as a $\GL(n+1,\Q)$-representation,
    \item for each bipartition $\ul\lambda$, the multiplicity of $V_{\ul\lambda}(n)$ in $V_n$ is constant.
\end{itemize}
For example, $H_i(U,\Q)\cong \bigwedge^i U$ is representation stable for each $i\ge 0$.

We consider an analogue of $\IA_n$ to the outer automorphism group $\Out(F_n)$ of $F_n$.
Let $\IO_n$ denote the kernel of the canonical surjective map from $\Out(F_n)$ to $\GL(n,\Z)$.
Kawazumi \cite{Kawazumi} determined $H^A_1(\IO_n,\Q)$, and Pettet \cite{Pettet} determined $H^A_2(\IO_n,\Q)$ as $\GL(n,\Q)$-representations.
However, as in the case of $\IA_n$, the Albanese homology of $\IO_n$ of degree greater than $2$ is not known.
Moreover, even non-triviality of the homology of $\IO_n$ of degree greater than $2$ does not seem to be known.

The aim of this paper is to determine the Albanese homology $H^A_i(\IA_n,\Q)$ and $H^A_i(\IO_n,\Q)$ as $\GL(n,\Q)$-representations.
We use \emph{abelian cycles} in $H_i(\IA_n,\Q)$, which are induced by $i$-tuples of mutually commuting elements of $\IA_n$.
Then we obtain a representation-stable $\GL(n,\Q)$-subrepresentation of $H^A_i(\IA_n,\Q)$, which conjecturally coincides with the entire $H^A_i(\IA_n,\Q)$.
In particular, we obtain a lower bound of the dimension of $H^A_i(\IA_n,\Q)$ for sufficiently large $n$ with respect to $i$.
We also obtain non-triviality and a lower bound of the dimension of $H^A_i(\IO_n,\Q)$ for sufficiently large $n$ with respect to $i$.
Moreover, we determine $H^A_3(\IA_n,\Q)$ and $H^A_3(\IO_n,\Q)$ for $n\ge 9$.
We also study the relation between the Albanese homology of $\IA_n$ and the cohomology of $\Aut(F_n)$ with twisted coefficients.

\subsection{Non-triviality of $H^A_i(\IA_n,\Q)$}

Let $i\ge 1$.
First, we observe that $H^A_i(\IA_n,\Q)$ is non-trivial for $n\ge i+1$. We detect a non-trivial representation-stable quotient of $H^A_i(\IA_n,\Q)$.

For $g\ge 1$, $\IA_{2g}$ includes the \emph{Torelli group} $\I_{g,1}$ of a connected oriented surface of genus $g$ with one boundary component.
Church--Farb \cite{CFAbelJacobi} used an $\Sp(2g,\Z)$-homomorphism
$$\tau_i:H_i(\I_{g,1},\Q)\to\bigwedge^{i+2} H,$$
which was introduced by Johnson \cite{Johnsonsurvey},
to detect a non-trivial representation-stable subrepresentation in the image of $\tau_i$.
The following theorem can be regarded as an analogue of this result of Church--Farb.

\begin{theorem}[Theorem \ref{nontriviality}]\label{nontrivialintro}
  For $n\ge i+1$, we have a surjective $\GL(n,\Q)$-homomorphism
  $$H^A_i(\IA_n,\Q) \twoheadrightarrow \Hom(H,\bigwedge^{i+1} H)\cong \left(\bigwedge^{i+1}H\right)\otimes H^*,$$
  where $H^*=\Hom_{\Q}(H,\Q)$.
\end{theorem}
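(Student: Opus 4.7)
Plan. The strategy is to realize $\Hom(H,\bigwedge^{i+1}H)$ as a $\GL(n,\Q)$-equivariant quotient of the ambient space $\bigwedge^i U$, and then to use abelian cycles in $\IA_n$ to show that the inclusion $H^A_i(\IA_n,\Q)\hookrightarrow \bigwedge^i U$ already surjects onto this quotient.

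Step 1: constructing the contraction. Using $U \cong H^*\otimes \bigwedge^2 H$, I construct a $\GL(n,\Q)$-equivariant contraction map
$$C\colon \bigwedge^i U \longrightarrow H^* \otimes \bigwedge^{i+1} H \cong \Hom(H,\bigwedge^{i+1} H),$$
defined on a simple wedge $(\phi_1 \otimes \omega_1)\wedge\cdots\wedge(\phi_i\otimes\omega_i)$ as an alternating sum in which one distinguished $\phi_k$ is kept while each remaining $\phi_j$ is contracted against a vector drawn from one of the $\omega_\ell$ via the evaluation pairing $H^*\otimes H\to \Q$, and the leftover $i+1$ vectors in $H$ are wedged together. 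Equivariance is immediate from that of contraction and wedge.

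Step 2: abelian cycles and surjectivity. For pairwise commuting $f_1,\ldots,f_i\in\IA_n$, the abelian cycle $\mu(f_1,\ldots,f_i)\in H_i(\IA_n,\Q)$ lies in $H^A_i(\IA_n,\Q)$ and maps under $\tau_*$ to $\tau(f_1)\wedge\cdots\wedge\tau(f_i)\in\bigwedge^i U$. I build such tuples from Magnus generators $K_{a,bc}$ (sending $x_a\mapsto x_a[x_b,x_c]$ and fixing the other $x_j$) together with inner conjugations $\operatorname{conj}_{x_d}$, exploiting the easily-checked fact that $K_{a,bc}$ and $\operatorname{conj}_{x_d}$ commute whenever $d\neq a$. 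Computing $C(\tau(f_1)\wedge\cdots\wedge\tau(f_i))$ explicitly on these tuples and invoking complete reducibility of algebraic $\GL(n,\Q)$-representations, it then suffices to exhibit a nonzero image vector in each irreducible summand of $H^*\otimes\bigwedge^{i+1}H$; equivariance propagates this to full surjectivity.

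Main obstacle. The bound $n\geq i+1$ is tight, while the naive disjoint-support commuting Magnus tuples require $n\geq 3i$. The hardest single step is constructing enough pairwise commuting $i$-tuples using only $i+1$ generators of $F_n$: the obvious candidates — several inner conjugations, or several Magnus generators modifying the same basis element — fail to commute in $\IA_n$. Overcoming this requires exhibiting a carefully chosen abelian subgroup of $\IA_n$ whose image under $\tau$ is rich enough that $C$ surjects onto $\Hom(H,\bigwedge^{i+1}H)$ when restricted to its $i$-fold exterior power, and verifying this surjectivity at the minimal value $n=i+1$ is the combinatorial crux of the argument.
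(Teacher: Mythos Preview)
Your plan identifies the right architecture — a $\GL(n,\Q)$-equivariant contraction $\bigwedge^i U \to H^*\otimes\bigwedge^{i+1}H$ together with abelian cycles whose images hit both irreducible summands — and you correctly isolate the real difficulty: producing an $i$-tuple of mutually commuting elements of $\IA_n$ using only $i+1$ generators of $F_n$. But the proposal stops precisely at this point. You say it ``requires exhibiting a carefully chosen abelian subgroup'' and call the nonvanishing computation ``the combinatorial crux,'' without supplying either. That is the entire content of the proof, so as written this is an outline rather than an argument.

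For comparison, the paper's solution is explicit. It takes the elements $h_{k,1}=g_{k,1}g_{k,2}\cdots g_{k,k-1}$ (conjugation of $x_1,\dots,x_{k-1}$ by $x_k$) for $k=2,\dots,i+1$; these commute because $h_{k,1}$ centralizes the copy of $\Aut(F_{k-1})$ acting on $\langle x_1,\dots,x_{k-1}\rangle$, which contains all $h_{j,1}$ with $j<k$. The resulting abelian cycle $\alpha_{(0,i)}$ has $\tau_*(\alpha_{(0,i)})=e_{2,1}^1\wedge(e_{3,1}^1+e_{3,2}^2)\wedge\cdots\wedge(\sum_{j=1}^i e_{i+1,j}^j)$, and the paper computes its image under two specific contractions $c^{\wheel}_i$ and $c_i$ to be $i!\,e_2\wedge\cdots\wedge e_{i+1}$ and $(-1)^i(i+1)!\,e_1\wedge\cdots\wedge e_{i+1}\otimes e_1^*$ respectively. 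Neither the commuting tuple nor the nonvanishing computation is routine; your candidates $K_{a,bc}$ and $\operatorname{conj}_{x_d}$ are not enough, as you yourself note, and no replacement is offered. To complete the proof you need to actually write down the abelian subgroup and carry out the image computation.
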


Cohen--Heap--Pettet \cite{CHP} obtained non-trivial subspaces of $H_A^i(\IA_n,\Q)$ whose dimensions are bounded above by a polynomial in $n$ of degree $2i$. However, they did not consider the $\GL(n,\Q)$-action on $H_A^i(\IA_n,\Q)$, and their non-trivial subspaces are not $\GL(n,\Q)$-subrepresentations.
Theorem \ref{nontrivialintro} gives another proof of non-triviality of $H^A_i(\IA_n,\Q)$.

\begin{corollary}[Cohen--Heap--Pettet \cite{CHP}]
 For $n\ge i+1$, $H^A_i(\IA_n,\Q)$ is non-trivial.
\end{corollary}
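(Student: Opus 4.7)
The plan is to deduce the corollary as an immediate consequence of Theorem \ref{nontrivialintro}. By that theorem, for $n\ge i+1$ there is a surjective $\GL(n,\Q)$-homomorphism
\[
H^A_i(\IA_n,\Q) \twoheadrightarrow \Hom\bigl(H,\bigwedge^{i+1}H\bigr) \cong \bigl(\bigwedge^{i+1}H\bigr)\otimes H^*,
\]
so it suffices to check that the right-hand side is non-zero under the hypothesis $n\ge i+1$.

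Since $H=H_1(F_n,\Q)$ has dimension $n$, the exterior power $\bigwedge^{i+1}H$ has dimension $\binom{n}{i+1}$, which is strictly positive precisely when $n\ge i+1$. Likewise $H^*$ is $n$-dimensional and hence non-zero (as $n\ge i+1\ge 2$, or directly as $n\ge 1$). Therefore $\bigl(\bigwedge^{i+1}H\bigr)\otimes H^*$ has dimension $n\binom{n}{i+1}>0$, so the surjection above forces $H^A_i(\IA_n,\Q)$ to be non-trivial. There is no substantive obstacle here; the only point requiring care is recording the numerical inequality that guarantees the target of the surjection in Theorem \ref{nontrivialintro} is non-zero in the stated range $n\ge i+1$.
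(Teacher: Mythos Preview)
Your proof is correct and matches the paper's approach exactly: the corollary is stated immediately after Theorem \ref{nontrivialintro} as a direct consequence, and the paper gives no further argument beyond noting that the theorem provides another proof of non-triviality. Your explicit verification that $\bigl(\bigwedge^{i+1}H\bigr)\otimes H^*$ has positive dimension when $n\ge i+1$ is precisely the (trivial) missing step.
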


To observe the representation stability of the subrepresentation of $H^A_i(\IA_*,\Q)$ that is detected in Theorem \ref{nontrivialintro}, we write $H$ as $H_n$ and $\Hom_{\Q}(H,\Q)$ as $H_n^*$.
The abelian cycles and the $\GL(n,\Q)$-homomorphisms that we use to prove Theorem \ref{nontrivialintro} are compatible with the canonical inclusion maps $$\IA_n\hookrightarrow \IA_{n+1},\quad H_n\hookrightarrow H_{n+1},\quad  H_n^*\hookrightarrow H_{n+1}^*,$$
which implies the representation stability of the subrepresentation of $H^A_i(\IA_*,\Q)$.
Let
$$\GL(\infty,\Q)=\varinjlim_{n} \GL(n,\Q),\quad \IA_{\infty}=\varinjlim_{n}\IA_{n},$$
and
$$\bfH=\varinjlim_{n}H_n,\quad \bfH^*=\varinjlim_{n}(H_n)^{*}.$$
We can rephrase the representation stability of the subrepresentation in the sense of $\GL(\infty,\Q)$-representations as follows.

\begin{corollary}[Corollary \ref{infinitedimensional}]\label{introinfinitedimensional}
 We have a surjective $\GL(\infty,\Q)$-homomorphism
 $$H^A_i(\IA_{\infty},\Q) \twoheadrightarrow \left(\bigwedge^{i+1} \bfH\right)\otimes\bfH^*.$$
 In particular, we have $\dim_{\Q}(H^A_i(\IA_{\infty},\Q))=\dim_{\Q}(H_i(\IA_{\infty},\Q))=\infty$.
\end{corollary}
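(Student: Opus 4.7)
The plan is to deduce Corollary \ref{introinfinitedimensional} from Theorem \ref{nontrivialintro} by passing to the direct limit in $n$. For each $n\ge i+1$, Theorem \ref{nontrivialintro} provides a surjective $\GL(n,\Q)$-homomorphism
\[
H^A_i(\IA_n,\Q)\twoheadrightarrow \Hom(H_n,\bigwedge^{i+1}H_n)\cong \bigl(\bigwedge^{i+1}H_n\bigr)\otimes H_n^*.
\]
The first step is to promote these to a morphism of direct systems indexed by $n$, with transition maps induced by the inclusions $\IA_n\hookrightarrow\IA_{n+1}$, $H_n\hookrightarrow H_{n+1}$ and $H_n^*\hookrightarrow H_{n+1}^*$. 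This requires a naturality check, namely that the abelian cycles and $\GL(n,\Q)$-equivariant evaluation maps used in the proof of Theorem \ref{nontrivialintro} are compatible with stabilization. This is essentially built into their construction: Magnus' generators of $\IA_n$ are indexed by basis elements of $H_n$ and include into those of $\IA_{n+1}$, while the Johnson homomorphism $\tau$ is manifestly natural in $n$, so both the mutually commuting tuples generating the abelian cycles and the target representations $\Hom(H_n,\bigwedge^{i+1}H_n)$ stabilize along the obvious inclusions.

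Granted the naturality, the remainder of the proof is formal. Group homology commutes with filtered colimits of groups, so $H_i(\IA_\infty,\Q)=\varinjlim_n H_i(\IA_n,\Q)$ and likewise for $H_i(H_1(\IA_\infty,\Z),\Q)$; since the image functor commutes with filtered colimits, we obtain $H^A_i(\IA_\infty,\Q)=\varinjlim_n H^A_i(\IA_n,\Q)$. On the target side, tensor products and exterior powers commute with filtered colimits of vector spaces, giving
\[
\varinjlim_n \bigl(\bigwedge^{i+1}H_n\bigr)\otimes H_n^* \cong \bigl(\bigwedge^{i+1}\bfH\bigr)\otimes\bfH^*.
\]
Taking the filtered colimit of the surjections above yields a surjection, which by construction is $\GL(\infty,\Q)$-equivariant, as desired.

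The dimensional statement then follows immediately. Both $\bfH$ and $\bfH^*$ are of countably infinite dimension over $\Q$, so $\bigl(\bigwedge^{i+1}\bfH\bigr)\otimes\bfH^*$ has infinite dimension for every $i\ge 1$; the existence of a surjection onto this space forces $H^A_i(\IA_\infty,\Q)$, and hence also $H_i(\IA_\infty,\Q)$, to be infinite-dimensional.

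The one substantive point, and the only real obstacle, is verifying that the explicit cycles and evaluation maps chosen in the proof of Theorem \ref{nontrivialintro} genuinely commute with the stabilization inclusions; the rest of the argument is a routine colimit manipulation.
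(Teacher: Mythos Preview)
Your proposal is correct and follows essentially the same approach as the paper. The paper likewise verifies that the abelian cycle $\alpha_{(0,i)}$ and the contraction map $c_i\iota_i$ are compatible with the stabilization inclusions, invokes the fact that Albanese homology preserves direct limits (proved in the appendix), and then concludes surjectivity; the only cosmetic difference is that the paper phrases the last step as ``the explicit element $e_1\wedge\cdots\wedge e_{i+1}\otimes e_1^*$ lies in the image and generates the target as a $\GL(\infty,\Q)$-representation'' rather than ``a filtered colimit of surjections is a surjection,'' but these are equivalent here.
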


\subsection{Conjectural structure of $H^A_i(\IA_n,\Q)$}
Next, we detect a subquotient representation of $H^A_i(\IA_n,\Q)$, which is conjecturally equal to the entire $H^A_i(\IA_n,\Q)$.

For $i\ge 1$, let
$$U_i=\Hom(H,\bigwedge^{i+1} H)\cong \left(\bigwedge^{i+1} H\right)\otimes H^*.$$
Note that we have $U=U_1$ and that $U_i$ vanishes for $n\le i$.
We have a direct sum decomposition
$$U_i=U_i^{\tree}\oplus U_i^{\wheel},$$
where
$U_i^{\tree}$ denotes the subrepresentation of $U_i$ that is isomorphic to $V_{1^{i+1},1}$, and where $U_i^{\wheel}$ denotes the other subrepresentation that is isomorphic to $V_{1^i,0}$ for $n\ge i+1$.

For the graded $\GL(n,\Q)$-representation $U_*=\bigoplus_{i\ge 1} U_i$,
let $S^*(U_*)=\bigoplus_{i\ge 0} S^*(U_*)_i$ denote the graded-symmetric algebra of $U_*$.
We define the \emph{traceless part} $W_*=\wti{S}^* (U_*)$ of $S^*(U_*)$, which consists of elements that vanish under any contraction maps between distinct factors of $S^*(U_*)$
(see Section \ref{tracelesspartofgradedsymmetricalgebra} for details).
We can also construct $W_*$ by using an operad $\calC om$ of non-unital commutative algebras as we will see in Section \ref{KV}.

We construct a $\GL(n,\Q)$-homomorphism
$$F_i: H_i(U,\Q)\to S^*(U_*)_i$$
by combining two kinds of contraction maps.
Then, we obtain $W_i$ as a subquotient representation of $H^A_i(\IA_n,\Q)$, which is our main result.

\begin{theorem}[Theorem \ref{Johnsonpart}]\label{Johnsonpartintro}
 For $n\ge 3i$, we have
 $$F_i(H^A_i(\IA_n,\Q))\supset W_i.$$
\end{theorem}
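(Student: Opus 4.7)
The plan is to realise every element of $W_i$ as $F_i(\alpha)$ for some $\alpha \in H^A_i(\IA_n,\Q)$ by exhibiting explicit \emph{abelian cycles}. For any $i$-tuple $f_1,\dots,f_i$ of pairwise commuting elements of $\IA_n$, the associated abelian cycle in $H_i(\IA_n,\Z)$ is pushed forward by the abelianisation map (combined with the Johnson isomorphism $H_1(\IA_n,\Q) \cong U$) to the wedge $\tau(f_1)\wedge\cdots\wedge\tau(f_i) \in \bigwedge^i U \cong H_i(U,\Q)$. Such classes lie automatically in $H^A_i(\IA_n,\Q)$, so the problem reduces to choosing enough commuting $i$-tuples $(f_1,\dots,f_i)$ so that the resulting wedges, after applying $F_i$, span $W_i$.

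To produce the commuting tuples I would draw on Magnus's generators of $\IA_n$: the conjugations $K_{a,b}: x_a \mapsto x_b x_a x_b^{-1}$, whose Johnson images in $U = H^* \otimes \bigwedge^2 H$ generate the wheel summand $U^{\wheel}$, and the commutator substitutions $K_{a,[b,c]}: x_a \mapsto x_a[x_b,x_c]$, whose Johnson images generate the tree summand $U^{\tree}$. Two such generators commute in $\IA_n$ whenever their index sets are disjoint, so one can freely wedge $i$ Magnus generators with pairwise disjoint index sets, each of size at most $3$. This consumes up to $3i$ basis vectors of $H$, which is precisely where the hypothesis $n \ge 3i$ enters.

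The computational core is then to evaluate $F_i$ on a wedge $\tau(f_1)\wedge\cdots\wedge\tau(f_i) \in \bigwedge^i U$. Since $F_i$ is built from two kinds of contractions of $H^*$-slots against $H$-slots, summed over set partitions of $\{1,\dots,i\}$ and producing graded-symmetric products in $S^*(U_*)_i$, the case of Magnus generators with disjoint indices simplifies drastically: only those contractions pairing slots labelled by indices actually appearing in the Johnson images can survive. By arranging the indices in a prescribed tree-and-wheel pattern, one can force the $F_i$-image to hit a chosen element of $W_i$, plus a linear combination of extra contractions that factor through traces and are therefore killed when projecting to the traceless part. Ranging over all such patterns and combining tree-type and wheel-type generators should exhaust $W_i = \wti{S}^*(U_*)_i$.

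The main obstacle is the combinatorial bookkeeping: one must track both the antisymmetrisation on $\bigwedge^i U$ and the graded-symmetrisation implicit in $S^*(U_*)_i$, verify that mixed products of $U_j^{\tree}$ and $U_k^{\wheel}$ factors in $W_i$ are reached simultaneously (rather than only the tree or only the wheel subalgebra), and show that the resulting rational coefficients are nonzero. This last step is what ultimately forces the stable bound $n \ge 3i$, and it may be convenient to organise it via Schur--Weyl duality, matching each irreducible $\GL(n,\Q)$-summand $V_{\underline\lambda}(n)$ of $W_i$ with an explicit abelian cycle indexed by a suitable Young-tableau-like labelling.
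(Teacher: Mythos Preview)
Your strategy has a genuine gap at the step where you produce abelian cycles from Magnus generators with \emph{pairwise disjoint} index sets. If the Johnson images $\tau(f_1),\dots,\tau(f_i)\in U$ involve disjoint sets of basis vectors, then every contraction $c^{\tree}_{k}$ or $c^{\wheel}_{k}$ with $k\ge 2$ applied to any sub-wedge vanishes (this is exactly the observation recorded as equation~\eqref{disjoint} in the paper). Hence for any pair of partitions $(\mu,\nu)\vdash i$ having a part of size at least $2$, the component $F_{(\mu,\nu)}$ of $F_i$ kills your class. Your disjoint-index abelian cycles therefore detect only the summands $W(1^l,1^{i-l})$ for $0\le l\le i$; their direct sum is precisely the traceless part $H_i(U,\Q)^{\tl}$. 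In other words your argument reproves Theorem~\ref{tracelessimageintro}, not Theorem~\ref{Johnsonpartintro}.

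The missing idea is to build commuting tuples whose Johnson images \emph{share} indices, so that the contractions $c_k^{\tree}$ and $c_k^{\wheel}$ are nonzero, while the tuple still commutes in $\IA_n$. The paper does this via the products $h_{k,l}=g_{k,l}g_{k,l+1}\cdots g_{k,k-1}$: each $h_{k,l}$ acts as an inner automorphism on $F_{k-l}\hookrightarrow F_n$, hence commutes with any automorphism supported there (Lemma~\ref{abeliancycle}), yet its Johnson image $\sum_{j=l}^{k-1} e_{k,j}^{j}$ overlaps nontrivially with those of the nested $h_{k',l}$. From these one assembles the abelian cycles $\alpha_{(\mu,\nu)}$ indexed by pairs of partitions, and a direct computation (Lemmas~\ref{lemconnected} and~\ref{lemcontractioncomputation}) shows $F_{(\mu,\nu)}(\tau_*\alpha_{(\mu,\nu)})$ generates $W(\mu,\nu)$. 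Finally, to pass from each $W(\mu,\nu)$ individually to their direct sum inside $F_i(H^A_i(\IA_n,\Q))$ one needs a further argument: the paper introduces a partial order on pairs of partitions, proves the triangularity statement $F_{(\mu,\nu)}(\tau_*\alpha_{(\xi,\eta)})=0$ unless $(\xi,\eta)\ge(\mu,\nu)$ (Lemma~\ref{lempairpartcontraction}), and then inducts. Both of these ingredients are absent from your sketch.
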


Theorem \ref{Johnsonpartintro} implies that $H^A_i(\IA_n,\Q)$ includes a representation-stable subrepresentation which is isomorphic to $W_i$.

\begin{remark}
We can decompose $W_i$ into direct summands
$$
W_i=\bigoplus_{(\mu,\nu)\in P_i}W(\mu,\nu),
$$
where $P_i$ denotes the set of pairs of partitions $(\mu,\nu)$ such that $\mu$ and $\nu$ are partitions of non-negative integers whose sum is $i$.
Recently Lindell \cite{Lindell} studied the Albanese homology $H^A_i(\I_{g,1},\Q)$ of the Torelli group $\I_{g,1}$.
Lindell's result \cite[Theorem 1.5]{Lindell} implies that for each pair of partitions $(\lambda,\mu)$ under some conditions, $H^A_i(\I_{g,1},\Q)$ contains an $\Sp(2g,\Q)$-subrepresentation $W^{\I_{g,1}}_i(\lambda,\mu)$ corresponding to $(\lambda,\mu)$.
\end{remark}

By \eqref{H1IAn}, we have $H^A_1(\IA_n,\Q)\cong W_1$ for $n\ge 3$.
By \cite{Pettet}, we have $H^A_2(\IA_n,\Q)\cong W_2$ for $n\ge 6$.
For $i=3$, we obtain the following theorem.

\begin{theorem}[Theorem \ref{thirdAlbanese}]
 For $n\ge 9$, we have a $\GL(n,\Q)$-isomorphism
 $$F_3: H^A_3(\IA_n,\Q)\xrightarrow{\cong} W_3.$$
\end{theorem}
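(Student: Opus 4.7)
The plan is to combine the lower bound from Theorem~\ref{Johnsonpartintro} with a matching upper bound and an injectivity argument. Applied with $i=3$ and $n\ge 3i = 9$, Theorem~\ref{Johnsonpartintro} already yields the containment $F_3(H^A_3(\IA_n,\Q)) \supseteq W_3$. It therefore suffices to establish (a) the reverse containment $F_3(H^A_3(\IA_n,\Q)) \subseteq W_3$, and (b) the injectivity of $F_3$ restricted to $H^A_3(\IA_n,\Q)$. Together (a) and (b) force $F_3|_{H^A_3(\IA_n,\Q)}$ to be an isomorphism onto $W_3$.

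For (a), recall that $W_3$ is by definition the subspace of $S^*(U_*)_3$ cut out by all contractions between distinct tensor factors. By $\GL(n,\Q)$-equivariance of $F_3$ and the definition of the Albanese homology as the image of $\tau_*$, it is enough to verify the trace-vanishing on a $\GL(n,\Q)$-generating set of $H^A_3(\IA_n,\Q)$. The abelian $3$-cycles built from mutually commuting Magnus-type generators of $\IA_n$, which already underlie the construction behind Theorem~\ref{Johnsonpartintro}, supply such a generating set; the trace-vanishing then reduces to an explicit combinatorial check on their Johnson images in $\bigwedge^3 U$, using the symmetry/Jacobi-type relations satisfied by the tree and wheel pieces of $U_*$.

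For (b), I would analyse $H^A_3(\IA_n,\Q) \hookrightarrow \bigwedge^3 U$ via the stable decomposition of $\bigwedge^3 U$ into irreducible algebraic $\GL(n,\Q)$-representations, which is valid for $n\ge 9$. It suffices to show that no irreducible constituent of $\ker F_3$ appears in $H^A_3(\IA_n,\Q)$. This can be done either by invoking a suitable multiplicity upper bound on $H^A_3(\IA_n,\Q)$---for instance one derived from the Church--Ellenberg--Farb representation-stability framework cited in the introduction---or by a direct bipartition-by-bipartition analysis in the spirit of Pettet's computation of $H^A_2(\IA_n,\Q)$, ruling out every irreducible type that does not already contribute to $W_3$.

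The main obstacle is step (b): matching the irreducible constituents of $H^A_3(\IA_n,\Q)$ precisely against those of $W_3$. Step (a), while necessary, reduces to finite combinatorics once the explicit Johnson images of the generating abelian cycles are laid out. By contrast, verifying that no extra Schur-functor type sneaks into $H^A_3(\IA_n,\Q)$ demands a sharp upper bound, and this is where the hypothesis $n \ge 9$ is essential: it guarantees both the stable decomposition of $\bigwedge^3 U$ into irreducibles $V_{\ul\lambda}(n)$ indexed by bipartitions of size at most $3$ and the applicability of the lower bound from Theorem~\ref{Johnsonpartintro}, so that the multiplicity counts on both sides can be compared directly.
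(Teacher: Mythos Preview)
Your plan has a genuine gap in step~(a) and misses the key mechanism in step~(b).

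For~(a), you assert that the abelian $3$-cycles ``supply such a generating set'' of $H^A_3(\IA_n,\Q)$. But that is exactly what the theorem is meant to establish; it is not known in advance. Theorem~\ref{Johnsonpartintro} only tells you that the subrepresentation generated by images of abelian cycles \emph{surjects} onto $W_3$ via $F_3$, not that those cycles generate all of $H^A_3$. So the trace-vanishing argument is circular. Moreover, the paper never proves the containment $F_3(H^A_3)\subset W_3$ directly; this inclusion falls out only after the full multiplicity count is done.

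For~(b), the Church--Ellenberg--Farb bound is too coarse: it only constrains the polynomial degree of $\dim H^A_i$, not the individual irreducible multiplicities. The argument that actually works is the ``Pettet-style'' one you allude to, but you do not identify the mechanism. The paper's proof runs as follows: decompose $H_3(U,\Q)=\bigwedge^3 U$ into $61$ irreducibles (valid for $n\ge 9$); use Pettet's explicit description of $R_2=\ker\big(\tau^*\colon H^2(U,\Q)\to H^2(\IA_n,\Q)\big)$ and compute the image of the cup product $H^1(U,\Q)\otimes R_2\to H^3(U,\Q)$, which lies inside $R_3=\ker\tau^*$. A lengthy explicit calculation shows this image contains $27$ specific irreducibles, so these are \emph{excluded} from $H^A_3(\IA_n,\Q)\cong H_A^3(\IA_n,\Q)^*$. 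Since Theorem~\ref{Johnsonpart} supplies a subrepresentation of $H^A_3$ isomorphic to $W_3$ ($34$ irreducibles) and $27+34=61$, one concludes $H^A_3(\IA_n,\Q)\cong W_3$; the surjectivity $F_3(H^A_3)\supset W_3$ together with equality of dimensions then forces $F_3|_{H^A_3}$ to be an isomorphism onto $W_3$.

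In short: the upper bound does not come from analysing $\ker F_3$ or from a generating-set argument, but from pushing Pettet's degree-$2$ kernel up to degree~$3$ via the cup product and matching multiplicities against the decomposition of $\bigwedge^3 U$.
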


It seems natural to make the following conjecture.

\begin{conjecture}[Conjecture \ref{conjectureAlbanese}]\label{conjintro}
 For $n\ge 3i$, we have a $\GL(n,\Q)$-isomorphism
 $$F_i: H^A_i(\IA_n,\Q)\xrightarrow{\cong} W_i.$$
\end{conjecture}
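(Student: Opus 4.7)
The plan is to reduce the conjecture to a single dimension inequality. By Theorem~\ref{Johnsonpartintro} we already have $F_i(H^A_i(\IA_n,\Q)) \supseteq W_i$ for $n \ge 3i$, which yields $\dim_\Q H^A_i(\IA_n,\Q) \ge \dim_\Q W_i$. Conversely, if one can establish
\begin{equation*}
  \dim_\Q H^A_i(\IA_n,\Q) \le \dim_\Q W_i,
\end{equation*}
then equality of dimensions together with the existing surjection forces $F_i$ to be injective on $H^A_i(\IA_n,\Q)$ and to have image exactly $W_i$, yielding the asserted $\GL(n,\Q)$-isomorphism. So the task reduces to bounding $H^A_i(\IA_n,\Q)$ from above, multiplicity by multiplicity as a $\GL(n,\Q)$-representation.

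The most natural route uses the relation between the Albanese homology of $\IA_n$ and the twisted cohomology of $\Aut(F_n)$ developed in the final section of this paper, via the Hochschild--Serre spectral sequence for
\begin{equation*}
  1 \to \IA_n \to \Aut(F_n) \to \GL(n,\Z) \to 1.
\end{equation*}
In the stable range, Borel's vanishing theorem kills the higher $\GL(n,\Z)$-cohomology with algebraic coefficients, so after passing to $\GL(n,\Q)$-isotypic components one can in principle read off an upper bound on the multiplicity of each irreducible $V_{\ul\lambda}(n)$ in $H^A_i(\IA_n,\Q)$ from the stable twisted cohomology of $\Aut(F_n)$ with coefficients in suitable polynomial functors in $H$ and $H^*$. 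Matching these multiplicities against the tree/wheel decomposition of $W_i$---accessible through its operadic description using the commutative operad alluded to in Section~\ref{KV}---would complete the argument.

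The main obstacle is precisely this twisted-cohomology computation. For $i \le 3$ it can be carried out directly, as in Pettet's analysis and in Theorem~\ref{thirdAlbanese}, but in general the stable rational cohomology of $\Aut(F_n)$ with polynomial coefficients is only partially known, and pairing its contributions summand by summand with $W_i$ seems to require new input from graph-complex techniques in the spirit of Kontsevich and Kawazumi--Vespa. A complementary approach would be inductive: equip $H^A_*(\IA_n,\Q)$ with a coproduct compatible with the graded-symmetric algebra structure of $W_*$, so as to reduce the upper bound to a bound on primitives in each homological degree, which one would hope are controlled by $U_i^{\tree}$ and $U_i^{\wheel}$. The sticking point in either strategy is the same---ruling out ``exotic'' irreducible $\GL(n,\Q)$-summands of $H^A_i(\IA_n,\Q)$ beyond those already furnished by Theorem~\ref{Johnsonpartintro}---and this is exactly why the statement is formulated as a conjecture rather than a theorem.
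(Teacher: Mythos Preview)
The statement is a conjecture in the paper, not a theorem; the paper proves it only for $i\le 3$ (via Theorem~\ref{thirdAlbanese} together with the cited results of Kawazumi and Pettet for $i=1,2$), so there is no general proof to compare against. Your proposal correctly recognizes this, and your reduction to the single inequality $\dim_{\Q} H^A_i(\IA_n,\Q)\le \dim_{\Q} W_i$ via Theorem~\ref{Johnsonpartintro} is sound (one minor wording issue: you refer to an ``existing surjection,'' but what Theorem~\ref{Johnsonpartintro} gives is the containment $W_i\subset F_i(H^A_i(\IA_n,\Q))$; the argument from equality of dimensions still goes through as you describe).

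The two strategies you sketch are precisely those the paper itself develops but leaves conditional. Your Hochschild--Serre/Borel-vanishing route through twisted cohomology of $\Aut(F_n)$ is the content of Conjecture~\ref{cohomologyAut}, and the paper formalizes the link you describe as Proposition~\ref{AlbaneseandAut}: the isomorphism $H^A_i(\IA_n,\Q)\cong W_i$ is equivalent to the Kawazumi--Vespa Conjecture~\ref{conjectureKV} once Conjecture~\ref{cohomologyAut} is granted. Your coalgebra/primitives approach is anticipated in Section~\ref{coalgebra}, where Proposition~\ref{coalgebramap} shows $F_*$ is a coalgebra map and Conjecture~\ref{conjectureprim} isolates the primitive part as the key unknown. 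So your diagnosis that the obstruction in either route is the same---ruling out irreducible summands of $H^A_i(\IA_n,\Q)$ beyond those in $W_i$---matches the paper's own position, and there is no argument in the paper that you have overlooked.
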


\subsection{Coalgebra structure of $H^A_*(\IA_n,\Q)$}

For any group $G$, it is well known that $H_*(G,\Q)$ has a coalgebra structure.
Then $H^A_*(G,\Q)$ is a subcoalgebra of $H_*(H_1(G,\Z),\Q)$. (See Section \ref{coalgebra} for details.)
We also have a coalgebra structure on $S^*(U_*)$ (see Section \ref{tracelesspartofgradedsymmetricalgebra}).
Then the graded $\GL(n,\Q)$-homomorphism $F_*=\bigoplus_{i}F_i$ is compatible with comultiplications.

\begin{proposition}[Proposition \ref{coalgebramap}]
 The graded $\GL(n,\Q)$-homomorphism
 $$F_*=\bigoplus_{i} F_i: H_*(U,\Q)\to S^*(U_*)$$
 is a coalgebra map.
\end{proposition}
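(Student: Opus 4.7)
The plan is to verify directly that $F_*$ intertwines the coproducts, by unpacking both sides as indexed sums and matching them. Recall that $H_*(U,\Q) \cong \bigwedge^* U$ carries the Pontryagin coproduct, which is the shuffle coproduct
\[
\Delta_\wedge(\phi_1 \wedge \cdots \wedge \phi_i) = \sum_{S \sqcup T = [i]} \varepsilon(S,T)\, \phi_S \otimes \phi_T,
\]
making each $\phi \in U$ primitive; similarly, the coproduct $\Delta_S$ on $S^*(U_*)$ from Section~\ref{tracelesspartofgradedsymmetricalgebra} is the graded-shuffle coproduct making each $u \in U_j$ primitive. Both coalgebras are therefore graded-cocommutative with primitive cogenerators, and the compatibility to be checked is the standard one.

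The key step is to unpack $F_i$ combinatorially. By construction, $F_i(\phi_1 \wedge \cdots \wedge \phi_i)$ is a sum indexed by contraction patterns: each pattern partitions $[i]$ into disjoint blocks $B_1 \sqcup \cdots \sqcup B_r$ (the connected components of the underlying tree/wheel diagram) and assigns to each block $B_j$ a contracted element $c(B_j) \in U_{|B_j|}$ built from the two contraction maps, and the contribution of the pattern is the graded-symmetric product $c(B_1) \cdots c(B_r) \in S^*(U_*)_i$. Applying $\Delta_S$ to such a product of primitives splits the multiset of blocks into two sub-multisets; conversely, $\Delta_\wedge$ first splits the inputs into two subsets $S \sqcup T = [i]$ and then $F_* \otimes F_*$ applies independent contraction patterns on each side. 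Both computations therefore produce a sum indexed by triples (a decomposition $S \sqcup T = [i]$, a contraction pattern on $S$, and a contraction pattern on $T$), and the two sums are reindexed into one another by recording which blocks of a global contraction pattern lie in $S$ versus in its complement.

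What remains is a sign check, which is the main technical obstacle. The shuffle sign $\varepsilon(S,T)$ appearing in $\Delta_\wedge$ must agree with the Koszul sign incurred when the graded-symmetric product in $S^*(U_*)$ is re-ordered so that all blocks contained in $S$ precede all blocks contained in $T$. Since each $\phi \in U = U_1$ has odd degree $1$, a single-block contraction $c(B) \in U_{|B|}$ has degree $|B|$, so the parity of the permutation moving $\phi_S$ past $\phi_T$ in $\bigwedge^* U$ coincides with the parity of the permutation moving the corresponding blocks in $S^*(U_*)$. Once this blockwise sign compatibility is verified, the bijection of triples above yields the desired identity $\Delta_S \circ F_* = (F_* \otimes F_*) \circ \Delta_\wedge$.
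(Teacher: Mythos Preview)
Your approach is correct and is essentially the same as the paper's: both proofs verify $\Delta_S\circ F_*=(F_*\otimes F_*)\circ\Delta_*^U$ by expanding each side as a double sum and matching terms. The paper writes this out explicitly, expanding $\Delta F_i(x)$ as a sum over $p$, over pairs of pairs of partitions $(\xi,\eta)\vdash p$, $(\zeta,\epsilon)\vdash i-p$, and over $\tau\in\gpS_i$ (with the symmetry denominators $\prod k'_j!\prod k''_j!\prod m'_j!\prod m''_j!$ coming from the definition of $F_{(\mu,\nu)}$), and then observes that this is exactly $(F_*\otimes F_*)\Delta_*^U(x)$. Your description of $F_i$ as a signed sum over block partitions of $[i]$ is simply the combinatorial repackaging of the composite $\bigl(\bigotimes c^{\tree}\otimes\bigotimes c^{\wheel}\bigr)\circ\iota_i$, where $\iota_i$ supplies the sum over $\gpS_i$ and the symmetry denominators compensate for reordering blocks of the same type; your sign argument (degree of each $\phi\in U$ equals $1$, degree of each contracted block equals its cardinality) is exactly why the shuffle sign in $\Delta_*^U$ coincides with the Koszul sign in $\Delta_S$, which is what makes the paper's displayed equality hold.
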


For a coalgebra $A$, let $\Prim(A)$ denote the primitive part of $A$.
We have
$$\Prim(S^*(U_*))=U_*\subset W_*.$$
The $\GL(n,\Q)$-homomorphism $F_*$ restricts to a graded $\GL(n,\Q)$-homomorphism
$$
 F_*: \Prim(H^A_*(\IA_n,\Q))\to \Prim(S^*(U_*))=U_*.
$$
Therefore, it is natural to make the following conjecture, which implies that Theorem \ref{nontrivialintro} determines the primitive part of $H^A_*(\IA_n,\Q)$.
Let $\Prim(H^A_*(\IA_n,\Q))_i$ denote the degree $i$ part of $\Prim(H^A_*(\IA_n,\Q))$.

\begin{conjecture}[Conjecture \ref{conjectureprim}]
For $n\ge 3i$, the $\GL(n,\Q)$-homomorphism
$$
  F_i:\Prim(H^A_*(\IA_n,\Q))_i\to U_i
$$
is an isomorphism.
\end{conjecture}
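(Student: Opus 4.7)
The plan is to deduce this conjecture as a formal consequence of Conjecture \ref{conjectureAlbanese} together with a coalgebra-theoretic analysis of the traceless part $W_*$. Since $F_*$ is a graded coalgebra map and $\Prim(S^*(U_*))=U_*$, the restriction of $F_*$ to $\Prim(H^A_*(\IA_n,\Q))$ automatically lands in $U_*$, so the substantive content of the conjecture is bijectivity of $F_i$ on primitives in each degree $i$ within the stable range.

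The first step is to show that $W_*$ is itself a subcoalgebra of $S^*(U_*)$ and that $\Prim(W_*)_i = U_i$. The key observation is that the contractions defining $W_*$, which pair up distinct tensor factors of a graded-symmetric monomial in $S^*(U_*)$, are compatible with the symmetric comultiplication: in a summand $\omega_{(1)}\otimes\omega_{(2)}$ of $\Delta\omega$, each contraction either acts within a single tensor factor, in which case its vanishing on $\omega_{(1)}$ or $\omega_{(2)}$ is forced by the tracelessness of $\omega$, or else splits nontrivially across the two factors, in which case the contribution is killed by cocommutativity. Given this, $\Prim(W_*)=W_*\cap \Prim(S^*(U_*))=W_*\cap U_*=U_*$, using the inclusion $U_*\subset W_*$ noted in the excerpt.

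Granting Conjecture \ref{conjectureAlbanese}, $F_i:H^A_i(\IA_n,\Q)\to W_i$ is an isomorphism for all $i$ with $n\ge 3i$. Since $F_*$ is compatible with comultiplication, this upgrades to a coalgebra isomorphism $H^A_*(\IA_n,\Q)\cong W_*$ in the relevant range. Applying $\Prim$ to both sides and invoking the first step then yields $\Prim(H^A_*(\IA_n,\Q))_i \cong \Prim(W_*)_i = U_i$ via $F_i$, which is exactly the assertion of the conjecture.

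The main obstacle is establishing the conjecture unconditionally, i.e.\ without assuming Conjecture \ref{conjectureAlbanese}. Surjectivity of $F_i$ on primitives can plausibly be obtained by refining the abelian-cycle construction underlying Theorem \ref{nontrivialintro}: for each element of $U_i$, one selects an abelian cycle in $\IA_n$ whose class in $H^A_i(\IA_n,\Q)$ is primitive (so that the reduced coproduct vanishes modulo the part of $\bigwedge^* H_1(\IA_n,\Q)$ killed by $F_*$) and whose image under $F_i$ hits the prescribed element of $U_i\subset W_i$. The genuinely hard part is injectivity, which amounts to a matching upper bound on $\dim\Prim(H^A_*(\IA_n,\Q))_i$; in the absence of the full strength of Conjecture \ref{conjectureAlbanese}, this seems to require either the Church--Ellenberg--Farb upper bounds on Albanese cohomology combined with a Milnor--Moore style structural analysis of the cocommutative coalgebra $H^A_*(\IA_n,\Q)$, or else a new input specific to $\IA_n$.
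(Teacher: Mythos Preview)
The statement is a conjecture, and the paper does not prove it; the paper's entire treatment is the sentence ``Conjecture~\ref{conjectureAlbanese} leads to the following conjecture.'' Your proposal correctly identifies this and supplies the details of that implication, which is more than the paper does explicitly. Granting Conjecture~\ref{conjectureAlbanese}, the argument you give---$F_j$ is an isomorphism onto $W_j$ for all $j\le i$ since $n\ge 3i\ge 3j$, $F_*$ is a coalgebra map by Proposition~\ref{coalgebramap}, and $\Prim(W_*)=W_*\cap \Prim(S^*(U_*))=W_*\cap U_*=U_*$---is correct and is presumably what the paper has in mind.

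One minor correction: your justification for $W_*$ being a subcoalgebra (cross-factor contractions ``killed by cocommutativity'') is not the right mechanism and does not quite make sense as stated. To check $\Delta\omega\in W_*\otimes W_*$ one only needs contractions acting \emph{within} each Sweedler factor $\omega_{(1)}$ or $\omega_{(2)}$ to vanish; there are no cross-factor contractions to consider at all, so nothing for cocommutativity to kill. The paper simply asserts that $\wti{S}^*(M_*)$ is a subcoalgebra (Section~\ref{tracelesspartofgradedsymmetricalgebra}), and the clean reason, noted there, is that $\wti{S}^*(M_*)$ is a bialgebra in the monoidal category $(\mathbf{Rep}^{\text{alg}}(\GL(n,\Q)),\wti\otimes,\wti\tau)$, so its comultiplication already lands in $\wti{S}^*(M_*)\,\wti\otimes\,\wti{S}^*(M_*)\subset \wti{S}^*(M_*)\otimes\wti{S}^*(M_*)$. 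Your unconditional speculation (surjectivity via primitive abelian cycles, injectivity via a Milnor--Moore style bound) is reasonable but goes beyond anything in the paper.
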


\subsection{Lower bound of the dimension of $H^A_i(\IA_n,\Q)$}
Church--Ellenberg--Farb \cite{CEF} introduced the theory of FI-modules and studied the representation stability of $H^A_i(\IA_n,\Q)$. They obtained the following theorem about the stability and an upper bound of the dimension of $H^A_i(\IA_n,\Q)$.

\begin{theorem}[Church--Ellenberg--Farb \cite{CEF}]\label{CEHdimension}
 For each $i\ge 0$, there exists a polynomial $P_i(T)$ of degree $\le 3i$ such that $\dim_{\Q}(H^A_i(\IA_n,\Q))=P_i(n)$ for sufficiently large $n$ with respect to $i$.
\end{theorem}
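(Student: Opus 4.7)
The plan is to embed the sequence $\{H^A_i(\IA_n,\Q)\}_{n}$ into a polynomial functor of bounded degree and then invoke representation stability to pin down the dimension as a polynomial.

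First, I would exploit the inclusion $H^A_i(\IA_n,\Q)\hookrightarrow H_i(U,\Q)\cong\bigwedge^i U$ recalled in the introduction. Since $U=H^*\otimes\bigwedge^2 H$, the ambient space sits inside $(H^*)^{\otimes i}\otimes H^{\otimes 2i}$, which is an algebraic $\GL(n,\Q)$-representation of bidegree $(i,2i)$ in the sense of polynomial functors. Hence every irreducible $V_{\ul\lambda}(n)$ with $\ul\lambda=(\lambda,\mu)$ that appears in $H^A_i(\IA_n,\Q)$ satisfies $|\lambda|\le i$ and $|\mu|\le 2i$, and in particular $|\ul\lambda|\le 3i$, so that only finitely many bipartitions contribute.

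Second, I would lift the sequence $\{H^A_i(\IA_n,\Q)\}_{n}$ to a finitely generated FI-module (or, more precisely, a finitely generated module over the category encoding both inclusions and general linear symmetries, so that the algebraic $\GL(n,\Q)$-action is retained). The stabilization maps $\IA_n\hookrightarrow\IA_{n+1}$ are functorial with respect to injections of finite index sets, so passing to the bar complex, which is itself functorial, produces such a structure on $\{H_i(\IA_n,\Q)\}_n$ and hence on its Albanese quotient. Finite generation follows because the Magnus-type generators of $\IA_n$ are indexed by combinatorial data of bounded complexity, so in each fixed homological degree the bar complex is generated in uniformly bounded degree over the indexing category. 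Noetherianity of finitely generated FI-modules then forces the multiplicity $m_{\ul\lambda}$ of each $V_{\ul\lambda}(n)$ in $H^A_i(\IA_n,\Q)$ to become constant for $n$ sufficiently large with respect to $i$.

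Finally, I would combine the two steps. For $n\ge|\ul\lambda|$ the Weyl dimension formula gives $\dim V_{\ul\lambda}(n)$ as a polynomial in $n$ of degree $|\ul\lambda|\le 3i$, so that
\[
\dim_{\Q} H^A_i(\IA_n,\Q)=\sum_{|\ul\lambda|\le 3i} m_{\ul\lambda}\,\dim V_{\ul\lambda}(n)
\]
is, for large $n$, a finite sum of polynomials of degree $\le 3i$, hence itself a polynomial $P_i(T)$ of degree $\le 3i$. The main obstacle is the finite generation step: the bridge between the topological homology group $H_i(\IA_n,\Q)$ and the combinatorial FI-module machinery rests on producing explicit FI-generators (via Magnus-type words or their cycles in the bar complex) in each homological degree, and it is there that most of the work in \cite{CEF} concentrates.
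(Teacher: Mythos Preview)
The paper does not actually prove this statement: it is quoted as \cite[Theorem 7.2.3]{CEF} and used as input for Theorem \ref{dimension}, with no argument given here. So there is no ``paper's own proof'' to compare against; your sketch is essentially a summary of what the cited reference does.

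That said, two points in your outline deserve tightening. First, the swap in bidegrees: since $U\cong H^*\otimes\bigwedge^2 H$, the ambient representation $(H^*)^{\otimes i}\otimes H^{\otimes 2i}$ has $|\lambda^+|\le 2i$ and $|\lambda^-|\le i$, not the other way around; the conclusion $|\ul\lambda|\le 3i$ is unaffected. Second, and more substantively, your ``finite generation'' step is doing too much work in the wrong place. You do not need to produce FI-generators for $H_i(\IA_n,\Q)$ via the bar complex or Magnus words; the actual argument in \cite{CEF} observes that $\bigwedge^i U$ is a finitely generated FI-module (being built functorially from $H$ and $H^*$), and then $H^A_i(\IA_n,\Q)$, as a sub-FI-module of $\bigwedge^i U$, is finitely generated by Noetherianity. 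The finite generation of the full $H_i(\IA_n,\Q)$ is neither needed nor known, and attempting it through the bar complex would be the genuine obstacle, not the solution.
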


We obtain a lower bound of $\dim_{\Q}(H^A_i(\IA_n,\Q))$.
As a consequence of Theorem \ref{Johnsonpartintro}, or directly, it can be shown that the \emph{traceless part} $H_i(U,\Q)^{\tl}$ of $H_i(U,\Q)$ is contained in $H^A_i(\IA_n,\Q)$, where $H_i(U,\Q)^{\tl}\subset H_i(U,\Q)$ is the subrepresentation that vanishes under any contraction maps. (See Section \ref{Tracelesspart}.)

\begin{theorem}[Theorem \ref{tracelessimage}] \label{tracelessimageintro}
 We have $H_i(U,\Q)^{\tl}\subset H^A_i(\IA_n,\Q)$ for $n\ge 3i$.
\end{theorem}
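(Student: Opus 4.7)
The plan is to deduce the theorem from Theorem \ref{Johnsonpartintro} via an isotypic-multiplicity argument. Every irreducible $V_{(\lambda,\mu)}(n)$ appearing in $H_i(U,\Q)=\bigwedge^i U\subset (H^*)^{\otimes i}\otimes H^{\otimes 2i}$ has $|\lambda|\le 2i$ and $|\mu|\le i$, and each contraction map decreases the total size $|\lambda|+|\mu|$ by $2$. Therefore $H_i(U,\Q)^{\tl}$ is precisely the sum of isotypic components with maximum total size $|\lambda|+|\mu|=3i$, and for each such $(\lambda,\mu)$ the multiplicity of $V_{(\lambda,\mu)}(n)$ in $H_i(U,\Q)^{\tl}$ equals its multiplicity in the whole $\bigwedge^i U$.

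Next I want to locate these max-size isotypic components inside $S^*(U_*)_i$. Since $U_j\cong H^*\otimes\bigwedge^{j+1}H$ carries bipartitions of total size at most $j+2$, the summand of $S^*(U_*)_i$ coming from a solution of $i=\sum_j j\,a_j$ (with $a_j$ graded-symmetric factors of $U_j$) contributes bipartitions of total size at most $\sum_j a_j(j+2)=i+2\sum_j a_j$, which is strictly maximized by $a_1=i$ and $a_j=0$ for $j\ge 2$, giving the value $3i$. Hence max-size irreducibles occur in $S^*(U_*)_i$ only through the $\bigwedge^i U_1=\bigwedge^i U$ direct summand, and within it only in $(\bigwedge^i U)^{\tl}=H_i(U,\Q)^{\tl}$. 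Since contractions on $S^*(U_*)_i$ likewise reduce total size by $2$, all max-size isotypic components lie in the kernel $W_i$, so $W_i$ contains $H_i(U,\Q)^{\tl}$ with matching multiplicities.

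Finally I would conclude by a multiplicity count. Fix a bipartition $(\lambda,\mu)$ with $|\lambda|+|\mu|=3i$. By Theorem \ref{Johnsonpartintro}, $F_i(H^A_i(\IA_n,\Q))\supset W_i$ contains the full $V_{(\lambda,\mu)}$-isotypic component of $\bigwedge^i U$. Since $F_i$ is $\GL(n,\Q)$-equivariant and the target is completely reducible, the $V_{(\lambda,\mu)}$-component of $H^A_i(\IA_n,\Q)$ surjects onto that of the image, so its multiplicity is at least the multiplicity of $V_{(\lambda,\mu)}$ in $\bigwedge^i U$; the inclusion $H^A_i(\IA_n,\Q)\hookrightarrow \bigwedge^i U$ gives the reverse inequality, forcing equality. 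Summing over all max-size bipartitions (distinct as $\GL(n,\Q)$-representations because $\ell(\lambda)+\ell(\mu)\le 3i\le n$) yields $H_i(U,\Q)^{\tl}\subset H^A_i(\IA_n,\Q)$.

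I expect the main obstacle to be the size-maximization claim in the second step --- verifying that max-size bipartitions occur in $S^*(U_*)_i$ only through the $\bigwedge^i U_1$ direct summand. This ultimately reduces to the elementary inequality $\sum_j a_j(j+2)\le 3i$ under $\sum_j j\,a_j=i$, with equality only at $a_1=i$, but some care is needed to track the \emph{tree} and \emph{wheel} subrepresentations of each $U_j$ correctly when computing bipartition sizes.
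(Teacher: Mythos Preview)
Your argument is correct and takes a genuinely different route from the paper. The paper gives a short direct proof: by Corollary~\ref{projgen}, the traceless part $H_i(U,\Q)^{\tl}=\pi(T_{2i,i})$ is generated as a $\GL(n,\Q)$-representation by the single element $\pi(e_{2i,i})=\bigwedge_{j=1}^{i} e_{2j-1,2j}^{n-j+1}$, and one then checks that the explicit abelian cycle $\alpha_{(0,1^i)}$ satisfies
\[
\Big(\prod_{j=1}^{i}(\id-E_{2j-1,\,n-j+1})\Big)\,\tau_*(\alpha_{(0,1^i)})=\pi(e_{2i,i}),
\]
so this generator lies in $H^A_i(\IA_n,\Q)$. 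Your approach instead deduces the result from Theorem~\ref{Johnsonpartintro} via an isotypic-multiplicity count. The paper in fact remarks that this route is available (``As a consequence of Theorem~\ref{Johnsonpartintro}, or directly\ldots''), but opts for the direct argument because it is logically prior and considerably lighter: it needs only one abelian cycle and one generator, not the partial-order machinery on $P_i^l$ or the induction in the proof of Theorem~\ref{Johnsonpart}. Your route has the virtue of being computation-free once Theorem~\ref{Johnsonpartintro} is established; the cost is that Theorem~\ref{Johnsonpartintro} itself already uses the abelian cycles $\alpha_{(1^l,1^{i-l})}$ (including $\alpha_{(0,1^i)}$) plus substantially more.

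One minor remark on your stated worry: the tree/wheel bookkeeping is harmless. Since $U_j^{\tree}\cong V_{1^{j+1},1}$ has size $j+2$ and $U_j^{\wheel}\cong V_{1^j,0}$ has size $j\le j+2$, the bound $\sum_j a_j(j+2)\le 3i$ with equality only at $a_1=i$ already covers both pieces, so no extra care is needed there.
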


We have $\dim_{\Q}(H_i(U,\Q)^{\tl})=P'_i(n)$ for $n\ge 3i$, where $P'_i(T)$ is a polynomial of degree $3i$. By Theorems \ref{CEHdimension} and \ref{tracelessimageintro}, we obtain the following theorem about the dimension of $H^A_i(\IA_n,\Q)$.

\begin{theorem}[Theorem \ref{dimension}]\label{dimensionintro}
 We have $\dim_{\Q}(H^A_i(\IA_n,\Q))\ge P'_i(n)$ for $n\ge 3i$.
 Moreover, there exists a polynomial $P_i(T)$ of degree exactly $3i$ such that we have
 $\dim_{\Q}(H^A_i(\IA_n,\Q))=P_i(n)$
 for sufficiently large $n$ with respect to $i$.
\end{theorem}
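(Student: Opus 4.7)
The plan is to combine Theorem \ref{tracelessimageintro} with Theorem \ref{CEHdimension}, reducing both assertions to a single dimension computation for the traceless part $H_i(U,\Q)^{\tl}$. By Theorem \ref{tracelessimageintro}, for $n\ge 3i$ we have $H_i(U,\Q)^{\tl}\subset H^A_i(\IA_n,\Q)$, so the first assertion will follow once we establish that $\dim_{\Q}(H_i(U,\Q)^{\tl}) = P'_i(n)$ for a polynomial $P'_i$ of degree exactly $3i$ in the stable range $n\ge 3i$.

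To carry out that dimension count, I would argue as follows. The total dimension $\dim_{\Q}\bigwedge^i U = \binom{n\binom{n}{2}}{i}$ is a polynomial in $n$ of degree $3i$, since $\dim U = n\binom{n}{2}$ is cubic. Decomposing $\bigwedge^i U$ into bipartition irreducibles $V_{\ul\lambda}(n)$, every summand satisfies $|\ul\lambda|\le 3i$ because $U\cong V_{(1,1),(1)}\oplus V_{(1),(0)}$ has maximal bipartition size $3$; moreover, by the Weyl dimension formula, the multiplicity of each $V_{\ul\lambda}(n)$ stabilizes for $n\ge |\ul\lambda|$, and $\dim_{\Q}V_{\ul\lambda}(n)$ is a polynomial in $n$ of degree $|\ul\lambda|$. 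The contractions defining $H_i(U,\Q)^{\tl}$ pair an $H^*$-index of one $U$-factor with an $H$-index of another, so the target of any contraction has bipartition constituents only of size at most $3i-2$; Schur's lemma then forces every $V_{\ul\lambda}(n)$ with $|\ul\lambda|=3i$ appearing in $\bigwedge^i U$ to lie in $H_i(U,\Q)^{\tl}$. Summing degrees, the non-traceless quotient has dimension a polynomial of degree strictly less than $3i$, whereas the total has degree $3i$, so $\dim_{\Q}(H_i(U,\Q)^{\tl})$ is a polynomial $P'_i(n)$ of degree exactly $3i$ for $n\ge 3i$.

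For the second assertion, Theorem \ref{CEHdimension} supplies a polynomial $P_i(T)$ of degree at most $3i$ with $\dim_{\Q}(H^A_i(\IA_n,\Q)) = P_i(n)$ for large $n$; the lower bound $P_i(n)\ge P'_i(n)$ just established, combined with $\deg P'_i = 3i$, immediately forces $\deg P_i = 3i$.

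The main obstacle is the polynomiality and degree claim for $\dim_{\Q}(H_i(U,\Q)^{\tl})$: while the framework is a fairly standard bipartite Schur--Weyl calculation, one needs to ensure stability already holds in the range $n\ge 3i$ rather than only for much larger $n$, and to verify cleanly the Schur's-lemma argument placing the maximal-size irreducibles inside the traceless part. I expect these points to be treated in detail in Section \ref{Tracelesspart}, so that the proof of Theorem \ref{dimensionintro} amounts to assembling the above ingredients.
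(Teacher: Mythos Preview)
Your proposal is correct and follows the same overall strategy as the paper: combine Theorem~\ref{tracelessimageintro} with Theorem~\ref{CEHdimension}, reducing everything to showing $\dim_{\Q}(H_i(U,\Q)^{\tl})$ is a polynomial of degree exactly $3i$ for $n\ge 3i$. The only difference is in how this last point is verified: you argue by subtracting the degree-$<3i$ non-traceless quotient from the degree-$3i$ total $\binom{\dim U}{i}$, whereas the paper uses the Schur functor decomposition $\bigwedge^i U=\bigoplus_{\lambda\vdash i}\repS_\lambda(\repS_{1^2}H)\otimes\repS_{\lambda'}(H^*)$ to exhibit the specific irreducible $V_{1^{2i},1^i}$ inside the traceless part. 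Both routes are short and yield the same conclusion.
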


\subsection{Non-triviality of $H^A_i(\IO_n,\Q)$}

In a way similar to $\IA_n$, by using abelian cycles, we detect a non-trivial quotient representation of $H^A_i(\IO_n,\Q)$.
Note that unlike the case of $\IA_n$, there is no canonical inclusion map $\IO_n\hookrightarrow \IO_{n+1}$, so we do not consider the representation stability for $H^A_i(\IO_*,\Q)$.

\begin{theorem}[Theorem \ref{ConnectedpartIO}]\label{introconnectedpartIO}
  Let $i\ge 2$.
  For $n\ge i+2+\frac{1-(-1)^i}{2}$, we have a surjective $\GL(n,\Q)$-homomorphism
  \begin{gather*}
      H^A_i(\IO_n,\Q)\twoheadrightarrow \bigwedge^i H.
  \end{gather*}
  For sufficiently large $n$ with respect to $i$, we have a surjective $\GL(n,\Q)$-homomorphism
  \begin{gather*}
      H^A_i(\IO_n,\Q)\twoheadrightarrow \Hom(H,\bigwedge^{i+1} H).
  \end{gather*}
 \end{theorem}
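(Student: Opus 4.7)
Both surjections will be witnessed by explicit abelian cycles in $H_i(\IO_n,\Q)$ whose Albanese images in $\bigwedge^i H_1(\IO_n,\Q) \cong \bigwedge^i U^{\tree}$ project onto the two target subrepresentations. The key structural input is that $H_1(\IO_n,\Q)=U^{\tree}$, the traceless summand of $U$, with complement $U^{\wheel}\cong H$ identified with the Johnson image of the inner automorphisms. Thus cycles pushed forward along $\IA_n \twoheadrightarrow \IO_n$ automatically land in $\bigwedge^i U^{\tree}$, and the task is to ensure they do not vanish under the required subsequent projection to the target.

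For the second surjection, onto $\Hom(H,\bigwedge^{i+1} H)$, I would reuse the abelian cycles underlying Theorem \ref{nontrivialintro}: commuting $i$-tuples of Magnus ``commutator'' generators $K_{a_k, b_k c_k}\colon x_{a_k}\mapsto x_{a_k}[x_{b_k}, x_{c_k}]$ with pairwise disjoint index triples and $a_k\notin\{b_k,c_k\}$. Their Johnson images $e_{a_k}^*\otimes(e_{b_k}\wedge e_{c_k})$ are already traceless, so they descend without collapse to classes in $\bigwedge^i U^{\tree}$, and the same $\GL(n,\Q)$-equivariant contraction $\bigwedge^i U\twoheadrightarrow\Hom(H,\bigwedge^{i+1}H)$ used for $\IA_n$ factors through $\bigwedge^i U^{\tree}$ and produces the required surjection, at the cost of $n$ being large enough to supply the disjoint triples.

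For the first surjection, onto $\bigwedge^i H$, which has no direct analogue in $\IA_n$, I would use commuting tuples of Magnus ``conjugation'' generators $C_{a_k b_k}\colon x_{a_k}\mapsto x_{b_k} x_{a_k} x_{b_k}^{-1}$. Such generators commute whenever their index pairs are pairwise disjoint, but configurations with a shared axis (for instance a common $b_k$) also commute and consume fewer free generators, which is what makes the modest bound $n\ge i+2+\frac{1-(-1)^i}{2}$ attainable. Their Johnson images $e_{a_k}^*\otimes(e_{a_k}\wedge e_{b_k})$ carry non-trivial trace $e_{b_k}$; after projecting the wedge to $\bigwedge^i U^{\tree}$, an explicit $\GL(n,\Q)$-equivariant contraction $\bigwedge^i U^{\tree}\twoheadrightarrow \bigwedge^i H\cong V_{1^i,0}$, built by iteratively pairing $H^*$-factors against $H$-factors across slots, should detect a suitable alternating combination of these cycles, yielding a non-zero image of a highest-weight vector $e_1\wedge\cdots\wedge e_i$.

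The principal obstacle is the non-vanishing check for the first surjection. The iterated contractions, combined with the alternating-wedge sign convention and the traceless projection, produce a parity-dependent coefficient; for odd $i$ one additional free index is required to prevent this coefficient from cancelling, accounting for the shift from $n\ge i+2$ to $n\ge i+3$. Ultimately this reduces to a Schur--Weyl-style bookkeeping problem: identify a highest-weight vector of $V_{1^i,0}$ inside $\bigwedge^i V_{1^2,1}$ and evaluate the explicit abelian cycle image against it. Once non-vanishing is confirmed on this single weight vector, $\GL(n,\Q)$-equivariance and irreducibility propagate surjectivity to the whole $\bigwedge^i H$, and an analogous but easier computation suffices for the second surjection.
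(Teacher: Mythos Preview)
Your plan for the second surjection contains a genuine gap. You propose to reuse ``the abelian cycles underlying Theorem \ref{nontrivialintro}'' and describe them as commutator generators $K_{a_k,b_kc_k}=f_{b_k,c_k,a_k}$ with pairwise disjoint index triples. This is a misidentification: Theorem \ref{nontrivialintro} is proved using the cycle $\alpha_{(0,i)}$ built from the conjugation-type products $h_{k,l}=g_{k,l}\cdots g_{k,k-1}$, not from disjoint commutator generators. More importantly, your proposed cycle cannot work. With all $3i$ indices distinct, its Johnson image lies in the traceless part $H_i(U,\Q)^{\tl}\subset\bigwedge^i U$, whose irreducible constituents all have size $3i$. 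The contraction $c_i\iota_i$ lands in $U_i=\Hom(H,\bigwedge^{i+1}H)$, whose irreducible pieces have sizes $i+2$ and $i$; hence $c_i\iota_i$ annihilates the traceless part and your cycle maps to zero. (Concretely, every pairing $d_j^*(b_{j-1})$ in the definition of $c_i$ vanishes when the triples are disjoint; cf.\ equation \eqref{disjoint}.)

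The paper instead pushes the \emph{single} cycle $\alpha_{(0,i)}$ forward along $\pi\colon\IA_n\twoheadrightarrow\IO_n$ and evaluates both $c^{\wheel}_i$ and $c^{\tree}_i$ on $\tau^O_*\pi_*(\alpha_{(0,i)})$. The point is that the projection $U\to U^O\cong U^{\tree}_1$ alters each factor $\sum_{j=1}^{k}e_{k+1,j}^{j}$ by subtracting $\tfrac{k}{n-1}\sum_{j}e_{k+1,j}^{j}$, producing an element that is \emph{not} traceless; the contraction then yields an explicit scalar, namely $\tfrac{i!\,(n-i-1)\bigl((n-2)\cdots(n-i)+(-1)^i i!\bigr)}{(n-1)^i}$ for $c^{\wheel}_i$. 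It is this closed formula, not an abstract Schur--Weyl argument, that gives the parity-dependent bound: for even $i$ the bracket is positive for $n\ge i+2$, while for odd $i$ one needs $n\ge i+3$. Your intuition about the first surjection (conjugation generators, parity shift) points in the right direction, but the precise cycle and the explicit coefficient are what make the argument go through, and the same cycle simultaneously handles the second surjection via $c^{\tree}_i$.
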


 By Pettet \cite{Pettet}, we have non-triviality of $H^A_2(\IO_n,\Q)$. We obtain non-triviality of $H^A_i(\IO_n,\Q)$ for $i\ge 3$.

\begin{corollary}
Let $i\ge 2$. For $n\ge i+2+\frac{1-(-1)^i}{2}$, $H^A_i(\IO_n,\Q)$ is non-trivial.
\end{corollary}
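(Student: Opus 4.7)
The statement is essentially immediate from the first assertion of Theorem \ref{introconnectedpartIO}, so the plan reduces to verifying that the target representation in that surjection is non-zero under the stated numerical hypothesis. The approach is:

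First, I would invoke the surjective $\GL(n,\Q)$-homomorphism
\begin{equation*}
  H^A_i(\IO_n,\Q)\twoheadrightarrow \bigwedge^i H
\end{equation*}
provided by Theorem \ref{introconnectedpartIO}, which is valid in exactly the range $n\ge i+2+\frac{1-(-1)^i}{2}$ appearing in the corollary. Since a surjection from $H^A_i(\IO_n,\Q)$ onto a non-zero vector space forces $H^A_i(\IO_n,\Q)$ itself to be non-zero, the task is reduced to checking that $\bigwedge^i H\ne 0$.

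Next, I would observe that $H=H_1(F_n,\Q)$ has dimension $n$, so $\dim_\Q \bigwedge^i H=\binom{n}{i}$, which is non-zero precisely when $n\ge i$. The quantity $i+2+\frac{1-(-1)^i}{2}$ equals $i+2$ when $i$ is even and $i+3$ when $i$ is odd; in either case it is strictly larger than $i$, so the hypothesis $n\ge i+2+\frac{1-(-1)^i}{2}$ implies $n\ge i+2>i$, and hence $\bigwedge^i H\ne 0$. Combining this with the surjection above yields the non-triviality claim.

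The proof has no real obstacle once Theorem \ref{introconnectedpartIO} is granted — it is a one-line deduction, the only thing to monitor being the cosmetic unpacking of the parity-dependent lower bound to confirm it sits above the threshold $n\ge i$ needed for $\bigwedge^i H$ to be non-zero. All the substantive work (the explicit construction of the abelian cycles in $\IO_n$ mapping onto $\bigwedge^i H$ under the Johnson homomorphism, and the verification that the resulting classes survive in $H^A_i(\IO_n,\Q)$) is bundled into Theorem \ref{introconnectedpartIO} itself and need not be repeated here.
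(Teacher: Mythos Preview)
Your proposal is correct and matches the paper's approach: the corollary is stated immediately after Theorem \ref{introconnectedpartIO} with no separate proof, as it is an immediate consequence of the surjection $H^A_i(\IO_n,\Q)\twoheadrightarrow \bigwedge^i H$ together with the trivial observation that $\bigwedge^i H\neq 0$ for $n\ge i$.
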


\subsection{Lower bound of the dimension of $H^A_i(\IO_n,\Q)$}
Unlike the case of $\IA_n$, the stability or an upper bound of the dimension of $H^A_i(\IO_n,\Q)$ is not known.
We detect a lower bound of the dimension of $H^A_i(\IO_n,\Q)$.

Let $U^O=U_1^{\tree}$.
We have an isomorphism $H_1(\IO_n,\Q)\cong U^O$ by \cite{Kawazumi}.
The canonical projection $\pi:\IA_n\twoheadrightarrow \IO_n$ induces a  $\GL(n,\Q)$-homomorphism
$\pi_*: H^A_i(\IA_n,\Q)\to H^A_i(\IO_n,\Q)$.
By using Theorem \ref{tracelessimageintro}, we obtain a lower bound of the dimension of $H^A_i(\IO_n,\Q)$.

\begin{theorem}[Theorem \ref{dimensionIO}]\label{introdimensionIO}
 For $n\ge 3i$, $H^A_i(\IO_n,\Q)$ contains a subrepresentation which is isomorphic to $H_i(U,\Q)^{\tl}$.
 In particular, we have $\dim_{\Q}(H^A_i(\IO_n,\Q))\ge P'_i(n)$ for $n\ge 3i$.
\end{theorem}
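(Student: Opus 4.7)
The plan is to deduce the result from Theorem \ref{tracelessimageintro} by analyzing the effect of $\pi_*$ on the traceless subrepresentation. First, by Theorem \ref{tracelessimageintro}, for $n\ge 3i$ one has $H_i(U,\Q)^{\tl}\subset H^A_i(\IA_n,\Q)$; I would then push this subrepresentation forward along $\pi_*:H^A_i(\IA_n,\Q)\to H^A_i(\IO_n,\Q)$ and verify that $\pi_*$ remains injective on it.

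At the level of first homology, the short exact sequence $1\to \Inn(F_n)\to \IA_n\to \IO_n\to 1$ abelianizes (rationally) to identify $\pi_*:U\to U^O$ with the quotient $U\twoheadrightarrow U/U^{\wheel}=U^{\tree}=U^O$. Indeed, the rational abelianization of $\Inn(F_n)\cong F_n$ is $H$, and under the first Johnson homomorphism the resulting map $H\to U=\Hom(H,\bigwedge^2 H)$ is the inner-derivation embedding $g\mapsto (h\mapsto g\wedge h)$, whose image is precisely the wheel summand $U^{\wheel}$. By naturality of the Albanese construction, $\pi_*$ is therefore the restriction to $H^A_i(\IA_n,\Q)$ of the map $\bigwedge^i U\to\bigwedge^i U^O$ induced by $U\twoheadrightarrow U^O$.

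The crucial observation is that this projection is injective on $H_i(U,\Q)^{\tl}$. Decomposing
$$\bigwedge^i U \;=\; \bigoplus_{p+q=i}\bigwedge^p U^{\tree}\otimes \bigwedge^q U^{\wheel},$$
the projection $\bigwedge^i U\to\bigwedge^i U^O$ is the projection onto the $q=0$ summand. Unpacking the definition of the traceless part from Section \ref{Tracelesspart}, the internal contraction $U\to H$ (whose kernel is $U^{\tree}$ and which restricts to an isomorphism on $U^{\wheel}$) is among the contraction maps. Any element with a nonzero component in some $\bigwedge^p U^{\tree}\otimes\bigwedge^q U^{\wheel}$ with $q\ge 1$ has a nonzero internal trace on one of its factors, so $H_i(U,\Q)^{\tl}\subset \bigwedge^i U^{\tree}=\bigwedge^i U^O$. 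Thus the restriction of the projection to the traceless subspace is the identity, and $\pi_*$ embeds $H_i(U,\Q)^{\tl}$ as a $\GL(n,\Q)$-subrepresentation of $H^A_i(\IO_n,\Q)$. The estimate $\dim_{\Q}(H^A_i(\IO_n,\Q))\ge P'_i(n)$ for $n\ge 3i$ then follows from the identity $\dim_{\Q}(H_i(U,\Q)^{\tl})=P'_i(n)$ recorded in Theorem \ref{dimensionintro}.

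The main obstacle is the containment $H_i(U,\Q)^{\tl}\subset \bigwedge^i U^{\tree}$: this is not a formal consequence of ``vanishing under contractions'' in the abstract, but depends on the precise family of contraction maps used in Section \ref{Tracelesspart} to define the traceless part. Once one confirms from that definition that the internal trace on each individual $U$-factor is included (so that surviving elements are automatically wheel-free on every factor), everything else is a routine naturality argument for Albanese homology under the projection $\IA_n\twoheadrightarrow \IO_n$.
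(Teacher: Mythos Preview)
Your proposal is correct and follows essentially the same route as the paper. Both arguments push $H_i(U,\Q)^{\tl}\subset H^A_i(\IA_n,\Q)$ (Theorem~\ref{tracelessimageintro}) forward along $\pi_*$, and both reduce the injectivity to the containment $H_i(U,\Q)^{\tl}\subset\bigwedge^i U^O$; the paper justifies this containment by observing that the complement $Y_i=\bigoplus_{q\ge1}\bigwedge^{i-q}U^{\tree}\otimes\bigwedge^qU^{\wheel}$ contains no irreducible $V_{\ul\lambda}$ with $|\ul\lambda|=3i$, whereas you phrase the same fact via the internal contraction $U\to H$ with kernel $U^{\tree}$. These are equivalent: $U^{\wheel}\cong V_{1,0}$ has size $1$, so any summand with $q\ge1$ has irreducibles of size at most $3(i-q)+q<3i$, while $H_i(U,\Q)^{\tl}=\pi(T_{2i,i})$ consists solely of irreducibles of size $3i$. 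Your caveat in the final paragraph is well placed---the cleanest way to close the argument is precisely this size count, since ``vanishing under the internal trace'' is a property of preimages in $T_{2i,i}$ rather than directly of elements of $\bigwedge^iU$.
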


\subsection{Conjectural structure of $H^A_i(\IO_n,\Q)$}
Here, we propose a conjectural structure of $H^A_i(\IO_n,\Q)$ and the relation between $H^A_*(\IO_n,\Q)$ and $H^A_*(\IA_n,\Q)$.

Let $U^O_1=U^O$ and $U^O_i=U_i$ for $i\ge 2$.
For the graded $\GL(n,\Q)$-representation $U^O_*=\bigoplus_{i\ge 1} U^O_i$, let $S^*(U^O_*)$ denote the graded-symmetric algebra of $U^O_*$.
Let $W^O_*=\wti{S}^*(U^O_*)$ denote the traceless part of $S^*(U^O_*)$.
Then we can decompose $W^O_i$ into direct summands
$$
 W^O_i=\bigoplus_{(\mu,\nu)\in P^O_i} W(\mu,\nu),
$$
where $P^O_i$ denotes the subset of $P_i$ consisting of pairs of partitions $(\mu,\nu)$ such that $\nu$ has no part of size $1$.

By \cite{Kawazumi}, we have $H^A_1(\IO_n,\Q)\cong W^O_1$ for $n\ge 3$. By \cite{Pettet}, we have $H^A_2(\IO_n,\Q)\cong W^O_2$ for $n\ge 6$. For $i=3$, we obtain the following theorem.

\begin{theorem}[Theorem \ref{thirdAlbaneseIO}]
 For $n\ge 9$, we have
 $$
 H^A_3(\IO_n,\Q)\cong W^O_3
 $$
 as $\GL(n,\Q)$-representations.
\end{theorem}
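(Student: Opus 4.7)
The plan is to transfer Theorem~\ref{thirdAlbanese}---which states $F_3\colon H^A_3(\IA_n,\Q) \xrightarrow{\cong} W_3$ for $n\ge 9$---to $\IO_n$ via the canonical surjection $\pi\colon \IA_n\twoheadrightarrow\IO_n$, whose kernel is $\Inn(F_n)\cong F_n$. On first rational homology, $\pi$ induces the projection $U = U_1^{\tree}\oplus U_1^{\wheel}\twoheadrightarrow U^O = U_1^{\tree}$; passing to exterior and symmetric powers and restricting to traceless parts produces a surjective $\GL(n,\Q)$-map $p\colon W_3\twoheadrightarrow W^O_3$ whose kernel is $\bigoplus_{(\mu,\nu)\in P_3\setminus P^O_3} W(\mu,\nu)$, namely the components indexed by bipartitions where $\nu$ contains a part of size $1$. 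One defines $F^O_3\colon H_3(U^O,\Q)\to S^3(U^O_*)$ by the same contraction construction as $F_3$; because those contractions commute with the direct-summand projection $U\twoheadrightarrow U^O$, one obtains the identity $F^O_3\circ\pi_* = p\circ F_3$ on $H^A_3(\IA_n,\Q)$.

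The lower bound $W^O_3\subseteq F^O_3(H^A_3(\IO_n,\Q))$ is then immediate: Theorem~\ref{thirdAlbanese} gives $F_3(H^A_3(\IA_n,\Q)) = W_3$, and $p$ is surjective, so $F^O_3(\pi_*(H^A_3(\IA_n,\Q))) = p(W_3) = W^O_3$. This refines Theorem~\ref{introdimensionIO}, which only secured the traceless sub-piece.

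The main obstacle is the matching upper bound $F^O_3(H^A_3(\IO_n,\Q))\subseteq W^O_3$, which is equivalent to surjectivity of $\pi_*\colon H^A_3(\IA_n,\Q)\to H^A_3(\IO_n,\Q)$. The underlying map $H_3(\IA_n,\Q)\to H_3(\IO_n,\Q)$ need not be surjective, but the Lyndon--Hochschild--Serre spectral sequence for $1\to\Inn(F_n)\to\IA_n\to\IO_n\to 1$ collapses to two nonzero rows because $\Inn(F_n)\cong F_n$ has cohomological dimension $1$. The only obstruction is the single differential $d_2\colon H_3(\IO_n,\Q)\to H_1(\IO_n,H_1(F_n,\Q))$, and the heart of the argument is to show---via $\GL(n,\Q)$-equivariance and careful bookkeeping of the bipartition components appearing on each side---that $d_2$ vanishes on any class whose image in $\bigwedge^3 U^O$ is nonzero, so that every element of $H^A_3(\IO_n,\Q)$ lifts to $H^A_3(\IA_n,\Q)$. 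Combining this with the lower bound yields the isomorphism $H^A_3(\IO_n,\Q)\cong W^O_3$ as $\GL(n,\Q)$-representations, and the bound $n\ge 9$ is inherited from the hypothesis of Theorem~\ref{thirdAlbanese}.
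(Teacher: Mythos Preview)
Your route is different from the paper's, and it has two real gaps.

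\textbf{What the paper actually does.} The paper proves Theorem~\ref{thirdAlbaneseIO} in Section~\ref{ThirdhomologyIO} \emph{before} Theorem~\ref{thirdAlbanese}, by a direct sandwich on $H_3(U^O,\Q)=\bigwedge^3 V_{1^2,1}$: decompose it into $36$ irreducibles, show by explicit cup-product computations (Proposition~\ref{kerIO}) that $17$ of them lie in $\ker\tau_O^*$, and show via Proposition~\ref{IOnWO} together with Pettet's $H^A_2(\IO_n,\Q)\cong W^O_2$ that the remaining $19$ irreducibles (namely $W^O_3$) embed into $H^A_3(\IO_n,\Q)$. Counting $36=17+19$ finishes it. Theorem~\ref{thirdAlbanese} is then proved afterwards, reusing Proposition~\ref{kerIO} but not Theorem~\ref{thirdAlbaneseIO} itself---so your reversed order is not circular, merely unorthodox.

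\textbf{Gap 1: the spectral-sequence step.} Your claim that ``$d_2$ vanishes on any class whose image in $\bigwedge^3 U^O$ is nonzero'' is not a well-posed linear statement, and ``careful bookkeeping of bipartition components'' is not a proof. What you actually need is surjectivity of $\pi_*\colon H^A_3(\IA_n,\Q)\to H^A_3(\IO_n,\Q)$, and this is precisely the content of Proposition~\ref{AlbaneseIAandIO2}: comparing the Hochschild--Serre spectral sequences for $1\to F_n\to\IA_n\to\IO_n\to 1$ and its abelianization shows $H^A_i(\IA_n,\Q)\cong H^A_i(\IO_n,\Q)\oplus(H^A_{i-1}(\IO_n,\Q)\otimes H)$. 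Invoke that instead of hand-waving at $d_2$.

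\textbf{Gap 2: the final identification.} Even after you obtain $F^O_3(H^A_3(\IO_n,\Q))=W^O_3$, you have not shown $F^O_3$ is \emph{injective} on $H^A_3(\IO_n,\Q)$, so you cannot conclude $H^A_3(\IO_n,\Q)\cong W^O_3$ from that alone. The clean deduction, once you accept Theorem~\ref{thirdAlbanese}, bypasses $F^O_3$ entirely: Proposition~\ref{AlbaneseIAandIO2} gives $W_3\cong H^A_3(\IA_n,\Q)\cong H^A_3(\IO_n,\Q)\oplus(H^A_2(\IO_n,\Q)\otimes H)$; Pettet gives $H^A_2(\IO_n,\Q)\cong W^O_2$; Lemma~\ref{WiWoi} gives $W_3\cong W^O_3\oplus(W^O_2\otimes H)$; semisimplicity of algebraic $\GL(n,\Q)$-representations then forces $H^A_3(\IO_n,\Q)\cong W^O_3$. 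This is exactly the inductive step in the proof of Proposition~\ref{AlbaneseIAandIO}.
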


It seems natural to make the following conjecture, which is an analogue of Conjecture \ref{conjintro}.

\begin{conjecture}[Conjecture \ref{conjectureAlbaneseIO}]\label{conjintroIO}
For sufficiently large $n$ with respect to $i$, we have a $\GL(n,\Q)$-isomorphism
$$
 H^A_i(\IO_n,\Q)\cong W^O_i.
$$
\end{conjecture}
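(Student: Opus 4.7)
The plan is to parallel the approach to Conjecture \ref{conjintro} for $\IA_n$. The first step is to construct a $\GL(n,\Q)$-homomorphism $F^O_i : H_i(U^O,\Q) \to S^*(U^O_*)_i$ analogous to $F_i$, by combining the Johnson homomorphism for $\IO_n$ with contraction maps, and to aim at showing that it restricts to an isomorphism $F^O_i : H^A_i(\IO_n,\Q) \xrightarrow{\cong} W^O_i$ for $n$ large with respect to $i$. The main structural input is the canonical surjection $\pi : \IA_n \twoheadrightarrow \IO_n$, which on first homology is precisely the projection $U \twoheadrightarrow U^O$ onto the tree part, killing the wheel summand $U_1^{\wheel}$ coming from inner automorphisms.

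For the lower bound $F^O_i(H^A_i(\IO_n,\Q)) \supseteq W^O_i$, I would transfer the abelian cycle construction underlying Theorem \ref{Johnsonpartintro} to $\IO_n$: take tuples of mutually commuting elements of $\IA_n$ drawn from the Johnson filtration, push them forward by $\pi$ to abelian cycles in $\IO_n$, and evaluate via $F^O_i$. Because $\pi$ annihilates $U_1^{\wheel}$ but is the identity on $U_k$ for $k \geq 2$, exactly those multi-indices $(\mu,\nu) \in P^O_i$ (i.e.\ those whose $\nu$ has no part of size $1$) produce non-trivial classes. The same contraction identities and graded-symmetric symmetrizations as in the $\IA_n$ case then recover $W^O_i$ as a subquotient of $H^A_i(\IO_n,\Q)$.

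The crux is the upper bound $F^O_i(H^A_i(\IO_n,\Q)) \subseteq W^O_i$, for which two complementary routes seem natural. Route A uses the Hochschild--Serre spectral sequence of the extension $1 \to F_n \to \IA_n \to \IO_n \to 1$, where $\Inn(F_n) \cong F_n$ and $\IO_n$ acts trivially on $H$. Only the rows $q = 0,1$ are nonzero, with $E^2_{p,0} = H_p(\IO_n,\Q)$ and $E^2_{p,1} = H_p(\IO_n,\Q) \otimes H$; assuming Conjecture \ref{conjintro}, the strategy is to identify the summands $W(\mu,\nu) \subseteq W_i \cong H^A_i(\IA_n,\Q)$ with $\nu$ containing a part of size $1$ as exactly the contribution from $E^\infty_{i-1,1}$, leaving $W^O_i$ on the $q=0$ line. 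Route B seeks a representation-stability style polynomial upper bound on $\dim_{\Q} H^A_i(\IO_n,\Q)$ of degree $3i$, analogous to Theorem \ref{CEHdimension}, and promotes the lower bound to an equality by matching leading terms.

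The main obstacle is the upper bound. Route A presupposes substantial progress on Conjecture \ref{conjintro} for $\IA_n$ and a fine control of the $d_2$ differential together with the extension class of $\IA_n$ over $\IO_n$; the identification of $\ker d_2$ and $\im d_2$ in terms of the tree/wheel decomposition seems technically delicate. Route B is hampered by the absence of a canonical inclusion $\IO_n \hookrightarrow \IO_{n+1}$, so the FI-module framework of Church--Ellenberg--Farb is not directly applicable to $\IO_n$ and a substitute stability mechanism would need to be developed. In either case, a complete proof will likely also require an appropriate analogue of Conjecture \ref{conjectureprim} for $\IO_n$, together with the observation that $H^A_*(\IO_n,\Q)$ is generated, as a coalgebra, by its primitives, which would reduce the upper bound in degree $i$ to an upper bound on $\Prim(H^A_*(\IO_n,\Q))_i \subseteq U^O_i$.
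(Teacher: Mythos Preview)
The statement you are trying to prove is a \emph{conjecture}; the paper does not prove it in general, only for $i\le 3$ (Theorem \ref{thirdAlbaneseIO}), and otherwise establishes partial results and an equivalence with Conjecture \ref{conjintro}. So there is no ``paper's proof'' to match, but it is worth comparing your plan to what the paper actually achieves.

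Your Route A is essentially the paper's strategy, but you overestimate the difficulty of the spectral sequence step and underestimate where the real obstacle lies. The paper shows (Proposition \ref{AlbaneseIAandIO2}) that the Hochschild--Serre spectral sequence for $1\to\Inn(F_n)\to\IA_n\to\IO_n\to1$ degenerates cleanly on Albanese homology: the only nonzero rows are $q=0,1$, the differential $d_2^{0,1}$ vanishes because $H^1(\IA_n,\Q)\cong H^1(\IO_n,\Q)\oplus H^1(F_n,\Q)$, and since everything is an algebraic $\GL(n,\Q)$-representation the filtration splits, giving an honest isomorphism $H^A_i(\IA_n,\Q)\cong H^A_i(\IO_n,\Q)\oplus(H^A_{i-1}(\IO_n,\Q)\otimes H)$. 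So your worry about ``fine control of $d_2$ and the extension class'' evaporates. Combined with the combinatorial identity $W_i\cong W^O_i\oplus(W^O_{i-1}\otimes H)$ (Lemma \ref{WiWoi}), this yields the equivalence of Conjectures \ref{conjintro} and \ref{conjintroIO} (Proposition \ref{AlbaneseIAandIO}). In other words, Route A does not give an independent proof: it reduces the $\IO_n$ conjecture to the $\IA_n$ conjecture, which remains open.

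Your lower bound argument is also slightly optimistic. Pushing abelian cycles through $\pi$ does give classes in $H^A_i(\IO_n,\Q)$, but the claim that ``exactly those $(\mu,\nu)\in P^O_i$ produce non-trivial classes'' is not how the paper proceeds: the cycles with $\nu$ containing a part of size $1$ do not simply vanish in $\IO_n$ (the generators $g_{a,b}$ have nonzero tree part in $U^O$), so disentangling the summands directly is not straightforward. The paper instead proves only the combined statement $W^O_i\oplus(W^O_{i-1}\otimes H)\hookrightarrow H^A_i(\IO_n,\Q)\oplus(H^A_{i-1}(\IO_n,\Q)\otimes H)$ (Proposition \ref{IOnWO}), which is weaker than $W^O_i\hookrightarrow H^A_i(\IO_n,\Q)$. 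Your Route B faces exactly the obstacle you name: no FI-module structure for $\IO_*$, and the paper leaves the polynomiality of $\dim H^A_i(\IO_n,\Q)$ as a separate conjecture.
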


On the relation between $H^A_*(\IO_n,\Q)$ and $H^A_*(\IA_n,\Q)$, the Hochschild--Serre spectral sequence for the exact sequence
$$
  1\to \Inn(F_n)\to \IA_n\to \IO_n\to 1,
$$
where $\Inn(F_n)$ is the inner automorphism group of $F_n$, leads to the following proposition.

\begin{proposition}[Proposition \ref{AlbaneseIAandIO2}]\label{AlbaneseIAandIO2intro}
   For $n\ge 2$, we have a $\GL(n,\Q)$-isomorphism
  \begin{gather*}
   \begin{split}
       H^A_i(\IA_n,\Q) \xrightarrow{\cong}  H^A_i(\IO_n,\Q) \oplus (H^A_{i-1}(\IO_n,\Q)\otimes H).
   \end{split}
  \end{gather*}
 \end{proposition}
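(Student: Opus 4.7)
The plan is to analyze the Hochschild--Serre spectral sequence for the group extension
$$1 \to \Inn(F_n) \to \IA_n \to \IO_n \to 1,$$
where $\Inn(F_n) \cong F_n$ for $n \ge 2$. Since $F_n$ is free we have $H_q(\Inn(F_n), \Q) = 0$ for $q \ge 2$, and $\IO_n$ acts trivially on $H_1(\Inn(F_n), \Q) = H$ by definition. Thus the spectral sequence has only two nonzero rows,
$$E^2_{p, 0} = H_p(\IO_n, \Q), \qquad E^2_{p, 1} = H_p(\IO_n, \Q) \otimes H,$$
and the only potentially nonzero differential is $d^2 \colon E^2_{p, 0} \to E^2_{p-2, 1}$.

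The key step, and the main obstacle, is to show that $d^2 = 0$. I would argue this by dominating the given extension by a split abelian one. Using the $\GL(n, \Q)$-equivariant splitting $U = U^O \oplus H$ of the abelianization of $\IA_n$, let $\pi^H \colon \IA_n \to H$ denote the $H$-component; its restriction to $\Inn(F_n) = F_n$ is the canonical abelianization $F_n \to H$. Thus $\pi^H$ descends to $\IA_n / [F_n, F_n]$ and combines with the projection $\IA_n \to \IO_n$ to give a homomorphism
$$\IA_n / [F_n, F_n] \longrightarrow \IO_n \times H,$$
which one checks directly to be an isomorphism (surjectivity uses that $F_n \to H$ is onto; injectivity follows from the fact that $\ker(\pi, \pi^H) = [F_n, F_n]$). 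In particular the abelian extension $1 \to H \to \IA_n / [F_n, F_n] \to \IO_n \to 1$ splits, so its Hochschild--Serre spectral sequence degenerates at $E^2$ by K\"unneth. The natural map of extensions induces a map of spectral sequences whose restriction to rows $q = 0$ and $q = 1$ of $E^2$ is the identity (the map of coefficients is $F_n \to H$, which is the identity on $H_1(-, \Q)$). Since $d^2$ is natural and vanishes in the lower spectral sequence, it vanishes in the original one as well.

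With $d^2 = 0$ we obtain a short exact sequence
$$0 \to H_{i-1}(\IO_n, \Q) \otimes H \to H_i(\IA_n, \Q) \to H_i(\IO_n, \Q) \to 0.$$
Finally, I would compare with the Hochschild--Serre spectral sequence for $0 \to H \to U \to U^O \to 0$ induced by the abelianization, which degenerates at $E^2$ so that $E^\infty_{p, q}(U) = \bigwedge^p U^O \otimes \bigwedge^q H$. The induced map of $E^\infty$-pages sends $E^\infty_{p, 0}(\IA_n) \to \bigwedge^p U^O$ with image $H^A_p(\IO_n, \Q)$, sends $E^\infty_{p, 1}(\IA_n) \to \bigwedge^p U^O \otimes H$ with image $H^A_p(\IO_n, \Q) \otimes H$, and is zero in rows $q \ge 2$. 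Hence $H^A_i(\IA_n, \Q) \subseteq H_i(U, \Q)$ inherits a two-step filtration with associated graded $H^A_i(\IO_n, \Q) \oplus (H^A_{i-1}(\IO_n, \Q) \otimes H)$, and complete reducibility of $\GL(n, \Q)$-representations splits the filtration to give the claimed isomorphism.
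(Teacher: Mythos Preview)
Your approach mirrors the paper's: Hochschild--Serre for $1 \to F_n \to \IA_n \to \IO_n \to 1$, degeneration, comparison with the abelianized extension to pass to Albanese, and splitting by semisimplicity of algebraic $\GL(n,\Q)$-representations. There is one genuine slip in the degeneration step and a mild difference in packaging.

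The slip: the map $\IA_n/[F_n, F_n] \to \IO_n \times H$ is \emph{not} an isomorphism. You say ``surjectivity uses that $F_n \to H$ is onto,'' but $H$ here is the $\Q$-vector space $H_1(F_n,\Q)$, and the composite $F_n \to H_\Z \hookrightarrow H$ has image only the lattice $H_\Z$. Fortunately the isomorphism is not needed. The map of extensions
\[
(1 \to F_n \to \IA_n \to \IO_n \to 1)\;\longrightarrow\; (1 \to H \to \IO_n \times H \to \IO_n \to 1)
\]
already induces an isomorphism on $E^2_{p,q}$ for $q\in\{0,1\}$ (since $H_q(F_n,\Q)\to H_q(H,\Q)$ is an isomorphism in those degrees), and the target spectral sequence collapses; naturality of $d^2$ then forces $d^2=0$ in the source exactly as you want. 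Alternatively, you can work integrally: the sequence $0\to H_\Z\to U_\Z\to U^O_\Z\to 0$ splits as abelian groups (non-equivariantly, since $U^O_\Z$ is free abelian), and this does yield a genuine group isomorphism $\IA_n/[F_n,F_n]\cong \IO_n\times H_\Z$.

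The paper argues degeneration slightly differently: it works on the cohomological side, observes that the known decomposition $H^1(\IA_n,\Q)\cong H^1(\IO_n,\Q)\oplus H^1(F_n,\Q)$ forces $d_2^{0,1}=0$, and then invokes the multiplicative structure of the cohomological spectral sequence (the differentials are derivations and $E_2^{p,1}=E_2^{p,0}\cup E_2^{0,1}$) to kill the remaining differentials. Both routes ultimately rest on the same splitting $U=U^O\oplus H$; yours avoids the multiplicative structure at the cost of introducing the auxiliary product extension.
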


 By Theorem \ref{Johnsonpartintro} and Proposition \ref{AlbaneseIAandIO2intro}, we obtain the following proposition, which partially ensures Conjecture \ref{conjintroIO}.

\begin{proposition}[Proposition \ref{IOnWO}]
 For $n\ge 3i$, we have an injective $\GL(n,\Q)$-homomorphism
  $$W^O_i \oplus (W^O_{i-1}\otimes H)\hookrightarrow H^A_i(\IO_n,\Q) \oplus (H^A_{i-1}(\IO_n,\Q) \otimes H).$$
\end{proposition}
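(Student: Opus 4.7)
The plan is to combine the two preceding results of the introduction. For $n \ge 3i$, Theorem~\ref{Johnsonpartintro} produces $W_i$ as a subquotient of $H^A_i(\IA_n,\Q)$; since algebraic $\GL(n,\Q)$-representations are completely reducible (as noted earlier in the excerpt), any such subquotient lifts to a $\GL(n,\Q)$-subrepresentation, yielding an injection $W_i \hookrightarrow H^A_i(\IA_n,\Q)$. Post-composing with the decomposition from Proposition~\ref{AlbaneseIAandIO2intro} gives an injective $\GL(n,\Q)$-homomorphism
\[
W_i \hookrightarrow H^A_i(\IO_n,\Q) \oplus (H^A_{i-1}(\IO_n,\Q) \otimes H).
\]
It therefore suffices to construct an injective $\GL(n,\Q)$-homomorphism $\Phi : W^O_i \oplus (W^O_{i-1} \otimes H) \hookrightarrow W_i$.

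The map $\Phi$ should be built from the decomposition $U_1 = U^O \oplus U_1^{\wheel}$ together with the isomorphism $U_1^{\wheel} \cong H$, which makes $U_*$ split as $U^O_*$ plus a single copy of $H$ concentrated in degree~$1$. On the first summand, $\Phi$ is the obvious inclusion $W^O_i \hookrightarrow W_i$ coming from the inclusion of indexing sets $P^O_i \hookrightarrow P_i$. On the second summand, $\Phi$ is the composite
\[
W^O_{i-1} \otimes H \;\xrightarrow{\cong}\; W^O_{i-1} \otimes U_1^{\wheel} \;\longrightarrow\; S^*(U_*)_i \;\longrightarrow\; W_i,
\]
where the middle arrow is multiplication in the graded-symmetric algebra $S^*(U_*)$ and the last arrow is the traceless projection. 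Since the image of $W^O_i$ lies in the summands $W(\mu,\nu)$ with $\nu$ having no part of size~$1$, while the image of the second map lies in summands where $\nu$ has at least one part of size~$1$, the two images are disjoint. Thus $\Phi$ will be injective provided each of its two components is.

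The main obstacle is verifying injectivity of the second component of $\Phi$. Multiplication in $S^*(U_*)$ by a nonzero element is injective since the algebra is polynomial on a graded vector space, so the task is to check that the traceless projection does not annihilate anything in the image of $W^O_{i-1} \otimes U_1^{\wheel}$. Because every element of $W^O_{i-1}$ is already annihilated by all contractions internal to its $U^O_*$-factors, any failure of tracelessness in $w \cdot u$ (for $w \in W^O_{i-1}$ and $u \in U_1^{\wheel}$) must come from contractions pairing the newly adjoined $U_1^{\wheel}$-factor with one of the old factors. Controlling these contractions is a representation-theoretic bookkeeping problem, which I would settle by comparing $V_{\ul\lambda}(n)$-multiplicities on both sides using the explicit description of the summands $W(\mu,\nu)$ developed in Section~\ref{KV}. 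Once this injectivity is in place, composing the three maps produces the desired injective $\GL(n,\Q)$-homomorphism and completes the proof.
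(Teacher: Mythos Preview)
Your overall strategy matches the paper's: use Theorem~\ref{Johnsonpart} to embed $W_i$ into $H^A_i(\IA_n,\Q)$, apply Proposition~\ref{AlbaneseIAandIO2}, and then relate $W_i$ to $W^O_i \oplus (W^O_{i-1}\otimes H)$. The gap is in the last step: the explicit map $\Phi$ you propose on the second summand is \emph{not} injective.

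Take $i=2$. Then $W^O_1 = W(1,0) = U_1^{\tree}\cong V_{1^2,1}$, and $W^O_1\otimes H \cong (U_1^{\tree}\,\wti{\otimes}\,H)\oplus V_{1^2,0}$. Your composite sends $W(1,0)\otimes U_1^{\wheel}$ by multiplication into the single summand $U_1^{\tree}\otimes U_1^{\wheel}\subset S^*(U_*)_2$ and then projects to its traceless part $W((1),(1)) = U_1^{\tree}\,\wti{\otimes}\,U_1^{\wheel}$. This kills the $V_{1^2,0}$ piece. In general, multiplication followed by the traceless projection maps $W(\mu,\nu)\otimes H$ onto $W(\mu,\nu+1)$ alone, so the contraction terms are lost and the map has nontrivial kernel whenever $l(\mu)>0$.

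The paper handles this step via Lemma~\ref{WiWoi}, which proves the stronger statement $W_i\cong W^O_i\oplus(W^O_{i-1}\otimes H)$. The key computation there is
\[
W(\mu,\nu)\otimes H \;\cong\; W(\mu,\nu+1)\;\oplus\;\bigoplus_{j} W(\mu-\mu_j,\,\nu+1^{1+\mu_j}),
\]
obtained from $U_{\mu_j}^{\tree}\otimes H \cong (U_{\mu_j}^{\tree}\,\wti{\otimes}\,H)\oplus V_{1^{1+\mu_j}}$ together with $V_{1^{1+\mu_j}}\cong \wti{S}^{1+\mu_j}(U_1^{\wheel})$. One then checks combinatorially that as $(\mu,\nu)$ ranges over $P^O_{i-1}$, the pairs appearing on the right-hand side are pairwise distinct and exhaust $P_i\setminus P^O_i$. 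This replaces your non-injective $\Phi$ with an honest isomorphism, after which the proof concludes exactly as you outlined. Your suggestion to ``compare multiplicities'' would ultimately recover this, but it requires abandoning the map $\Phi$ rather than verifying its injectivity.
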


Now we have the following equivalence between conjectures about the structures of $H^A_*(\IA_n,\Q)$ and $H^A_*(\IO_n,\Q)$.

\begin{proposition}[Proposition \ref{AlbaneseIAandIO}]
  The followings are equivalent.
\begin{enumerate}
     \item For any $i$, we have a $\GL(n,\Q)$-isomorphism $H^A_i(\IA_n,\Q)\cong W_i$ for sufficiently large $n$ with respect to $i$ (cf. Conjecture \ref{conjintro}).
     \item Conjecture \ref{conjintroIO}.
\end{enumerate}
\end{proposition}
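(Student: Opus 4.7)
The plan is to prove the equivalence by combining Proposition \ref{AlbaneseIAandIO2intro} with a parallel decomposition on the conjectural side, namely
\begin{equation*}
W_i \;\cong\; W^O_i \oplus (W^O_{i-1}\otimes H),
\end{equation*}
valid as algebraic $\GL(n,\Q)$-representations for $n$ sufficiently large with respect to $i$. This combinatorial decomposition stems from the splitting $U_* \cong U^O_* \oplus H$ of graded $\GL(n,\Q)$-representations, with the summand $H$ sitting in degree one via $U_1 = U_1^{\tree}\oplus U_1^{\wheel}$ and $U_1^{\wheel}\cong V_{1,0}\cong H$, which yields
\begin{equation*}
S^*(U_*) \;\cong\; S^*(U^O_*)\otimes \bigwedge^* H
\end{equation*}
as graded-symmetric algebras (since $H$ sits in odd degree). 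Extracting the traceless part is the nontrivial step: contractions pair the $H$-summand with the $H^*$-factors hidden inside $U^O_*$, and a careful combinatorial analysis should show that, for $n$ large, among the pieces $\wti{S}^*(U^O_*)\otimes \bigwedge^k H$, only $k=0$ and $k=1$ contribute to $W_*$, yielding the displayed decomposition.

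Granting this, the equivalence is immediate using Proposition \ref{AlbaneseIAandIO2intro},
\begin{equation*}
H^A_i(\IA_n,\Q) \;\cong\; H^A_i(\IO_n,\Q) \oplus \bigl(H^A_{i-1}(\IO_n,\Q)\otimes H\bigr).
\end{equation*}
For $(2)\Rightarrow(1)$, assuming $H^A_j(\IO_n,\Q)\cong W^O_j$ for all $j$ and $n$ large, substituting gives $H^A_i(\IA_n,\Q) \cong W^O_i \oplus (W^O_{i-1}\otimes H) \cong W_i$.

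For $(1)\Rightarrow(2)$, I induct on $i$; the base case $i=1$ is \cite{Kawazumi}. For the inductive step, the inductive hypothesis gives $H^A_{i-1}(\IO_n,\Q)\cong W^O_{i-1}$, hence $H^A_{i-1}(\IO_n,\Q)\otimes H \cong W^O_{i-1}\otimes H$. Combining assumption $(1)$, Proposition \ref{AlbaneseIAandIO2intro}, and the combinatorial decomposition,
\begin{equation*}
H^A_i(\IO_n,\Q)\oplus (W^O_{i-1}\otimes H) \;\cong\; H^A_i(\IA_n,\Q) \;\cong\; W_i \;\cong\; W^O_i \oplus (W^O_{i-1}\otimes H).
\end{equation*}
Since algebraic $\GL(n,\Q)$-representations form a semisimple category, cancelling the common summand $W^O_{i-1}\otimes H$ yields $H^A_i(\IO_n,\Q)\cong W^O_i$.

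The main obstacle is the verification of the combinatorial decomposition $W_i\cong W^O_i\oplus (W^O_{i-1}\otimes H)$. The additive splitting of $U_*$ is elementary, but the traceless operation $\wti{S}^*$ does not simply distribute over the tensor factor $\bigwedge^* H$: contractions mix the $H$-summand with the $H^*$-factors inside $U^O_*$, so a naive $k$-by-$k$ bookkeeping overshoots. The cleanest route should be the operadic description of $W_*$ via $\calC om$ promised in Section \ref{KV}, which ought to make the splitting into $W^O_*$-parts and $H$-parts manifest at the level of combinatorial trees and wheels; alternatively, a Littlewood--Richardson computation of the multiplicity of each $V_{\ul\lambda}(n)$ in both sides, stabilizing as $n\to\infty$, should produce the same conclusion.
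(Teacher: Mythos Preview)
Your overall strategy matches the paper's proof exactly: both directions use Proposition~\ref{AlbaneseIAandIO2intro} together with the decomposition $W_i\cong W^O_i\oplus(W^O_{i-1}\otimes H)$, with $(2)\Rightarrow(1)$ by direct substitution and $(1)\Rightarrow(2)$ by induction on $i$ and cancellation in the semisimple category of algebraic $\GL(n,\Q)$-representations.

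The decomposition you flag as the ``main obstacle'' is precisely Lemma~\ref{WiWoi} in the paper, already established before this proposition. The paper's proof of that lemma is neither operadic nor via Littlewood--Richardson multiplicities; instead it works directly with the indexing set $P_i$ of pairs of partitions. For each $(\mu,\nu)\in P^O_{i-1}$ one computes $W(\mu,\nu)\otimes H$ explicitly: the traceless piece is $W(\mu,\nu+1)$, and the remaining contractions against the $H^*$-factor of each $U^{\tree}_{\mu_j}$ contribute $W(\mu-\mu_j,\nu+1^{1+\mu_j})$. One then checks that this sets up a bijection between $P^O_{i-1}$ (via these expansions) and $P_i\setminus P^O_i$, giving $\bigoplus_{(\mu,\nu)\in P^O_{i-1}}W(\mu,\nu)\otimes H\cong\bigoplus_{(\xi,\eta)\in P_i\setminus P^O_i}W(\xi,\eta)$. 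This partition-level bookkeeping is more elementary and explicit than the routes you propose, and in particular makes the range $n\ge 3i$ transparent. So rather than developing a new argument, you should simply cite Lemma~\ref{WiWoi}.
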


\subsection{Relation between $H^A_*(\IA_n,\Q)$ and the cohomology of $\Aut(F_n)$ with twisted coefficients}

The stable cohomology of $\Aut(F_n)$ with twisted coefficients has been studied by many authors.
Satoh computed the first and second homology with coefficients in $H$ and $H^*$ \cite{Satoh2006, Satoh2007}.
Djament--Vespa \cite{Djament-Vespa} and Randal-Williams \cite{Randal-Williams} obtained the stable cohomology $H^*(\Aut(F_n),H^{\otimes p})$. Djament \cite{Djament19}, Vespa \cite{Vespa} and Randal-Williams \cite{Randal-Williams} obtained the stable cohomology $H^*(\Aut(F_n),(H^*)^{\otimes q})$.

Let $H^{p,q}=H^{\otimes p}\otimes (H^*)^{\otimes q}$.
Kawazumi--Vespa \cite{Kawazumi--Vespa} studied the stable cohomology $H^*(\Aut(F_n),H^{p,q})$.
Their conjecture \cite[Conjecture 6]{Kawazumi--Vespa} implies the following conjecture, where $\calC_{\calP_0^{\circlearrowright}}$ is the wheeled PROP associated to the operadic suspension $\calP_0$ of the operad $\calC om$ of non-unital commutative algebras (see Section \ref{KV} for details.)

\begin{conjecture}[Kawazumi--Vespa {\cite[Conjecture 6]{Kawazumi--Vespa}}, Conjecture \ref{conjectureKV}]\label{conjKVintro}
For $p,q\ge 0$, we stably have an isomorphism of graded $\Q[\gpS_{p}\times \gpS_{q}]$-modules
$$
  H^{*}(\Aut(F_n), H^{p,q})=
    \calC_{\calP_0^{\circlearrowright}}(p,q).
$$
\end{conjecture}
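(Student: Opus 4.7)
The plan is to deduce the conjecture from the Hochschild--Serre spectral sequence of the extension $1\to \IA_n\to \Aut(F_n)\to \GL(n,\Z)\to 1$, combined with the conjectural structure of the Albanese cohomology of $\IA_n$ (Conjecture \ref{conjintro}) and Borel-type stable vanishing for $\GL(n,\Z)$ with polynomial coefficients.

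Since $\IA_n$ acts trivially on $H$, the universal coefficient theorem gives $H^j(\IA_n, H^{p,q})\cong H^j(\IA_n,\Q)\otimes H^{p,q}$ as $\GL(n,\Z)$-modules, so the cohomological Hochschild--Serre $E_2$-page reads $E_2^{i,j}=H^i(\GL(n,\Z), H^j(\IA_n,\Q)\otimes H^{p,q})$ and abuts to $H^{i+j}(\Aut(F_n), H^{p,q})$. By Borel's theorem in its polynomial-coefficient form (due to Borel for trivial coefficients and to Franjou--Friedlander--Scorichenko--Suslin for polynomial coefficients), stably $H^i(\GL(n,\Z), V)$ vanishes for every nontrivial irreducible polynomial $\GL_n(\Q)$-representation $V$. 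Decomposing $H^j(\IA_n,\Q)\otimes H^{p,q}$ into isotypic summands and accounting for the contribution of the trivial summand separately, one reduces the computation to the stable $\GL(n,\Q)$-invariants $(H^j(\IA_n,\Q)\otimes H^{p,q})^{\GL(n,\Q)}$.

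I would next replace $H^*(\IA_n,\Q)$ by its Albanese part $H^*_A(\IA_n,\Q)$, which, under the cohomological dual of Conjecture \ref{conjintro}, is stably isomorphic to the graded dual of $W_*$; write $W^*$ for this dual. Granting moreover that the non-Albanese part of $H^*(\IA_n,\Q)$ contributes no stable $\GL(n,\Q)$-invariants once tensored with $H^{p,q}$, the invariants become $(W^j\otimes H^{\otimes p}\otimes (H^*)^{\otimes q})^{\GL(n,\Q)}$. Weyl's first fundamental theorem for $\GL_n$ then expresses these invariants stably as total contractions between $H$-factors and $H^*$-factors. Because $W^*$ is the traceless part of the graded-symmetric algebra on $U_*=\bigoplus_{i\ge 1}\Hom(H,\bigwedge^{i+1}H)$, the allowed contractions organise into decorations of wheeled graphs: the $p+q$ external $H^{p,q}$-factors become boundary legs, pairings between distinct $U_i$-tensors produce tree edges, cyclic self-pairings inside a single $U_i$-tensor produce wheels, and tracelessness forbids every remaining internal contraction. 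This combinatorial description matches the definition of $\calC_{\calP_0^{\circlearrowright}}(p,q)$, and the resulting bijection intertwines the $\gpS_p\times\gpS_q$-action while aligning cohomological degree with the grading on the wheeled PROP.

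The principal obstacles are twofold. First, Conjecture \ref{conjintro} is only verified through degree $3$ in this paper, so the entire argument is conditional on a deep unproven statement. Second, and arguably harder, is controlling the non-Albanese part of $H^*(\IA_n,\Q)$: Satoh's computations for $\IA_3$ show that non-Albanese classes genuinely occur in low rank, and a stable-range argument is needed to rule out their $\GL(n,\Q)$-invariant contribution after tensoring with $H^{p,q}$. The final invariant-theoretic identification with the wheeled PROP, though combinatorially delicate, should follow by essentially straightforward bookkeeping once the previous steps are in place.
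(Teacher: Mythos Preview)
The statement you are attempting to prove is a \emph{conjecture} in the paper, not a theorem; the paper does not provide a proof and does not claim one. Consequently there is no ``paper's own proof'' to compare your proposal against. What the paper does instead is formulate the relationship between this Kawazumi--Vespa conjecture and two other conjectures: Conjecture \ref{conjintro} (that $H^A_i(\IA_n,\Q)\cong W_i$ stably) and Conjecture \ref{cohomologyAutintro} (that $H^i(\Aut(F_n),V_{\ul\lambda})\cong (H_A^i(\IA_n,\Q)\otimes V_{\ul\lambda})^{\GL(n,\Z)}$ stably). Proposition \ref{AlbaneseandAut} shows that any two of these three conjectures imply the third, via the computation of $(W_i^*\otimes H^{p,q})^{\GL(n,\Z)}$ in Lemma \ref{lemmawheeledPROP}.

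Your proposal is not a proof but a conditional reduction, and you correctly identify this yourself in your final paragraph. What you have sketched is essentially a route toward Conjecture \ref{cohomologyAutintro} via the Hochschild--Serre spectral sequence for $1\to \IA_n\to \Aut(F_n)\to \GL(n,\Z)\to 1$ together with Borel-type vanishing, followed by an invocation of Conjecture \ref{conjintro} and then the invariant-theoretic identification that the paper carries out in Lemma \ref{lemmawheeledPROP} and Proposition \ref{Wiprop}. This is a reasonable heuristic, and your combinatorial description of how contractions organise into wheeled graphs matches the paper's reconstruction of $W_*$ from the non-unital wheeled PROP $\calC_{\calO^{\circlearrowright}}$. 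However, the two obstacles you name are precisely the two unproven conjectures: the first is Conjecture \ref{conjintro}, and the second (controlling the non-Albanese part) is exactly the content of Conjecture \ref{cohomologyAutintro}. Note also that Borel vanishing alone does not collapse the spectral sequence to the bottom row: one needs the higher $\GL(n,\Z)$-cohomology of the trivial summand to vanish as well, and more seriously one needs $H^j(\IA_n,\Q)$ to be an algebraic $\GL(n,\Q)$-representation in the stable range, which is not known beyond the Albanese part. So your argument, once its hypotheses are made explicit, reproduces the logical structure of Proposition \ref{AlbaneseandAut} rather than proving the conjecture outright.
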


We make the following conjecture about the relation between the Albanese (co)homology of $\IA_n$ and the cohomology of $\Aut(F_n)$ with twisted coefficients, where the Albanese cohomology $H_A^i(\IA_n,\Q)$ of $\IA_n$ is isomorphic to $H^A_i(\IA_n,\Q)^*$ as $\GL(n,\Q)$-representations.

\begin{conjecture}[Conjecture \ref{cohomologyAut}]\label{cohomologyAutintro}
Let $i$ be a non-negative integer and $\ul\lambda$ a bipartition.
Then, for sufficiently large $n$, we have a linear isomorphism
$$
  H^i(\Aut(F_n),V_{\ul\lambda})\cong (H_A^i(\IA_n,\Q)\otimes V_{\ul\lambda})^{\GL(n,\Z)}.
$$
\end{conjecture}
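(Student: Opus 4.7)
The plan is to prove Conjecture \ref{cohomologyAutintro} by reducing it to the combination of the Kawazumi--Vespa conjecture \ref{conjKVintro} and the conjectural structure of the Albanese (co)homology of $\IA_n$ from Conjecture \ref{conjintro}. First, since every irreducible algebraic $\GL(n,\Q)$-representation $V_{\ul\lambda}$ occurs in the stable range as a direct summand of some $H^{p,q} = H^{\otimes p}\otimes (H^*)^{\otimes q}$ by Schur--Weyl duality, it suffices to prove the statement after replacing $V_{\ul\lambda}$ by $H^{p,q}$ and tracking the action of $\gpS_p\times \gpS_q$. The Kawazumi--Vespa conjecture then identifies the left-hand side with $\calC_{\calP_0^{\circlearrowright}}(p,q)$ in degree $i$.

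Second, on the right-hand side, I would use Borel density to replace $\GL(n,\Z)$-invariants by $\GL(n,\Q)$-invariants on algebraic $\GL(n,\Q)$-representations, and then invoke Conjecture \ref{conjintro} to identify $H_A^i(\IA_n,\Q)$ with $W_i^*$. This reduces matters to establishing the $\GL(n,\Q)$-invariant-theoretic identity
$$(W_i^*\otimes H^{p,q})^{\GL(n,\Q)}\cong \calC_{\calP_0^{\circlearrowright}}(p,q)_i$$
as $\gpS_p\times \gpS_q$-modules, in the stable range.

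Third, to establish this identity, I would expand both sides in terms of the operad $\calC om$: the space $W_i=\wti{S}^*(U_*)_i$ is constructed from $\calC om$ via the traceless part of the graded-symmetric algebra of $U_*$, while by definition the wheeled PROP $\calC_{\calP_0^{\circlearrowright}}$ is built from the operadic suspension $\calP_0$ of $\calC om$. Applying the first and second fundamental theorems of classical invariant theory for $\GL$, the stable invariant space $(W_i^*\otimes H^{p,q})^{\GL(n,\Q)}$ should unfold into a sum over diagrams matching precisely the combinatorial generators of $\calC_{\calP_0^{\circlearrowright}}(p,q)_i$, with the tracelessness of $W_i$ translating into the absence of self-loops at vertices and the wheel summands of $U_i$ accounting for the wheeled part of the PROP.

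The main obstacle is that both Conjecture \ref{conjintro} and Conjecture \ref{conjKVintro} are themselves open, so the strategy above is only a conditional reduction. An unconditional attack via the Hochschild--Serre spectral sequence
$$E_2^{p,q}=H^p(\GL(n,\Z),H^q(\IA_n,\Q)\otimes V_{\ul\lambda})\Rightarrow H^{p+q}(\Aut(F_n),V_{\ul\lambda})$$
runs into the difficulty that the Borel classes in $H^*(\GL(n,\Z),\Q)$ in degrees $5,9,13,\ldots$ do not vanish, and hence must be killed by nontrivial differentials---as forced by Galatius's theorem $H^*(\Aut(F_\infty),\Q)=\Q$ in the case $V_{\ul\lambda}=\Q$. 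Controlling these differentials, together with the potentially non-algebraic part of $H^*(\IA_n,\Q)$ that lies outside the Albanese cohomology, appears to be the crux of the difficulty.
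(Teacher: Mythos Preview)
The statement is a \emph{conjecture}, and the paper does not prove it unconditionally; what the paper offers is precisely the conditional reduction you outline. Your first three paragraphs recover the content of Proposition~\ref{AlbaneseandAut}: assuming Conjecture~\ref{conjintro} and Conjecture~\ref{conjKVintro}, one obtains Conjecture~\ref{cohomologyAutintro}, with the key invariant-theoretic identity $(W_i^*\otimes H^{p,q})^{\GL(n,\Z)}\cong \calC_{\calP_0^{\circlearrowright}}(p,q)$ being exactly Lemma~\ref{lemmawheeledPROP} (proved via Proposition~\ref{Wiprop} and the decomposition $H^{p,q}\cong\bigoplus_c T_{p-c,q-c}^{\oplus\binom{p}{c}\binom{q}{c}c!}$, rather than the fundamental theorems of invariant theory you invoke, though these are closely related). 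Your diagnosis of the obstacle---that both input conjectures are open, and that an unconditional Hochschild--Serre approach must contend with Borel classes and the non-algebraic part of $H^*(\IA_n,\Q)$---is accurate and goes slightly beyond what the paper makes explicit.
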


Then we have the following relation between the conjectural structure of the Albanese homology of $\IA_n$ and the above two conjectures.

\begin{proposition}[Proposition \ref{AlbaneseandAut}]
Let $i$ be a non-negative integer.
  If two of the followings hold, then so does the third.
 \begin{enumerate}
     \item We have a $\GL(n,\Q)$-isomorphism $H^A_i(\IA_n,\Q)\cong W_i$ for sufficiently large $n$
     (cf. Conjecture \ref{conjintro}).
     \item Conjecture \ref{conjKVintro} holds for cohomological degree $i$.
     \item Conjecture \ref{cohomologyAutintro} holds for cohomological degree $i$.
 \end{enumerate}
\end{proposition}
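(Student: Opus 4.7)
The plan is to reduce the three-way equivalence to a single representation-theoretic bridge identity, after which the proposition becomes a formal consequence. The identity I aim to establish is
$$
  \Hom_{\GL(n,\Q)}(W_i,\, H^{p,q}) \;\cong\; \calC_{\calP_0^{\circlearrowright}}(p,q)
$$
(in cohomological degree $i$, stably in $n$) as $\Q[\gpS_p \times \gpS_q]$-modules. Both sides admit a graphical interpretation in terms of $\calC om$-decorated diagrams; by the operadic construction of $W_*$ recalled in Section \ref{KV}, the left-hand side parametrises such diagrams with $p$ external legs coloured by $H^*$ and $q$ by $H$, which is precisely what the wheeled PROP $\calC_{\calP_0^{\circlearrowright}}(p,q)$, built from the operadic suspension $\calP_0$ of $\calC om$, is designed to compute.

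Granting the bridge identity, each of the three implications is formal. Assuming (1) and (3), I take $V_{\ul\lambda}=H^{p,q}$ in (3) and substitute $H^A_i(\IA_n,\Q)\cong W_i$ from (1); since $H_A^i(\IA_n,\Q)\cong H^A_i(\IA_n,\Q)^*$, the right-hand side of (3) becomes $\Hom_{\GL(n,\Q)}(W_i,\,H^{p,q})$, which by the bridge identity is $\calC_{\calP_0^{\circlearrowright}}(p,q)$, giving (2). Assuming (2) and (3), for each $(p,q)$ I obtain
$$
  \Hom_{\GL(n,\Q)}(H^A_i(\IA_n,\Q),\, H^{p,q}) \;\cong\; \calC_{\calP_0^{\circlearrowright}}(p,q) \;\cong\; \Hom_{\GL(n,\Q)}(W_i,\, H^{p,q});
$$
since every algebraic irreducible $V_{\ul\lambda}$ occurs as a summand of some $H^{p,q}$, matching multiplicities forces $H^A_i(\IA_n,\Q)\cong W_i$, giving (1). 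Assuming (1) and (2), for any bipartition $\ul\lambda$ I realise $V_{\ul\lambda}$ as a summand of some $H^{p,q}$, extract the $V_{\ul\lambda}$-isotypic piece of (2), and identify it via the bridge with $\Hom_{\GL(n,\Q)}(W_i,\,V_{\ul\lambda})\cong(H_A^i(\IA_n,\Q)\otimes V_{\ul\lambda})^{\GL(n,\Z)}$ using (1), yielding (3).

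The main obstacle is the bridge identity itself. The space $W_i=\wti S^i(U_*)$ is defined as the traceless part of a graded-symmetric algebra, while $\calC_{\calP_0^{\circlearrowright}}(p,q)$ is a wheeled PROP; matching them requires unpacking both descriptions and carefully tracking signs arising from the operadic suspension $\calP_0$, the exterior powers $\bigwedge^{j+1} H$ inside $U_j$, and the $\gpS_p\times\gpS_q$-action. The decomposition $W_i=\bigoplus_{(\mu,\nu)\in P_i} W(\mu,\nu)$ should match a Schur--Weyl-type decomposition of $\calC_{\calP_0^{\circlearrowright}}(p,q)$ indexed by bipartitions of appropriate shape, and verifying this correspondence is the core combinatorial task. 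A useful sanity check is the low-degree cases $i=1,2,3$, where both sides are explicitly known.
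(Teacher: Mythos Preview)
The proposal is correct and follows essentially the same approach as the paper. Your bridge identity $\Hom_{\GL(n,\Q)}(W_i, H^{p,q}) \cong \calC_{\calP_0^{\circlearrowright}}(p,q)$ is precisely Lemma~\ref{lemmawheeledPROP} (after identifying $\Hom_{\GL(n,\Q)}(W_i,-)$ with $(W_i^* \otimes -)^{\GL(n,\Z)}$ for algebraic representations), which the paper proves via the operadic reconstruction of $W_*$ in Proposition~\ref{Wiprop}; the paper then organizes the formal three-way equivalence as a single closed loop of conditional isomorphisms rather than three separate implications, but the content is the same.
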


\subsection{Conjectural structures of $H_A^*(\I_{g,1},\Q)$, $H_A^*(\I_{g},\Q)$ and $H_A^*(\I_{g}^{1},\Q)$}

Let $\I_{g}$ (resp. $\I_{g}^{1}$) denote the Torelli group of a closed surface (resp. a surface with one marked point) of genus $g$.
We propose conjectural structures of the Albanese cohomology of the Torelli groups $\I_{g}$, $\I_{g,1}$ and $\I_{g}^{1}$.
Note that the Albanese cohomology and homology of the Torelli groups are isomorphic since algebraic $\Sp(2g,\Q)$-representations are self-dual.
In a way similar to $W_*=\wti{S}^*(U_*)$ and $W^O_*=\wti{S}^*(U^O_*)$, we define $\Sp(2g,\Q)$-representations $\wti{S}^*(X''_*)$, $\wti{S}^*(Y''_*)$ and $\wti{S}^*(Z''_*)$.

\begin{conjecture}[Conjecture \ref{Torellialb}]
We stably have $\Sp(2g,\Q)$-isomorphisms
$$
   H_A^*(\I_{g},\Q)\cong \wti{S}^*(X''_*),\quad H_A^*(\I_{g,1},\Q)\cong \wti{S}^*(Y''_*),\quad
   H_A^*(\I_{g}^{1},\Q)\cong \wti{S}^*(Z''_*).
$$
\end{conjecture}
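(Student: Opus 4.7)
The plan is to adapt the strategy used for $\IA_n$ to the Torelli group setting, replacing $\GL(n,\Q)$ by $\Sp(2g,\Q)$ and the Johnson homomorphism for $\Aut(F_n)$ by the Johnson homomorphisms for the mapping class groups of surfaces. For each of $\I_{g}$, $\I_{g,1}$ and $\I_{g}^{1}$, the first Johnson image is an $\Sp(2g,\Q)$-representation depending on whether the surface is closed, has one boundary component, or has one marked point, and the graded pieces $X''_*$, $Y''_*$, $Z''_*$ should be built from these in exactly the same recursive way that $U_*$ and $U_*^O$ were built in the free-group case. The conjecture then splits into a lower bound (the traceless-symmetric piece is contained in the Albanese cohomology) and an upper bound (nothing more appears).

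For the lower bound I would construct abelian cycles in each Torelli group from tuples of commuting mapping classes supported on pairwise disjoint subsurfaces: products of bounding-pair maps and Dehn twists along separating curves furnish the ``tree'' generators, while mapping classes of disjoint genus-one subsurfaces furnish the ``wheel'' generators. These can be combined via the graded-symmetric algebra structure exactly as in the proof of Theorem \ref{Johnsonpartintro}, and the same iterated-contraction argument---now using the symplectic form on $H$ to identify $H$ with $H^*$---shows that the traceless summand of $\wti{S}^*(\bullet_*)_i$ is hit. The fact that mapping classes with disjoint supports automatically commute makes this combinatorially cleaner than the $\IA_n$ case. For $\I_{g,1}$ the lower bound is in fact essentially contained in Lindell's theorem cited in the remark after Theorem \ref{Johnsonpartintro}.

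Once the $\I_{g,1}$ case is under control, I would propagate the calculation to the other two groups by means of the Birman exact sequences
\[
 1\to \pi_1(\Sigma_g)\to \I_g^1\to \I_g\to 1,\qquad 1\to \Z\to \I_{g,1}\to \I_g^1\to 1,
\]
and their associated Hochschild--Serre spectral sequences, in the spirit of Proposition \ref{AlbaneseIAandIO2intro}. Since $\pi_1(\Sigma_g)^{\ab}\cong H$ and the boundary Dehn twist dies rationally, the extra factors arising in the spectral sequences should interact with the graded-symmetric algebra exactly so as to distinguish $\wti{S}^*(X''_*)$, $\wti{S}^*(Y''_*)$ and $\wti{S}^*(Z''_*)$ from one another---this is the analogue of the interplay between $W_*$ and $W^O_*$ in the free-group setting.

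The main obstacle will be the reverse containment: showing that $H_A^i(\I_{\bullet},\Q)$ does not exceed $\wti{S}^*(\bullet_*)_i$. Unlike in the $\IA_n$ case, there is no Church--Ellenberg--Farb polynomial upper bound readily available, so one needs either a direct argument that every Albanese cohomology class is detected by iterated Johnson contractions landing inside the symmetric-algebra part, or a symplectic analogue of the Kawazumi--Vespa computation (Conjecture \ref{conjKVintro}) for the stable cohomology of the mapping class group with coefficients in $H^{\otimes p}$. Either route requires controlling how higher Johnson homomorphisms interact with products, and this is where I expect the principal technical difficulty to reside.
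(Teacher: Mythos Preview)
The statement you are attempting to prove is labeled a \emph{Conjecture} in the paper and is not proved there; the paper offers no proof to compare against. What the paper does establish in its direction is an \emph{upper} bound: Proposition~\ref{Torellisurj} gives stable surjections $H^*(\Gr\mathfrak{t}_{\bullet},\Q)\twoheadrightarrow H_A^*(\I_{\bullet},\Q)$, and Proposition~\ref{Liealgebracohomology} identifies the Lie-algebra cohomology with $\wti{S}^*(X'_*)$, $\wti{S}^*(Y'_*)$, $\wti{S}^*(Z'_*)$ respectively. Note these are the single-prime objects, which are strictly larger than the double-prime objects $X''_*$, $Y''_*$, $Z''_*$ appearing in the conjecture (the latter kill the trivial summands $\Q[4i-2]$ for all $i\ge 1$, not just $i=1$). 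So even the paper's upper bound is not sharp enough to pin down the conjectural answer. The paper also records (equation~\eqref{KRWandtraceless} and the surrounding discussion) that the Kupers--Randal-Williams algebra $W$ has the same character as $\wti{S}^*(Y''_*)$, and that $\Phi:W\to H^*(\I_{g,1},\Q)$ is known to be an isomorphism in degrees $*\le N$ \emph{conditional} on $H^*(\I_{g,1},\Q)$ being stably finite-dimensional for $*<N$; this conditional input is the real obstruction.

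Your outline is reasonable as a heuristic, and the lower-bound half via abelian cycles is essentially Lindell's theorem together with the Birman-sequence propagation you describe (the paper carries out the analogous propagation in Proposition~\ref{torelliinj}). But you should be aware that the reverse containment you flag as ``the main obstacle'' is not a technical difficulty you can expect to overcome by adapting the $\IA_n$ arguments: it is an open problem, tied to the stable finite-dimensionality of $H^*(\I_{g,1},\Q)$, and the paper makes no claim to resolve it. In short, there is no proof here to reproduce; your proposal is a plausible strategy for attacking an open conjecture, with the hard step correctly identified.
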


We also study the structures of the cohomology of Lie algebras associated to $\I_{g}$, $\I_{g,1}$ and $\I_{g}^{1}$.

\subsection{Outline}
 The rest of the paper is organized as follows.
 In Section \ref{Preliminary}, we recall algebraic $\GL(n,\Q)$-representations and the notion of traceless parts.
 In Section \ref{Abeliancycles}, we recall the notion of abelian cycles. We construct an abelian cycle for $\IA_n$ corresponding to each pair of partitions.
 In Section \ref{Connectedpart}, we obtain Theorem \ref{nontrivialintro}.
 In Section \ref{Tracelesspart}, we show Theorem \ref{tracelessimageintro}, which induces Theorem \ref{dimensionintro} about the dimension of $H^A_i(\IA_n,\Q)$.
 In Section \ref{Generalpart}, we study the structure of $H^A_i(\IA_n,\Q)$ and prove Theorem \ref{Johnsonpartintro}.
 In Section \ref{coalgebra}, we recall a coalgebra structure on group homology and study the coalgebra structure of $H^A_*(\IA_n,\Q)$.
 In Section \ref{albanesecohomology}, we study the algebra structure of the Albanese cohomology $H_A^*(\IA_n,\Q)$.
 In Section \ref{Albanesehomology}, we study the Albanese homology $H^A_i(\IO_n,\Q)$. We obtain Theorems \ref{introconnectedpartIO} and \ref{introdimensionIO}. We also study the relation between $H^A_i(\IO_n,\Q)$ and $H^A_i(\IA_n,\Q)$.
 In Sections \ref{ThirdhomologyIO} and \ref{Thirdhomology}, we obtain $H^A_3(\IO_n,\Q)$ and $H^A_3(\IA_n,\Q)$ for $n\ge 9$.
 In Section \ref{KV}, we study the relation between $H_A^*(\IA_n,\Q)$ and $H^*(\Aut(F_n),H^{p,q})$.
 In Section \ref{Sprep}, we recall algebraic $\Sp(2g,\Q)$-representations.
 In Section \ref{problems}, we discuss the Albanese cohomology of the Torelli groups and the cohomology of Lie algebras associated to the Torelli groups.
 In Appendix \ref{Albanesefunctor}, we give a brief summary of some properties of Albanese homology and cohomology of groups.

\subsection*{Acknowledgements}
The author would like to thank her supervisor Kazuo Habiro for valuable advice. She also thanks Nariya Kawazumi and Christine Vespa for comments about topics around homology of $\IA_n$.
This work was supported by JSPS KAKENHI Grant Number JP22J14812.

\section{Preliminaries}\label{Preliminary}
 In this section, we recall representation theory of $\GL(n,\Q)$ and of $\GL(\infty,\Q)$, and introduce the notion of traceless tensor products and traceless parts of graded-symmetric algebras.

 \subsection{Notations and conventions}
 For a non-negative integer $j$, let $[j]$ denote the set $\{1,\cdots,j\}$.

 Let $n\ge 1$.
 Let $F_n=\langle x_1,\cdots,x_n\rangle$.
 Let $H=H(n)=H_1(F_n,\Q)=\bigoplus_{j=1}^n \Q e_j$, where we fix the basis $\{e_j\}_{j=1}^{n}$ for $H$.
 We have $H^*=H(n)^*=H^1(F_n,\Q)=\bigoplus_{j=1}^n \Q e_j^*$.

  In what follows, we consider representations of $\GL(H,\Q)=\GL(n,\Q)$ over $\Q$, which are the same as $\Q[\GL(n,\Q)]$-representations. Sometimes we simply write ``representations'' to mean $\GL(n,\Q)$-representations.

  In computations, we use the following matrices in $\GL(n,\Q)$.
 Let $\id\in \GL(n,\Q)$ denote the identity matrix.
 For distinct elements $k,l\in [n]$,
 let $E_{k,l}\in \GL(n,\Q)$ denote the matrix that maps $e_l$ to $e_k+e_l$ and fixes $e_a$ for $a\neq l$. Then $E_{k,l}$ maps the dual basis $e_k^*$ to $e_k^*-e_l^*$ and fixes $e_a^*$ for $a\neq k$.
 Let $P_{k,l}\in \GL(n,\Q)$ denote the matrix that exchanges $e_k$ and $e_l$ and fixes $e_a$ for $a\neq k,l$.
 For $k=l$, we have $P_{k,k}=\id$.

 \subsection{Irreducible polynomial representations of $\GL(n,\Q)$}
  Here we recall several notions from representation theory.
  See \cite{FH} for details.

  Let $n\ge 1$.
  A \emph{partition} $\lambda=(\lambda_1,\lambda_2,\dots,\lambda_n)$ with at most $n$ parts is a sequence of non-negative integers such that $\lambda_1\ge\lambda_2\ge\dots\ge\lambda_n$.
  Let $l(\lambda)=\max(\{0\}\cup\{i\mid \lambda_{i}> 0\})$ denote the \emph{length} of $\lambda$ and $|\lambda|=\lambda_1+\cdots+\lambda_{l(\lambda)}$ the \emph{size} of $\lambda$.
  We write $\lambda \vdash |\lambda|$.
  A \emph{Young diagram} for a partition $\lambda$ is a diagram with $\lambda_i$ boxes in the $i$-th row such that the rows of boxes are left-aligned.
  A \emph{tableau} on a Young diagram is a numbering of the boxes by integers in $[|\lambda|]$.
  A tableau is \emph{standard} if the numbering of each row and each column is increasing.
  The \emph{canonical tableau} on a Young diagram is a standard tableau whose numbering starts from the first row from left to right and followed by the second row from left to right and so on.

  Let
  \begin{gather*}
      a_{\lambda}= \sum_{\sigma\in R} \sigma\in \Q[\gpS_{|\lambda|}],\quad b_{\lambda}= \sum_{\tau \in C} \sgn(\tau) \tau\in \Q[\gpS_{|\lambda|}],
  \end{gather*}
  where $R$ (resp. $C$) is a subgroup of $\gpS_{|\lambda|}$ preserving rows (resp. columns) of the canonical tableau on the Young diagram corresponding to $\lambda$.
  The \emph{Young symmetrizer} $c_{\lambda}$ is defined by
  \begin{gather*}
      c_{\lambda}= b_{\lambda} a_{\lambda}\in \Q[\gpS_{|\lambda|}].
  \end{gather*}
  Let $S^\lambda=\Q[\gpS_{|\lambda|}] c_{\lambda}$ denote the \emph{Specht module} corresponding to the partition $\lambda$, which is an irreducible representation of $\gpS_{|\lambda|}$.

  We call a $\GL(n,\Q)$-representation $V$ \emph{polynomial} if after choosing a basis for $V$, the $(\dim V)^2$ coordinate functions of the action $\GL(n,\Q)\to \GL(V)$ are polynomial functions of the $n^2$ variables.
  Consider $H\cong \Q^n$ as the standard representation of $\GL(n,\Q)$.
  For a partition $\lambda$, let
  $$V_{\lambda}= H^{\otimes |\lambda|}\otimes_{\Q[\gpS_{|\lambda|}]}S^\lambda.$$
  If $\lambda$ has at most $n$ parts, then $V_{\lambda}$ is an irreducible polynomial $\GL(n,\Q)$-representation, and otherwise, we have $V_{\lambda}=0$.
  For a non-negative integer $p$, by the Schur--Weyl duality, we have a decomposition of $H^{\otimes p}$ as $\GL(n,\Q)\times \gpS_{p}$-modules
  $$
   H^{\otimes p}= \bigoplus_{\lambda\vdash p \text{ with at most $n$ parts}} V_{\lambda}\otimes S^{\lambda}.
  $$
  It is well known that irreducible polynomial representations of $\GL(n,\Q)$ are classified by partitions with at most $n$ parts, that is, any irreducible $\GL(n,\Q)$-representation is isomorphic to $V_{\lambda}$ for a partition $\lambda$ with at most $n$ parts.

  We have the following irreducible decomposition of the tensor product of two irreducible polynomial $\GL(n,\Q)$-representations $V_{\lambda}$ and $V_{\mu}$
  $$
   V_{\lambda}\otimes V_{\mu}=\bigoplus_{\nu} V_{\nu}^{\oplus N_{\lambda\mu}^{\nu}},
  $$
  where $N_{\lambda\mu}^{\nu}$ is the Littlewood--Richardson coefficient.

\subsection{Irreducible algebraic representations of $\GL(n,\Q)$}\label{irrrep}
 We call a $\GL(n,\Q)$-representation $V$ \emph{algebraic} if after choosing a basis for $V$, the $(\dim V)^2$ coordinate functions of the action $\GL(n,\Q)\to \GL(V)$ are rational functions of the $n^2$ variables.
 Here we recall irreducible algebraic representations of $\GL(n,\Q)$, which generalizes irreducible polynomial representations of $\GL(n,\Q)$. See \cite{FH,Koike,Patzt} for details.

  Let $p$ and $q$ be positive integers.
  For a pair $(k,l)\in[p]\times [q]$, the \emph{contraction map}
  $$c_{k,l}:H^{\otimes p}\otimes (H^{*})^{\otimes q}\rightarrow H^{\otimes p-1}\otimes (H^{*})^{\otimes q-1}$$
  is defined for $v_1\otimes\cdots\otimes v_{p}\in H^{\otimes p}$ and $f_1\otimes\cdots\otimes f_{q} \in (H^{*})^{\otimes q}$ by
  $$c_{k,l}((v_1\otimes\cdots\otimes v_{p}) \otimes(f_1\otimes\cdots\otimes f_{q}))
  = \langle v_k,f_l\rangle
    (v_1\otimes\cdots\hat{v}_{k}\cdots\otimes v_{p})\otimes (f_1\otimes\cdots\hat{f}_{l}\cdots\otimes f_{q}),
  $$
  where
  $\langle -,-\rangle:H\otimes H^{*}\rightarrow \Q$
  denotes the dual pairing,
  and where $\hat{v}_{k}$ (resp. $\hat{f}_{l}$) denotes the omission of $v_{k}$ (resp. $f_{l}$).

  For $p,q\ge 0$, let $T_{p,q}$ denote the \emph{traceless part} of $H^{\otimes p}\otimes (H^{*})^{\otimes q}$, which is defined by
  $$T_{p,q}=\bigcap_{(k,l)\in [p]\times [q]} \ker c_{k,l}\subset H^{\otimes p}\otimes (H^{*})^{\otimes q}.$$
  Note that we have $T_{0,q}=(H^*)^{\otimes q}$ and $T_{p,0}=H^{\otimes p}$.

  A \emph{bipartition} $\ul{\lambda}=(\lambda^{+},\lambda^{-})$ with at most $n$ parts is a pair of two partitions $\lambda^{+}$ and $\lambda^{-}$ such that $l(\ul\lambda)=l(\lambda^{+})+l(\lambda^{-})\le n$.
  The \emph{size} $|\ul\lambda|$ of $\ul\lambda$ is defined by $|\lambda^{+}|+|\lambda^{-}|$.

  For a bipartition $\ul\lambda$, set $p=|\lambda^{+}|$ and $q=|\lambda^{-}|$.
  Let
  $$
    V_{\ul\lambda}=T_{p,q}\otimes_{\Q[\gpS_{p}\times\gpS_{q}]} (S^{\lambda^{+}}\otimes S^{\lambda^{-}}).
  $$
  If $\ul\lambda$ has at most $n$ parts, then $V_{\ul\lambda}$ is an irreducible algebraic representation, and otherwise, we have $V_{\ul\lambda}=0$.
  We have the following decomposition of $T_{p,q}$ as a $\GL(n,\Q)\times (\gpS_p\times \gpS_q)$-modules, which generalizes the Schur--Weyl duality for $H^{\otimes p}$
  \begin{gather}\label{Tpq}
    T_{p,q}=\bigoplus_{\substack{\ul\lambda: \text{bipartition with at most $n$ parts} \\ |\lambda^{+}|=p,\;|\lambda^{-}|=q}} V_{\ul\lambda}\otimes (S^{\lambda^{+}}\otimes S^{\lambda^{-}}).
  \end{gather}(See \cite[Theorem 1.1]{Koike}.)
  It is well known that irreducible algebraic representations of $\GL(n,\Q)$ are classified by bipartitions with at most $n$ parts, that is, any irreducible algebraic $\GL(n,\Q)$-representation is isomorphic to $V_{\ul\lambda}$ for a bipartition $\ul\lambda$ with at most $n$ parts.
  (See also \cite{Patzt} for the correspondence between irreducible algebraic representations and bipartitions.)

   For a bipartition $\ul\lambda=(\lambda^{+},\lambda^{-})$ with at most $n$ parts, we have
  $V_{\ul\lambda}=V_{\mu}\otimes \det^{k}$,
  where $\mu$ is a partition and $k$ is an integer satisfying
  $$(\lambda^{+}_1,\cdots, \lambda^{+}_{l(\lambda^{+})},0,\cdots,0,-\lambda^{-}_{l(\lambda^{-})},\cdots,-\lambda^{-}_1)= (\mu_1+k,\cdots, \mu_n+k),$$
  and where $\det$ denotes the $1$-dimensional determinant representation.

  The \emph{dual} of $\ul\lambda$ is defined by $\ul\lambda^*=(\lambda^{-},\lambda^{+})$.
  Note that we have a $\GL(n,\Q)$-isomorphism $(V_{\ul\lambda})^*\cong V_{\ul\lambda^*}$.

  For two bipartitions $\ul\lambda, \ul\mu$ such that $n\ge l(\ul\lambda)+l(\ul\mu)$, we have the following irreducible decomposition of the tensor product of $V_{\ul\lambda}$ and $V_{\ul\mu}$
  $$
  V_{\ul\lambda}\otimes V_{\ul\mu}=\bigoplus_{\ul\nu} V_{\ul\nu}^{\oplus N_{\ul\lambda \ul\mu}^{\ul\nu}},
  $$
 where $N_{\ul\lambda \ul\mu}^{\ul\nu}=
 \sum_{\alpha\beta\theta\delta}
 (\sum_{\kappa}N_{\kappa\alpha}^{\lambda^+}N_{\kappa\beta}^{\mu^-})
 (\sum_{\epsilon}N_{\epsilon\theta}^{\lambda^-}N_{\epsilon\delta}^{\mu^+})
 N_{\alpha\delta}^{\nu^+}N_{\beta\theta}^{\nu^-}$ (see \cite{Koike} for details).

 \subsection{Generator of the traceless part $T_{p,q}$}
   Here we give a generator of the traceless part $T_{p,q}$ of $H^{\otimes p}\otimes (H^*)^{\otimes q}$.

   We have the following explicit highest weight vectors of $T_{p,q}$.

  \begin{lemma}[Theorems 2.7 and 2.11 of \cite{BCHLLS}]\label{hwv}
   Let $p,q\ge 0$.
   For a bipartition $\ul\lambda$ with at most $n$ parts such that $|\lambda^{+}|=p$ and $|\lambda^{-}|=q$, let $$e(\ul\lambda)=(e_1^{\otimes \lambda^{+}_1} \otimes \cdots \otimes e_{l(\lambda^{+})}^{\otimes \lambda^{+}_{l(\lambda^{+})}}) \otimes ((e_n^*)^{\otimes \lambda^{-}_1} \otimes \cdots \otimes (e_{n-l(\lambda^{-})+1}^{*})^{\otimes \lambda^{-}_{l(\lambda^{-})}})
   \in H^{\otimes p}\otimes (H^*)^{\otimes q}.$$
   Let $St_{\lambda^+}$ (resp. $St_{\lambda^-}$) denote the subset of $\gpS_p$ (resp. $\gpS_q$) consisting of permutations which send the canonical tableau to standard tableaux on the Young diagram corresponding to $\lambda^+$ (resp. $\lambda^-$).
   Then the set
   $$\{(\pi b_{\lambda^{+}}\otimes \id) (\id\otimes \rho b_{\lambda^{-}}) e(\ul\lambda)\mid |\lambda^{+}|=p,\: |\lambda^{-}|=q,\: \pi\in St_{\lambda^+},\: \rho\in St_{\lambda^-}\}$$
   is a basis for the space of all highest weight vectors of $T_{p,q}$.

   In particular, for each $\ul\lambda$, $(\pi b_{\lambda^{+}}\otimes \id) (\id\otimes \rho b_{\lambda^{-}})  e(\ul\lambda)$ generates an irreducible subrepresentation of $T_{p,q}$ which is isomorphic to $V_{\ul\lambda}$.
  \end{lemma}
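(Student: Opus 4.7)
The plan is to combine the mixed Schur--Weyl duality \eqref{Tpq} with explicit verification that the vectors listed in the statement are highest weight vectors, and then use a dimension count to upgrade this to a basis statement.

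First I would prove that $e(\ul\lambda)$ itself lies in $T_{p,q}$. The $H$-tensor factors of $e(\ul\lambda)$ use only the basis vectors $e_1,\dots,e_{l(\lambda^{+})}$, while the $H^{*}$-tensor factors use only $e_n^{*},\dots,e_{n-l(\lambda^{-})+1}^{*}$. Because $l(\lambda^{+})+l(\lambda^{-})\le n$, these two index sets are disjoint, so for any $(k,l)\in[p]\times[q]$ the pairing $\langle v_k, f_l\rangle$ that appears in $c_{k,l}(e(\ul\lambda))$ vanishes. Since each contraction $c_{k,l}$ is $\gpS_p\times\gpS_q$-equivariant on $H^{\otimes p}\otimes (H^{*})^{\otimes q}$, the vectors $(\pi b_{\lambda^{+}}\otimes\id)(\id\otimes\rho b_{\lambda^{-}})e(\ul\lambda)$ also lie in $T_{p,q}$.

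Next I would check that each such vector is a highest weight vector of weight $\ul\lambda$. The weight of $e(\ul\lambda)$ under the diagonal torus is exactly
\[
(\lambda^{+}_1,\dots,\lambda^{+}_{l(\lambda^{+})},0,\dots,0,-\lambda^{-}_{l(\lambda^{-})},\dots,-\lambda^{-}_1),
\]
which is the highest weight of $V_{\ul\lambda}$; the symmetrizers $b_{\lambda^{+}}$ and $b_{\lambda^{-}}$ preserve weights. For the annihilation by positive root vectors $E_{k,k+1}$, there are two cases: acting on an $H$-slot moves an $e_{k+1}$ to the $e_k$-slot, producing a tensor that has two identical entries in some column of the Young tableau on $\lambda^{+}$ and is therefore killed by $b_{\lambda^{+}}$; acting on an $H^{*}$-slot produces $-e_{k+1}^{*}$ in a position previously filled by $e_k^{*}$, giving an analogous repeated entry in a column of $\lambda^{-}$ and vanishing under $b_{\lambda^{-}}$. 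The remaining diagonal cases are handled similarly.

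Finally, I would upgrade this to a basis statement via a dimension count. The decomposition \eqref{Tpq} identifies the $\ul\lambda$-isotypic component of $T_{p,q}$ with $V_{\ul\lambda}\otimes(S^{\lambda^{+}}\otimes S^{\lambda^{-}})$, so the space of highest weight vectors of weight $\ul\lambda$ has dimension $\dim S^{\lambda^{+}}\cdot\dim S^{\lambda^{-}}$, which equals $|St_{\lambda^{+}}|\cdot|St_{\lambda^{-}}|$ by the standard tableaux basis of Specht modules. Hence it suffices to show that the proposed family is linearly independent. This follows from the fact that the vectors $\{\pi b_{\lambda^{+}}a_{\lambda^{+}}\cdot\mathrm{(tableau)}\mid \pi\in St_{\lambda^{+}}\}$ form a basis of the Specht module $S^{\lambda^{+}}$ (and similarly for $\lambda^{-}$), transported into $T_{p,q}$ by the Schur--Weyl isomorphism of \eqref{Tpq}. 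The last identification - making the Specht-basis correspondence compatible with the explicit symmetrizers $(\pi b_{\lambda^{+}}\otimes\id)(\id\otimes\rho b_{\lambda^{-}})$ applied to $e(\ul\lambda)$ - is the main obstacle, because one must carefully check that the specific choice $e(\ul\lambda)$ (as opposed to a sum over tabloids) still produces a nonzero image realizing the Specht basis; this is where I would either invoke Theorems 2.7 and 2.11 of \cite{BCHLLS} directly or carry out the explicit polytabloid computation.
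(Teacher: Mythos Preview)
The paper does not give a proof of this lemma at all; it is quoted verbatim from Theorems~2.7 and~2.11 of \cite{BCHLLS}, and the paper only uses it as a black box (chiefly to deduce Lemma~\ref{tracelessgen}). So there is no ``paper's own proof'' to compare against.

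Your outline is a correct sketch of how one would prove the statement. The step you flag as the ``main obstacle''---reconciling the symmetrizers $\pi b_{\lambda^{+}}$ (without the row symmetrizer $a_{\lambda^{+}}$) with the standard polytabloid basis of the Specht module---is not really an obstacle. The vector $e(\ul\lambda)$ is constant along each row of the canonical tableau, so $a_{\lambda^{+}}$ (and likewise $a_{\lambda^{-}}$) acts on it as multiplication by the order of the row stabilizer. Hence
\[
\pi b_{\lambda^{+}}\,e(\ul\lambda)\;=\;\frac{1}{|R_{\lambda^{+}}|}\,\pi b_{\lambda^{+}}a_{\lambda^{+}}\,e(\ul\lambda)\;=\;\frac{1}{|R_{\lambda^{+}}|}\,\pi c_{\lambda^{+}}\,e(\ul\lambda),
\]
and similarly on the $H^{*}$ side, so your vectors are nonzero scalar multiples of the images of the standard polytabloid basis under the Schur--Weyl embedding. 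With that observation the linear independence follows immediately from the Specht-module theory you already invoke, and the dimension count via \eqref{Tpq} finishes the argument.
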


  \begin{lemma}\label{tracelessgen}
   Let $n\ge p+q$.
   Then the traceless part $T_{p,q}$ of $H^{\otimes p}\otimes (H^*)^{\otimes q}$ is generated by
   $$e_{p,q}=e_1\otimes \cdots \otimes e_p \otimes e_n^*\otimes \cdots \otimes e_{n-q+1}^* \in H^{\otimes p}\otimes (H^*)^{\otimes q}$$
   as a $\GL(n,\Q)$-representation.
  \end{lemma}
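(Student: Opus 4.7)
The plan is to prove that the $\GL(n,\Q)$-submodule $N:=\Q[\GL(n,\Q)]\cdot e_{p,q}$ of $T_{p,q}$ is all of $T_{p,q}$. The key idea will be first to enlarge $N$ via an auxiliary $\gpS_p\times\gpS_q$-action permuting the tensor factors, and then to invoke the bimodule decomposition \eqref{Tpq}.

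I would begin by checking that $e_{p,q}$ really lies in $T_{p,q}$: the hypothesis $n\ge p+q$ forces the $H$-indices $\{1,\dots,p\}$ and the $H^*$-indices $\{n-q+1,\dots,n\}$ to be disjoint, so every contraction $c_{k,l}$ kills $e_{p,q}$. Next, I would show that $N$ is stable under the tensor-factor-permutation action of $\gpS_p\times\gpS_q$. For any $(\alpha,\beta)\in\gpS_p\times\gpS_q$, the vector $(\alpha,\beta)\cdot e_{p,q}$ can be realised as $P_\tau\cdot e_{p,q}$ for a permutation $\tau\in\gpS_n$ that acts as $\alpha^{-1}$ on $[p]$, realises $\beta^{-1}$ on $\{n-q+1,\dots,n\}$ under the relabelling $j\mapsto n-j+1$, and fixes the remaining $n-p-q$ indices; such a $\tau$ exists precisely because $n\ge p+q$. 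Since the $\GL(n,\Q)$- and $\gpS_p\times\gpS_q$-actions commute, this upgrades to $\sigma\cdot N=\Q[\GL(n,\Q)]\cdot(\sigma\cdot e_{p,q})\subseteq N$ for every $\sigma$, so $N$ is in fact a $\bigl(\GL(n,\Q)\times\gpS_p\times\gpS_q\bigr)$-submodule of $T_{p,q}$.

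By \eqref{Tpq}, every $\bigl(\GL(n,\Q)\times\gpS_p\times\gpS_q\bigr)$-submodule of $T_{p,q}$ is a direct sum of the bimodule irreducibles $V_{\ul\lambda}\otimes(S^{\lambda^+}\otimes S^{\lambda^-})$. The inequality $l(\ul\lambda)=l(\lambda^+)+l(\lambda^-)\le |\lambda^+|+|\lambda^-|=p+q\le n$ shows that every bipartition with $|\lambda^+|=p$ and $|\lambda^-|=q$ yields a non-zero summand, each of which must be shown to lie in $N$. For this, I plan to apply the Young symmetrizer $c_{\lambda^+}\otimes c_{\lambda^-}\in\Q[\gpS_p\times\gpS_q]$ to $e_{p,q}$. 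The result factorises as $\bigl(c_{\lambda^+}\cdot(e_1\otimes\cdots\otimes e_p)\bigr)\otimes\bigl(c_{\lambda^-}\cdot(e_n^*\otimes\cdots\otimes e_{n-q+1}^*)\bigr)$, and each factor is non-zero: since $n\ge p$, the vector $e_1\otimes\cdots\otimes e_p$ has trivial $\gpS_p$-stabiliser, so the left $\gpS_p$-module map $\Q[\gpS_p]\to H^{\otimes p}$, $\sigma\mapsto\sigma\cdot(e_1\otimes\cdots\otimes e_p)$, is injective and therefore does not kill $c_{\lambda^+}$, and similarly on the dual side. This non-zero vector lies in the $(S^{\lambda^+}\otimes S^{\lambda^-})$-isotypic component of $T_{p,q}$, which by \eqref{Tpq} is exactly $V_{\ul\lambda}\otimes(S^{\lambda^+}\otimes S^{\lambda^-})$, and it belongs to $N$ by the $\gpS_p\times\gpS_q$-stability established above. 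Hence the whole bimodule-irreducible summand is contained in $N$, and summing over $\ul\lambda$ yields $N=T_{p,q}$.

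The main obstacle will be the second step: extracting a genuine $\gpS_p\times\gpS_q$-symmetry from within the $\GL(n,\Q)$-action. This is where the stable-range hypothesis $n\ge p+q$ does all the work, supplying simultaneously the disjointness needed to keep $e_{p,q}$ inside $T_{p,q}$ and the freedom to realise independent permutations of $[p]$ and of $\{n-q+1,\dots,n\}$ by a single permutation matrix. Once the bimodule structure is available, the remaining argument reduces to the essentially combinatorial fact that a Young symmetrizer does not annihilate a tensor of pairwise distinct basis vectors.
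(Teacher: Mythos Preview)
Your proof is correct and takes a genuinely different route from the paper's. The paper invokes Lemma~\ref{hwv} to write down explicit highest weight vectors generating each $V_{\ul\lambda}\subset T_{p,q}$; these are linear combinations of pure tensors $e_J$ with indices drawn from $[p]$ and $\{n-q+1,\dots,n\}$, possibly with repetitions, and the paper then shows each such $e_J$ lies in $\Q[\GL(n,\Q)]\cdot e_{p,q}$ by an explicit product of permutation matrices and elementary-matrix operations $\prod(E_{j_k,k}-\id)$ that manufacture the repeated entries. You bypass this computation entirely: the observation that the tensor-factor action of $\gpS_p\times\gpS_q$ is realised inside $\GL(n,\Q)$ by permutation matrices (precisely what $n\ge p+q$ affords) promotes the cyclic $\GL(n,\Q)$-module to a $\GL(n,\Q)\times\gpS_p\times\gpS_q$-submodule, after which the multiplicity-free decomposition~\eqref{Tpq} and the nonvanishing of Young symmetrizers on tensors of distinct basis vectors finish the argument. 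Your approach is cleaner and more structural; the paper's is more hands-on, modelled on \cite[Lemma~2.1]{Lindell}, and perhaps a better stylistic fit with the explicit matrix computations that recur throughout Sections~\ref{Connectedpart}--\ref{Generalpart}.
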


  \begin{proof}
   The proof is analogous to that of \cite[Lemma 2.1]{Lindell}.
   By Lemma \ref{hwv}, the traceless part $T_{p,q}$ is generated by
   $$\{(\pi b_{\lambda^{+}}\otimes \id) (\id\otimes \rho b_{\lambda^{-}}) e(\ul\lambda)\mid |\lambda^{+}|=p,\: |\lambda^{-}|=q,\: \pi\in St_{\lambda^+},\: \rho\in St_{\lambda^-}\}.$$
   Therefore, it suffices to show that
   $$
    e_J:= e_{j_1}\otimes \cdots \otimes e_{j_p} \otimes e_{j_{n}}^*\otimes \cdots \otimes e_{j_{n-q+1}}^* \in \Q[\GL(n,\Q)] e_{p,q}
   $$
   for $j_1, \cdots, j_p \in [p]$, $j_{n},\cdots,j_{n-q+1}\in \{n,\cdots,n-q+1\}$.

   Let
   $$K_p=\{k\in [p]\mid e_{j_k}=e_{j_l} \text{ for some }l<k\}$$
   and
   $$K'_q=\{k\in [q]\mid e_{j_{n-k+1}}^*=e_{j_{n-l+1}}^* \text{ for some } l<k\}.$$
   We can reorder the basis $\{e_1,\cdots,e_n\}$ for $H$ by using permutation matrices in such a way that $e_{j_k}=e_k$ for $k\in [p]\setminus K_p$, that is, we may assume that $e_J$ coincides with
   $e_1\otimes \cdots\otimes e_p\otimes  e_{j_{n}}^*\otimes \cdots \otimes e_{j_{n-q+1}}^*$
   except for tensor factors where $e_J$ has repeating elements.
   We can also assume that $e_J$ coincides with $e_{p,q}$ except for tensor factors where $e_J$ has repeating elements
   by reordering the basis $\{e_1,\cdots,e_n\}$ again in such a way that $e_{j_{n-k+1}}^*=e_{n-k+1}^*$ for $k\in [q]\setminus K'_q$.

   Now, we have to check that $e_{J}\in \Q[\GL(n,\Q)] e_{p,q}$.
   Define
   $$E_J:=\prod_{k\in K_p} (E_{j_k,k}-\id)\prod_{k\in K_q}(\id-E_{n-k+1,j_{n-k+1}})\in \Q[\GL(n,\Q)].$$
   Then we have
   \begin{gather*}
      e_{J}=E_J(e_{p,q})\in \Q[\GL(n,\Q)] e_{p,q},
   \end{gather*}
   which completes the proof.
  \end{proof}

  \begin{corollary}\label{projgen}
   Let $n\ge p+q$.
   Let $X$ be a $\GL(n,\Q)$-representation with a projection $\pi:T_{p,q}\twoheadrightarrow X$.
   Then $X$ is generated by $\pi(e_{p,q})$ as a $\GL(n,\Q)$-representation.
  \end{corollary}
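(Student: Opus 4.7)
The statement follows immediately from Lemma \ref{tracelessgen} by a general principle: any surjective equivariant map sends a set of generators to a set of generators. Concretely, the plan is to apply $\pi$ to the generation statement for $T_{p,q}$.

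First, observe that by Lemma \ref{tracelessgen}, under the hypothesis $n \ge p+q$, every element $v \in T_{p,q}$ can be written as $v = \sum_i c_i \cdot g_i(e_{p,q})$ for finitely many scalars $c_i \in \Q$ and matrices $g_i \in \GL(n,\Q)$; equivalently, $T_{p,q} = \Q[\GL(n,\Q)] \cdot e_{p,q}$. Next, since $\pi: T_{p,q} \twoheadrightarrow X$ is a surjective $\GL(n,\Q)$-equivariant map, for any $x \in X$ we may choose a preimage $v \in T_{p,q}$ and write
\[
x = \pi(v) = \pi\Bigl(\sum_i c_i \cdot g_i(e_{p,q})\Bigr) = \sum_i c_i \cdot g_i(\pi(e_{p,q})),
\]
using linearity of $\pi$ in the first equality and equivariance in the second. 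This exhibits $x$ as an element of $\Q[\GL(n,\Q)] \cdot \pi(e_{p,q})$, giving the desired conclusion that $X$ is generated by $\pi(e_{p,q})$ as a $\GL(n,\Q)$-representation.

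There is no real obstacle; the statement is a formal consequence of Lemma \ref{tracelessgen} together with the basic fact that equivariant quotients preserve generation. The only point that needs to be recorded is that the hypothesis $n \ge p+q$ is inherited from Lemma \ref{tracelessgen} and is used exactly there to guarantee that $e_{p,q}$ generates $T_{p,q}$ in the first place.
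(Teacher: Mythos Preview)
Your proof is correct and matches the paper's approach exactly: the paper states the corollary immediately after Lemma~\ref{tracelessgen} without proof, treating it as a formal consequence of the lemma together with the fact that surjective equivariant maps preserve generation.
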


 \subsection{Traceless tensor products}\label{tracelesswedge}
  Here we define traceless tensor products of algebraic $\GL(n,\Q)$-representations.

  By Lemma \ref{hwv}, we have a subrepresentation $V_{\ul\lambda}\subset H^{\otimes |\lambda^+|}\otimes (H^*)^{\otimes |\lambda^-|}$ which is generated by $(b_{\lambda^{+}}\otimes \id) (\id\otimes b_{\lambda^{-}})  e(\ul\lambda)$ for each bipartition $\ul\lambda$.

  Let $\ul\lambda$ and $\ul\mu$ be two bipartitions. Let $p=|\lambda^{+}|, q=|\lambda^{-}|, r=|\mu^{+}|$ and $s=|\mu^{-}|$.
  Define the \emph{traceless tensor product} $V_{\ul\lambda}\wti{\otimes} V_{\ul\mu}$ of $V_{\ul\lambda}$ and $V_{\ul\mu}$ as
  $$V_{\ul\lambda}\wti{\otimes} V_{\ul\mu}=(V_{\ul\lambda}\otimes V_{\ul\mu})\cap T_{p+r,q+s}\subset  H^{\otimes (p+r)}\otimes (H^*)^{\otimes (q+s)}.$$
  In other words, $V_{\ul\lambda}\wti{\otimes} V_{\ul\mu}$ is a subrepresentation of $V_{\ul\lambda}\otimes V_{\ul\mu}$ which vanishes under any contraction maps.
  Then we have for $n\ge l(\ul\lambda)+l(\ul\mu)$,
  $$V_{\ul\lambda}\wti{\otimes} V_{\ul\mu}\cong  \bigoplus_{|\ul\nu|= |\ul\lambda|+|\ul\mu|} V_{\ul\nu}^{\oplus N_{\ul\lambda \ul\mu}^{\ul\nu}}\subset
  \bigoplus_{\ul\nu} V_{\ul\nu}^{\oplus N_{\ul\lambda \ul\mu}^{\ul\nu}}
  \cong V_{\ul\lambda}\otimes V_{\ul\mu}.$$

  Let $M$ be an algebraic $\GL(n,\Q)$-representation. For each bipartition $\ul\lambda$, define a vector space $$M_{\ul\lambda}=\Hom_{\GL(n,\Q)}(V_{\ul\lambda},M).$$
  Since the category of algebraic $\GL(n,\Q)$-representations is semisimple, we have a natural isomorphism
  $$
  \iota_M: M\xrightarrow{\cong} \bigoplus_{\ul\lambda}V_{\ul\lambda}\otimes M_{\ul\lambda}.
  $$
  For two algebraic $\GL(n,\Q)$-representations $M$ and $N$, we have
   $$
  \iota_M\otimes \iota_N: M\otimes N \xrightarrow{\cong} \left(\bigoplus_{\ul\lambda}V_{\ul\lambda}\otimes M_{\ul\lambda}\right)
  \otimes
  \left(\bigoplus_{\ul\mu}V_{\ul\mu}\otimes N_{\ul\mu}\right)
  \cong
  \bigoplus_{\ul\lambda,\ul\mu}(V_{\ul\lambda}\otimes V_{\ul\mu})\otimes (M_{\ul\lambda}\otimes N_{\ul\mu}).
  $$
  Define the \emph{traceless tensor product} $M \wti{\otimes} N$ of $M$ and $N$ as
  $$
  M \wti{\otimes} N=(\iota_{M}\otimes \iota_{N})^{-1} \left(\bigoplus_{\ul\lambda,\ul\mu}(V_{\ul\lambda}\wti{\otimes} V_{\ul\mu})\otimes (M_{\ul\lambda}\otimes N_{\ul\mu})\right)\subset M\otimes N.
  $$
  Let $M^{\wti{\otimes} 0}=\Q$ and $M^{\wti{\otimes} 1}=M$.
  For $n\ge 2$, we define $M^{\wti{\otimes}n}=M^{\wti{\otimes}n-1}\wti{\otimes} M\subset M^{\otimes n}$ iteratively, and define the \emph{traceless part} $\wti{T}^*M$ of the tensor algebra $T^*M$ as
  $$\wti{T}^*M=\bigoplus_{n\ge 0} M^{\wti{\otimes}n}\subset T^*M.$$

  Let $\bigwedge^* M$ denote the exterior algebra of $M$ and $\Sym^* M$ the symmetric algebra of $M$.
  We have canonical projections $T^*M\twoheadrightarrow \bigwedge^* M$ and $T^*M \twoheadrightarrow \Sym^* M$.
  We define the \emph{traceless part} $\wti{\bigwedge}^* M$ of $\bigwedge^* M$ as the image of $\wti{T}^*M$ under the above projection, and the \emph{traceless part} $\wti{\Sym}^* M$ of $\Sym^* M$ in a similar way.

 \subsection{Traceless parts of graded-symmetric algebras}\label{tracelesspartofgradedsymmetricalgebra}

  Let $M_*=\bigoplus_{i\ge 1} M_i$ be a graded algebraic $\GL(n,\Q)$-representation.
  Let $S^*(M_*)$ denote the graded-symmetric algebra of $M_*$.
  That is, we have the following graded-commutativity
  $$
  y x =(-1)^{ij} x y
  $$
  for $x\in M_i$ and $y\in M_j$.
  Then we have $S^k (M_i)=\Sym^k(M_i)$ for even $i$ and $S^k (M_i)=\bigwedge^k(M_i)$ for odd $i$.
  We have a canonical projection $T^*M_*\twoheadrightarrow S^*(M_*)$.
  Define the \emph{traceless part} $\wti{S}^* (M_*)=\bigoplus_{i\ge 0} \wti{S}^* (M_*)_i$ of the graded-symmetric algebra $S^*(M_*)$ as the image of $\wti{T}^*M_*$ under the projection.
  Then $\wti{S}^* (M_*)$ is a graded $\GL(n,\Q)$-representation satisfying
  \begin{gather*}
      \wti{S}^* (M_*)_i =\bigoplus_{k_1+2k_2 +\cdots + p k_p=i} \left(\wti{S}^{k_1}M_{1}\right)  \wti{\otimes}\cdots \wti{\otimes} \left(\wti{S}^{k_p}M_{p}\right).
  \end{gather*}

  The graded-symmetric algebra $S^*(M_*)$ has a coalgebra structure defined as follows.
 For an element $x=x_1\cdots x_k\in S^*(M_*)$, where $x_j\in M_{i_j}$, and for $\sigma\in \gpS_k$, let $\sgn(\sigma; x)\in \{1,-1\}$ denote the sign satisfying
 $$
 x_{\sigma(1)}\cdots x_{\sigma(k)}= \sgn(\sigma; x)x_1\cdots x_k.
 $$
 Then the comultiplication $\Delta$ is defined by
 $$
 \Delta(x_1\cdots x_k)=\sum_{p=0}^k \sum_{\sigma\in \Sh(p,k-p)}\sgn(\sigma; x) x_{\sigma(1)}\cdots x_{\sigma(p)} \otimes x_{\sigma(p+1)}\cdots x_{\sigma(k)},
 $$
 where $\Sh(p,k-p)\subset \gpS_k$ denotes the set of $(p,k-p)$-shuffles.
 Then we can check that the coalgebra structure of $S^*(M_*)$ induces a subcoalgebra structure of the traceless part $\wti{S}^*(M_*)$. We can also check that the primitive part of $S^*(M_*)$ is $M_*$.

 \begin{remark}
  The traceless part $\wti{S}^*(M_*)$ does not inherit the algebra structure of $S^*(M_*)$. However, we can consider an algebra structure on $\wti{S}^*(M_*)$ in the symmetric monoidal category that we introduce below.
  Let $\mathbf{Rep}^{\text{alg}}(\GL(n,\Q))$ denote the category of algebraic $\GL(n,\Q)$-representations and $\GL(n,\Q)$-homomorphisms.
  The traceless tensor product $\wti{\otimes}$ and the symmetry $\wti{\tau}_{V,W}: V\wti{\otimes}W\to W\wti{\otimes}V$ which is the restriction of the usual symmetry $\tau_{V,W}:V\otimes W\xrightarrow{\cong} W\otimes V$
  form a symmetric monoidal structure $(\mathbf{Rep}^{\text{alg}}(\GL(n,\Q)),\wti{\otimes}, \wti{\tau})$.
  Then $\wti{S}^*(M_*)$ is a bialgebra in $(\mathbf{Rep}^{\text{alg}}(\GL(n,\Q)),\wti{\otimes}, \wti{\tau})$.
 \end{remark}

  \subsection{$\GL(\infty,\Q)$-representations}\label{prestable}
   Let $n\ge 0$.
   We have an inclusion $\GL(n,\Q)\hookrightarrow \GL(n+1,\Q)$ sending $A\in \GL(n,\Q)$ to $A\oplus {1}\in \GL(n+1,\Q)$.
   Let $\GL(\infty,\Q)=\varinjlim_{n}\GL(n,\Q)=\bigcup_{n\ge 1} \GL(n,\Q)$.
   Here we recall representation theory of $\GL(\infty,\Q)$. See \cite{SamSnowden} for details.

   We have a canonical inclusion $H(n)\hookrightarrow H(n+1)$ sending the basis vector $e_j$ to $e_j$ for $j\in [n]$.
   We also have an inclusion $H(n)^*\hookrightarrow H(n+1)^*$ sending the dual basis vector $e_j^*$ to $e_j^*$ for $j\in [n]$.
   Let $\bfH=\varinjlim_{n} H(n)=\bigcup_{n\ge 1} H(n)$, and $\bfH^*=\varinjlim_{n} (H(n)^*)=\bigcup_{n\ge 1}(H(n)^*)$.

   The group $\GL(\infty,\Q)$ acts on the tensor product $\bfH^{\otimes p}\otimes (\bfH^*)^{\otimes q}$ for any $p,q\ge 0$.
   We call a $\GL(\infty,\Q)$-representation \emph{algebraic} if it is a subquotient of a finite direct sum of tensor products $\bfH^{\otimes p}\otimes (\bfH^*)^{\otimes q}$ for $p,q\ge 0$.
   Note that the category of algebraic $\GL(\infty,\Q)$-representations is not semisimple.
   For example, the contraction map $\bfH\otimes \bfH^*\twoheadrightarrow \Q$ does not split.

   We consider the contraction maps $c_{k,l}$ and the traceless part $T_{p,q}$ for $\GL(\infty,\Q)$ as in the case of $\GL(n,\Q)$.
   For any bipartition $\ul\lambda$ such that $|\lambda^{+}|=p$ and $|\lambda^{-}|=q$,
   define a $\GL(\infty,\Q)$-representation $V_{\ul\lambda}$ as
   $$
   V_{\ul\lambda}=T_{p,q}\otimes_{\Q[\gpS_p\times\gpS_q]}(S^{\lambda^{+}}\otimes S^{\lambda^{-}}).
   $$
   Then as in \eqref{Tpq}, we have the following decomposition of the traceless part $T_{p,q}$ as $\GL(\infty,\Q)$-representations
   $$
    T_{p,q}=\bigoplus_{\substack{\ul\lambda=(\lambda^{+},\lambda^{-}) \\ |\lambda^{+}|=p,\;|\lambda^{-}|=q}} V_{\ul\lambda}\otimes (S^{\lambda^{+}}\otimes S^{\lambda^{-}}).
   $$
   Moreover, as in the case of $\GL(n,\Q)$, it is known that
   irreducible algebraic representations of $\GL(\infty,\Q)$ are classified by bipartitions. (See Proposition 3.14 in \cite{SamSnowden}.)

\section{Abelian cycles in $H^A_i(\IA_n,\Q)$}\label{Abeliancycles}
   Here we recall the notion of abelian cycles and construct abelian cycles in $H_i(\IA_n,\Q)$.

   \subsection{Definition of abelian cycles}
    Let $i\ge 1$ and $\Z^i=\bigoplus_{k=1}^i \Z z_k$.
    Let $(\phi_1,\cdots,\phi_i)$ be an $i$-tuple of mutually commuting elements of $\IA_n$.
    Then we have a group homomorphism $\Z^i\to \IA_n$ mapping $z_k$ to $\phi_k$ for $1\le k\le i$,
    which induces a group homomorphism $$H_i(\Z^i,\Q)\to H_i(\IA_n,\Q).$$
    We have $H_i(\Z^i,\Q)\cong H_i(T^i,\Q)\cong \Q$, where $T^i$ is the $i$-dimensional torus.
    Let $A(\phi_1,\cdots,\phi_i)\in H_i(\IA_n,\Q)$ denote the image of the fundamental class of $H_i(T^i,\Q)$ and call it the \emph{abelian cycle} determined by $(\phi_1,\cdots,\phi_i)$.

   \subsection{Abelian cycles in $H_i(\IA_n,\Q)$}\label{secabeliancycle}

    Magnus's set of generators of $\IA_n$ is
    $$
   \{g_{a,b}\mid 1\le a,b \le n,\; a\ne b \}\cup\{f_{a,b,c}\mid 1\le a,b,c\le n,\; a< b,\; a\ne c\ne b\},
   $$
   where $g_{a,b}$ and $f_{a,b,c}$ are defined by
    \begin{gather*}
        g_{a,b}(x_b)=x_a x_b x_a^{-1}, \quad g_{a,b}(x_d)=x_d \text{ for }d\neq b,\\
        f_{a,b,c}(x_c)=x_c[x_a,x_b], \quad f_{a,b,c}(x_d)=x_d \text{ for }d\neq c.
    \end{gather*}

    For $k>l$, let
    $$
    h_{k,l}:=g_{k,l} g_{k,l+1}\cdots g_{k,k-1}\in\IA_n.
    $$
    Let $\Aut(F_n)(k,l)$ denote the image of the canonical injective map
    $$\Aut(F_{k-l})\hookrightarrow \Aut(F_n),\; x_j\mapsto x_{j+l-1}.$$
    Since $h_{k,l}$ fixes $x_j$ for $j\in \{1,\cdots,l-1,k,\cdots,n\}$ and maps $x_j$ to $x_k x_j x_k^{-1}$ for $j\in \{l,l+1,\cdots,k-1\}$, it follows that $h_{k,l}$ commutes with any element of $\Aut(F_n)(k,l)$.

    For positive integers $a$ and $r$, set
    $$\mathbf{g}_{r,a}=(g_{a+1,a}=h_{a+1,a}, h_{a+2,a},\cdots, h_{a+r,a})$$
    and
    $$\mathbf{f}_{r,a}=(f_{a,a+1,a+2}, h_{a+3,a},h_{a+4,a},\cdots, h_{a+r+1,a}).$$

    \begin{lemma}\label{abeliancycle}
     For $n\ge a+r$, $\mathbf{g}_{r,a}$ is an $r$-tuple of mutually commuting elements.
     For $n\ge a+r+1$, $\mathbf{f}_{r,a}$ is an $r$-tuple of mutually commuting elements.
    \end{lemma}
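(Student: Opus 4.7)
The plan is to exploit the commutation property that the authors have just highlighted: $h_{k,l}$ commutes with every element of $\Aut(F_n)(k,l)$, the copy of $\Aut(F_{k-l})$ acting on $\langle x_l,\ldots,x_{k-1}\rangle$ and fixing all other generators. All of the required commutations will follow by showing that each earlier entry in the tuple lies in $\Aut(F_n)(k',a)$ where $k'$ is the index of some later entry.

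For $\mathbf{g}_{r,a}=(h_{a+1,a},h_{a+2,a},\ldots,h_{a+r,a})$, I would fix $a+1\le k<k'\le a+r$ and check that $h_{k,a}\in\Aut(F_n)(k',a)$. Indeed, $h_{k,a}$ fixes $x_j$ for $j\notin\{a,\ldots,k-1\}$, and for $j\in\{a,\ldots,k-1\}$ it sends $x_j\mapsto x_kx_jx_k^{-1}$; since $k\le k'-1$, the image lies in $\langle x_a,\ldots,x_{k'-1}\rangle$, and $h_{k,a}$ fixes $x_k,\ldots,x_{k'-1}$ as well as $x_j$ for $j<a$ or $j\ge k'$. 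Hence $h_{k,a}$ restricts to an automorphism of $\langle x_a,\ldots,x_{k'-1}\rangle$ and acts as the identity elsewhere, placing it in $\Aut(F_n)(k',a)$. The cited property then gives $[h_{k,a},h_{k',a}]=1$. The hypothesis $n\ge a+r$ is precisely what is needed for $x_{a+r}$ to exist in $F_n$.

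For $\mathbf{f}_{r,a}=(f_{a,a+1,a+2},h_{a+3,a},\ldots,h_{a+r+1,a})$, the commutations among the $h_{k,a}$'s are handled by the previous paragraph, so the only new content is to check that $f_{a,a+1,a+2}\in\Aut(F_n)(k,a)$ for every $k\ge a+3$. This element fixes all $x_j$ with $j\ne a+2$ and sends $x_{a+2}\mapsto x_{a+2}[x_a,x_{a+1}]$; its restriction to $\langle x_a,\ldots,x_{k-1}\rangle$ is an automorphism of that free subgroup (an inverse is obtained by replacing $[x_a,x_{a+1}]$ with $[x_a,x_{a+1}]^{-1}$), and the restriction to the remaining generators is the identity. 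This is the membership required, so the cited property yields $[f_{a,a+1,a+2},h_{k,a}]=1$ for each $k\in\{a+3,\ldots,a+r+1\}$. The bound $n\ge a+r+1$ is the minimum needed so that $x_{a+r+1}$ is available for the last $h$.

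There is no real obstacle; the proof is essentially the verification that the relevant Magnus generators live in the appropriate parabolic copy of $\Aut(F_m)$, after which the already-established commutativity of $h_{k,l}$ with $\Aut(F_n)(k,l)$ does all the work. The only point requiring a small argument, rather than mere index-chasing, is the check that $f_{a,a+1,a+2}$ still restricts to an \emph{automorphism} of the relevant free subgroup, which follows from the explicit inverse above.
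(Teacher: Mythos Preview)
Your proposal is correct and follows essentially the same approach as the paper: both arguments reduce every commutation to showing that the earlier entry lies in $\Aut(F_n)(k',a)$ for the appropriate later index $k'$, and then invoke the already-established fact that $h_{k',a}$ commutes with everything in $\Aut(F_n)(k',a)$. You supply slightly more detail (the explicit verification that the restriction is an automorphism, and the inverse of $f_{a,a+1,a+2}$), but the strategy is identical.
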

    \begin{proof}
    For $\mathbf{g}_{r,a}$, it suffices to check that $h_{a+i,a}$ and $h_{a+j,a}$ commute for $1\le i<j\le r$.
    Since $h_{a+j,a}$ commutes with any elements of $\Aut(F_n)(a+j,a)$ and since we have $h_{a+i,a}\in \Aut(F_n)(a+j,a)$, it follows that $h_{a+j,a}$ commutes with $h_{a+i,a}$.

     We can also check that $\mathbf{f}_{r,a}$ is an $r$-tuple of mutually commuting elements by using the fact that
     $f_{a,a+1,a+2}\in \Aut(F_n)(a+j,a)$ for any $3\le j\le r+1$.
    \end{proof}

    Let $i\ge 0$.
    We call $(\mu,\nu)$ \emph{a pair of partitions of total size $i$}, denoted by $(\mu,\nu)\vdash i$, if $\mu$ and $\nu$ are partitions with $|\mu|+|\nu|=i$.
    (Note that pairs of partitions are the same as bipartitions, but we regard them as different notions.)
    For $n\ge i+2l(\mu)+l(\nu)$,
    define an $i$-tuple $\mathbf{h}_{(\mu,\nu)}$ of elements of $\IA_n$ by
    $$
    \mathbf{h}_{(\mu,\nu)}=(\mathbf{f}_{\mu_1,s(1)},\cdots, \mathbf{f}_{\mu_{l(\mu)},s(l(\mu))}, \mathbf{g}_{\nu_1,t(1)},  \cdots, \mathbf{g}_{\nu_{l(\nu)},t(l(\nu))}),
    $$
    where
    $$s(j)=1+\sum_{p=1}^{j-1}(\mu_p +2),\quad
    t(k)=1+|\mu|+2l(\mu)+\sum_{p=1}^{k-1}(\nu_p +1).$$
    Then we can check that $\mathbf{h}_{(\mu,\nu)}$ consists of mutually commuting elements.
    For example, we have
    \begin{gather*}
     \begin{split}
        \mathbf{h}_{(2, 0)}&=\mathbf{f}_{2,1}=(f_{1,2,3},\; h_{4,1}),\\
        \mathbf{h}_{(0,2)}&=\mathbf{g}_{2,1}=(h_{2,1},\; h_{3,1}),\\
        \mathbf{h}_{(1^2,0)}&=(\mathbf{f}_{1,1},\mathbf{f}_{1,4})=(f_{1,2,3},\; f_{4,5,6}),\\
        \mathbf{h}_{(0,1^2)}&=(\mathbf{g}_{1,1},\mathbf{g}_{1,3})=(h_{2,1},\; h_{4,3}),\\
        \mathbf{h}_{(1,1)}&=(\mathbf{f}_{1,1},\mathbf{g}_{1,4})=(f_{1,2,3},\; h_{5,4}).
     \end{split}
    \end{gather*}
    Then we obtain an abelian cycle corresponding to each pair of partitions.

    \begin{definition}\label{abeliancyclepairpart}
     Let $(\mu,\nu)\vdash i$ be a pair of partitions.
     For $n\ge i+2l(\mu)+l(\nu)$, define $\alpha_{(\mu,\nu)}=A(\mathbf{h}_{(\mu,\nu)})\in H_i(\IA_n,\Q)$ as the abelian cycle corresponding to the $i$-tuple $\mathbf{h}_{(\mu,\nu)}$ of mutually commuting elements.
    \end{definition}

    \begin{remark}
     The above construction gives an abelian cycle $\alpha_{(\mu,\nu)}$ in $H_i(\IA_n,\Z)$ for each $(\mu,\nu)\vdash i$.
    \end{remark}

\section{Non-triviality of $H^A_i(\IA_n,\Q)$}\label{Connectedpart}
 In this section, we construct two types of contraction maps each of which detects an irreducible $\GL(n,\Q)$-quotient representation of $H^A_i(\IA_n,\Q)$.

 Let
 $$U=\Hom(H,\bigwedge^2 H)\cong \left(\bigwedge^2 H\right)\otimes H^{*}.$$
 For $a,b,c\in [n]$,
 let
 $$e_{a,b}^{c}:=(e_a\wedge e_b) \otimes e_c^*\in U.$$
 Then we have the following basis for $U$ induced by Magnus's set of generators of $\IA_n$
 \begin{gather*}
     \{e_{a,b}^{b}\mid 1\le a,b \le n,\; a\ne b \}\cup\{e_{a,b}^{c}\mid 1\le a,b,c\le n,\; a< b,\; a\ne c\ne b\}\\
   =\{e_{a,b}^{c}\mid 1\le a,b,c\le n,\; a< b\}.
 \end{gather*}

  \subsection{Contraction maps}\label{subseccontraction}
    Let
    $$M_i:=(H^{\otimes 2}\otimes H^{*})^{\otimes i}=H^{\otimes 2i}\otimes (H^*)^{\otimes i}.$$

    Let $\iota_{i}$ be the canonical injective map
    \begin{gather*}
      \iota_{i}: \bigwedge^{i} U \hookrightarrow M_i,\quad
      u_1\wedge\cdots\wedge u_i\mapsto
      \sum_{\sigma\in \gpS_i}
    \sgn(\sigma) \varphi(u_{\sigma(1)})\otimes\cdots\otimes \varphi(u_{\sigma(i)}),
    \end{gather*}
    where
    $$
    \varphi:U \hookrightarrow H^{\otimes 2}\otimes H^*,\quad (a\wedge b)\otimes d^* \mapsto (a\otimes b- b\otimes a)\otimes d^*.
    $$

    For $i\ge 1$, let
    $$U_i=\Hom(H,\bigwedge^{i+1} H)\cong \left(\bigwedge^{i+1} H\right)\otimes H^*.$$
    Then we have $U=U_1$. Note that $U_i$ vanishes for $n\le i$.
    We have a direct sum decomposition
    $$U_i=U_i^{\tree}\oplus U_i^{\wheel},$$
    where
    $U_i^{\tree}$ denotes the subrepresentation of $U_i$ that is isomorphic to $V_{1^{i+1},1}$, and where $U_i^{\wheel}$ denotes the other subrepresentation that is isomorphic to $V_{1^i,0}$ for $n\ge i+1$.
    In what follows, we assume $n\ge i+1$ and we identify $U_i^{\wheel}$ with $V_{1^i,0}$.

    Define a $\GL(n,\Q)$-homomorphism
    \begin{gather*}
     c_{i}: M_i\to U_i\cong \left(\bigwedge^{i+1} H\right)  \otimes H^*
    \end{gather*}
    by
    $$c_{i}\left(\bigotimes_{j=1}^i (a_j\otimes b_j \otimes d_j^*)\right)=\left(\prod_{j=2}^i d_j^*(b_{j-1})\right) a_1\wedge a_2\wedge\cdots\wedge a_i\wedge b_i \otimes d_1^*.$$
    We also define a $\GL(n,\Q)$-homomorphism
    \begin{gather*}
     c^{\tree}_{i}: M_i\to U_i^{\tree}\cong V_{1^{i+1},1}
    \end{gather*}
    to be the composition of the contraction map $c_{i}$ and the canonical projection $U_i\twoheadrightarrow U_i^{\tree}$.

    Define another $\GL(n,\Q)$-homomorphism
    \begin{gather*}
     c^{\wheel}_{i}: M_i\to U_i^{\wheel} \cong V_{1^i,0}
    \end{gather*}
    by
    $$c^{\wheel}_{i}\left(\bigotimes_{j=1}^i (a_j\otimes b_j \otimes d_j^*)\right)= \left(\prod_{j=1}^i d_j^*(b_{j-1})\right) a_1\wedge a_2\wedge\cdots\wedge a_i,$$
    where $b_{0}$ means $b_{i}$.

  \subsection{Non-trivial quotient representation of $H^A_i(\IA_n,\Q)$}
    Here we detect two irreducible quotient representations of $H^A_i(\IA_n,\Q)$ by using $c^{\tree}_{i}$ and $c^{\wheel}_{i}$.

    Recall that in Definition \ref{abeliancyclepairpart}, we defined the abelian cycle $\alpha_{(0,i)}\in H_i(\IA_n,\Q)$ corresponding to the pair of partition $(0,i)$ for $n\ge i+1$, and that we have the following $\GL(n,\Z)$-homomorphisms
    \begin{gather*}
    \xymatrix{
    H_i(\IA_n,\Q)\ar[r]^-{\tau_*}
    &H_i(U,\Q)=\bigwedge^i U \ar[r]^-{\iota_i}
    & M_i\ar[rr]^-{c^{\wheel}_i}\ar[rrd]^-{c^{\tree}_i}
    && U_i^{\wheel}\\
    &&&& U_i^{\tree},
    }
    \end{gather*}
    where $\tau_* $ is induced by the Johnson homomorphism.

    \begin{lemma}\label{lemconnected}
     For $n\ge i+1$, we have
     \begin{gather*}
         c^{\wheel}_{i} \iota_{i} \tau_* (\alpha_{(0,i)}) = i!\: e_2\wedge e_3\wedge\cdots \wedge e_{i+1}\in
        U_i^{\wheel}\setminus \{0\},
     \end{gather*}
     and
     \begin{gather*}
         c_{i} \iota_{i} \tau_* (\alpha_{(0,i)})= (-1)^i (i+1)!\: e_1\wedge e_2\wedge\cdots \wedge e_{i+1}\otimes e_{1}^*.
     \end{gather*}
     Therefore, for $n\ge i+2$, we have
     \begin{gather*}
         (\id- E_{1,n})c^{\tree}_{i} \iota_{i} \tau_* (\alpha_{(0,i)}) =(-1)^i (i+1)!\: e_1\wedge e_2\wedge \cdots \wedge e_{i+1}\otimes e_{n}^* \in U_i^{\tree}\setminus \{0\}.
     \end{gather*}
    \end{lemma}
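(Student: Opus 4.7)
The plan is to compute $\tau_*(\alpha_{(0,i)})$ explicitly in $\bigwedge^i U$, push it forward to $M_i$ via $\iota_i$, and then apply the contractions $c_i^{\wheel}$ and $c_i$ directly. First I would use that the Johnson isomorphism \eqref{H1IAn} sends the Magnus generator $g_{a,b}$ to $e_{a,b}^{b}$, so by additivity of $\tau$ on the abelianization applied to $h_{k,1}=g_{k,1}g_{k,2}\cdots g_{k,k-1}$,
$$
\tau(h_{k,1}) \;=\; \sum_{l=1}^{k-1} e_{k,l}^{l}\;\in\;U.
$$
Because $\alpha_{(0,i)}$ is the abelian cycle on the commuting tuple $(h_{2,1},\ldots,h_{i+1,1})$, and $\tau_*$ carries such an abelian cycle to the wedge of the individual $\tau$-values, this yields
$$
\tau_*(\alpha_{(0,i)}) \;=\; \bigwedge_{k=2}^{i+1}\tau(h_{k,1})\;=\; \sum_{L}\; e_{2,l_2}^{l_2}\wedge e_{3,l_3}^{l_3}\wedge\cdots \wedge e_{i+1,l_{i+1}}^{l_{i+1}},
$$
summed over sequences $L=(l_2,\ldots,l_{i+1})$ with $1\le l_k\le k-1$.

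Next, I would expand $\iota_i$ of each wedge as a signed sum over $\gpS_i$ of tensor products of $\varphi$-images and apply each contraction term by term. For $c_i^{\wheel}$, the cyclic weight $\prod_{j}d_j^{*}(b_{j-1})$ forces compatibility between the $l$-index at tensor slot $j$ and the choice of ``$b$-slot'' at slot $j-1$; the strict triangularity $l_k\le k-1$ together with the antisymmetrization inside $\iota_i$ restricts the surviving triples $(L,\sigma,\text{internal }\varphi\text{-choice})$ to a single $\gpS_i$-orbit contributing $e_2\wedge\cdots\wedge e_{i+1}$ with total multiplicity $i!$, giving the first identity. A parallel but non-cyclic matching analysis for $c_i$, in which $d_1^{*}$ remains uncontracted and supplies the output $H^*$-factor, produces $(-1)^{i}(i+1)!\,e_1\wedge\cdots\wedge e_{i+1}\otimes e_1^{*}$, yielding the second identity.

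For the last statement, $E_{1,n}$ fixes $e_a$ for $a\ne n$ and acts on $H^*$ by $e_1^{*}\mapsto e_1^{*}-e_n^{*}$ while fixing $e_a^{*}$ for $a\ne 1$. Since $n\ge i+2$, the wedge $e_1\wedge\cdots\wedge e_{i+1}$ is fixed by $E_{1,n}$, so
$$
(\id-E_{1,n})\bigl(e_1\wedge\cdots\wedge e_{i+1}\otimes e_1^{*}\bigr)\;=\;e_1\wedge\cdots\wedge e_{i+1}\otimes e_n^{*}.
$$
Since $n\notin\{1,\ldots,i+1\}$, this element is already traceless and lies in $U_i^{\tree}$; in particular $(\id-E_{1,n})$ commutes with the projection $U_i\twoheadrightarrow U_i^{\tree}$ defining $c_i^{\tree}$, so the formula computed for $c_i$ transports directly to the claim for $c_i^{\tree}$. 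Non-triviality of the resulting vectors in $U_i^{\wheel}$ and $U_i^{\tree}$ is immediate, since they are non-zero weight vectors in the respective irreducible representations.

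The main obstacle will be the combinatorial bookkeeping in the middle paragraph: identifying precisely which triples $(L,\sigma,\text{internal }\varphi\text{-choice})$ survive each contraction and with what sign. The essential structural input that makes this tractable is the triangular inequality $l_k\le k-1$, which together with the antisymmetrization collapses a priori intricate signed sums into a single $\gpS_i$-orbit; the remaining work is then a careful sign-and-counting verification, which one can sanity-check in the case $i=1$ by a small direct calculation.
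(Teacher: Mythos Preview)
Your approach is essentially the same as the paper's. The paper also computes $\tau_*(\alpha_{(0,i)})$ as the wedge of the $\tau(h_{k,1})$ and evaluates the two contractions directly; for the harder $c_i$ computation it organizes the combinatorics via an indexing $K\in[n]^{3i}$ of the basis of $M_i$, shows that for each $\sigma\in\gpS_{i+1}$ there is a unique surviving basis element $K(\sigma)$, and proves by induction on $i$ that its coefficient is $(-1)^i\sgn(\sigma)$---exactly the ``careful sign-and-counting verification'' you anticipate as the main obstacle.
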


    \begin{proof}
    We have
    \begin{gather*}
        \tau_* (\alpha_{(0,i)})=
        e_{2,1}^{1} \wedge (e_{3,1}^{1}+e_{3,2}^2)
        \wedge \cdots \wedge (\sum_{j=1}^i e_{i+1,j}^{j})
        \in \bigwedge^{i} U.
    \end{gather*}

    First, we consider the image under $c^{\wheel}_{i}$.
    By the definition of $c^{\wheel}_{i}$,
    we have
    \begin{gather*}
        c^{\wheel}_{i} \iota_{i} \tau_* (\alpha_{(0,i)}) =  c^{\wheel}_{i} \iota_{i} ( e_{2,1}^{1} \wedge e_{3,1}^{1}\wedge \cdots \wedge e_{i+1,1}^{1})
        \in \Z (e_2\wedge e_3\wedge\cdots \wedge e_{i+1}).
    \end{gather*}
    Since the inclusion $\iota_{i}$ gives $i!$ copies of $e_2\wedge e_3\wedge \cdots\wedge e_{i+1}$, we obtain
    \begin{gather*}
         c^{\wheel}_{i} \iota_{i} \tau_* (\alpha_{(0,i)}) = i!\: e_2\wedge e_3\wedge\cdots \wedge e_{i+1}\in V_{1^i,0}\setminus\{0\}\cong  U_i^{\wheel}\setminus \{0\}.
     \end{gather*}

    Next, we consider the image under the contraction map  $c^{\tree}_{i}$.
    For $K=(k_1,\cdots,k_{3i})\in [n]^{3i}$, let $e_{K}=\bigotimes_{j=1}^i (e_{k_j}\otimes e_{k_{2i+1-j}}\otimes e_{k_{2i+j}}^*)$.
    Then $\{e_{K} \mid K\in [n]^{3i} \}$
    forms a basis for $M_i$.
    We write
    \begin{gather*}
         \iota_{i} \tau_* (\alpha_{(0,i)})=\sum_{K\in [n]^{3i}} a_K e_K \in M_i,
    \end{gather*}
    where $a_K\in \Z$.
    Then we can easily check the following properties of the coefficient $a_K$
    \begin{itemize}
        \item for each $K\in [n]^{3i}$, we have $a_K\in \{0,1,-1\}$.
        \item for $K\notin [i+1]^{3i}$, we have $a_K=0$.
        \item for $K\in [i+1]^{3i}$ such that $k_{2i+j}\neq \min\{k_j,k_{2i+1-j}\}$ for some $j\in [i]$, we have $a_K=0$.
    \end{itemize}

    By the above properties, if we have $c_{i}(a_K e_K)\in (\bigwedge^{i+1} H  \otimes H^*)\setminus \{0\}$, then there exists a permutation $\sigma\in \gpS_{i+1}$ such that for any $j\in [i+1]$, we have $k_j=\sigma(j)$.

    \begin{claim}\label{exist}
     For each $\sigma\in \gpS_{i+1}$, there uniquely exists $K\in [i+1]^{3i}$
     such that
     \begin{itemize}
         \item $k_j=\sigma(j)$ for $j\in [i+1]$
         \item $c_{i}(a_K e_K)\neq 0$.
     \end{itemize}
     In particular, we have
     $$c_{i}(a_K e_K)= a_K (e_{\sigma(1)}\wedge e_{\sigma(2)}\wedge \cdots\wedge e_{\sigma(i+1)})\otimes e_1^{*} \neq 0.$$
    \end{claim}

    \begin{proof}[Proof of Claim \ref{exist}]
    Define $K\in [i+1]^{3i}$ as
    \begin{gather*}
        k_j=
        \begin{cases}
        \sigma(j) & (1\le j\le i+1)\\
        \min \{k_{2i+2-j}, \cdots , k_{i+1}\} & (i+2\le j \le 2i)\\
        \min \{k_{j-2i},\cdots , k_{i+1} \} & (2i+1\le j\le 3i).
        \end{cases}
    \end{gather*}
    Then we can check that
    \begin{gather*}
        c_{i}(a_K e_K)=a_K (e_{\sigma(1)}\wedge e_{\sigma(2)}\wedge \cdots\wedge e_{\sigma(i+1)})\otimes e_1^{*}.
    \end{gather*}
    The uniqueness follows from the construction.
    \end{proof}

    \begin{claim}\label{sgn}
     For each $\sigma\in \gpS_{i+1}$, let $K(\sigma)$ denote the unique element of $[i+1]^{3i}$ that we constructed in the proof of Claim \ref{exist}.
     Then we have $a_{K(\sigma)}=(-1)^i \sgn (\sigma)$.
    \end{claim}

    \begin{proof}[Proof of Claim \ref{sgn}]
     We use the induction on $i$.
     For $i=1$, we have
     $$a_{K(\id)}=-1=(-1)^1 \sgn (\id), \quad a_{K((12))}=1=(-1)^1 \sgn((12)).$$
     Assume that the statement holds for $i\ge 1$.
     For any $\sigma\in \gpS_{i+1}$, define $\tau\in \gpS_{i+1}$ as
     \begin{gather*}
         \tau(j)=
         \begin{cases}
         \sigma(1) & (j=1)\\
         j-1 & (2\le j\le \sigma(1))\\
         j &(\sigma(1)+1\le j\le i+1).
         \end{cases}
     \end{gather*}
     Let $\ti{\tau}=\tau^{-1}\sigma$.
     Then $\ti{\tau}$ maps $1$ to $1$, so we regard $\ti{\tau}\in \gpS_i$ by identifying $\{2,\cdots,i+1\}$ with $\{1,\cdots,i\}$ via the order preserving bijection.
     We have
     $$a_{K(\sigma)}=a_{K(\tau\ti{\tau})}=
     \begin{cases}
     (-1)a_{K(\ti{\tau})} & (\sigma(1)=1)\\
     (-1)^{\sigma(1)-2}a_{K(\ti{\tau})} & (\sigma(1)\neq 1).
     \end{cases}
     $$
     Therefore, in any case, we have
     $a_{K(\sigma)}=(-1)^{\sigma(1)}a_{K(\ti{\tau})}$.
     Since we have $\sgn(\tau)=(-1)^{\sigma(1)-1}$, by the hypothesis of the induction, we have
     \begin{gather*}
      \begin{split}
         (-1)^{\sigma(1)} a_{K(\ti{\tau})}
          &=(-1)^{\sigma(1)}(-1)^i \sgn(\ti{\tau})
          =(-1)^{i+1} \sgn(\tau) \sgn(\ti{\tau})
          =(-1)^{i+1}\sgn(\sigma),
      \end{split}
     \end{gather*}
     which completes the proof.
    \end{proof}

    By using Claims \ref{exist} and \ref{sgn}, we have
    \begin{gather*}
     \begin{split}
        c_{i} \iota_{i} \tau_* (\alpha_{(0,i)})
        &=\sum_{K\in [n]^{3i}} c_{i} (a_K e_K)\\
        &=\sum_{\sigma\in \gpS_{i+1}} a_{K(\sigma)} (e_{\sigma(1)}\wedge e_{\sigma(2)}\wedge \cdots\wedge e_{\sigma(i+1)})\otimes e_1^{*}\\
        &=\sum_{\sigma\in \gpS_{i+1}} a_{K(\sigma)} \sgn(\sigma) (e_1\wedge e_2\wedge\cdots\wedge e_{i+1})\otimes e_1^{*}\\
        &=(-1)^i (i+1)!\: e_1\wedge e_2\wedge\cdots \wedge e_{i+1}\otimes e_{1}^*.
     \end{split}
    \end{gather*}
    Since for $n\ge i+2$, we have
    $$(\id- E_{1,n})c_{i} \iota_{i} \tau_* (\alpha_{(0,i)}) =(-1)^i (i+1)!\: e_1\wedge e_2\wedge \cdots \wedge e_{i+1}\otimes e_{n}^* \in U_i^{\tree}\setminus \{0\},$$
    the proof is complete.
    \end{proof}

    \begin{theorem}\label{nontriviality}
     For $n\ge i+1$, we have a surjective $\GL(n,\Q)$-homomorphism
     $$H^A_i(\IA_n,\Q) \twoheadrightarrow \Hom(H,\bigwedge^{i+1} H)\cong V_{1^i,0} \oplus V_{1^{i+1},1}.$$
     Therefore, $H^A_i(\IA_n,\Q)$ includes a $\GL(n,\Q)$-subrepresentation which is isomorphic to $\Hom(H,\bigwedge^{i+1} H)$.
    \end{theorem}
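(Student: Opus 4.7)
The plan is to exhibit the claimed surjection as the composition
$$H^A_i(\IA_n, \Q) \hookrightarrow \bigwedge^i U \xrightarrow{\iota_i} M_i \xrightarrow{(c_i^{\tree},\, c_i^{\wheel})} U_i^{\tree} \oplus U_i^{\wheel} = U_i,$$
and then deduce surjectivity using Lemma \ref{lemconnected} together with the irreducibility of the two summands. This composition is manifestly a $\GL(n,\Q)$-homomorphism since each of $\iota_i$, $c_i^{\tree}$, $c_i^{\wheel}$ is $\GL(n,\Q)$-equivariant by construction, and the inclusion $H^A_i(\IA_n, \Q) \hookrightarrow \bigwedge^i U$ comes from $\tau_*$, which is exactly the homology of the abelianization map after the Johnson identification \eqref{H1IAn}, so the composition really does factor through the Albanese quotient.

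Next, I would feed the explicit abelian cycle $\alpha_{(0,i)} \in H_i(\IA_n, \Q)$ from Definition \ref{abeliancyclepairpart} into this composition. Lemma \ref{lemconnected} already tells me that its image under the wheel contraction is a nonzero multiple of $e_2 \wedge \cdots \wedge e_{i+1}$ in $U_i^{\wheel}$, and that acting by $\id - E_{1,n} \in \Q[\GL(n,\Z)]$ on the tree-contraction image produces the nonzero element $(-1)^i (i+1)!\, e_1 \wedge \cdots \wedge e_{i+1} \otimes e_n^*$ of $U_i^{\tree}$. Since $U_i^{\wheel} \cong V_{1^i, 0}$ and $U_i^{\tree} \cong V_{1^{i+1}, 1}$ are irreducible $\GL(n,\Q)$-representations and the image of $H^A_i(\IA_n, \Q)$ is a $\GL(n,\Q)$-subrepresentation of $U_i$ (via the algebraic extension of the $\GL(n,\Z)$-action highlighted in the introduction), this image must contain both irreducible summands, hence equal all of $U_i = \Hom(H, \bigwedge^{i+1} H)$.

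The bulk of the technical work is already packaged inside Lemma \ref{lemconnected} (in particular Claims \ref{exist} and \ref{sgn} for the tree part); what remains is a clean semisimplicity argument, so I do not expect further obstacles. The only delicate point is confirming that the map $\tau_*$ above really coincides with the abelianization-induced map $\pi_*$ defining $H^A_i(\IA_n, \Q)$, so that the factorization through $H^A_i(\IA_n, \Q)$ is valid; this follows directly from \eqref{H1IAn}. The last assertion of the theorem, that $H^A_i(\IA_n, \Q)$ contains an isomorphic copy of $\Hom(H, \bigwedge^{i+1} H)$, is then immediate from complete reducibility of algebraic $\GL(n, \Q)$-representations applied to the surjection just constructed.
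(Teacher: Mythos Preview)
Your proposal is correct and follows exactly the approach the paper intends: the paper's own proof is the single sentence ``This directly follows from Lemma \ref{lemconnected},'' and you have simply written out what that sentence means (compose $\tau_*$ with $\iota_i$ and the two contractions, then use irreducibility of the summands together with the nonvanishing in Lemma \ref{lemconnected} to get surjectivity). One small remark: the tree part of Lemma \ref{lemconnected} is stated only for $n\ge i+2$, but at $n=i+1$ the summand $U_i^{\tree}\cong V_{1^{i+1},1}$ vanishes anyway, so the wheel computation alone suffices there; you may want to make that edge case explicit.
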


    \begin{proof}
    This directly follows from Lemma \ref{lemconnected}.
    \end{proof}

 \subsection{A representation-stable subrepresentation of $H^A_i(\IA_n,\Q)$}

  Here we observe that the non-trivial subrepresentation of $H^A_i(\IA_n,\Q)$ that is detected in Theorem \ref{nontriviality} is representation stable.

  We have a canonical inclusion map $\IA_n\hookrightarrow \IA_{n+1}$ sending $f\in\IA_n$ to $f\in \IA_{n+1}$ which is the same as $f$ on $F_n\subset F_{n+1}$ and fixes $x_{n+1}$.
  Let $H_i(\IA_*,\Q)$ denote the sequence of group homomorphisms $H_i(\IA_n,\Q)\to H_i(\IA_{n+1},\Q)$ induced by the inclusion maps.
  Since the inclusion map $\IA_n\hookrightarrow \IA_{n+1}$ sends
  $g_{a,b}\in \IA_n$ to $g_{a,b}\in \IA_{n+1}$ and $f_{a,b,c}\in \IA_n$ to $f_{a,b,c}\in \IA_{n+1}$,
  the abelian cycle $\alpha_{(0,i)}\in H_i(\IA_n,\Q)$ is sent to $\alpha_{(0,i)}\in H_i(\IA_{n+1},\Q)$.
  Therefore, the element $\tau_*(\alpha_{(0,i)})\in H^A_i(\IA_n,\Q)$ is sent to $\tau_*(\alpha_{(0,i)})\in H^A_i(\IA_{n+1},\Q)$.

  We can check that the canonical inclusion maps $H(n)\hookrightarrow H(n+1)$ and $H(n)^*\hookrightarrow H(n+1)^*$ that we have observed in Section \ref{prestable} send $e_1\wedge e_2\wedge\cdots \wedge e_{i+1}\otimes e_1^*\in \bigwedge^{i+1} H(n)\otimes H(n)^*$ to $e_1\wedge e_2\wedge\cdots\wedge e_{i+1}\otimes e_1^*\in \bigwedge^{i+1} H(n+1) \otimes H(n+1)^*$.
  Let $\left(\bigwedge^{i+1} H\right)\otimes H^*$ denote the sequence of group homomorphisms $\left(\bigwedge^{i+1} H(n)\right)\otimes H(n)^*\hookrightarrow \left(\bigwedge^{i+1} H(n+1)\right)\otimes H(n+1)^*$.

  By Theorem \ref{nontriviality}, we have the following proposition.

  \begin{proposition}\label{representationstabilityofconnectedpart}
  The sequence $H^A_i(\IA_*,\Q)$ includes a representation-stable sequence which is isomorphic to $\left(\bigwedge^{i+1} H\right) \otimes H^*$.
  \end{proposition}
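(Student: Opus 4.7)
The plan is to promote the surjection of Theorem \ref{nontriviality} to a compatible family of splittings, from which representation stability of the target is automatically inherited by the chosen subsequence.

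First I would verify that the surjection $\pi_n : H^A_i(\IA_n,\Q)\twoheadrightarrow (\bigwedge^{i+1}H(n))\otimes H(n)^*$ produced by Lemma \ref{lemconnected} commutes with the stabilization maps. Each constituent---the Johnson homomorphism $\tau_*$, the embedding $\iota_i:\bigwedge^i U\hookrightarrow M_i$, and the contractions $c^{\tree}_i$ and $c^{\wheel}_i$---is defined by a universal formula involving only the dual pairing and wedge product and does not depend on the rank $n$. Hence these constituents commute with the inclusions $H(n)\hookrightarrow H(n+1)$ and $H(n)^*\hookrightarrow H(n+1)^*$. Combined with the two observations recorded immediately before the proposition---that $\tau_*(\alpha_{(0,i)})$ stabilizes to $\tau_*(\alpha_{(0,i)})$, and that the vectors $e_1\wedge\cdots\wedge e_{i+1}\otimes e_1^*$ stabilize correctly across ranks---this yields the commutative square $\sigma_n \circ \pi_n = \pi_{n+1}\circ \sigma_n$.

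Next, I would use semisimplicity of each $H^A_i(\IA_n,\Q)$ as an algebraic $\GL(n,\Q)$-representation to inductively choose $\GL(n,\Q)$-subrepresentations $V_n\subset H^A_i(\IA_n,\Q)$ satisfying (i) $\pi_n|_{V_n}$ is an isomorphism onto $(\bigwedge^{i+1}H(n))\otimes H(n)^*$, and (ii) $\sigma_n(V_n)\subset V_{n+1}$. At the base stage any $\GL(n_0,\Q)$-equivariant section of $\pi_{n_0}$ suffices. For the inductive step, $\sigma_n(V_n)$ is a $\GL(n,\Q)$-subrepresentation of $H^A_i(\IA_{n+1},\Q)$ on which $\pi_{n+1}$ is injective (using commutativity of the square and injectivity of the target stabilization), and I would extend the resulting $\GL(n,\Q)$-equivariant partial section of $\pi_{n+1}$ (defined on $\pi_{n+1}(\sigma_n(V_n))$) to a $\GL(n+1,\Q)$-equivariant section on all of $(\bigwedge^{i+1}H(n+1))\otimes H(n+1)^*$, setting $V_{n+1}$ to be its image.

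The main obstacle is precisely this extension of a $\GL(n,\Q)$-equivariant partial section to a $\GL(n+1,\Q)$-equivariant section; the obstruction lives in a $\GL(n+1,\Q)$-equivariant $\mathrm{Ext}^1$ group that vanishes by semisimplicity of $\ker\pi_{n+1}$ as a $\GL(n+1,\Q)$-representation. Once the family $(V_n)_n$ is in hand, the isomorphisms $\pi_n|_{V_n}$ assemble into an isomorphism of sequences identifying $(V_n)_n$ with $\{(\bigwedge^{i+1}H(n))\otimes H(n)^*\}_n$, so representation stability of the former is inherited from that of the latter (which is immediate since the target is a polynomial tensor construction in the natural representation and its dual).
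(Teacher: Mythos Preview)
Your first paragraph is correct and matches what the paper records just before the proposition: the contraction maps, $\iota_i$, and $\tau_*$ are all given by rank-independent formulas, so the surjections $\pi_n$ form a morphism of sequences. The paper's own argument is essentially just this compatibility observation together with Theorem~\ref{nontriviality}; it does not spell out the construction of a compatible family of subrepresentations, so in that sense you are attempting something more precise than the paper.

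The gap is in your inductive extension step. Vanishing of $\mathrm{Ext}^1$ in the category of algebraic $\GL(n{+}1,\Q)$-representations tells you that \emph{some} $\GL(n{+}1,\Q)$-equivariant section of $\pi_{n+1}$ exists; it does \emph{not} tell you that a prescribed $\GL(n,\Q)$-equivariant partial section on $B_0=\pi_{n+1}(\sigma_n(V_n))$ extends to one. Concretely, the set of $\GL(n{+}1,\Q)$-sections is a torsor under $\Hom_{\GL(n{+}1,\Q)}(B,K_{n+1})$, and restriction to $B_0$ lands in the (generally larger) space of $\GL(n,\Q)$-sections on $B_0$, a torsor under $\Hom_{\GL(n,\Q)}(B_0,K_{n+1})$. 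There is no reason the affine restriction map hits your particular $s_0$; equivalently, the $\GL(n{+}1,\Q)$-span of $\sigma_n(V_n)$ inside $H^A_i(\IA_{n+1},\Q)$ may meet $\ker\pi_{n+1}$ nontrivially. This is exactly the phenomenon the paper later flags in the $\GL(\infty,\Q)$ setting (the sequence $0\to V_{1^{i+1},1}\to (\bigwedge^{i+1}\bfH)\otimes\bfH^*\to V_{1^i,0}\to 0$ does not split), so one should not expect a purely formal splitting argument to go through.

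If you want to repair this, one option is to read the proposition in the looser sense the paper seems to intend: for each $n\ge i+1$ semisimplicity gives a subrepresentation $V_n\subset H^A_i(\IA_n,\Q)$ isomorphic to $(\bigwedge^{i+1}H(n))\otimes H(n)^*$, and the target sequence is representation stable; the ``inclusion'' is then of isomorphism types rather than of a literal coherent subobject. Alternatively, you could try to pin down $V_n$ canonically (e.g.\ as a specific isotypic piece or via a canonical retraction built from the same contraction maps), but that requires an argument beyond generic semisimplicity.
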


  By the representation stability, we can consider an analogue of Theorem \ref{nontriviality} in the category of algebraic $\GL(\infty,\Q)$-representations.
  Let $\IA_{\infty}=\varinjlim_{n}\IA_n$ denote the direct limit.
  Both group homology and Albanese homology preserve direct limits (see Proposition \ref{preservedirectlimit}), that is, we have
  $$
   H_i(\IA_{\infty},\Q)\cong \varinjlim_{n}H_i(\IA_n,\Q),\quad  H^A_i(\IA_{\infty},\Q)\cong \varinjlim_{n}H^A_i(\IA_n,\Q).
  $$
  Since $\alpha_{(0,i)}$ is preserved under $H_i(\IA_n,\Q)\to H_i(\IA_{n+1},\Q)$, we obtain a cycle $\alpha_{(0,i)}$ in $H_i(\IA_{\infty},\Q)$ and an element $\tau_*(\alpha_{(0,i)})\in H^A_i(\IA_{\infty},\Q)$.
  Therefore, by Lemma \ref{lemconnected}, we have a $\GL(\infty,\Q)$-homomorphism
  $$c_{i} \iota_{i}: H^A_i(\IA_{\infty},\Q) \to \left(\bigwedge^{i+1} \bfH\right)\otimes\bfH^*$$
  such that
  \begin{gather*}
    c_{i} \iota_{i} \tau_* (\alpha_{(0,i)})= (-1)^i (i+1)!\: e_1\wedge e_2\wedge\cdots \wedge e_{i+1}\otimes e_{1}^*.
  \end{gather*}

  \begin{corollary}\label{infinitedimensional}
  The $\GL(\infty,\Q)$-homomorphism
  $$
  c_{i} \iota_{i}: H^A_i(\IA_{\infty},\Q)\to\left(\bigwedge^{i+1} \bfH\right)\otimes\bfH^*
  $$
  is surjective.
  In particular, for each $i\ge 1$, $H_i(\IA_{\infty},\Q)$ is infinite dimensional.
  \end{corollary}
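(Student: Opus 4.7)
The plan is to deduce the corollary from the finite-rank statement in Theorem \ref{nontriviality} by passing to the direct limit over $n$. For each $n \ge i+1$, Theorem \ref{nontriviality} provides a surjective $\GL(n,\Q)$-homomorphism, namely $c_i \iota_i$ restricted to the subspace $H^A_i(\IA_n, \Q) \hookrightarrow H_i(U, \Q)$, landing onto $\left(\bigwedge^{i+1} H(n)\right) \otimes H(n)^* \cong V_{1^i, 0} \oplus V_{1^{i+1}, 1}$. First I would verify that these maps are compatible with the canonical inclusions $\IA_n \hookrightarrow \IA_{n+1}$, $H(n) \hookrightarrow H(n+1)$, and $H(n)^* \hookrightarrow H(n+1)^*$. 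Compatibility on the source side is essentially what Proposition \ref{representationstabilityofconnectedpart} packages; compatibility on the target side is a routine inspection of the definitions of $\iota_i$ and $c_i$ in Section \ref{subseccontraction}, using that the inclusions preserve the distinguished basis vectors $e_j$ and $e_j^*$.

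Having established this compatibility, the second step is to take the direct limit. Both group homology and Albanese homology preserve filtered colimits (Proposition \ref{preservedirectlimit}), so the left-hand side becomes $H^A_i(\IA_\infty, \Q)$, while the stable target becomes $\left(\bigwedge^{i+1} \bfH\right) \otimes \bfH^*$. Since the colimit of a filtered system of surjective $\Q$-linear maps is surjective, the induced $\GL(\infty, \Q)$-homomorphism $c_i \iota_i \colon H^A_i(\IA_\infty, \Q) \to \left(\bigwedge^{i+1} \bfH\right) \otimes \bfH^*$ is surjective, as claimed. Alternatively, one could bypass the colimit machinery and argue directly that the single element $\tau_*(\alpha_{(0,i)})$, which by Lemma \ref{lemconnected} maps to a nonzero scalar multiple of $e_1 \wedge \cdots \wedge e_{i+1} \otimes e_1^*$, generates the target as a $\GL(\infty,\Q)$-module: permutation matrices move the wedge factors to any prescribed distinct basis vectors, and applying $\id - E_{1,k}$ for $k$ large replaces $e_1^*$ with $e_k^*$, so the orbit exhausts a basis of $\left(\bigwedge^{i+1} \bfH\right) \otimes \bfH^*$.

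The infinite-dimensionality statement follows immediately: the family $\{e_1 \wedge \cdots \wedge e_{i+1} \otimes e_k^*\}_{k \ge 1}$ is linearly independent in $\left(\bigwedge^{i+1} \bfH\right) \otimes \bfH^*$, so the target is infinite dimensional, and by surjectivity so is $H^A_i(\IA_\infty, \Q)$; since this is a quotient of $H_i(\IA_\infty, \Q)$, the latter is a fortiori infinite dimensional. I do not anticipate a serious obstacle: Lemma \ref{lemconnected} and Theorem \ref{nontriviality} supply the substantive content, and what remains is only the compatibility check with the stabilization maps and the passage to the filtered colimit.
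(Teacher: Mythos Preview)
Your proposal is correct and matches the paper's approach: the paper's proof is the discussion immediately preceding the corollary, which exhibits the single element $c_i\iota_i\tau_*(\alpha_{(0,i)}) = (-1)^i(i+1)!\,e_1\wedge\cdots\wedge e_{i+1}\otimes e_1^*$ in the image and leaves the reader to observe (exactly as in your alternative argument) that it generates the target as a $\GL(\infty,\Q)$-module. Your primary colimit-of-surjections argument is a valid reformulation of the same content and requires no additional ideas.
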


  \begin{remark}
   We have a non-split exact sequence
   $$0\to V_{1^{i+1},1}\to \left(\bigwedge^{i+1} \bfH\right)\otimes\bfH^*\to V_{1^i,0}\to 0,$$
   which generalizes the observation in Section \ref{prestable} that $\bfH\otimes \bfH^*\to \Q$ does not split.
   Therefore, $H^A_i(\IA_{\infty},\Q)$ is not semisimple.
  \end{remark}

\section{The traceless part of $H^A_i(\IA_n,\Q)$}\label{Tracelesspart}
   Let $H_i(U,\Q)^{\tl}$ denote the traceless part $\wti{\bigwedge}^i U$ of $H_i(U,\Q)=\bigwedge^i U$.
   In this section, we show that the traceless part of $H_i(U,\Q)$ is contained in $H^A_i(\IA_n,\Q)$.
   Note that we have
   $$H_i(U,\Q)^{\tl}=\pi(T_{2i,i}),$$
   where $\pi: H^{\otimes 2i}\otimes (H^{*})^{\otimes i}=M_i\to \bigwedge^{i} U$ is defined by
   $$
   \left(\bigotimes_{j=1}^i (a_j\otimes b_j)\right) \otimes \left(\bigotimes_{j=1}^i c_j^*\right)\mapsto \bigwedge_{j=1}^i ((a_j\wedge b_j)\otimes c_j^*).
   $$

   \begin{theorem}\label{tracelessimage}
   Let $n\ge 3i$.
   We have $H_i(U,\Q)^{\tl}\subset H^A_i(\IA_n,\Q)$.
   \end{theorem}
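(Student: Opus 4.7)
The plan is to reduce the theorem to producing a single explicit element of $H^A_i(\IA_n,\Q)$ and then propagate it by the $\GL(n,\Q)$-action. As recorded just before the theorem, $H_i(U,\Q)^{\tl} = \pi(T_{2i,i})$, so $\pi$ restricts to a surjection $T_{2i,i} \twoheadrightarrow H_i(U,\Q)^{\tl}$. Since $n \ge 3i = 2i+i$, Corollary~\ref{projgen} applies and shows that $H_i(U,\Q)^{\tl}$ is generated as a $\GL(n,\Q)$-representation by the single element $\pi(e_{2i,i})$. Recall also from the introduction that $H^A_i(\IA_n,\Q)$ is an algebraic $\GL(n,\Q)$-subrepresentation of $H_i(U,\Q)$, since its $\GL(n,\Z)$-structure extends to the ambient algebraic $\GL(n,\Q)$-structure. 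Hence it will suffice to exhibit $\pi(e_{2i,i})$ in the image of $\tau_*\colon H_i(\IA_n,\Q) \to H_i(U,\Q)$.

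To that end, I will use the abelian-cycle construction of Section~\ref{Abeliancycles} applied to the Magnus generators $f_{2j-1,\,2j,\,n-j+1}$ for $j = 1,\ldots, i$. When $n \ge 3i$, the three indices $\{2j-1, 2j, n-j+1\}$ for $j=1,\ldots,i$ are pairwise disjoint: the ``source'' indices lie in $\{1,\ldots,2i\}$ while the ``target'' indices lie in $\{n-i+1,\ldots,n\}$, and these two index sets are disjoint precisely when $2i < n-i+1$. Two generators $f_{a,b,c}$ and $f_{a',b',c'}$ whose index sets are disjoint commute, because each acts nontrivially only on the generator indexed by its target index and leaves all other generators fixed. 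The same style of argument appears in the proof of Lemma~\ref{abeliancycle}. The resulting $i$-tuple of pairwise commuting automorphisms defines an abelian cycle
\[
    \alpha := A\bigl(f_{1,2,n},\, f_{3,4,n-1},\, \ldots,\, f_{2i-1,\,2i,\,n-i+1}\bigr) \in H_i(\IA_n, \Q).
\]

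By functoriality, $\tau_*(\alpha)$ is the image of the fundamental class of $T^i$ under the composite $\Z^i \to \IA_n \xrightarrow{\tau} U$, which is the wedge product of the Johnson images of the commuting factors. Since $\tau(f_{a,b,c}) = e_{a,b}^{\,c} = (e_a \wedge e_b) \otimes e_c^*$, we obtain
\[
    \tau_*(\alpha) \;=\; \bigwedge_{j=1}^{i} \bigl((e_{2j-1}\wedge e_{2j}) \otimes e_{n-j+1}^*\bigr) \;=\; \pi(e_{2i,i}),
\]
upon unfolding the definitions of $e_{2i,i}$ and $\pi$. Hence $\pi(e_{2i,i}) \in H^A_i(\IA_n,\Q)$, and combined with the first paragraph this gives $H_i(U,\Q)^{\tl} \subset H^A_i(\IA_n,\Q)$. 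The only genuine obstacle is the index bookkeeping packaged by the bound $n \ge 3i$: this same inequality simultaneously secures commutativity of the tuple (so that $\alpha$ is well defined) and meets the hypothesis of Corollary~\ref{projgen} (so that $\pi(e_{2i,i})$ generates the traceless part).
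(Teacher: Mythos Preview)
Your proof is correct and follows the same overall plan as the paper: reduce to showing $\pi(e_{2i,i})\in H^A_i(\IA_n,\Q)$ via Corollary~\ref{projgen}, and then produce this element as the Johnson image of an abelian cycle. The only difference is the choice of abelian cycle. The paper uses $\alpha_{(0,1^i)}=A(g_{2,1},g_{4,3},\ldots,g_{2i,2i-1})$, whose Johnson image $\bigwedge_{j=1}^{i}e_{2j,2j-1}^{\,2j-1}$ is not traceless, so it must then be transformed into $\pi(e_{2i,i})$ by acting with the element $\prod_{j=1}^{i}(\id-E_{2j-1,\,n-j+1})\in\Q[\GL(n,\Q)]$. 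Your abelian cycle $A(f_{1,2,n},f_{3,4,n-1},\ldots,f_{2i-1,2i,n-i+1})$ instead uses the $f$-generators with fully disjoint index sets, so that $\tau_*$ lands directly on $\pi(e_{2i,i})$ with no further manipulation. This is a mild but genuine simplification: it eliminates the group-algebra step, at the cost of not fitting into the paper's uniform family $\alpha_{(\mu,\nu)}$ of abelian cycles indexed by pairs of partitions. Both arguments make identical use of the hypothesis $n\ge 3i$, which simultaneously ensures disjointness of the index triples and the applicability of Corollary~\ref{projgen}.
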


   \begin{proof}
   By Corollary \ref{projgen}, the traceless part $H_i(U,\Q)^{\tl}$ is generated by
   $$\pi(e_{2i,i})=\bigwedge_{j=1}^{i}
   e_{2j-1,2j}^{n-j+1}
   $$
   as a $\GL(n,\Q)$-representation.
   Therefore, we have to show that $\pi(e_{2i,i})\in H^A_i(\IA_n,\Q)$.
   For this purpose, we use the abelian cycle $\alpha_{(0,1^i)}\in H_i(\IA_n,\Q)$ that we defined in Definition \ref{abeliancyclepairpart}.
   We have
   \begin{gather*}
       \tau_*(\alpha_{(0,1^i)})=\bigwedge_{j=1}^i
       e_{2j,2j-1}^{2j-1}
       \in H^A_i(\IA_n,\Q).
   \end{gather*}
   We can transform $\tau_*(\alpha_{(0,1^i)})$ into $\pi(e_{2i,i})$ by an action of $$\prod_{j=1}^i (\id-E_{2j-1,n-j+1})\in \Q[\GL(n,\Q)].$$
   Therefore, we have $\pi(e_{2i,i})\in H^A_i(\IA_n,\Q)$.
   \end{proof}

   Church--Ellenberg--Farb \cite[Theorem 7.2.3]{CEF} proved that for each $i\ge 0$,
   there exists a polynomial $P_i(T)$ of degree $\le 3i$ such that $\dim_{\Q}(H^A_i(\IA_n,\Q))=P_i(n)$ for sufficiently large $n$ with respect to $i$.
   On the other hand, we have the following decomposition of $H_i(U,\Q)$
   \begin{gather}\label{decompositionofwedgeU}
       H_i(U,\Q)=\bigwedge^i U=\bigoplus_{\lambda \vdash i} (\repS_{\lambda}(\repS_{1^2} H)\otimes \repS_{\lambda'}(H^*)),
   \end{gather}
   where $\repS_{\lambda}$ denotes the \emph{Schur functor}, which sends a vector space $V$ to $\repS_{\lambda}(V)=V_{\lambda}$, and
   where $\lambda'$ denotes the \emph{conjugate} partition $\lambda'$ to $\lambda$, which is obtained from the Young diagram corresponding to $\lambda$ by interchanging rows and columns.
   (See \cite[Exercise 6.11]{FH}.)
   In particular, we have $V_{1^{2i},1^{i}} \subset H_i(U,\Q)^{\tl}$.
   By \eqref{decompositionofwedgeU}, we obtain $\dim_{\Q}(H_i(U,\Q)^{\tl})=P'_i(n)$ for $n\ge 3i$, where $P'_i(T)$ is a polynomial of degree $3i$.
   By Theorem \ref{tracelessimage}, we obtain the following theorem.

   \begin{theorem}\label{dimension}
   For $n\ge 3i$, we have $\dim_{\Q}(H^A_i(\IA_n,\Q))\ge P'_i(n)$.
   Moreover, there exists a polynomial $P_i(T)$ of degree $3i$ such that $\dim_{\Q}(H^A_i(\IA_n,\Q))=P_i(n)$ for sufficiently large $n$ with respect to $i$.
   \end{theorem}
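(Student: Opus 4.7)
The proof is essentially a sandwich argument combining the lower bound from Theorem \ref{tracelessimage} with the Church--Ellenberg--Farb upper bound stated as Theorem \ref{CEHdimension}.

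First I would prove the lower bound assertion by invoking Theorem \ref{tracelessimage} directly: for $n\ge 3i$ we have an inclusion $H_i(U,\Q)^{\tl}\subset H^A_i(\IA_n,\Q)$, hence
$$\dim_{\Q} H^A_i(\IA_n,\Q)\ge \dim_{\Q} H_i(U,\Q)^{\tl}=P'_i(n),$$
where the equality on the right uses the Schur-functor decomposition \eqref{decompositionofwedgeU} together with the identification of $P'_i(n)$ as a polynomial of degree $3i$, both established in the paragraph immediately preceding the theorem statement.

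Next I would treat the ``moreover'' part. Theorem \ref{CEHdimension} furnishes a polynomial $P_i(T)$ of degree at most $3i$ such that $\dim_{\Q} H^A_i(\IA_n,\Q)=P_i(n)$ for all sufficiently large $n$. Combining with the lower bound $P_i(n)\ge P'_i(n)$ valid for all sufficiently large $n$, and using that $P'_i$ has degree exactly $3i$, we conclude $\deg P_i\ge 3i$, and hence $\deg P_i=3i$. The only step that demands any attention here is knowing that $\deg P'_i=3i$; this follows from the decomposition \eqref{decompositionofwedgeU} once one locates an irreducible summand $V_{\ul\lambda}$ in $H_i(U,\Q)^{\tl}$ with $|\lambda^+|+|\lambda^-|=3i$, and the paper has already done this by pointing out $V_{1^{2i},1^i}\subset H_i(U,\Q)^{\tl}$. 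Since the dimension of the irreducible algebraic $\GL(n,\Q)$-representation $V_{\ul\lambda}(n)$ is, by the Weyl dimension formula, a polynomial in $n$ of degree $|\lambda^+|+|\lambda^-|$, the summand $V_{1^{2i},1^i}$ contributes a term of degree exactly $3i$ to $P'_i$, while every other irreducible summand of $H_i(U,\Q)^{\tl}\subset M_i$ corresponds to a bipartition of total size at most $3i$. Thus no leading-term cancellation can occur and $\deg P'_i=3i$.

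The main (mild) obstacle is the verification that the dimension of $H_i(U,\Q)^{\tl}$ really is a polynomial in $n$ once $n\ge 3i$, with a well-defined leading coefficient. This amounts to showing that the multiplicity of each $V_{\ul\mu}$ in $H_i(U,\Q)^{\tl}$ is independent of $n$ in the stable range, which follows from the standard representation-stability behavior of Schur functors applied to $H$ and $H^*$, together with the Koike decomposition \eqref{Tpq}. Once this polynomiality is granted, the degree count above is immediate and the theorem follows.
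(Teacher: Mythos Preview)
Your proposal is correct and follows essentially the same approach as the paper: the lower bound comes from Theorem \ref{tracelessimage}, the upper bound on the degree from the Church--Ellenberg--Farb result (Theorem \ref{CEHdimension}), and the two combine to force $\deg P_i=3i$. The paper states the degree of $P'_i$ as a consequence of the decomposition \eqref{decompositionofwedgeU} and the inclusion $V_{1^{2i},1^i}\subset H_i(U,\Q)^{\tl}$ without further elaboration; your additional remarks about the Weyl dimension formula and the absence of leading-term cancellation simply flesh out what the paper leaves implicit.
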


\section{The structure of $H^A_i(\IA_n,\Q)$}\label{Generalpart}

In this section, we introduce a graded $\GL(n,\Q)$-representation $W_*$.
The degree $i$ part $W_i$ is graded over the set of pairs of partitions of total size $i$.
We show that $W_i$ is a subquotient of $H^A_i(\IA_n,\Q)$.
The non-trivial quotient representations that we detected in Theorem \ref{nontriviality} are the degree $(i,0)$ and $(0,i)$ parts, and the traceless part $H_i(U,\Q)^{\tl}$ is the direct sum of the degree $(1^j,1^k)$ parts for $j+k=i$.

 \subsection{Conjectural structure of $H^A_i(\IA_n,\Q)$}

Recall that for $n\ge i+1$, we have
$$U_i=\Hom(H,\bigwedge^{i+1} H)=U_i^{\tree}\oplus U_i^{\wheel}\cong  V_{1^{i+1},1}\oplus V_{1^i,0}.$$
Let $U_*=\bigoplus_{i\ge 1} U_i$, which is a graded algebraic $\GL(n,\Q)$-representation.
Define $W_*=\wti{S}^* (U_*)$ as the traceless part of the graded-symmetric algebra $S^* (U_*)$ of $U_*$, which we defined in Section \ref{tracelesswedge}.
We can also construct $W_*$ by using an operad $\calC om$ of non-unital commutative algebras as we explain in Section \ref{KV}.

 For $i\ge 0$, let $(\mu,\nu)\vdash i$ be a pair of partitions.
 If we write
 $$\mu=(\mu_1^{k_1'},\cdots,\mu_r^{k_r'}),\quad \nu=(\nu_1^{k_1''},\cdots,\nu_s^{k_s''}),$$
 then it means that $\mu_1>\mu_2>\cdots>\mu_r$ and $\mu_j$ appears $k_j'>0$ times in $\mu$ and that $\nu_1>\nu_2>\cdots>\nu_s$ and $\nu_j$ appears $k_j''>0$ times in $\nu$.
 Then the length of $\mu$ (resp. $\nu$) is $l(\mu)=\sum_{j=1}^r k_j'$ (resp. $l(\nu)=\sum_{j=1}^s k_j''$).
 Let
 $$U^{\tree}_{\mu}=\bigotimes_{j=1}^r S^{k_j'} (U_{\mu_j}^{\tree}),\quad
 U^{\wheel}_{\nu}=\bigotimes_{j=1}^s S^{k_j''} (U_{\nu_j}^{\wheel}).$$
 Then we have
 $$S^*(U_*)= \bigoplus_{i\ge 0} \bigoplus_{(\mu,\nu)\vdash i} U^{\tree}_{\mu}\otimes U^{\wheel}_{\nu}.$$
 Since
 $W_i=\wti{S}^* (U_*)_i$ by the definition of $W_*$,
 we have
  $$W_i= \bigoplus_{(\mu,\nu)\vdash i} W(\mu,\nu)\subset S^*(U^*)_i,$$
  where
 $$W(\mu,\nu)=
  \wti{\bigotimes}_{j=1}^r \wti{S}^{k_j'}\left(U_{\mu_j}^{\tree}\right)  \wti{\otimes}\;
  \wti{\bigotimes}_{j=1}^s \wti{S}^{k_j''}\left(U_{\nu_j}^{\wheel}\right)\subset U^{\tree}_{\mu}\otimes U^{\wheel}_{\nu}.$$

 For example, we have for $n\ge 3$,
 $$W_1=W(1,0)\oplus W(0,1)\cong V_{1^2,1}\oplus V_{1,0},$$
 and we have for $n\ge 6$,
 \begin{gather*}
 \begin{split}
      W_2&=W(2,0)\oplus W(0,2)\oplus W(1^2,0)\oplus W(1,1)\oplus W(0,1^2)\\
      &\cong V_{1^3,1}\oplus V_{1^2,0} \oplus (V_{1^4,1^2}\oplus V_{2 1^2,2}\oplus V_{2^2,1^2})\oplus (V_{21,1}\oplus V_{1^3,1})\oplus V_{1^2,0}.
 \end{split}
 \end{gather*}

 Here, we observe that the subrepresentation $W(\mu,\nu)$ of $W_i$ is an image of the traceless part $T_{i+l(\mu),l(\mu)}$ under a projection.
 Let
 $$\pi_j^{\mu}: H^{\otimes k_j'(1+\mu_j)}\otimes (H^*)^{\otimes k_j'}=(H^{\otimes (1+\mu_j)}\otimes H^*)^{\otimes k_j'}\twoheadrightarrow (V_{1^{1+\mu_j}, 1})^{\otimes k_j'}\twoheadrightarrow S^{k_j'} (V_{1^{1+\mu_j}, 1})
 $$
 and
 $$
 \pi_j^{\nu}:  H^{\otimes k_j''\nu_j}= (H^{\otimes \nu_j})^{\otimes k_j''} \twoheadrightarrow (V_{1^{\nu_j}})^{\otimes k_j''}\twoheadrightarrow S^{k_j''} (V_{1^{\nu_j}})
 $$
 be the composition of two canonical projections, and let
 \begin{gather*}
  \begin{split}
    \pi^{(\mu,\nu)}=\left(\bigotimes_{j=1}^r \pi_j^{\mu} \right) \otimes \left(\bigotimes_{j=1}^s \pi_j^{\nu}\right):
     H^{\otimes (i+l(\mu))} \otimes (H^*)^{\otimes l(\mu)}\to
     U^{\tree}_{\mu}\otimes U^{\wheel}_{\nu}.
  \end{split}
 \end{gather*}
 Then we have
 $$W(\mu,\nu)=\pi^{(\mu,\nu)}(T_{i+l(\mu), l(\mu)}).$$

 Define
 \begin{gather*}
     \begin{split}
         &F_{(\mu,\nu)}:
         H_i(U,\Q)=\bigwedge^i U
         \to
         U^{\tree}_{\mu}\otimes U^{\wheel}_{\nu}
     \end{split}
 \end{gather*}
 to be the composition of the inclusion
 $\iota_{i}: \bigwedge^i U\hookrightarrow M_i$
 and
 $$\left(\bigotimes_{j=1}^r \frac{1}{k_j'!} \prod^{k_j'}c^{\tree}_{\mu_j} \right)
 \otimes
 \left(\bigotimes_{j=1}^s \frac{1}{k_j''!} \prod^{k_j''}c^{\wheel}_{\nu_j}
 \right)
 : M_i
 \to
 U^{\tree}_{\mu}\otimes U^{\wheel}_{\nu}.$$

 Then we have a $\GL(n,\Q)$-homomorphism
 \begin{gather*}
     F_i:= \bigoplus_{(\mu,\nu)\vdash i} F_{(\mu,\nu)} : H_i(U,\Q)\to \bigoplus_{(\mu,\nu)\vdash i}
     U^{\tree}_{\mu}\otimes U^{\wheel}_{\nu}.
 \end{gather*}

 \begin{theorem}\label{Johnsonpart}
  For $n\ge 3i$, we have
  $$F_i (H^A_i(\IA_n,\Q)) \supset W_i.$$
 \end{theorem}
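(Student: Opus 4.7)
The plan is to produce, for each pair of partitions $(\mu,\nu) \vdash i$, an explicit class in $H^A_i(\IA_n,\Q)$ whose image under $F_i$ has its $(\mu,\nu)$-component equal to a generator of $W(\mu,\nu)$. The obvious candidate is $\tau_*(\alpha_{(\mu,\nu)})$ for the abelian cycle from Definition \ref{abeliancyclepairpart}. The hypothesis $n \ge 3i$ is precisely what makes both halves of this strategy work: it ensures $n \ge i + 2l(\mu) + l(\nu)$ so that $\alpha_{(\mu,\nu)}$ is defined, and $n \ge (i+l(\mu)) + l(\mu)$ so that Corollary \ref{projgen} produces $W(\mu,\nu) = \pi^{(\mu,\nu)}(T_{i+l(\mu), l(\mu)})$ from the single generator $\pi^{(\mu,\nu)}(e_{i+l(\mu),l(\mu)})$.

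First I would compute $F_{(\mu,\nu)}(\tau_*(\alpha_{(\mu,\nu)}))$ directly. Because the $i$-tuple $\mathbf{h}_{(\mu,\nu)}$ is a concatenation of blocks $\mathbf{f}_{\mu_j,s(j)}$ and $\mathbf{g}_{\nu_k,t(k)}$ supported on disjoint index ranges chosen by $s$ and $t$, the wedge $\tau_*(\alpha_{(\mu,\nu)}) \in \bigwedge^i U$ decomposes into a block-wedge. The map $F_{(\mu,\nu)}$ applies a $c^{\tree}_{\mu_j}$ to each $\mathbf{f}$-block and a $c^{\wheel}_{\nu_k}$ to each $\mathbf{g}$-block, so the computation reduces to a product of the single-block evaluations already carried out in Lemma \ref{lemconnected}. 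Up to an explicit nonzero scalar and the action of a product of matrices of the form $\id-E_{k,l}$ (as in the proof of Theorem \ref{tracelessimage}), the output equals $\pi^{(\mu,\nu)}(e_{i+l(\mu),l(\mu)})$, and this element $\GL(n,\Q)$-generates $W(\mu,\nu)$ by Corollary \ref{projgen} combined with the construction of the traceless tensor product recalled in Section \ref{tracelesswedge}.

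The delicate point, which I expect to be the main obstacle, is that $\tau_*(\alpha_{(\mu,\nu)})$ generally has nonzero image under the other projections $F_{(\mu',\nu')}$ as well, so a single abelian cycle does not pin down the $W(\mu,\nu)$ summand on its own. To handle this I would introduce a partial order on pairs of partitions of total size $i$ (a natural choice being $(\mu',\nu') \preceq (\mu,\nu)$ when the multiset of parts of $\mu'\cup\nu'$ arises from that of $\mu\cup\nu$ by successive merging of parts) and prove a triangularity statement: $F_{(\mu',\nu')}(\tau_*(\alpha_{(\mu,\nu)})) = 0$ unless $(\mu',\nu') \preceq (\mu,\nu)$. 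Each block of $c^{\tree}_{k}$ or $c^{\wheel}_{k}$ demands its inputs form a chainable index pattern of length $k$, and the disjoint-block structure of $\mathbf{h}_{(\mu,\nu)}$ only allows such chains within a single block or by merging consecutive blocks; this combinatorial rigidity should force the required vanishing.

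Once the triangularity is established, a descending induction on $\preceq$ starting from the maximal pairs of partitions lets me subtract off lower-order contributions from the $F_i$-images of the $\tau_*(\alpha_{(\mu,\nu)})$, producing for each $(\mu,\nu)$ a class in $H^A_i(\IA_n,\Q)$ whose image under $F_i$ lies entirely in the summand $U^{\tree}_\mu\otimes U^{\wheel}_\nu$ and hits the generator $\pi^{(\mu,\nu)}(e_{i+l(\mu),l(\mu)})$ of $W(\mu,\nu)$. Acting by $\GL(n,\Q)$ then fills out all of $W(\mu,\nu)$, and summing over $(\mu,\nu) \vdash i$ yields $W_i \subset F_i(H^A_i(\IA_n,\Q))$.
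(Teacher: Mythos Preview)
Your overall strategy is exactly the paper's: compute $F_{(\mu,\nu)}(\tau_*(\alpha_{(\mu,\nu)}))$ block by block, invoke Corollary~\ref{projgen} to see it generates $W(\mu,\nu)$, then use a triangularity argument to assemble the pieces into the full direct sum. However, you have the direction of the triangularity reversed, and this breaks the induction as written.

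The key combinatorial fact is that the blocks $\mathbf{f}_{\mu_j,s(j)}$ and $\mathbf{g}_{\nu_k,t(k)}$ making up $\mathbf{h}_{(\mu,\nu)}$ live on pairwise disjoint index sets. By \eqref{disjoint}, any single contraction $c^{\tree}_k$ or $c^{\wheel}_k$ applied to a wedge of factors drawn from two different blocks vanishes. Consequently each $c^{\tree}_{\mu'_j}$ or $c^{\wheel}_{\nu'_k}$ occurring in $F_{(\mu',\nu')}$ must draw all of its inputs from a \emph{single} block of $\mathbf{h}_{(\mu,\nu)}$. This forces the parts of $(\mu',\nu')$ to \emph{refine} those of $(\mu,\nu)$, not merge them: $F_{(\mu',\nu')}(\tau_*(\alpha_{(\mu,\nu)}))=0$ unless $(\mu',\nu')$ is obtained from $(\mu,\nu)$ by splitting parts. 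Your statement that ``the disjoint-block structure \dots only allows such chains \dots by merging consecutive blocks'' is exactly backwards; disjointness forbids merging. With the corrected direction, the base case of the induction is the \emph{finest} pair $(1^l,1^{i-l})$, for which $F_i(\tau_*(\alpha_{(1^l,1^{i-l})}))$ lands purely in the $(1^l,1^{i-l})$ summand, and one proceeds upward toward coarser pairs (this is Lemma~\ref{lempairpartcontraction} and the induction in the proof of Theorem~\ref{Johnsonpart}).

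There is one further refinement in the paper that you should incorporate: one does not need to treat all of $P_i$ at once. The first factor in each $\mathbf{f}$-block carries a distinguished element $e^{d_1}_{a_1,b_1}$ with $d_1\notin\{a_1,\dots,a_{\mu_j},b_1\}$, and Lemma~\ref{lemcontractioncomputation} shows any $c^{\wheel}$ containing it vanishes; thus each $\mathbf{f}$-block must contribute exactly one $c^{\tree}$-part, so $l(\mu')=l(\mu)$. The paper therefore fixes $l$ and works inside $P_i^l$, noting that distinct $l$ give non-isomorphic irreducible constituents so the pieces assemble independently.
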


  We will prove Theorem \ref{Johnsonpart} in the rest of this section by using the abelian cycles $\alpha_{(\mu,\nu)}$.
  In a way similar to Proposition \ref{representationstabilityofconnectedpart}, we can check that the Albanese homology $H^A_i(\IA_n,\Q)$ includes a representation-stable subrepresentation which is isomorphic to $W_i$.

  It seems natural to make the following conjecture, which holds for $i=1$ \cite{CP, Farb, Kawazumi}, for $i=2$ \cite{Pettet} and for $i=3$ as we will observe in Theorem  \ref{thirdAlbanese}.

  \begin{conjecture}\label{conjectureAlbanese}
  For $n\ge 3i$, $F_i$ restricts to a $\GL(n,\Q)$-isomorphism
  $$
   F_i : H^A_i(\IA_n,\Q) \xrightarrow{\cong}  W_i.
  $$
  \end{conjecture}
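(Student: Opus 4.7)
The strategy is to use, for each pair of partitions $(\mu,\nu)\vdash i$, the abelian cycle $\alpha_{(\mu,\nu)}\in H_i(\IA_n,\Q)$ of Definition \ref{abeliancyclepairpart}, and to show that the images $\tau_*(\alpha_{(\mu,\nu)})\in H^A_i(\IA_n,\Q)$ under $F_i$, together with the $\GL(n,\Q)$-action and a triangularity argument, span $W_i$. Since $W(\mu,\nu)=\pi^{(\mu,\nu)}(T_{i+l(\mu),l(\mu)})$, Corollary \ref{projgen} shows that $W(\mu,\nu)$ is generated as a $\GL(n,\Q)$-representation by the single vector $\pi^{(\mu,\nu)}(e_{i+l(\mu),l(\mu)})$. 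It therefore suffices to produce, for each $(\mu,\nu)$, an element of $F_i(H^A_i(\IA_n,\Q))$ lying in the summand $U^{\tree}_\mu\otimes U^{\wheel}_\nu$ whose $\GL(n,\Q)$-orbit hits this distinguished generator.

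First I would compute $\tau_*(\alpha_{(\mu,\nu)})\in\bigwedge^i U$ explicitly. Using $\tau(f_{a,b,c})=e_{a,b}^c$ and $\tau(h_{k,l})=\sum_{m=l}^{k-1}e_{k,m}^m$, and exploiting the fact that the blocks $\mathbf{f}_{\mu_j,s(j)}$ and $\mathbf{g}_{\nu_k,t(k)}$ making up $\mathbf{h}_{(\mu,\nu)}$ are supported on mutually disjoint index ranges, the wedge $\tau_*(\alpha_{(\mu,\nu)})$ factors into $l(\mu)$ tree-type sub-wedges and $l(\nu)$ wheel-type sub-wedges. Applying the component $F_{(\mu,\nu)}$ of $F_i$, which pairs wedge factors via compositions of $c^{\tree}_{\mu_j}$ and $c^{\wheel}_{\nu_k}$, selects the unique pairing respecting this block decomposition; a direct computation generalising Lemma \ref{lemconnected} yields a nonzero rank-one tensor in $U^{\tree}_\mu\otimes U^{\wheel}_\nu$. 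An element of $\Q[\GL(n,\Q)]$ built from shift matrices $E_{k,l}$, in the same spirit as the proof of Theorem \ref{tracelessimage}, then carries this tensor to $\pi^{(\mu,\nu)}(e_{i+l(\mu),l(\mu)})$.

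The complication is that $F_{(\mu',\nu')}\tau_*(\alpha_{(\mu,\nu)})$ may also be nonzero for other $(\mu',\nu')\vdash i$, so the image of a single $\alpha_{(\mu,\nu)}$ under $F_i$ distributes across several summands at once. To isolate each $W(\mu,\nu)$ I would introduce a partial order $\preceq$ on pairs of partitions of $i$, roughly determined by the operations of splitting a tree or wheel block into smaller pieces and converting trees into wheels, in such a way that $F_{(\mu',\nu')}\tau_*(\alpha_{(\mu,\nu)})=0$ unless $(\mu',\nu')\preceq(\mu,\nu)$, with $(\mu,\nu)$ itself realising the generator computed above. An induction from the maximal to the minimal pairs in $\preceq$ then produces, for each $(\mu,\nu)$, a linear combination of the $\alpha_{(\mu',\nu')}$ whose image under $F_i$ is supported entirely in $U^{\tree}_\mu\otimes U^{\wheel}_\nu$ and generates $W(\mu,\nu)$.

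The step I expect to be the main obstacle is making the partial order $\preceq$ and the corresponding vanishing statement precise. Each $F_{(\mu',\nu')}$ is a sum over ways of distributing the $i$ wedge-factors of $\bigwedge^i U$ among the tree contractions $c^{\tree}_{\mu'_j}$ and wheel contractions $c^{\wheel}_{\nu'_k}$; one must check combinatorially that the distributions which survive on $\tau_*(\alpha_{(\mu,\nu)})$ force $(\mu',\nu')\preceq(\mu,\nu)$, by analysing how the dual-index coincidences demanded by each contraction interact with the disjoint-block structure of $\mathbf{h}_{(\mu,\nu)}$. Once this combinatorial bookkeeping is settled, the conclusion follows formally from semisimplicity of algebraic $\GL(n,\Q)$-representations and Corollary \ref{projgen}.
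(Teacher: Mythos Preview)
The statement you are trying to prove is a \emph{conjecture}, and the paper does not prove it in general. Your proposal, if carried out, establishes only the inclusion $W_i\subset F_i(H^A_i(\IA_n,\Q))$, which is precisely Theorem~\ref{Johnsonpart} and is proved in the paper by essentially the strategy you describe: the paper's Lemma~\ref{Johnsonpartmunu} is your ``nonzero rank-one tensor'' computation, its partial order on $P_i^l$ is your $\preceq$, and its Lemma~\ref{lempairpartcontraction} is your vanishing statement $F_{(\mu',\nu')}\tau_*(\alpha_{(\mu,\nu)})=0$ unless $(\mu',\nu')\preceq(\mu,\nu)$. So for that direction your outline matches the paper closely.

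The genuine gap is that you never address the other half of the isomorphism. Nothing in your argument bounds $H^A_i(\IA_n,\Q)$ from above: you would still need to show that $F_i$ is injective on $H^A_i(\IA_n,\Q)$ and that its image does not exceed $W_i$. Producing generators of $W_i$ inside the image says nothing about whether $H^A_i(\IA_n,\Q)$ contains additional irreducibles that either lie in $\ker F_i$ or map outside $W_i$. The paper only verifies the conjecture for $i\le 3$, and for $i=3$ it does so by an entirely different method (Section~\ref{Thirdhomology}): it computes explicit elements of $\ker\tau^*$ in $H^3(U,\Q)$ via the cup product with $R_2$ to obtain an upper bound on $H^A_3(\IA_n,\Q)$, and then matches this against the lower bound from Theorem~\ref{Johnsonpart} by counting irreducibles. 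No general mechanism for the upper bound is known, which is why the statement remains a conjecture.
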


  Conjecture \ref{conjectureAlbanese} implies that as a $\GL(n,\Q)$-representation, the Albanese homology $H^A_i(\IA_n,\Q)$ is generated by the images of abelian cycles of $H_i(\IA_n,\Q)$ under $\tau_*$.

 We would like to realize $W_i$ as a subrepresentation of $H^A_i(\IA_n,\Q)$.

 \begin{problem}
   Construct a lift
   $$
   W_i\to H^A_i(\IA_n,\Q)
   $$
   of the inclusion map $W_i\hookrightarrow F_i(H^A_i(\IA_n,\Q))$
   along $H^A_i(\IA_n,\Q)\xto{F_i} F_i(H^A_i(\IA_n,\Q))$
   as $\GL(n,\Q)$-representations for sufficiently large $n$ with respect to $i$.
 \end{problem}

   We would like to realize the Albanese homology $H^A_i(\IA_n,\Q)$ as a subrepresentation of $H_i(\IA_n,\Q)$.

 \begin{problem}
   Construct a section of
   $$
     H_i(\IA_n,\Q)\overset{\tau_*}{\twoheadrightarrow} H^A_i(\IA_n,\Q)
   $$
   as $\GL(n,\Z)$-representations for sufficiently large $n$ with respect to $i$.
 \end{problem}

 \subsection{Computation of the contraction maps}

  Here we consider a condition for $(\mu,\nu)$ and $(\xi,\eta)$ that $F_{(\mu,\nu)}(\tau_*(\alpha_{(\xi,\eta)}))$ vanishes.

  In a way similar to Lemma \ref{lemconnected}, we obtain the following lemma.

 \begin{lemma}\label{lemcontractioncomputation}
  For $n\ge i+2$, we have
     \begin{gather*}
        c^{\wheel}_{i} \iota_{i} \tau_* (\alpha_{(i,0)}) = 0,
     \end{gather*}
     and
     \begin{gather*}
        c^{\tree}_{i} \iota_{i} \tau_* (\alpha_{(i,0)}) = (-1)^{i-1}(i+1)! \: e_1\wedge e_2\wedge e_4\wedge \cdots \wedge e_{i+2}\otimes e_{3}^* \in U_i^{\tree}\setminus \{0\}.
     \end{gather*}
 \end{lemma}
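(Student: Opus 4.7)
The plan is to follow the template of the proof of Lemma \ref{lemconnected}: compute $\tau_*(\alpha_{(i,0)})\in\bigwedge^i U$ explicitly, then analyze $c_i^{\wheel}\iota_i$ and $c_i^{\tree}\iota_i$ separately. Using the Johnson formulas $\tau(g_{a,b})=e_{a,b}^{b}$ and $\tau(f_{a,b,c})=e_{a,b}^{c}$ together with $h_{k+1,1}=g_{k+1,1}g_{k+1,2}\cdots g_{k+1,k}$ and $\mathbf{h}_{(i,0)}=(f_{1,2,3},h_{4,1},\ldots,h_{i+2,1})$, one obtains
\[
\tau_*(\alpha_{(i,0)})=e_{1,2}^{3}\wedge\Bigl(\sum_{j=1}^{3}e_{4,j}^{j}\Bigr)\wedge\cdots\wedge\Bigl(\sum_{j=1}^{i+1}e_{i+2,j}^{j}\Bigr).
\]
I will denote the $k$-th wedge factor by $F_k$. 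The distinguishing feature is that $F_1=e_{1,2}^{3}$ has label index $3\notin\{1,2\}$, whereas each $F_\ell$ with $\ell\ge 2$ has its label matching a vertex.

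For the vanishing of $c_i^{\wheel}$, I would use a cycle-tracing argument. Expand $\iota_i\tau_*(\alpha_{(i,0)})$ as a signed sum over $\sigma\in\gpS_i$ (arranging the factors into positions $1,\ldots,i$) and choose an antisymmetric term in each factor. The wheel condition requires $d_j^{*}(b_{j-1})\neq 0$ cyclically. The essential observation is that any $F_\ell$ ($\ell\ge 2$) whose chosen term satisfies $b=e_3$ is forced to have $d=e_3$ too, since the only such term is $(a,b,d)=(e_{\ell+2},e_3,e_3)$ (the choice $j=3$, valid since $\ell\ge 2$). Place $F_1$ at position $p=\sigma^{-1}(1)$; then $d_p=e_3$ forces $b_{p-1}=e_3$, hence $d_{p-1}=e_3$, and iterating cyclically gives $d_{p-k}=e_3$ for all $k$. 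After $i-1$ steps we reach position $p+1$, where $d_{p+1}=e_3$. But $d_{p+1}$ must eat $F_1$'s $b_p\in\{e_1,e_2\}$, contradiction. Hence $c_i^{\wheel}(\iota_i\tau_*(\alpha_{(i,0)}))=0$.

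For $c_i^{\tree}$, the vanishing of the wheel part automatically places $c_i(\iota_i\tau_*(\alpha_{(i,0)}))$ in $U_i^{\tree}$, so it suffices to compute $c_i\circ\iota_i$. I would proceed as in Claims \ref{exist} and \ref{sgn}: write $\iota_i\tau_*(\alpha_{(i,0)})=\sum_K a_K e_K$ in the natural basis of $M_i$ with $a_K\in\{0,\pm 1\}$. For a nonvanishing contribution $c_i(a_K e_K)\neq 0$, the label $d_1^{*}$ must equal $e_3^{*}$ and the surviving $(i+1)$-wedge $a_1\wedge\cdots\wedge a_i\wedge b_i$ must be a nonzero multiple of $e_1\wedge e_2\wedge e_4\wedge\cdots\wedge e_{i+2}$. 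This forces $F_1$ into position $1$: if $d_1^{*}=e_3^{*}$ came from some $F_\ell$ with $\ell\ge 2$ (choosing $j=3$), then $e_3$ would appear as a vertex in two factors and the wedge would vanish. The remaining $F_2,\ldots,F_i$ occupy positions $2,\ldots,i$, and the surviving vertex indices form a permutation $\sigma\in\gpS_{i+1}$ of $\{1,2,4,5,\ldots,i+2\}$. As in Claim \ref{exist} there is a unique contributing tuple $K(\sigma)$ for each such $\sigma$, and an induction on $i$ paralleling Claim \ref{sgn} gives $a_{K(\sigma)}=(-1)^{i-1}\sgn(\sigma)$; summing yields $(-1)^{i-1}(i+1)!\,e_1\wedge e_2\wedge e_4\wedge\cdots\wedge e_{i+2}\otimes e_3^{*}$.

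The main obstacle will be the sign bookkeeping in the inductive computation of $a_{K(\sigma)}$. The overall sign $(-1)^{i-1}$ differs from the $(-1)^{i}$ of Lemma \ref{lemconnected} by a single flip, reflecting the asymmetric position of $F_1$'s label index $3$ between its vertex indices $\{1,2\}$ and the later indices $\{4,\ldots,i+2\}$; the base case $i=1$ (where $c_1\iota_1(e_{1,2}^{3})=2\,e_1\wedge e_2\otimes e_3^{*}$ by direct computation) already displays this twist. Carrying the induction through while accounting for the ``gap'' at index $3$ in the output wedge is the delicate combinatorial step.
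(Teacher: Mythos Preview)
Your overall plan matches the paper's: the paper gives exactly your formula for $\tau_*(\alpha_{(i,0)})$, disposes of $c_i^{\wheel}$ by the one-line observation that for $x=e_{a_1,b_1}^{d_1}\wedge e_{a_2,b_2}^{b_2}\wedge\cdots\wedge e_{a_i,b_i}^{b_i}$ with $d_1\notin\{a_1,\dots,a_i,b_1\}$ one has $c_i^{\wheel}\iota_i(x)=0$, and for $c_i^{\tree}$ simply says the computation is ``similar to the computation of the image of $\tau_*(\alpha_{(0,i)})$ in Lemma~\ref{lemconnected}.'' Your cycle-tracing argument for the wheel vanishing is a correct unpacking of the paper's one-liner.

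There is, however, a genuine error in your tree argument: the claim ``this forces $F_1$ into position $1$'' is false. Already for $i=2$ the term $\varphi(F_2)\otimes\varphi(F_1)$ (i.e.\ $F_1$ in position $2$) with $(a_1,b_1,d_1)=(e_4,e_3,e_3)$ contributes $-2\,e_1\wedge e_2\wedge e_4\otimes e_3^*$, while $F_1$ in position $1$ contributes $-4$, giving the required total $-6$. Your justification (``$e_3$ would appear as a vertex in two factors and the wedge would vanish'') does not apply: when $F_1$ sits at position $p>1$, the chain forces $b_{p-1}=b_{p-2}=\cdots=e_3$, but these are all \emph{contracted} $b$-vertices, not surviving $a$-vertices, so no $e_3$ enters the output wedge. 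The correct analogue of Claim~\ref{exist} is to parametrize by the output sequence $\sigma=(a_1,\dots,a_i,b_i)$ as a bijection onto $\{1,2,4,\dots,i+2\}$; for each such $\sigma$ there is a unique contributing $K(\sigma)$, but the position of $F_1$ in $K(\sigma)$ depends on $\sigma$ (it is determined by where $\sigma$ first drops into $\{1,2\}$, in the same spirit as the construction in the proof of Claim~\ref{exist}).

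A minor side remark: the vanishing of $c_i^{\wheel}$ does \emph{not} by itself place $c_i\iota_i\tau_*(\alpha_{(i,0)})$ in $U_i^{\tree}$, since $c_i^{\wheel}$ is not the $U_i^{\wheel}$-projection of $c_i$ (it is a different contraction). This is harmless here because the explicit answer $e_1\wedge e_2\wedge e_4\wedge\cdots\wedge e_{i+2}\otimes e_3^*$ visibly lies in $U_i^{\tree}$ (the wedge misses $e_3$), so $c_i^{\tree}$ and $c_i$ agree on it; but you should justify it that way rather than via $c_i^{\wheel}=0$.
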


 \begin{proof}
  For the abelian cycle $\alpha_{(i,0)}$, we have
    \begin{gather*}
        \tau_* (\alpha_{(i,0)})=
        e_{1,2}^{3}
        \wedge (\sum_{j=1}^{3} e_{4,j}^{j})
        \wedge \cdots \wedge (\sum_{j=1}^{i+1} e_{i+2,j}^{j}).
    \end{gather*}

  Let $x=e_{a_1,b_1}^{d_1} \wedge  e_{a_2,b_2}^{b_2}\wedge\cdots \wedge e_{a_i,b_i}^{b_i}\in \bigwedge^i U$.
  If $d_1\notin \{a_1,\cdots, a_i, b_1\}$,
  then we can check that $c^{\wheel}_{i}\iota_{i}(x)=0$.
  Therefore, we have $c^{\wheel}_{i} \iota_{i} \tau_* (\alpha_{(i,0)}) = 0$.

    The computation of the image of $\tau_* (\alpha_{(i,0)})$ under $c^{\tree}_{i} \iota_{i}$ is similar to the computation of the image of $\tau_* (\alpha_{(0,i)})$ in Lemma \ref{lemconnected}.
 \end{proof}

 We call elements of the form
 $$
  e_{a_1,b_1}^{d_1}\wedge e_{a_2,b_2}^{d_2}\wedge\cdots \wedge e_{a_i,b_i}^{d_i}\in \bigwedge^i U
 $$
 the \emph{basis elements} of $\bigwedge^i U$.
 Two basis elements $x=e_{a_1,b_1}^{d_1}\wedge e_{a_2,b_2}^{d_2}\wedge\cdots \wedge e_{a_k,b_k}^{d_k}$ of $\bigwedge^k U$ and $y=e_{p_1,q_1}^{r_1}\wedge e_{p_2,q_2}^{r_2}\wedge\cdots \wedge e_{p_l,q_l}^{r_l}$ of $\bigwedge^l U$ are said to be \emph{disjoint} if $\{a_j,b_j,d_j\}_{j=1}^{k}\cap \{p_j,q_j,r_j\}_{j=1}^{l}=\emptyset$.
 If basis elements $x\in \bigwedge^k U$ and $y\in \bigwedge^l U$ are disjoint, then it is easy to see that \begin{equation}\label{disjoint}
     c^{\wheel}_{k+l}\iota_{k+l}(x\wedge y)=c^{\tree}_{k+l}\iota_{k+l}(x\wedge y)=0.
 \end{equation}

 Let $P_i$ denote the set of pairs of partitions of total size $i$.
 For $l\in\{0,\cdots,i\}$, let $P_i^l$ denote the subset of $P_i$ consisting of elements $(\mu,\nu)$ with $l(\mu)=l$.
 For $(\xi,\eta), (\mu,\nu) \in P_i^l$, we write
 $(\xi,\eta)\ge (\mu,\nu)$
 if
 $\xi_j\ge \mu_j$ for all $j\in [l]$
 and
 if there exist a decomposition $L_1\sqcup\cdots\sqcup L_{l+l(\eta)} = \{1,\cdots,l(\nu)\}$ and $\sigma\in \gpS_l$ such that
 \begin{gather}\label{partialorder1}
      \xi_j-\mu_{\sigma(j)}=\sum_{k\in L_j}\nu_k \quad(1\le j\le l),
 \end{gather}
 \begin{gather}\label{partialorder2}
     \eta_j=\sum_{k\in L_{l+j}}\nu_k\quad (1\le j\le l(\eta)).
 \end{gather}
 We can check that $(P_i^l,\ge)$ is a partially ordered set with the minimum element $(1^l, 1^{i-l})$ and with the maximal elements $(\mu,0)$ for $\mu\vdash i$ with $l(\mu)=l$.

\begin{lemma}\label{lempairpartcontraction}
 For $(\xi,\eta),(\mu,\nu)\in P_i^l$,
 we have
 \begin{gather*}
      F_{(\mu,\nu)}(\tau_*(\alpha_{(\xi,\eta)}))=0 \quad \text{if } (\xi,\eta)\ngeq (\mu,\nu).
 \end{gather*}
\end{lemma}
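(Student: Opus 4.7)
The plan is to expand $F_{(\mu,\nu)}(\tau_*(\alpha_{(\xi,\eta)}))$ combinatorially, exploiting the decomposition of $\tau_*(\alpha_{(\xi,\eta)})$ into blocks with pairwise disjoint index sets, and then to reinterpret the surviving constraints as exactly the partial order $(\xi,\eta)\ge(\mu,\nu)$.

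First I would write
\[
\tau_*(\alpha_{(\xi,\eta)}) = B^{\xi}_1 \wedge \cdots \wedge B^{\xi}_{l(\xi)} \wedge B^{\eta}_1 \wedge \cdots \wedge B^{\eta}_{l(\eta)} \in \bigwedge^i U,
\]
where $B^{\xi}_j := \tau_*(\mathbf{f}_{\xi_j,s(j)}) \in \bigwedge^{\xi_j} U$ and $B^{\eta}_k := \tau_*(\mathbf{g}_{\eta_k,t(k)}) \in \bigwedge^{\eta_k} U$; by the construction of $\mathbf{h}_{(\xi,\eta)}$ in Section \ref{secabeliancycle}, these blocks use pairwise disjoint subsets of $[n]$ as their underlying indices. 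I would then unfold $\iota_i$ and the tensor product of contractions defining $F_{(\mu,\nu)}$: each factor $c^{\tree}_{\mu_j}$ or $c^{\wheel}_{\nu_k}$ reads a specific block of consecutive tensor positions in $M_i$. Applying \eqref{disjoint} separately to each contraction, any term in which a single contraction draws basis elements from two different source blocks vanishes. Thus the non-vanishing terms correspond to assignments $\phi$ sending each contraction to a single source block, with contraction sizes on each source summing to that source's size; moreover, since source blocks use disjoint indices, the output factorizes as a product over source blocks, and so it vanishes whenever any one source contributes zero.

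The hard part will be to show that a tree source block $B^{\xi}_j$ assigned no tree contraction contributes zero. The distinguished first factor $e_{s,s+1}^{s+2}$ of $B^{\xi}_j$ has dual index $s+2$ disjoint from its $H$-indices $\{s,s+1\}$, in sharp contrast to the first factor $e_{t+1,t}^t$ of a wheel source, where the dual and one of the $H$-indices coincide. To exploit this, I would trace the cyclic closure $d_j = b_{j-1}$ of any $c^{\wheel}_k$ containing the distinguished factor: the position immediately before it in the cycle must have $b=s+2$, which within a tree source block forces the choice of the sub-term $e_{s+m,s+2}^{s+2}$ of some $h_{s+m,s}$, and this sub-term itself has dual index $s+2$. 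Iterating backwards around the cycle forces every preceding position to carry the same configuration, and a case analysis, inducting on $k$ and on where the first factor sits in the cycle, shows that either a required index pairing fails or the $a$-indices in the output wedge $a_1 \wedge \cdots \wedge a_k$ must repeat --- both of which produce zero.

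Once this key step is established, the remainder is bookkeeping. The counting $l(\xi) = l(\mu) = l$ combined with pigeonhole forces each tree source to receive exactly one tree contraction, so that $\phi$ restricts to a bijection encoded by some $\sigma \in \gpS_l$. Writing $L_j$ for the set of indices of wheel contractions assigned to $B^{\xi}_j$ and $L_{l+k}$ for those assigned to $B^{\eta}_k$, the size matching yields precisely $\xi_j - \mu_{\sigma(j)} = \sum_{k \in L_j}\nu_k$ and $\eta_k = \sum_{m \in L_{l+k}}\nu_m$ with $L_1 \sqcup \cdots \sqcup L_{l+l(\eta)} = \{1,\ldots,l(\nu)\}$, which are the defining conditions of $(\xi,\eta) \ge (\mu,\nu)$. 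Hence, if $(\xi,\eta) \ngeq (\mu,\nu)$, no valid assignment exists and $F_{(\mu,\nu)}(\tau_*(\alpha_{(\xi,\eta)})) = 0$.
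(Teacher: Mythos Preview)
Your proposal is correct and follows essentially the same strategy as the paper: decompose $\tau_*(\alpha_{(\xi,\eta)})$ into disjoint blocks, use \eqref{disjoint} to confine each contraction to a single source block, show that a tree block consumed only by wheel contractions vanishes, and then match sizes to recover conditions \eqref{partialorder1}--\eqref{partialorder2}. For the ``hard part'' no inductive case analysis is needed: the paper simply records that the distinguished factor satisfies $d_1\notin\{a_1,\ldots,a_k,b_1\}$ and invokes the vanishing already verified in the proof of Lemma~\ref{lemcontractioncomputation}, and in fact your own backward trace closes the loop after one revolution (forcing $q_m\in\{s,s+1\}$ to equal $s+2$), so the contradiction is immediate.
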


\begin{proof}
 Let $(\mu,\nu)\in P_i^l$.
 Suppose that we have $F_{(\mu,\nu)}(\tau_*(\alpha_{(\xi,\eta)}))\neq 0$ for $(\xi,\eta)\in P_i^l$.

 We have
 \begin{gather}\label{taualpha}
  \tau_*(\alpha_{(\xi,\eta)})=
  \tau(\xi,1)\wedge\cdots\wedge \tau(\xi,l)\wedge \tau(\eta,1)\wedge\cdots\wedge \tau(\eta,l(\eta))\in \bigwedge^i U,
 \end{gather}
 where $\tau(\xi,j)$ (resp. $\tau(\eta,j)$) is obtained from $\tau_*(\alpha_{(\xi_j,0)})$ (resp. $\tau_*(\alpha_{(0,\eta_j)})$) by the shift that is appeared in the definition of $\alpha_{(\xi,\eta)}$.
 Each $\tau(\xi,j)$ is a linear sum of basis elements
 $e_{a_1,b_1}^{d_1}\wedge e_{a_2,b_2}^{b_2}\wedge \cdots \wedge e_{a_{\xi_j},b_{\xi_j}}^{b_{\xi_j}}$
 of $\bigwedge^{\xi_j} U$ such that $d_1\notin \{a_1,\cdots, a_{\xi_j}, b_1\}$.
 As in the proof of Lemma \ref{lemcontractioncomputation}, for any subset $K\subset \{2,\cdots, \xi_j\}$, we have
 $$c^{\wheel}_{1+|K|}\iota_{1+|K|}(e_{a_1,b_1}^{d_1}\wedge \bigwedge_{k\in K} e_{a_{k},b_{k}}^{b_{k}})=0.$$
 Since $e_{a_1,b_1}^{d_1}$ and any basis elements that appear in $\tau(\xi,p) (1\le p\le l, p\neq j)$ or $\tau(\eta,q) (1\le q\le l(\eta))$ are disjoint, the wedge product of $e_{a_1,b_1}^{d_1}$ and any other factors vanishes under $c^{\tree}_{\mu_{j'}}\iota_{\mu_{j'}}$ for any $j'\in [l]$ and $c^{\wheel}_{\nu_{j'}}\iota_{\nu_{j'}}$ for any $j'\in [l(\nu)]$.
 Therefore, in order to satisfy $F_{(\mu,\nu)}(\tau_*(\alpha_{(\xi,\eta)}))\neq 0$, $e_{a_1,b_1}^{d_1}$ has to be mapped under $c^{\tree}_{\mu_{j'}}\iota_{\mu_{j'}}$ for some $j'\in [l]$ as a wedge product with some other factors in $\tau(\xi,j)$.
 It follows that we need $\xi_j\ge \mu_j$ for all $j\in [l]$.
 In what follows, we restrict to the condition that for any $j\in [l]$, any element $e_{a_1,b_1}^{d_1}$ that appears in $\tau(\xi,j)$ is mapped under $c^{\tree}_{\mu_{j'}}\iota_{\mu_{j'}}$ for some $j'\in [l]$ as a wedge product with some other factors in $\tau(\xi,j)$.

 If a pair of $\sigma\in \gpS_l$ and $L_1\sqcup\cdots\sqcup L_{l+l(\nu)}= \{1,\cdots,l(\nu)\}$ does not satisfy \eqref{partialorder1},
 then one of the following holds:
 \begin{itemize}
     \item there exist distinct elements $j,j'\in [l]$ such that the wedge product of some factors of a basis element which appears in $\tau(\xi,j)$ and other factors of a basis element which appears in $\tau(\xi,j')$ are mapped under $c^{\wheel}_{\nu_{j''}}\iota_{\nu_{j''}}$ for some $j''\in [l(\nu)]$,

     \item there exist $j\in [l]$ and $j'\in [l(\nu)]$ such that the wedge product of some factors of a basis element which appears in $\tau(\xi,j)$ and other factors of a basis element which appears in $\tau(\eta,j')$ are mapped under  $c^{\wheel}_{\nu_{j''}}\iota_{\nu_{j''}}$ for some $j''\in [l(\nu)]$.
 \end{itemize}
  By \eqref{disjoint}, in both cases, the values under $c^{\wheel}_{\nu_{j''}}\iota_{\nu_{j''}}$ are zero, which contradicts $F_{(\mu,\nu)}(\tau_*(\alpha_{(\xi,\eta)}))\neq 0$.
 The case where a pair of $\sigma\in \gpS_l$ and $L_1\sqcup\cdots\sqcup L_{l+l(\nu)}= \{1,\cdots,l(\nu)\}$ does not satisfy \eqref{partialorder2} is similar.
 Therefore, we need such a pair, which completes the proof.
\end{proof}

 \subsection{Proof of Theorem \ref{Johnsonpart}}

 To prove Theorem \ref{Johnsonpart}, we use the following lemma, which is an analogue of \cite[Theorem 1.5]{Lindell}.

 \begin{lemma}\label{Johnsonpartmunu}
 Let $(\mu,\nu)\vdash i$.
 Then for $n\ge i+2l(\mu)+l(\nu)$, we have
 $$F_{(\mu,\nu)} (H^A_i(\IA_n,\Q)) \supset W(\mu,\nu).$$
 \end{lemma}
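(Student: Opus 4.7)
The plan is to exhibit the canonical generator of $W(\mu,\nu)$ as a $\GL(n,\Q)$-translate of $F_{(\mu,\nu)}(\tau_*(\alpha_{(\mu,\nu)}))$. Since $n\ge i+2l(\mu)+l(\nu)\ge (i+l(\mu))+l(\mu)$, Corollary \ref{projgen} applies and tells us that $W(\mu,\nu)=\pi^{(\mu,\nu)}(T_{i+l(\mu),l(\mu)})$ is generated as a $\GL(n,\Q)$-representation by the single element $\pi^{(\mu,\nu)}(e_{i+l(\mu),l(\mu)})$. Because $F_{(\mu,\nu)}(H^A_i(\IA_n,\Q))$ is itself a $\GL(n,\Q)$-subrepresentation of $U^{\tree}_{\mu}\otimes U^{\wheel}_{\nu}$, it is enough to produce $\pi^{(\mu,\nu)}(e_{i+l(\mu),l(\mu)})$ from some element already in $F_{(\mu,\nu)}(H^A_i(\IA_n,\Q))$.

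I will produce it starting from the abelian cycle $\alpha_{(\mu,\nu)}$, which is defined precisely under our hypothesis $n\ge i+2l(\mu)+l(\nu)$. By \eqref{taualpha}, $\tau_*(\alpha_{(\mu,\nu)})$ is a wedge of blocks $\tau(\mu,1),\dots,\tau(\mu,l(\mu)),\tau(\nu,1),\dots,\tau(\nu,l(\nu))$ supported on pairwise disjoint index ranges in the sense preceding \eqref{disjoint}. After applying $\iota_i$ to expand this wedge into $M_i$ and distributing the contractions $c^{\tree}_{\mu_j}\iota_{\mu_j}$ and $c^{\wheel}_{\nu_j}\iota_{\nu_j}$ that make up $F_{(\mu,\nu)}$, the vanishing identity \eqref{disjoint} kills every contribution in which a single contraction receives tensor factors from two different blocks. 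The surviving terms are precisely those in which each $c^{\tree}_{\mu_j}\iota_{\mu_j}$ (resp.\ $c^{\wheel}_{\nu_j}\iota_{\nu_j}$) is applied to one whole block $\tau(\mu,j')$ (resp.\ $\tau(\nu,j')$) of matching size, and the $k'_j!$ and $k''_j!$ reorderings of equal-size blocks across the symmetric factors are exactly cancelled by the $1/k'_j!$ and $1/k''_j!$ prefactors built into $F_{(\mu,\nu)}$. Plugging in the explicit block values computed in Lemma \ref{lemcontractioncomputation} (for the tree contractions) and Lemma \ref{lemconnected} (for the wheel contractions), suitably shifted by the index shifts $s(j),t(k)$ from Definition \ref{abeliancyclepairpart}, yields
\[
F_{(\mu,\nu)}\bigl(\tau_*(\alpha_{(\mu,\nu)})\bigr)\;=\;c\cdot\bigotimes_{j=1}^{r}\bigl(w^{\tree}_{\mu_j,1}\otimes\cdots\otimes w^{\tree}_{\mu_j,k'_j}\bigr)\otimes\bigotimes_{j=1}^{s}\bigl(w^{\wheel}_{\nu_j,1}\otimes\cdots\otimes w^{\wheel}_{\nu_j,k''_j}\bigr)
\]
for a nonzero rational $c$, with each $w^{\tree}_{\mu_j,a}$ and $w^{\wheel}_{\nu_j,a}$ a standard elementary tensor in disjoint basis vectors of $H$ and $H^*$.

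To finish, observe that both this value and $\pi^{(\mu,\nu)}(e_{i+l(\mu),l(\mu)})$ live in the same summand $U^{\tree}_{\mu}\otimes U^{\wheel}_{\nu}$ as elementary tensors built out of disjoint sets of basis vectors of $H$ and $H^*$. Exactly as in the proof of Lemma \ref{tracelessgen}, a suitable product of permutation matrices $P_{k,l}$ and elementary matrices $E_{k,l}$ (whose construction requires only that $n$ exceed the total number of basis vectors being manipulated, which is guaranteed by $n\ge i+2l(\mu)+l(\nu)$) acts on $F_{(\mu,\nu)}(\tau_*(\alpha_{(\mu,\nu)}))$ to produce a nonzero scalar multiple of $\pi^{(\mu,\nu)}(e_{i+l(\mu),l(\mu)})$. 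Hence this generator lies in $F_{(\mu,\nu)}(H^A_i(\IA_n,\Q))$, and by the first paragraph so does all of $W(\mu,\nu)$.

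The main obstacle I expect is the combinatorial bookkeeping in the middle paragraph: verifying that only block-aligned partitions of the $i$ wedge factors contribute, and assembling the nonzero coefficient $c$ correctly. Lemma \ref{lempairpartcontraction} already packages the vanishing for $(\xi,\eta)\ne (\mu,\nu)$; here the point is to push through and track the explicit nonzero value in the equality case $(\xi,\eta)=(\mu,\nu)$, using the block-wise outputs of Lemmas \ref{lemconnected} and \ref{lemcontractioncomputation} together with the cancellation of the symmetrization prefactors.
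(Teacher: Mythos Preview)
Your proposal is correct and follows essentially the same line as the paper's proof: reduce via Corollary \ref{projgen} to the single generator $\pi^{(\mu,\nu)}(e_{i+l(\mu),l(\mu)})$, compute $F_{(\mu,\nu)}(\tau_*(\alpha_{(\mu,\nu)}))$ as a tensor product of the block-wise values from Lemmas \ref{lemconnected} and \ref{lemcontractioncomputation} (shifted by $s(j),t(k)$), and then move this to the generator by an element of $\Q[\GL(n,\Q)]$. If anything, you spell out the disjointness-and-cancellation argument behind the block factorization more explicitly than the paper, which simply asserts the factorized formula for $F_{(\mu,\nu)}(\tau_*(\alpha_{(\mu,\nu)}))$.
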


 \begin{proof}
  Our proof is analogous to the proof of \cite[Theorem 1.5]{Lindell}.
  By Corollary \ref{projgen}, $W(\mu,\nu)=\pi^{(\mu,\nu)}(T_{i+l(\mu), l(\mu)})$ is generated by the element $\pi^{(\mu,\nu)}(e_{i+l(\mu),l(\mu)})$.
  Therefore, it suffices to show that
  there is an element $x \in \Q[\GL(n,\Q)]$ such that
  $x (F_{(\mu,\nu)}(\tau_*(\alpha_{(\mu,\nu)})))=\pi^{(\mu,\nu)}(e_{i+l(\mu),l(\mu)})$.

  Here, we write
  $$\mu=(\mu_1,\cdots,\mu_{l(\mu)})=(\ti{\mu}_1^{k_1'},\cdots, \ti{\mu}_r^{k_r'}),\quad \nu=(\nu_1,\cdots, \nu_{l(\nu)})=(\ti{\nu}_1^{k_1''},\cdots, \ti{\nu}_s^{k_s''}).$$
  As in \eqref{taualpha}, we have
  $$
   \tau_*(\alpha_{(\mu,\nu)})=
  \tau(\mu,1)\wedge\cdots\wedge \tau(\mu,l(\mu))\wedge \tau(\nu,1)\wedge\cdots\wedge \tau(\nu,l(\nu))\in \bigwedge^i U.
  $$
  For $1\le j\le r$ and $1\le m\le k_j'$, let $\tau(\mu,j,m)=\tau(\mu,\phi_{\mu}(j,m))$, where $\phi_{\mu}(j,m)=k_1'+\cdots + k_{j-1}'+m$. In a similar way, we define $\phi_{\nu}(j,m)$ and $\tau(\nu,j,m)$.
  Then we have
   \begin{gather*}
    \begin{split}
    &F_{(\mu,\nu)}(\tau_*(\alpha_{(\mu,\nu)}))\\
    &=\left(\bigotimes_{j=1}^r \frac{1}{k_j'!} \prod_{m=1}^{k_j'}c^{\tree}_{\ti{\mu}_j}\iota_{\ti{\mu}_j} (\tau(\mu,j,m)) \right)\otimes
    \left(\bigotimes_{j=1}^s \frac{1}{k_j''!} \prod_{m=1}^{k_j''}c^{\wheel}_{\ti{\nu}_j}\iota_{\ti{\nu}_j}  (\tau(\nu,j,m)) \right).
    \end{split}
  \end{gather*}
  By Lemma \ref{lemcontractioncomputation}, we have
  $$c^{\tree}_{\ti{\mu}_j}\iota_{\ti{\mu}_j} (\tau(\mu,j,m))=(-1)^{\ti{\mu}_j-1}(\ti{\mu}_j+1)! \: s(\phi_{\mu}(j,m))(e_{1}\wedge e_{2}\wedge e_{4}\wedge \cdots \wedge e_{\ti{\mu}_j+2}\otimes e_{3}^*),$$
  where $s(j)$ is the function that we used in Section \ref{secabeliancycle} to define the abelian cycle $\alpha_{(\mu,\nu)}$, and where $s(\phi_{\mu}(j,m))$ denotes the shift homomorphism by $s(\phi_{\mu}(j,m))$.
  By Lemma \ref{lemconnected}, we also have
  $$c^{\wheel}_{\ti{\nu}_j}\iota_{\ti{\nu}_j}  (\tau(\nu,j,m))=(\ti{\nu}_j)!\: t(\phi_{\tau}(j,m))(e_2\wedge e_3\wedge\cdots \wedge e_{\ti{\nu}_j+1}).$$
  Therefore, we can take an element $x \in \Q[\GL(n,\Q)]$ to satisfy
  $x (F_{(\mu,\nu)}(\tau_*(\alpha_{(\mu,\nu)})))=\pi^{(\mu,\nu)}(e_{i+l(\mu),l(\mu)})$, which completes the proof.
 \end{proof}

 \begin{remark}
  Let $\mu\cup \nu$ denote the partition of $i$ that is obtained from $\mu$ and $\nu$ by reordering the parts.
  We can also use the abelian cycle $\alpha_{(0,\mu\cup\nu)}$ to prove Lemma \ref{Johnsonpartmunu}. That is, we can also show that $\tau_*(\alpha_{(0,\mu\cup\nu)})$ generates $W(\mu,\nu)$.
  However, we need the abelian cycle $\alpha_{(\mu,\nu)}$ to show that $F_{(\mu,\nu)}(H^A_i(\IA_n,\Q))$ includes the direct sum of $W(\mu,\nu)$.
 \end{remark}

 \begin{proof}[Proof of Theorem \ref{Johnsonpart}]
  Let $F_{i}^{l}:=\bigoplus_{(\mu,\nu)\in P_i^l} F_{(\mu,\nu)}$.

  For $(\mu,\nu)\in P_i^l$ and $(\xi,\eta)\in P_i^{l'}$ with $l\neq l'$, any irreducible component of $W(\mu,\nu)$ and any irreducible component of $W(\xi,\eta)$ are not isomorphic. Therefore, it suffices to show that we have
  $$\bigoplus_{(\mu,\nu)\in P_i^l} W(\mu,\nu)\subset F_{i}^{l} (H^A_i(\IA_n,\Q))$$
  for any $l\in\{0,\cdots,i\}$.

  By Lemma \ref{Johnsonpartmunu}, it suffices to show that for each $(\xi,\eta)\in P_i^l$, the element $$F_{(\xi,\eta)}(\tau_*(\alpha_{(\xi,\eta)}))\in W(\xi,\eta)\subset \bigoplus_{(\mu,\nu)\in P_i^l} W(\mu,\nu)$$
  is included in $F_{i}^{l} (H^A_i(\IA_n,\Q))$.
  In what follows, we identify an element of $W(\xi,\eta)$ with the image under the canonical inclusion $W(\xi,\eta)\hookrightarrow \bigoplus_{(\mu,\nu)\in P_i^l} W(\mu,\nu)$.

  We use the induction with respect to the partial order $\ge$ of $P_i^l$.
  For the minimum element $(1^l,1^{i-l})$ of $P_i^l$, by Lemma \ref{lempairpartcontraction}, we have
  \begin{gather*}
  \begin{split}
       F_{(1^l,1^{i-l})}(\tau_*(\alpha_{(1^l,1^{i-l})}))=
       F_{i}^{l}(\tau_*(\alpha_{(1^l,1^{i-l})}))\in F_{i}^{l} (H^A_i(\IA_n,\Q)).
  \end{split}
  \end{gather*}
  For $(\xi,\eta)\in P_i^l$, suppose that for any $(\zeta,\epsilon)\le(\xi,\eta)$, we have
  $$
  F_{(\zeta,\epsilon)}(\tau_*(\alpha_{(\zeta,\epsilon)}))\in F_{i}^{l} (H^A_i(\IA_n,\Q)).$$
  Then by Lemma \ref{lempairpartcontraction}, we have
  \begin{gather*}
   \begin{split}
   F_{i}^{l} (\tau_* (\alpha_{(\xi,\eta)}))&=F_{(\xi,\eta)}(\tau_*(\alpha_{(\xi,\eta)}))+ X(\xi,\eta)
   \end{split}
  \end{gather*}
  where $X(\xi,\eta)$ is an element of $\bigoplus_{(\zeta,\epsilon)\le (\xi,\eta)} W(\zeta,\epsilon)$.
  Therefore, by the hypothesis of the induction, we have
  $$
  F_{(\xi,\eta)}(\tau_*(\alpha_{(\xi,\eta)}))\in F_{i}^{l} (H^A_i(\IA_n,\Q)).
  $$
  This completes the proof.
 \end{proof}

\section{Coalgebra structure of $H^A_*(\IA_n,\Q)$}\label{coalgebra}
 Here we recall the coalgebra structure of the rational homology of groups. We show that the map $F_*=\bigoplus_{i\ge 0} F_i: H_*(U,\Q) \to S^*(U_*)$, which we constructed in Section \ref{Generalpart}, is a coalgebra map.

 \subsection{Coalgebra structure of $H_*(G,\Q)$}

 Let $G$ be a group.
 We briefly recall the graded-cocommutative coalgebra structure $(H_*(G,\Q), \Delta_*^G,\epsilon_*)$. (See \cite{Brown} for details.)

 The rational homology $H_*(G,\Q)$ is defined by
 $$
  H_*(G,\Q)=H_*(F\otimes_{G}\Q),
 $$
 where $F$ is a projective resolution of $\Z$ over $\Z[G]$.
 Here, we take the bar resolution.
 The diagonal map $\Delta^G: G\to G\times G,\;g\mapsto (g,g)$ induces a homomorphism
 $$\Delta_*^G: H_*(G,\Q)\to H_*(G,\Q)\otimes H_*(G,\Q),$$
 which coincides with the map induced by the Alexander--Whitney map
 $$\Delta: F\to F\otimes F,\quad (g_0,\cdots,g_n)\mapsto \sum_{p=0}^{n}(g_0,\cdots,g_p)\otimes (g_p,\cdots,g_n).$$
 (See Brown \cite[Section 1 of Chapter 5]{Brown}.)
 Then the induced map $\Delta_*^G$ can be written explicitly as follows:
 $$\Delta_*^G([x_1\otimes \cdots \otimes x_i])=\sum_{p=0}^i [x_1\otimes \cdots \otimes x_p] \otimes [x_{p+1}\otimes \cdots \otimes x_i]$$
 for $x_1,\cdots, x_i\in G$.
 We also have a trivial map $\epsilon: G\to 1$, which induces
 $$\epsilon_*: H_*(G,\Q)\to \Q.$$

 The canonical projection $\pi^G: G\twoheadrightarrow G^{\ab}$ induces a coalgebra map
 $$\pi^G_*: H_*(G,\Q)\to H_*(G^{\ab},\Q).$$
 Therefore, the coalgebra structure of $H_*(G^{\ab},\Q)$ induces a subcoalgebra structure on $H^A_*(G,\Q)$.

 \subsection{Coalgebra structure of $H^A_*(\IA_n,\Q)$}
 As we saw in the previous subsection,
 we have a coalgebra structure of $H_*(U,\Q)$, which is compatible with the graded $\GL(n,\Q)$-representation structure.
 We consider the coalgebra structure on $S^*(U_*)$ that we observed in Section \ref{tracelesspartofgradedsymmetricalgebra}.
 Then the two coalgebra structures are compatible in the sense of the following proposition.

\begin{proposition}\label{coalgebramap}
 The graded $\GL(n,\Q)$-homomorphism
 $$F_*=\bigoplus_{i\ge 0} F_i: H_*(U,\Q) \to S^*(U_*)$$
 is a coalgebra map.
\end{proposition}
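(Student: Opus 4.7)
The plan is to verify the identity $\Delta \circ F_* = (F_* \otimes F_*) \circ \Delta_*^U$ by direct expansion on a decomposable wedge $x = u_1 \wedge \cdots \wedge u_i \in H_i(U,\Q) = \bigwedge^i U$, which suffices by linearity. Both coproducts are shuffle-type: for abelian $U$, $\Delta_*^U$ takes the standard form
$$\Delta_*^U(x) = \sum_{p=0}^i \sum_{\sigma \in \Sh(p, i-p)} \sgn(\sigma)\, (u_{\sigma(1)} \wedge \cdots \wedge u_{\sigma(p)}) \otimes (u_{\sigma(p+1)} \wedge \cdots \wedge u_{\sigma(i)}),$$
while $\Delta$ on $S^*(U_*)$ is the graded-shuffle coproduct recalled in Section \ref{tracelesspartofgradedsymmetricalgebra}. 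The goal is to match the two expansions term-by-term over a common combinatorial index set.

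Unpacking the definition of $F_{(\mu,\nu)}$, the image $F_{(\mu,\nu)}(x)$ is a signed sum indexed by ordered partitions of $[i]$ into ordered blocks of sizes $\mu_1, \ldots, \mu_{l(\mu)}, \nu_1, \ldots, \nu_{l(\nu)}$, of products in $S^*(U_*)$ of block-wise contractions $c^{\tree}_{\mu_j}$ and $c^{\wheel}_{\nu_j}$; the factors $\tfrac{1}{k_j'!}, \tfrac{1}{k_j''!}$ absorb the multiplicity of equivalent permutations of equal-sized blocks coming from the signed sum defining $\iota_i$. Applying $\Delta$ then shuffles the $l(\mu)+l(\nu)$ block-factors into a left-right split, producing terms indexed by (i) an ordered block-partition of $[i]$, (ii) tree/wheel labels on blocks, and (iii) left/right labels on blocks. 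The right-hand side $(F_* \otimes F_*) \circ \Delta_*^U(x)$ produces the same index set dually: $\Delta_*^U$ chooses a subset $S \subseteq [i]$, and $F(u_S) \otimes F(u_{S^c})$ further sums over bipartitions $(\mu',\nu')\vdash |S|$, $(\mu'',\nu'')\vdash |S^c|$ and over ordered block-partitions of $S$ and $S^c$, with the left/right labels determined by membership in $S$ versus $S^c$ and $(\mu,\nu) = (\mu' \cup \mu'', \nu' \cup \nu'')$.

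The main obstacle is matching signs and rational coefficients under this bijection. For signs, the shuffle sign in $\Delta_*^U$ equals the graded-commutativity sign in $\Delta$ governing the same split, because interchanging two adjacent blocks of sizes $d$ and $d'$ contributes $(-1)^{dd'}$ in both settings---on $\bigwedge^* U$, from moving $d$ degree-one wedge factors past $d'$ such factors, and in $S^*(U_*)$, from graded-commutativity of block-elements of degrees $d$ and $d'$---and this accounts for each elementary block-transposition in the shuffle. For the coefficients, splitting $k_j'$ identical-size tree blocks into $a$ on the left and $k_j' - a$ on the right contributes shuffle multiplicity $\binom{k_j'}{a}$ in $\Delta$, which converts $\tfrac{1}{k_j'!}$ on the $\Delta \circ F_*$ side into $\tfrac{1}{a!(k_j' - a)!}$, matching the product of normalizations $\tfrac{1}{a!}$ from $F_{(\mu',\nu')}$ and $\tfrac{1}{(k_j'-a)!}$ from $F_{(\mu'',\nu'')}$ on the other side; the wheel factors are handled identically. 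Once these bookkeeping identities are verified, both sides agree term by term and the coalgebra map property follows.
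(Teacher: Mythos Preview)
Your proposal is correct and follows essentially the same approach as the paper: both verify $(F_*\otimes F_*)\Delta_*^U = \Delta F_*$ by direct expansion on decomposable wedges $x = u_1\wedge\cdots\wedge u_i$, matching the two sides over the common index set of ordered block-partitions with tree/wheel and left/right labels. Your write-up is in fact more explicit than the paper's---the paper records the expanded formula for $\Delta F_i(x)$ as a sum over $\tau\in\gpS_i$ and pairs $(\xi,\eta)\vdash p$, $(\zeta,\epsilon)\vdash i-p$ and then simply asserts equality with $(F_*\otimes F_*)\Delta_*^U(x)$, whereas you spell out the sign check (block-transpositions contribute $(-1)^{dd'}$ on both sides) and the coefficient check (the shuffle multiplicity $\binom{k_j'}{a}$ converts $\tfrac{1}{k_j'!}$ into $\tfrac{1}{a!(k_j'-a)!}$).
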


\begin{proof}
 It suffices to show that we have
 $$
   (F_*\otimes F_*) \Delta_*^U(x)=\Delta F_* (x)
 $$
 for $x=x_1\wedge \cdots \wedge x_i\in H_i(U,\Q)$.

 We have
 $$
  F_i (x)=\sum_{(\mu,\nu)\vdash i}F_{(\mu,\nu)}(x_1\wedge \cdots \wedge x_i)
 $$
 and
 $$
  \Delta_*^U (x)=\sum_{p=0}^i\sum_{\sigma\in \Sh(p,i-p)} \sgn(\sigma) (x_{\sigma(1)}\wedge \cdots\wedge x_{\sigma(p)})\otimes (x_{\sigma(p+1)}\wedge \cdots\wedge x_{\sigma(i)}).
 $$

 Let $p\in \{0,\cdots,i\}$. For pairs of partitions $(\xi,\eta)\vdash p, (\zeta,\epsilon)\vdash i-p$, we write $\xi=(\xi_1^{k_1'},\cdots, \xi_r^{k_r'})$, $\eta=(\eta_1^{k_1''},\cdots, \eta_s^{k_s''})$ and $\zeta=(\zeta_1^{m_1'},\cdots, \zeta_t^{m_t'})$, $\epsilon=(\epsilon_1^{m_1''},\cdots, \epsilon_u^{m_u''})$.
 Then we can check that
 \begin{gather*}
  \begin{split}
    \Delta F_i (x)
    &= \sum_{p=0}^i \sum_{\substack{(\xi,\eta)\vdash p\\ (\zeta,\epsilon)\vdash i-p}} \sum_{\tau\in \gpS_i}\frac{1}{(\prod_{j=1}^r k_j'!) (\prod_{j=1}^s k_j''!) (\prod_{j=1}^t m_j'!)(\prod_{j=1}^u m_j''!)} \sgn(\tau) \\
    &
    \quad\times \left(\bigotimes_{j=1}^r \prod^{k_{j}'}c^{\tree}_{\xi_j} \otimes \bigotimes_{j=1}^s \prod^{k_{j}''}c^{\wheel}_{\eta_j}\right)
    (x_{\tau(1)}\otimes \cdots\otimes x_{\tau(p)})
    \\
    &\quad\otimes
     \left(\bigotimes_{j=1}^r \prod^{m_{j}'}c^{\tree}_{\zeta_j}\otimes \bigotimes_{j=1}^s \prod^{m_{j}''}c^{\wheel}_{\epsilon_j}\right)
     (x_{\tau(p+1)}\otimes \cdots\otimes x_{\tau(i)})\\
    &= (F_*\otimes F_*) \Delta_*^U (x).
  \end{split}
 \end{gather*}
\end{proof}

By Proposition \ref{coalgebramap}, the subcoalgebra $H^A_*(\IA_n,\Q)\subset H_*(U,\Q)$ is mapped to a subcoalgebra of $S^*(U_*)$, which includes $W_*$ as a subcoalgebra.
For a coalgebra $A$, let $\Prim(A)$ denote the \emph{primitive part} of $A$.
We can check that
$$\Prim(S^*(U_*))=U_*\subset W_*.$$
Since coalgebra maps preserve the primitive part, the graded $\GL(n,\Q)$-homomorphism $F_*$ restricts to
$$F_*: \Prim(H^A_*(\IA_n,\Q))\to \Prim(S^*(U_*))=U_*.$$
Conjecture \ref{conjectureAlbanese} leads to the following conjecture.
Let $\Prim(H^A_*(\IA_n,\Q))_i$ denote the degree $i$ part of $\Prim(H^A_*(\IA_n,\Q))$.

\begin{conjecture}\label{conjectureprim}
 For $n\ge 3i$, the $\GL(n,\Q)$-homomorphism
 $$F_i:\Prim(H^A_*(\IA_n,\Q))_i\to U_i$$
 is an isomorphism.
\end{conjecture}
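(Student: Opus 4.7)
The plan is to derive this from Conjecture \ref{conjectureAlbanese} by exploiting the fact that $F_*$ respects the coalgebra structures (Proposition \ref{coalgebramap}). Assuming $F_i : H^A_i(\IA_n,\Q) \xrightarrow{\cong} W_i$ for $n \ge 3i$, the restriction $F_*|_{H^A_*(\IA_n,\Q)} : H^A_*(\IA_n,\Q) \to W_*$ is a bijective coalgebra map, hence a coalgebra isomorphism. Since coalgebra isomorphisms carry primitives bijectively to primitives, $F_i$ then restricts to an isomorphism $\Prim(H^A_*(\IA_n,\Q))_i \xrightarrow{\cong} \Prim(W_*)_i$.

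To finish the argument on the target side, I would identify $\Prim(W_*)_i = U_i$. Because $W_* = \wti{S}^*(U_*)$ is a subcoalgebra of $S^*(U_*)$ containing $U_* = \Prim(S^*(U_*))$, one has
$$\Prim(W_*) = W_* \cap \Prim(S^*(U_*)) = W_* \cap U_* = U_*,$$
and taking the degree $i$ part gives the desired identification.

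For a proof not conditional on Conjecture \ref{conjectureAlbanese}, my approach would be to construct primitive elements of $H^A_i(\IA_n,\Q)$ whose images under $F_i$ span $U_i$, together with an injectivity argument for $F_i$ on primitives. Natural candidates for primitive lifts come from the abelian cycles of Section \ref{Abeliancycles}: for instance $\tau_*(\alpha_{(0,i)})$ and $\tau_*(\alpha_{(i,0)})$ project onto nonzero elements of $U_i^{\wheel}$ and $U_i^{\tree}$ via $c^{\wheel}_i \iota_i$ and $c^{\tree}_i \iota_i$ respectively (Lemma \ref{lemconnected}), but these classes are not themselves primitive. The hard part will be cancelling the non-primitive cross terms in the Alexander--Whitney coproduct of such $\tau_*(\alpha_{(\mu,\nu)})$ by correction terms coming from abelian cycles indexed by smaller pairs of partitions; an injectivity statement for $F_i$ on primitives would then follow by downward induction on the degrees of the cross components, using that $F_1$ is essentially the identity on $U$. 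In essence, a clean direct attack appears to require much of the combinatorial machinery needed for Conjecture \ref{conjectureAlbanese} itself.
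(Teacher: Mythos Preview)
This statement is a \emph{conjecture} in the paper, not a theorem; the paper offers no proof. The only justification given is the sentence ``Conjecture~\ref{conjectureAlbanese} leads to the following conjecture,'' together with the preceding observation that $F_*$ maps primitives to primitives because it is a coalgebra map. Your conditional argument (assuming Conjecture~\ref{conjectureAlbanese}) spells out exactly this implication and is correct: for fixed $n\ge 3i$, Conjecture~\ref{conjectureAlbanese} gives isomorphisms $F_j$ for all $j\le i$, which is what you need to transport primitivity in both directions via the injectivity of $F_p\otimes F_q$ on each bidegree $(p,q)$ with $p+q=i$. Your identification $\Prim(W_*)=W_*\cap\Prim(S^*(U_*))=U_*$ is also fine. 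So on the conditional side you are simply making explicit what the paper asserts in one line.

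Your unconditional sketch, however, does not constitute a proof and you correctly flag this. Producing genuinely primitive elements in $H^A_i(\IA_n,\Q)$ mapping onto $U_i$ would establish surjectivity, but injectivity of $F_i$ on primitives is the real obstruction: a nonzero primitive in $\ker F_i$ cannot be ruled out without knowing that $F_i$ is injective on all of $H^A_i(\IA_n,\Q)$, which is essentially Conjecture~\ref{conjectureAlbanese} again. The paper does not claim to get around this, and neither should you; the statement remains open unconditionally.
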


\section{Albanese cohomology of $\IA_n$}\label{albanesecohomology}
 In this section, we study the subalgebra of the rational cohomology algebra $H^*(\IA_n,\Q)$ that Church--Ellenberg--Farb \cite{CEF} called the Albanese cohomology of $\IA_n$.

 \subsection{Albanese cohomology of groups}

 For a group $G$, the \emph{Albanese cohomology} $H_A^i(G,\Q)$ of $G$ is defined by
 $$
  H_A^i(G,\Q)=\im (\pi^*:H^i(H_1(G,\Z),\Q)\to H^i(G,\Q)),
 $$
 where $\pi:G\twoheadrightarrow H_1(G,\Z)$ is the abelianization map.

 We have a linear isomorphism
 $$
  H_A^i(G,\Q)\xrightarrow{\cong} (H^A_i(G,\Q))^*=\Hom_{\Q}(H^A_i(G,\Q),\Q).
 $$
 (See Lemma \ref{dualityisom}.)

 It is well known that $H^*(G,\Q)$ is a graded-commutative algebra with the cup product as a multiplication, which is the dual of the comultiplication of $H_*(G,\Q)$.
 Then $H_A^*(G,\Q)$ is a subalgebra of $H^*(G,\Q)$.
 Since we have $H^i(H_1(G,\Z),\Q))\cong \bigwedge^i H^1(G,\Q)$, the cohomology algebra $H_A^*(G,\Q)$ is generated by $H^1(G,\Q)$ as an algebra.

 \subsection{Albanese cohomology of $\IA_n$}

 As we saw in the previous subsection, the Albanese cohomology $H_A^*(\IA_n,\Q)$ has a graded-symmetric algebra structure and is generated by $H^1(\IA_n,\Q)$.
 Moreover, the linear isomorphism
 $$
  H_A^i(G,\Q)\xrightarrow{\cong} (H^A_i(G,\Q))^*
 $$
 is a $\GL(n,\Q)$-isomorphism (see Proposition \ref{dualityisomIA}).

 Let $S^*(U_*)^*$ (resp. $H_*(U,\Q)^*$) denote the graded dual of $S^*(U_*)$ (resp. $H_*(U,\Q)$) and let
 $$F^*: S^*(U_*)^*\to H_*(U,\Q)^*\twoheadrightarrow H_A^*(\IA_n,\Q)$$
 denote the composition of the dual map of $F_*$ and the canonical surjection.
 Then by Proposition \ref{coalgebramap}, $F^*$ is an algebra map.

 \begin{proposition}
   The graded $\GL(n,\Q)$-homomorphism
   $$F^*: S^*(U_*)^*\to H_A^*(\IA_n,\Q)$$
   is an algebra map.
 \end{proposition}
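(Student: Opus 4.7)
The plan is to deduce this from Proposition \ref{coalgebramap} together with the general duality between coalgebra maps and algebra maps, plus the observation that the canonical surjection onto the Albanese cohomology is itself an algebra map.

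First, I would invoke the standard fact that the graded dual of a coalgebra map between locally finite graded coalgebras is an algebra map (with respect to the dual multiplication). Applying this to the coalgebra map $F_*: H_*(U,\Q)\to S^*(U_*)$ of Proposition \ref{coalgebramap}, I obtain an algebra map $F_*^{\vee}: S^*(U_*)^*\to H_*(U,\Q)^*$, where the target carries the multiplication dual to the diagonal comultiplication $\Delta_*^U$.

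Next, I would identify $H_*(U,\Q)^*$ with $H^*(U,\Q)$ as graded algebras, noting that the cup product on $H^*(U,\Q)$ is by construction dual to the Alexander--Whitney diagonal on $H_*(U,\Q)$; under the further identification $U\cong H_1(\IA_n,\Z)\otimes\Q$ coming from the Cohen--Pakianathan--Farb--Kawazumi isomorphism \eqref{H1IAn} (which in particular shows $H_1(\IA_n,\Z)$ is torsion-free), this matches $H^*(U,\Q)$ with $H^*(H_1(\IA_n,\Z),\Q)$ as algebras. The abelianization $\pi:\IA_n\twoheadrightarrow H_1(\IA_n,\Z)$ is a group homomorphism, so the induced map $\pi^*:H^*(H_1(\IA_n,\Z),\Q)\to H^*(\IA_n,\Q)$ is an algebra map by functoriality of the cup product. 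By the definition of Albanese cohomology, its image is exactly $H_A^*(\IA_n,\Q)$, and therefore the canonical surjection $H^*(U,\Q)\twoheadrightarrow H_A^*(\IA_n,\Q)$ is an algebra map (as the corestriction of $\pi^*$ onto its image).

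Finally, the map $F^*$ is by definition the composition of $F_*^{\vee}$ with this surjection, so it is a composition of two algebra maps and hence an algebra map. There is no serious obstacle: the only point requiring mild care is verifying that the identification $H_*(U,\Q)^*\cong H^*(H_1(\IA_n,\Z),\Q)$ intertwines the dual-of-diagonal multiplication on the left with the cup product on the right, which is the standard compatibility between the Alexander--Whitney and Eilenberg--Zilber constructions on the bar resolution as recalled in Section \ref{coalgebra}.
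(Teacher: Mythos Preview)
Your proposal is correct and follows essentially the same approach as the paper: the paper defines $F^*$ as the composition of the graded dual of $F_*$ with the canonical surjection $H_*(U,\Q)^*\twoheadrightarrow H_A^*(\IA_n,\Q)$ and then simply asserts ``by Proposition~\ref{coalgebramap}, $F^*$ is an algebra map.'' You have merely spelled out the implicit steps behind that one-line justification (graded dual of a coalgebra map is an algebra map; the surjection onto Albanese cohomology is the corestriction of the ring map $\pi^*$).
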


Let $\langle R_2\rangle$ denote the ideal of $H^*(U,\Q)$ generated by $R_2 = \ker (\tau^*: H^2(U,\Q)\to H^2(\IA_n,\Q))$.
We have a surjective $\GL(n,\Q)$-homomorphism
$$
  H^*(U,\Q)/\langle R_2\rangle \twoheadrightarrow H_A^*(\IA_n,\Q).
$$

\begin{conjecture}\label{quadratic}
   The Albanese cohomology algebra $H_A^*(\IA_n,\Q)$ is stably quadratic, that is, the surjective $\GL(n,\Q)$-homomorphism
  $$
  H^*(U,\Q)/\langle R_2\rangle \twoheadrightarrow H_A^*(\IA_n,\Q)
  $$
  is an isomorphism for sufficiently large $n$ with respect to the cohomological degree.
\end{conjecture}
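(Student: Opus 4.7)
The plan is to attack this via duality. By Proposition \ref{dualityisomIA}, $H_A^*(\IA_n,\Q)$ is the graded $\GL(n,\Q)$-dual of $H^A_*(\IA_n,\Q)$, and since the cup product on cohomology is dual to the Alexander--Whitney coproduct on homology, Conjecture \ref{quadratic} translates into the following ``coquadratic'' statement about the subcoalgebra $H^A_*(\IA_n,\Q) \subset H_*(U,\Q) = \bigwedge^* U$: for sufficiently large $n$ and each $i$,
\begin{equation*}
  H^A_i(\IA_n,\Q) = \{x \in \bigwedge^i U \mid \Delta_{2, i-2}(x) \in H^A_2(\IA_n,\Q) \otimes \bigwedge^{i-2} U\},
\end{equation*}
where $\Delta_{2,i-2}$ is the $(2,i{-}2)$-component of the shuffle coproduct on $\bigwedge^* U$. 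The inclusion ``$\subset$'' is automatic from $H^A_*(\IA_n,\Q)$ being a subcoalgebra. The substance of the conjecture lies in the reverse inclusion ``$\supset$''.

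The natural approach is to combine this with Conjecture \ref{conjectureAlbanese} and transfer the problem to the coalgebra $S^*(U_*)$ via the coalgebra map $F_*$ of Proposition \ref{coalgebramap}. Under Conjecture \ref{conjectureAlbanese}, $F_*$ restricts to a coalgebra isomorphism $H^A_*(\IA_n,\Q) \xrightarrow{\cong} W_* = \wti{S}^*(U_*)$, and Pettet's computation identifies $F_2(H^A_2(\IA_n,\Q)) = W_2$. The reverse inclusion then becomes: every $x \in \bigwedge^i U$ whose $(2,i{-}2)$-coproduct lands in $W_2 \otimes \bigwedge^{i-2} U$ after applying $F_*$ must already lie in $W_i$. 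This is a purely representation-theoretic question about highest-weight vectors in $\bigwedge^i U$, accessible through Lemma \ref{hwv}, the explicit description of the contractions $c^{\tree}_i, c^{\wheel}_i$ in Section \ref{subseccontraction}, and the Littlewood--Richardson decomposition of the tensor products $W(\mu,\nu)$. The key subtask is to show that the traceless constraints cutting out $W_i \subset S^*(U_*)_i$ are all consequences of the traceless constraints cutting out $W_2 \subset S^*(U_*)_2$, coupled with the subcoalgebra condition.

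The main obstacle is two-fold. First, the entire strategy is contingent on Conjecture \ref{conjectureAlbanese}, which is currently known only for $i\le 3$ by Theorem \ref{thirdAlbanese}; this immediately restricts any proof obtained along these lines to the range where the structural conjecture is established. Second, even granting that conjecture, coquadraticity of $W_*$ is nontrivial precisely because the graded-symmetric algebra $S^*(U_*)$ has primitives $U_i$ in \emph{every} degree $i\ge 1$, so the claim amounts to saying that the ``higher'' cogenerators $U_i$ for $i\ge 3$ are already forced by quadratic traceless constraints rather than being genuinely new — this is far from manifest from the operadic construction of $W_*$. An unconditional approach would need either (i) a direct quadratic presentation using Pettet's description of $H^A_2(\IA_n,\Q)$, combined with the abelian cycles of Section \ref{secabeliancycle} and an inductive vanishing argument for higher-degree relations, or (ii) a Koszul-duality argument within the wheeled PROP formalism of Section \ref{KV}, identifying $H_A^*(\IA_n,\Q)$ with a naturally quadratic dual object; either route appears to require substantial new input.
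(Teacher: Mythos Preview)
The statement you are attempting to prove is labeled \emph{Conjecture} in the paper and is \emph{not} proven there; it remains open. The only case established is $*=3$, and that is obtained not by a structural argument but as a corollary of the explicit irreducible decomposition in Section~\ref{Thirdhomology}: once Proposition~\ref{imagecup} shows that $\im\cup$ already has $27$ irreducibles and Theorem~\ref{Johnsonpart} supplies the remaining $34$ inside $H^A_3$, the count against Lemma~\ref{H3U} forces $\langle R_2\rangle_3=R_3$ (Remark~\ref{quadratic3}). There is therefore no general proof in the paper for you to compare your proposal against.

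Your proposal is an outline of a strategy, not a proof, and you correctly flag its two main obstacles. Let me add one more concrete gap in the transfer step. You write that under Conjecture~\ref{conjectureAlbanese} the reverse inclusion ``becomes'' the statement that any $x\in\bigwedge^iU$ whose $(2,i-2)$-coproduct lands in $W_2\otimes\bigwedge^{i-2}U$ after applying $F_*$ must lie in $W_i$. But this reformulation is not equivalent to what you need: $F_*$ is a coalgebra map on all of $\bigwedge^*U$, not an isomorphism, so even if $F_i(x)\in W_i$ you cannot conclude $x\in H^A_i(\IA_n,\Q)$ without knowing that $F_i^{-1}(W_i)=H^A_i(\IA_n,\Q)$, which is strictly stronger than Conjecture~\ref{conjectureAlbanese} (the latter only asserts that $F_i$ restricted to $H^A_i$ is an isomorphism onto $W_i$, and says nothing about the rest of the fibre). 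A cleaner reduction under Conjecture~\ref{conjectureAlbanese} is to compare dimensions: one would need $\dim\langle R_2\rangle_i=\dim\bigwedge^iU^*-\dim W_i$, i.e.\ a direct computation of the degree-$i$ part of the quadratic ideal, which is precisely what the paper carries out by hand for $i=3$.
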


Conjecture \ref{quadratic} holds for $*=3$. See Remark \ref{quadratic3}.

\section{Albanese homology of $\IO_n$}\label{Albanesehomology}

The \emph{inner automorphism group} $\Inn(F_n)$ of $F_n$ is the normal subgroup of $\Aut(F_n)$ consisting of $\{\sigma_x \mid x\in F_n\}$, where $\sigma_x(y)=xyx^{-1}$ for any $y\in F_n$.
The \emph{outer automorphism group} $\Out(F_n)$ of $F_n$ is the quotient group of $\Aut(F_n)$ by $\Inn(F_n)$.
Since we have $\Inn(F_n)\subset \IA_n$, we have a surjection
$\Out(F_n)\twoheadrightarrow \GL(n,\Z)$.
Let $\IO_n$ denote its kernel.
That is, we have exact sequences
$$
1\to \IO_n \to \Out(F_n)\to \GL(n,\Z) \to 1
$$
and
$$
1\to \Inn(F_n)\to \IA_n \xrightarrow{\pi} \IO_n\to 1.
$$

As in the case of $\Aut(F_n)$, the Johnson homomorphism for $\Out(F_n)$ induces an isomorphism on the first homology \cite{Kawazumi}
$$
\tau^O : H_1(\IO_n,\Z)\xrightarrow{\cong} \Hom(H_{\Z}, \bigwedge^2 H_{\Z})/H_{\Z}.
$$

We can also consider $H_i(\IO_n,\Q)$ as a $\GL(n,\Z)$-representation, and the Johnson homomorphism preserves the $\GL(n,\Z)$-action.
Then we have
$$H_1(\IO_n,\Q)\cong\Hom(H, \bigwedge^2 H)/H \cong V_{1^2,1}.$$
Let $U^O=\Hom(H, \bigwedge^2 H)/H$.
The Johnson homomorphism induces a $\GL(n,\Z)$-homomorphism on homology
$$
\tau^O_*: H_i(\IO_n,\Q)\to H_i(U^O,\Q).
$$
In this section, we study the Albanese homology of $\IO_n$ and observe some relation between $H^A_i(\IO_n,\Q)$ and $H^A_i(\IA_n,\Q)$.

\subsection{A set of generators for $H_1(\IO_n,\Q)$}\label{generatorforIOn}
 Here we obtain a set of generators for $H_1(\IO_n,\Q)$, which is induced by Magnus's set of generators for $\IA_n$.

 The projection $\pi: \IA_n\twoheadrightarrow \IO_n$ induces a map
 $$
 \pi_*: H_1(\IA_n,\Q) \to H_1(\IO_n,\Q).
 $$
 Then we have the following commutative diagram of $\GL(n,\Q)$-representations:
 \begin{gather*}
   \xymatrix{
    H_1(\IA_n,\Q)\ar@{->>}[r]^{\pi_*}\ar[d]^{\tau}_{\cong}
    &
     H_1(\IO_n,\Q)\ar[d]^{\tau^O}_{\cong}
      \\
     U=\Hom(H,\bigwedge^2 H) \ar@{->>}[r]^-{\pr}
    &
    U^O=\Hom(H,\bigwedge^2 H)/ H,
   }
 \end{gather*}
 where the bottom map is the canonical surjection.

 Magnus's set of generators for $\IA_n$ induces the following set of generators for $U^O$:
 $$\{\overline{g_{a,b}}\mid 1\le a,b \le n,\; a\ne b\}\cup \{\overline{f_{a,b,c}}\mid 1\le a,b,c\le n,\; a< b,\; a\ne c\ne b\},$$
 where
 $$\overline{g_{a,b}}=\pr\tau(g_{a,b})= \pr(e_{a,b}^{b})\in U^O,\quad
 \overline{f_{a,b,c}}=\pr\tau(f_{a,b,c})=\pr(e_{a,b}^{c})\in U^O.$$
 We can check that
 $$
 \tau(f_{a,b,c})=e_{a,b}^{c} \in U_{1}^{\tree}\subset U
 $$
 and that
 \begin{gather*}
  \begin{split}
     \tau(g_{a,b})- \frac{1}{n-1} (\sum_{j=1}^n e_{a,j}^{j})
     &=\frac{1}{n-1}(\sum_{j\neq a,b} P_{j,c})  (e_{a,b}^{b} -e_{a,c}^{c})
     \\
     &= \frac{1}{n-1}(\sum_{j\neq a,b} P_{j,c})(\id- E_{c,b}- P_{c,b}) (e_{a,b}^{c}) \in U_{1}^{\tree},
  \end{split}
 \end{gather*}
 where $c\in [n]$ is an element distinct from $a$ and $b$.
 Therefore, we obtain an isomorphism $U^O \xrightarrow{\cong} U_{1}^{\tree}$ defined by
 $$\overline{f_{a,b,c}} \mapsto e_{a,b}^{c}\in U_{1}^{\tree},\quad \overline{g_{a,b}} \mapsto e_{a,b}^{b}- \frac{1}{n-1} (\sum_{j=1}^n e_{a,j}^{j}) \in U_{1}^{\tree},$$
 and thus we obtain the canonical injective map
 $$U^O \hookrightarrow U=U_{1}^{\tree}\oplus U_{1}^{\wheel}.$$
 In what follows, we consider $U^O$ as a subrepresentation of $U$.

\subsection{Computation of the contraction maps}
 The inclusion map
 $\iota_{i}:\bigwedge^i U \hookrightarrow M_i$, which we defined in Section \ref{subseccontraction},
 restricts to an inclusion map
 \begin{gather*}
      \iota_{i}: \bigwedge^{i} U^O \hookrightarrow M_i.
 \end{gather*}
 Then we can consider the composition of $\iota_{i}$ and each of the two contraction maps $c^{\wheel}_{i}$ and $c^{\tree}_{i}$, which we defined in Section \ref{subseccontraction}.
 The abelian cycle $\alpha_{(0,i)}$ of $H_i(\IA_n,\Q)$ induces an abelian cycle $\pi_*(\alpha_{(0,i)})$ of $H_i(\IO_n,\Q)$.
 Here we compute the two contraction maps for $\tau^O_*\pi_*(\alpha_{(0,i)})$ as in Lemma \ref{lemconnected}.

 \begin{lemma}\label{lemcontractionout0i}
  (1)  For $i=1$, we have $c^{\wheel}_{1} \iota_{1} \tau^O_* \pi_*(\alpha_{(0,1)}) = 0.$ \\
  (2)  For $i\ge 2$, we have $c^{\wheel}_{i} \iota_{i} \tau^O_* \pi_*(\alpha_{(0,i)})\neq 0$ if $n\ge i+2+\frac{1-(-1)^i}{2}$.\\
  (3)  For $i\ge 1$, we have $c^{\tree}_{i} \iota_{i} \tau^O_* \pi_*(\alpha_{(0,i)})\neq 0$ for sufficiently large $n$.
 \end{lemma}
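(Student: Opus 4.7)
The approach is to use the section $s\colon U^O \hookrightarrow U$ from Section \ref{generatorforIOn} to realize $\tau^O_*\pi_*(\alpha_{(0,i)})$ as an explicit wedge in $\bigwedge^i U$, then compute the two contractions directly. Write $q\colon H \hookrightarrow U$, $e_a \mapsto \sum_b e_{a,b}^b$, for the Johnson image of $\Inn(F_n)$. Then $s(\overline{g_{a,b}}) = e_{a,b}^b - \tfrac{1}{n-1}q(e_a)$, and hence $s(\overline{h_{k+1,1}}) = T_k - \tfrac{k}{n-1}q(e_{k+1})$ where $T_k := \tau(h_{k+1,1}) = \sum_{j=1}^k e_{k+1,j}^j$. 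Setting $v_k := T_k - \tfrac{k}{n-1}q(e_{k+1})$, we have $(\bigwedge^i s)\tau^O_*\pi_*(\alpha_{(0,i)}) = \bigwedge_{k=1}^i v_k$, and the task reduces to evaluating $c^{\wheel}_i\iota_i$ and $c^{\tree}_i\iota_i$ on this element.

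Part (1) is immediate: a direct calculation gives $c^{\wheel}_1\iota_1(T_1) = e_2$ and $c^{\wheel}_1\iota_1(q(e_2)) = (n-1)e_2$, so $c^{\wheel}_1\iota_1(v_1) = 0$. Conceptually, $\im(s)$ coincides with $U_1^{\tree}$, on which $c^{\wheel}_1\iota_1$ vanishes by construction. For (2) and (3), I would expand
\[
 \bigwedge_{k=1}^i v_k = \sum_{S \subseteq [i]} \Big(-\tfrac{1}{n-1}\Big)^{|S|} \Big(\prod_{k\in S} k\Big)\, x_S,
\]
where $x_S$ is the wedge whose $k$-th factor is $q(e_{k+1})$ for $k \in S$ and $T_k$ otherwise. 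The $S=\emptyset$ term is $\tau_*(\alpha_{(0,i)})$, whose images under $c^{\wheel}_i\iota_i$ and $c^{\tree}_i\iota_i$ are the nonzero vectors $i!\, e_2\wedge\cdots\wedge e_{i+1}$ and $(-1)^i(i{+}1)!\, e_1\wedge\cdots\wedge e_{i+1}\otimes e_1^*$ by Lemma \ref{lemconnected}. For each nonempty $S$, the term $x_S$ expands as a sum of basis wedges; applying the cyclic formula for $c^{\wheel}_i$ (respectively the formula for $c_i$) together with the permutation expansion $\iota_i(u_1\wedge\cdots\wedge u_i)=\sum_\sigma\sgn(\sigma)\varphi(u_{\sigma(1)})\otimes\cdots\otimes\varphi(u_{\sigma(i)})$ yields, after collecting, a rational function in $n$ of denominator $(n-1)^i$ multiplying $e_2\wedge\cdots\wedge e_{i+1}$ in the wheel case and $e_1\wedge\cdots\wedge e_{i+1}\otimes e_n^*$ (after applying $\id-E_{1,n}$) in the tree case. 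Part (3) then follows because for large $n$ the $S=\emptyset$ contribution dominates, each other term carrying a factor $(n-1)^{-|S|}$.

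The main obstacle lies in part (2), whose sharp bound $n\ge i+2+(1-(-1)^i)/2$ requires extracting the polynomial in $n$ and verifying it is nonzero at the prescribed boundary. For each $S$ one must enumerate the index matchings that yield nonzero cyclic contractions and track the signs from $\iota_i$. As a sanity check the case $i=2$ produces the coefficient $2(n-3)(n+1)/(n-1)^2$ of $e_2\wedge e_3$, nonzero exactly when $n\ge 4 = i+2$; the parity-dependent shift in the general bound reflects a sign in the all-wheel correction (the $S=[i]$ term), where cyclic contractions on the $q$-factors combine with the $\sgn(\sigma)$ from $\iota_i$ to produce a polynomial whose root structure depends on the parity of $i$. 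Establishing this polynomial and verifying its nonvanishing at the critical value is the combinatorial heart of the proof.
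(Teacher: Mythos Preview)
Your approach matches the paper's exactly: realize $\tau^O_*\pi_*(\alpha_{(0,i)})$ via the section $s$ as the wedge $\bigwedge_{k=1}^i v_k$ with $v_k=T_k-\tfrac{k}{n-1}q(e_{k+1})$, then compute the two contractions directly. Parts (1) and (3) are handled just as the paper does.

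The gap is in part (2). You correctly identify that one must extract the polynomial in $n$ and check its nonvanishing at the stated threshold, but you stop short of doing so; your subset expansion over $S\subseteq[i]$ is a valid starting point, yet without collapsing it to a closed form the sharp bound cannot be verified. The paper carries this out and obtains
\[
c^{\wheel}_{i}\iota_{i}\tau^O_*\pi_*(\alpha_{(0,i)})
=\frac{i!\,(n-i-1)\bigl((n-2)\cdots(n-i)+(-1)^i i!\bigr)}{(n-1)^i}\; e_2\wedge\cdots\wedge e_{i+1},
\]
from which the parity-dependent bound is immediate: the factor $(n-2)\cdots(n-i)+(-1)^i i!$ is positive for all $n\ge i+1$ when $i$ is even, and positive for $n\ge i+3$ when $i$ is odd (with the factor $n-i-1$ forcing $n\ge i+2$ in both cases). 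Incidentally, your $i=2$ sanity check should read $2n(n-3)/(n-1)^2$ rather than $2(n+1)(n-3)/(n-1)^2$; this does not affect the conclusion for $n\ge 4$, but it suggests the subset enumeration needs more care than your outline indicates. The paper's direct computation avoids the inclusion--exclusion bookkeeping entirely.
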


 \begin{proof}
  The proof is similar to that of Lemma \ref{lemconnected}.
  We have
  \begin{gather*}
   \begin{split}
        \tau^O_* \pi_*(\alpha_{(0,i)})
        &=\overline{g_{2,1}}\wedge \left(\overline{g_{3,1}}+\overline{g_{3,2}}\right) \wedge \cdots\wedge \left(\sum_{j=1}^i \overline{g_{i+1,j}}\right)
        \\
        &=
        \bigwedge_{k=1}^{i}
        \left(\sum_{j=1}^{k}\frac{n-k-1}{n-1} e_{k+1,j}^{j}
        -\frac{k}{n-1}\sum_{j=k+2}^n e_{k+1,j}^{j}\right).
   \end{split}
  \end{gather*}

  For $i=1$, we have
  \begin{gather*}
   \begin{split}
    c^{\wheel}_{1} \iota_{1} \tau^O_* \pi_*(\alpha_{(0,1)}) &=c^{\wheel}_{1} \iota_{1} \left( \frac{n-2}{n-1}e_{2,1}^{1} -\frac{1}{n-1}\sum_{j=3}^n e_{2,j}^{j}\right)\\
    &=\frac{n-2}{n-1}e_2-\frac{1}{n-1}(n-2)e_2=0,
   \end{split}
  \end{gather*}
  which proves (1).

   For $i\ge 2$, $n\ge i+1$, we have
     \begin{gather*}
      \begin{split}
          &c^{\wheel}_{i} \iota_{i} \tau^O_* \pi_*(\alpha_{(0,i)}) \\
          &\quad = \frac{i! (n-i-1)((n-2)\cdots(n-i)+(-1)^i i!)}{(n-1)^i} e_2\wedge e_3\wedge \cdots \wedge e_{i+1}\in V_{1^i,0}.
      \end{split}
     \end{gather*}
    If $i$ is even, then we have $(n-2)\cdots(n-i)+(-1)^i i!>0$.
    If $i$ is odd, then we have $(n-2)\cdots(n-i)+(-1)^i i!>0$ for $n\ge i+3$.
    Since we have $i! (n-i-1)\neq 0$ for $n\ge i+2$, we have (2).

    For $i\ge 1$, $n\ge i+1$, one can show that
     \begin{gather*}
      \begin{split}
        c^{\tree}_{i} \iota_{i} \tau^O_* \pi_*(\alpha_{(0,i)})
        &= \frac{(i+1)!(n-i-1)Q_i(n)}{(n-1)^i}  e_2\wedge e_3\wedge \cdots \wedge e_{i+1} \wedge e_{1}\otimes e_1^*\\
        &\quad + \sum_{j=i+2}^{n} \frac{R_{i}(n)}{(n-1)^i} e_2\wedge e_3\wedge \cdots \wedge e_{i+1} \wedge e_{j}\otimes e_j^*
        \in V_{1^{i+1},1},
      \end{split}
     \end{gather*}
     where $Q_{i}(n)$ is a monic polynomial of degree $i-1$ and $R_{i}(n)$ is a polynomial of degree $i-1$.
     Since $Q_{i}(n)$ is monic, we have (3).
 \end{proof}

 \begin{remark}
   We can say that to satisfy $c^{\tree}_{i} \iota_{i} \tau^O_* \pi_*(\alpha_{(0,i)})\neq 0$, the condition $n\ge 2+\max(i,\lfloor \frac{i-1}{i+1}2^i\rfloor)$ is enough.
   We need $n\ge i+2$, and if $i\le 4$, then the condition that $n\ge i+2$ is sufficient.
 \end{remark}

 \begin{theorem}\label{ConnectedpartIO}
  Let $i\ge 2$.
  For $n\ge i+2+\frac{1-(-1)^i}{2}$, we have a surjective $\GL(n,\Q)$-homomorphism
  \begin{gather*}
      H^A_i(\IO_n,\Q)\twoheadrightarrow \bigwedge^i H\cong V_{1^i,0}.
  \end{gather*}
  For sufficiently large $n$ with respect to $i$, we have a surjective $\GL(n,\Q)$-homomorphism
  \begin{gather*}
      H^A_i(\IO_n,\Q)\twoheadrightarrow \Hom(H,\bigwedge^{i+1} H) \cong V_{1^{i+1},1} \oplus V_{1^i,0}.
  \end{gather*}
 \end{theorem}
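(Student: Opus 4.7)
\emph{Plan.} Both statements will follow from Lemma \ref{lemcontractionout0i} together with the irreducibility of the $\GL(n,\Q)$-representations $V_{1^i,0}$ and $V_{1^{i+1},1}$. The set-up is that, via the embedding $U^O \hookrightarrow U$ of Section \ref{generatorforIOn},
$$H^A_i(\IO_n,\Q) = \im(\tau^O_*) \subset H_i(U^O,\Q) \subset H_i(U,\Q) = \bigwedge^i U,$$
so the $\GL(n,\Q)$-equivariant contraction maps $c^{\wheel}_i\iota_i$ and $c^{\tree}_i\iota_i$ of Section \ref{subseccontraction} restrict to $\GL(n,\Q)$-homomorphisms out of $H^A_i(\IO_n,\Q)$.

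For the first assertion I would evaluate these on the image in $H^A_i(\IO_n,\Q)$ of the abelian cycle $\pi_*(\alpha_{(0,i)}) \in H_i(\IO_n,\Q)$. By Lemma \ref{lemcontractionout0i}(2), the $\GL(n,\Q)$-homomorphism
$$c^{\wheel}_i \iota_i : H^A_i(\IO_n,\Q) \longrightarrow U_i^{\wheel} \cong V_{1^i,0} \cong \bigwedge^i H$$
sends $\tau^O_* \pi_*(\alpha_{(0,i)})$ to a nonzero vector whenever $n \ge i+2+\frac{1-(-1)^i}{2}$. Since the target is irreducible, the image, being a nontrivial $\GL(n,\Q)$-subrepresentation of it, must equal the whole of $V_{1^i,0}$, giving the first surjection.

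For the second assertion I would bundle the two contractions into a single $\GL(n,\Q)$-equivariant map
$$c^{\tree}_i\iota_i \oplus c^{\wheel}_i\iota_i : H^A_i(\IO_n,\Q) \longrightarrow U_i^{\tree} \oplus U_i^{\wheel} = U_i \cong \Hom(H, \bigwedge^{i+1} H).$$
By Lemma \ref{lemcontractionout0i}(2) and (3), both components send $\tau^O_* \pi_*(\alpha_{(0,i)})$ to nonzero vectors for sufficiently large $n$. Because $V_{1^{i+1},1}$ and $V_{1^i,0}$ are non-isomorphic irreducibles, every $\GL(n,\Q)$-subrepresentation of $U_i$ splits as a direct sum of a subrepresentation of each summand; nonvanishing of both projections therefore forces the image to be all of $U_i$.

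The actual difficulty is already concentrated in Lemma \ref{lemcontractionout0i}: the correction needed so that $\overline{g_{a,b}}$ lies in $U^O \cong U_1^{\tree}$ introduces a $\frac{1}{n-1}$-weighted diagonal term, which turns the wheel contraction of $\tau^O_*\pi_*(\alpha_{(0,i)})$ into an alternating polynomial in $n$ whose sign behaviour is delicate — the odd-$i$ case requires $(n-2)\cdots(n-i) + (-1)^i i! > 0$, forcing $n \ge i+3$ and thereby the $\frac{1-(-1)^i}{2}$ shift in the theorem. Once that lemma is granted, everything above is purely formal.
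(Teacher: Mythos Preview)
Your proof is correct and follows exactly the paper's approach: the paper's proof reads simply ``This directly follows from Lemma \ref{lemcontractionout0i},'' and what you have written is a faithful unpacking of that sentence, using the nonvanishing provided by the lemma together with irreducibility of the targets to force surjectivity.
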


 \begin{proof}
  This directly follows from Lemma \ref{lemcontractionout0i}.
 \end{proof}

\subsection{The traceless part of $H^A_i(\IO_n,\Q)$}

We have a commutative diagram
\begin{gather*}
   \xymatrix{
    H_i(\IA_n,\Q)\ar[r]^-{\pi_*}\ar[d]^-{\tau_*}
    &
     H_i(\IO_n,\Q)\ar[d]^-{\tau^O_*}
      \\
     H_i(U,\Q)\ar@{->>}[r]_-{\pr_*}
    &
    H_i(U^O,\Q).
   }
\end{gather*}
Therefore, we have
\begin{gather}\label{AlbaneseIOandIA}
\pr_*(H^A_i(\IA_n,\Q))\subset H^A_i(\IO_n,\Q).
\end{gather}

\begin{question}
 Is $\pr_*: H^A_i(\IA_n,\Q) \to H^A_i(\IO_n,\Q)$ surjective for any $n$?
\end{question}

We have the following decomposition of $H_i(U,\Q)$ as $\GL(n,\Q)$-representations
\begin{gather*}
 \begin{split}
    H_i(U,\Q)&=\bigwedge^i U= \left(\bigwedge^i U^O\right) \oplus Y_i,
 \end{split}
\end{gather*}
where $Y_i$ is a subrepresentation of $H_i(U,\Q)$ whose irreducible decomposition does not include $V_{\ul\lambda}$ for any $\ul\lambda$ such that $|\ul\lambda|=3i$.
Therefore, by Theorem \ref{tracelessimage}, we obtain the following theorem.
Recall that we have $\dim_{\Q}(H_i(U,\Q)^{\tl})=P'_i(n)$ for $n\ge 3i$, where $P'_i(T)$ is a polynomial of degree $3i$.

\begin{theorem}\label{dimensionIO}
 For $n\ge 3i$, we have
 $$H_i(U,\Q)^{\tl}\subset H^A_i(\IO_n,\Q).$$
 In particular, we have $\dim_{\Q}(H^A_i(\IO_n,\Q))\ge P'_i(n)$ for $n\ge 3i$.
\end{theorem}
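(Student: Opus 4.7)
The plan is to deduce this from Theorem \ref{tracelessimage} by pushing the traceless part through the projection $\pr_*:H^A_i(\IA_n,\Q)\to H^A_i(\IO_n,\Q)$ that was established in \eqref{AlbaneseIOandIA}. For this to yield a containment (rather than merely a surjection), I will verify that $\pr_*$ acts as the identity on $H_i(U,\Q)^{\tl}$, which amounts to showing that the traceless part already lives inside the summand $\bigwedge^i U^O\subset\bigwedge^i U$.

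First, using the identification from Section \ref{generatorforIOn} we have a direct sum decomposition $U=U^O\oplus U_{1}^{\wheel}$ as algebraic $\GL(n,\Q)$-representations, where $U^O\cong V_{1^2,1}$ and $U_{1}^{\wheel}\cong V_{1,0}$. Expanding the exterior power gives
$$\bigwedge^i U \;=\; \bigoplus_{k=0}^{i} \left(\bigwedge^{k} U^O\right)\otimes \left(\bigwedge^{i-k} U_{1}^{\wheel}\right),$$
and the projection $\pr_*:\bigwedge^i U\to\bigwedge^i U^O$ induced by $\pr:U\twoheadrightarrow U^O$ is the projection onto the $k=i$ summand.

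Next, I will use the size grading. By definition, $H_i(U,\Q)^{\tl}=\pi(T_{2i,i})$, so every irreducible constituent of $H_i(U,\Q)^{\tl}$ is of the form $V_{\ul\lambda}$ with $|\ul\lambda|=3i$. On the other hand, any irreducible summand of $\left(\bigwedge^{k} U^O\right)\otimes \left(\bigwedge^{i-k} U_{1}^{\wheel}\right)$ is a constituent of $V_{1^2,1}^{\otimes k}\otimes V_{1,0}^{\otimes (i-k)}$, and the Littlewood--Richardson rule for $\GL(n,\Q)$ shows that every $V_{\ul\nu}$ appearing satisfies $|\ul\nu|\le 3k+(i-k)=2k+i$, with equality required to hit size $3i$. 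Hence the size-$3i$ isotypic part of $\bigwedge^i U$ lies in the $k=i$ summand, and in particular
$$H_i(U,\Q)^{\tl}\;\subset\;\bigwedge^i U^O\;\subset\;\bigwedge^i U.$$
Therefore $\pr_*$ is the identity on $H_i(U,\Q)^{\tl}$.

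Finally, by Theorem \ref{tracelessimage} we have $H_i(U,\Q)^{\tl}\subset H^A_i(\IA_n,\Q)$ for $n\ge 3i$. Applying $\pr_*$ and using \eqref{AlbaneseIOandIA} yields
$$H_i(U,\Q)^{\tl}=\pr_*\bigl(H_i(U,\Q)^{\tl}\bigr)\subset \pr_*\bigl(H^A_i(\IA_n,\Q)\bigr)\subset H^A_i(\IO_n,\Q),$$
which is the desired containment. The dimension lower bound $\dim_{\Q}(H^A_i(\IO_n,\Q))\ge P'_i(n)$ for $n\ge 3i$ then follows immediately from the identity $\dim_{\Q}(H_i(U,\Q)^{\tl})=P'_i(n)$ recorded just before the statement. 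The only nontrivial ingredient is the size-counting argument that places $H_i(U,\Q)^{\tl}$ inside $\bigwedge^i U^O$; once that is in hand, the rest of the proof is formal.
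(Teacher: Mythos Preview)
Your proof is correct and follows essentially the same approach as the paper: both argue via the size grading that $H_i(U,\Q)^{\tl}$ lies in the summand $\bigwedge^i U^O$ of $\bigwedge^i U$ (the paper phrases this as $\bigwedge^i U=\bigwedge^i U^O\oplus Y_i$ with $Y_i$ containing no irreducibles of size $3i$), and then combine Theorem~\ref{tracelessimage} with \eqref{AlbaneseIOandIA}. Your explicit expansion of the exterior power and the bound $|\ul\nu|\le 2k+i$ simply make the paper's one-line claim about $Y_i$ more transparent.
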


\begin{conjecture}
There is a polynomial $P^O_i(T)$ of degree $3i$ such that we have $\dim_{\Q}(H^A_i(\IO_n,\Q))=P^O_i(n)$ for sufficiently large $n$ with respect to $i$.
\end{conjecture}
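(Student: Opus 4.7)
The plan is to derive the conjecture for $\IO_n$ directly from the known polynomiality for $\IA_n$ via the splitting in Proposition \ref{AlbaneseIAandIO2intro}, and then use the lower bound of Theorem \ref{dimensionIO} to pin down the degree. Set
$$p_i(n)=\dim_{\Q} H^A_i(\IA_n,\Q),\qquad q_i(n)=\dim_{\Q} H^A_i(\IO_n,\Q).$$
By Proposition \ref{AlbaneseIAandIO2intro}, for every $n\ge 2$ we have
$$p_i(n)=q_i(n)+n\,q_{i-1}(n),$$
so $q_i(n)=p_i(n)-n\,q_{i-1}(n)$.

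First I would record the base case $q_0(n)=1$ and then proceed by induction on $i$. Assume that for some threshold $n\ge N_{i-1}$ one has $q_{i-1}(n)=Q_{i-1}(n)$ for a polynomial $Q_{i-1}$ of degree at most $3(i-1)$. By Theorem \ref{dimension} (the $\IA_n$ version of the Church--Ellenberg--Farb polynomiality) there is an $N_i\ge N_{i-1}$ and a polynomial $P_i$ of degree exactly $3i$ with $p_i(n)=P_i(n)$ for all $n\ge N_i$. The recurrence then forces
$$q_i(n)=P_i(n)-n\,Q_{i-1}(n)\qquad(n\ge N_i),$$
which is a polynomial in $n$ of degree at most $3i$. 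Writing $P_i(n)=a_i n^{3i}+\cdots$, the coefficient of $n^{3i}$ in $q_i(n)$ equals $a_i$ (since $n\,Q_{i-1}(n)$ has degree at most $3i-2$), so the leading term is preserved.

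It remains to show the degree of $P^O_i:=P_i-T\,Q_{i-1}$ is exactly $3i$ rather than strictly less. This is where the lower bound of Theorem \ref{dimensionIO} enters: for $n\ge 3i$ we have $q_i(n)\ge P'_i(n)$ with $\deg P'_i=3i$, so the coefficient of $n^{3i}$ in $P^O_i$ must be at least the leading coefficient of $P'_i$, in particular nonzero. This gives $\deg P^O_i=3i$ and finishes the induction, proving the conjecture with the same stable range $n\ge N_i$ as for $\IA_n$.

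The argument is essentially a dimension count and does not genuinely rely on any new input beyond what is already proved in the paper. The only real obstacle is the inductive upper bound on $\deg Q_{i-1}$: one needs the inductive hypothesis to carry the sharp bound $3(i-1)$, not just $\le 3(i-1)$ with possibly negative leading coefficient, but this is guaranteed by the simultaneous lower bound from Theorem \ref{dimensionIO} at each stage. One should also verify that the stable ranges $N_i$ can be chosen consistently, but since the recurrence holds for all $n\ge 2$, one may simply take $N_i=\max(N_{i-1},N'_i,3i)$ where $N'_i$ is the stable range in Theorem \ref{dimension}.
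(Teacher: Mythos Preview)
The paper does not prove this statement; it is stated as a conjecture and left open. Your argument is correct and in fact \emph{resolves} the conjecture using only results already established elsewhere in the paper, namely Proposition~\ref{AlbaneseIAandIO2} (the splitting $H^A_i(\IA_n,\Q)\cong H^A_i(\IO_n,\Q)\oplus(H^A_{i-1}(\IO_n,\Q)\otimes H)$ for all $n\ge2$) together with Theorem~\ref{dimension}. The recurrence $q_i(n)=p_i(n)-n\,q_{i-1}(n)$ and the induction on $i$ are exactly right. The author appears to have overlooked this implication: the conjecture is placed in Section~\ref{Albanesehomology} before Proposition~\ref{AlbaneseIAandIO2} is established, and the introduction explicitly says that an upper bound on $\dim_{\Q}H^A_i(\IO_n,\Q)$ is not known.

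Two minor comments on the write-up. First, your appeal to Theorem~\ref{dimensionIO} for the lower bound is unnecessary: once you know $\deg P_i=3i$ exactly (this is part of Theorem~\ref{dimension}) and $\deg Q_{i-1}\le 3(i-1)$, the term $n\,Q_{i-1}(n)$ has degree at most $3i-2<3i$, so the leading coefficient of $P^O_i=P_i-T\,Q_{i-1}$ equals that of $P_i$ and is automatically nonzero. Second, your closing worry about a ``possibly negative leading coefficient'' is a non-issue for the same reason; the inductive hypothesis need only assert $\deg Q_{i-1}\le 3(i-1)$, and the exact degree $3i$ at the next step is forced by $\deg P_i=3i$.
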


Moreover, by Theorem \ref{Johnsonpart}, we have a direct summand $\wti{W_i}$ of $H^A_i(\IA_n,\Q)$ which is isomorphic to $W_i$.
Therefore, by \eqref{AlbaneseIOandIA}, we have for $n\ge 3i$,
$$\pr_*(\wti{W_i})\subset H^A_i(\IO_n,\Q).$$

\subsection{Conjectural structure of $H^A_i(\IO_n,\Q)$}
 Here we propose a conjectural structure of $H^A_i(\IO_n,\Q)$.

 Define $U^O_i$ by
 $$U^O_i=
 \begin{cases}
  U^O \cong U_1^{\tree} & (i=1)\\
  U_i & (i\ge 2).
 \end{cases}
 $$
 For the graded $\GL(n,\Q)$-representation $U^O_*=\bigoplus_{i\ge 1} U^O_i$, let
 $S^* (U^O_*)$ denote the graded-symmetric algebra of $U^O_*$.
 Let
 $W^O_*=\wti{S}^* (U^O_*)$ denote the traceless part of $S^*(U^O_*)$.
 Let
 $P^O_i\subset P_i$ denote the subset of $P_i$ consisting of pairs of partitions $(\mu,\nu)$ of total size $i$ such that $\nu$ has no part of size $1$.
 Then we have
 $$
 W^O_i=\bigoplus_{(\mu,\nu)\in P^O_i} W(\mu,\nu).
 $$

 We make the following conjecture, which is true for $i=1$ by Kawazumi \cite{Kawazumi}, for $i=2$ by Pettet \cite{Pettet} and for $i=3$ as we will observe in Theorem \ref{thirdAlbaneseIO}.

 \begin{conjecture}\label{conjectureAlbaneseIO}
  For sufficiently large $n$ with respect to $i$, we have a $\GL(n,\Q)$-isomorphism
  $$H^A_i(\IO_n,\Q) \xrightarrow{\cong} W^O_i.$$
 \end{conjecture}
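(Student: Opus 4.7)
The plan is to combine a lower-bound statement, proved by adapting the abelian-cycle construction of Theorem \ref{Johnsonpart}, with an upper bound obtained through the splitting of Proposition \ref{AlbaneseIAandIO2}. The cleanest route is that Proposition \ref{AlbaneseIAandIO} already shows Conjecture \ref{conjectureAlbaneseIO} is equivalent to Conjecture \ref{conjectureAlbanese}, so it suffices to prove the $\IA_n$-side; but a direct inductive attack on the $\IO_n$-side in parallel is also natural and sharpens the geometric picture.

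For the lower bound, I would show that $W^O_i$ appears as a $\GL(n,\Q)$-subquotient of $H^A_i(\IO_n,\Q)$ for $n \ge 3i$ by running the proof of Theorem \ref{Johnsonpart} with $\IA_n$ replaced by $\IO_n$. Concretely, for each $(\mu,\nu)\in P^O_i$, the abelian cycle $\alpha_{(\mu,\nu)}\in H_i(\IA_n,\Q)$ pushes forward to $\pi_*(\alpha_{(\mu,\nu)})\in H_i(\IO_n,\Q)$, and composing the contraction maps $F_{(\mu,\nu)}$ with $\tau^O_*$ yields a nonzero image in $W(\mu,\nu)$. The restriction to $P^O_i$, namely that $\nu$ has no part of size $1$, is exactly what is required: parts of size $1$ in $\nu$ would produce the $U_1^{\wheel}$-component that is killed by the quotient $U\twoheadrightarrow U^O=U_1^{\tree}$ (cf.\ Section \ref{generatorforIOn} and Lemma \ref{lemcontractionout0i}(1)). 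The induction with respect to the partial order $\ge$ on $P^O_i$ then proceeds verbatim, using the obvious adaptations of Lemmas \ref{lempairpartcontraction} and \ref{Johnsonpartmunu}.

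For the upper bound, I would induct on $i$ using Proposition \ref{AlbaneseIAandIO2}:
$$H^A_i(\IA_n,\Q)\cong H^A_i(\IO_n,\Q)\oplus (H^A_{i-1}(\IO_n,\Q)\otimes H).$$
Assuming inductively that $H^A_{i-1}(\IO_n,\Q)\cong W^O_{i-1}$ and using Theorem \ref{CEHdimension} from \cite{CEF} together with (conjecturally) Conjecture \ref{conjectureAlbanese} to pin down $\dim_\Q H^A_i(\IA_n,\Q)=\dim_\Q W_i$, the lower bound $W^O_i\hookrightarrow H^A_i(\IO_n,\Q)$ from the previous step is forced to be an isomorphism provided one verifies the combinatorial identity $\dim W_i=\dim W^O_i+n\cdot\dim W^O_{i-1}$. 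This identity should follow by grouping $P_i\setminus P^O_i$ according to the extraction of a single size-$1$ part of $\nu$ and observing that $U_1^{\wheel}\cong H$.

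The main obstacle is the upper bound, which reduces to establishing Conjecture \ref{conjectureAlbanese} outright. The containment $F_i(H^A_i(\IA_n,\Q))\supset W_i$ is already given by Theorem \ref{Johnsonpart}, but the reverse inclusion and the injectivity of $F_i$ on $H^A_i(\IA_n,\Q)$ remain open, and Theorem \ref{CEHdimension} only constrains the degree of the Hilbert polynomial, not its coefficients. The most promising route seems to be via Kawazumi--Vespa's Conjecture \ref{conjKVintro} together with Conjecture \ref{cohomologyAutintro}: these translate $H_A^i(\IA_n,\Q)$ into $\GL(n,\Z)$-invariants of twisted cohomology $H^i(\Aut(F_n),V_{\ul\lambda})$ combinatorially described by the wheeled PROP $\calC_{\calP_0^{\circlearrowright}}$, and matching these invariants with the traceless part of the graded-symmetric algebra $S^*(U_*^O)$ should yield precisely $W^O_i$. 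A secondary approach would be to run the Hochschild--Serre spectral sequence for $1\to\Inn(F_n)\to\IA_n\to\IO_n\to 1$ with the conjectural knowledge of $H^A_*(\IO_n,\Q)$ at lower degree as input, hoping that the differentials collapse for dimensional reasons.
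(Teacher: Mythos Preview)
The statement you are attempting to prove is a \emph{conjecture} in the paper, not a theorem: the paper does not give a proof of it in general. It is verified only for $i\le 3$ (by \cite{Kawazumi}, \cite{Pettet}, and Theorem~\ref{thirdAlbaneseIO} respectively), and the paper explicitly leaves the general case open.

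Your proposal is not a proof either, and you correctly identify why. The reduction you describe via Proposition~\ref{AlbaneseIAandIO2} and Lemma~\ref{WiWoi} is exactly the content of Proposition~\ref{AlbaneseIAandIO}: Conjecture~\ref{conjectureAlbaneseIO} is equivalent to Conjecture~\ref{conjectureAlbanese}. Your lower-bound argument (adapting the abelian-cycle computation to $\IO_n$) is in the spirit of what the paper does in Theorem~\ref{ConnectedpartIO}, Theorem~\ref{dimensionIO}, and Proposition~\ref{IOnWO}, though the paper does not claim the full inclusion $W^O_i\hookrightarrow H^A_i(\IO_n,\Q)$ directly---it instead proves the weaker statement $W^O_i\oplus(W^O_{i-1}\otimes H)\hookrightarrow H^A_i(\IO_n,\Q)\oplus(H^A_{i-1}(\IO_n,\Q)\otimes H)$ in Proposition~\ref{IOnWO}. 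The upper bound is the genuine obstruction, as you acknowledge: it requires Conjecture~\ref{conjectureAlbanese}, which is open beyond degree~$3$. Neither the Kawazumi--Vespa route (itself conjectural) nor the spectral-sequence collapse you suggest is currently known to work, so your plan is a reasonable outline of the landscape but not a proof.
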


 Since we have a direct sum decomposition
 $$S^*(U^O_*)_i=\bigoplus_{(\mu,\nu)\in P^O_i}
    U_{\mu}^{\tree} \otimes U_{\nu}^{\wheel},$$
 there is a canonical surjective $\GL(n,\Q)$-homomorphism
 $$ \Pr : S^*(U_*)_i=
    \bigoplus_{(\mu,\nu)\in P_i}
      U_{\mu}^{\tree} \otimes U_{\nu}^{\wheel}\twoheadrightarrow S^*(U^O_*)_i=\bigoplus_{(\mu,\nu)\in P^O_i}
    U_{\mu}^{\tree} \otimes U_{\nu}^{\wheel}.$$
Then we have a $\GL(n,\Q)$-homomorphism
$$G_{i}: H_i(U^O,\Q)=\bigwedge^i U^O \hookrightarrow \bigwedge^i U \xrightarrow{F_i} S^*(U_*)_i\xrightarrow{\Pr} S^*(U^O_*)_i.$$
We expect that $G_i$ restricts to a $\GL(n,\Q)$-isomorphism
$G_i: H^A_i(\IO_n,\Q) \xrightarrow{\cong} W^O_i$, which implies that Conjecture \ref{conjectureAlbaneseIO} is true.

\subsection{Structures of $H^A_i(\IO_n,\Q)$ and $H^A_i(\IA_n,\Q)$}
 Here we study the relation between the structures of $H^A_i(\IO_n,\Q)$ and $H^A_i(\IA_n,\Q)$.

 Recall that we have an exact sequence of groups with $\Aut(F_n)$-actions
  $$1\to \Inn(F_n)\to \IA_n\to \IO_n\to 1.$$
 Since we have $\Inn(F_n)\cong F_n$ for $n\ge 2$, we identify $\Inn(F_n)$ with $F_n$.
 By Proposition \ref{spectralsequenceaction} and Remark \ref{cospectralsequence}, we obtain the following proposition.

 \begin{proposition}\label{AlbaneseIAandIO2}
      For $n\ge 2$, we have a $\GL(n,\Q)$-isomorphism
  \begin{gather*}
   \begin{split}
       H^A_i(\IA_n,\Q) \xrightarrow{\cong}  H^A_i(\IO_n,\Q)\oplus (H^A_{i-1}(\IO_n,\Q)\otimes H).
   \end{split}
  \end{gather*}
 \end{proposition}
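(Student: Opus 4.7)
The plan is to apply the Lyndon--Hochschild--Serre spectral sequence to the extension $1\to \Inn(F_n)\to \IA_n\to \IO_n\to 1$ together with the corresponding spectral sequence for its abelianization, and combine them via Proposition~\ref{spectralsequenceaction}. Since $F_n$ is centerless for $n\ge 2$, I identify $\Inn(F_n)\cong F_n$, so the spectral sequence reads
\[
E^2_{p,q}=H_p(\IO_n,H_q(F_n,\Q))\Longrightarrow H_{p+q}(\IA_n,\Q).
\]
Freeness of $F_n$ forces $H_q(F_n,\Q)=0$ for $q\ge 2$, while by the very definition of $\IO_n$ the induced action on $H_1(F_n,\Q)=H$ is trivial. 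Hence only two rows survive: $E^2_{p,0}=H_p(\IO_n,\Q)$ and $E^2_{p,1}=H_p(\IO_n,\Q)\otimes H$. By Proposition~\ref{spectralsequenceaction} this spectral sequence carries a compatible algebraic $\GL(n,\Q)$-representation structure.

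Next I apply the analogous spectral sequence to the abelianized short exact sequence of $\Q$-vector spaces
\[
0\to H\to H_1(\IA_n,\Q)\to H_1(\IO_n,\Q)\to 0.
\]
As an extension of abelian groups it splits canonically as $\GL(n,\Q)$-representations by complete reducibility, since $H\cong V_{1,0}$ and $H_1(\IO_n,\Q)\cong V_{1^2,1}$ share no irreducible constituent. Consequently its spectral sequence collapses at $E^2$ and
\[
H_i(\IA_n^{\ab},\Q)=\bigoplus_{p+q=i}\bigwedge^p H_1(\IO_n,\Q)\otimes \bigwedge^q H.
\]
The commutative ladder of abelianizations induces a morphism from the first spectral sequence to the second, converging to the Albanese map $H_i(\IA_n,\Q)\to H_i(\IA_n^{\ab},\Q)$. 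On $E^2$ this map in row $q=0$ has image $H^A_p(\IO_n,\Q)$ and in row $q=1$ has image $H^A_p(\IO_n,\Q)\otimes H$, while rows $q\ge 2$ contribute nothing because they vanish already in the source.

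Proposition~\ref{spectralsequenceaction}, together with its cohomological counterpart in Remark~\ref{cospectralsequence}, then assembles these observations into a short exact sequence of algebraic $\GL(n,\Q)$-representations
\[
0\to H^A_{i-1}(\IO_n,\Q)\otimes H\to H^A_i(\IA_n,\Q)\to H^A_i(\IO_n,\Q)\to 0,
\]
which splits by complete reducibility of the category of algebraic $\GL(n,\Q)$-representations, yielding the claimed isomorphism.

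The main obstacle is identifying the top graded piece of the filtered module $H^A_i(\IA_n,\Q)$ as all of $H^A_i(\IO_n,\Q)$: a priori it is the image of $E^{\infty}_{i,0}=\ker d^2$ of the source spectral sequence under the Albanese map, which might be a strict subspace. Naturality of the morphism of spectral sequences forces the source differential $d^2\colon H_p(\IO_n,\Q)\to H_{p-2}(\IO_n,\Q)\otimes H$ to take values in $\ker(\operatorname{Alb}_{\IO_n})\otimes H$; promoting this to the equality $\operatorname{Alb}_{\IO_n}(\ker d^2)=H^A_i(\IO_n,\Q)$ is the technical heart of Proposition~\ref{spectralsequenceaction}. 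Once that is secured, the splitting of the short exact sequence above is automatic from complete reducibility.
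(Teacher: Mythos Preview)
Your overall strategy matches the paper's: use the Hochschild--Serre spectral sequence for $1\to F_n\to\IA_n\to\IO_n\to1$, compare it via abelianization to the collapsed spectral sequence for the split extension of abelian groups, and invoke Proposition~\ref{spectralsequenceaction} together with Remark~\ref{cospectralsequence}. The two-row structure, the triviality of the $\IO_n$-action on $H$, and the semisimple splitting at the end are all handled correctly.

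However, there is a genuine gap in your final paragraph. You correctly isolate the obstacle: one must show that the Albanese map restricted to $\ker\bigl(d^2\colon H_i(\IO_n,\Q)\to H_{i-2}(\IO_n,\Q)\otimes H\bigr)$ still surjects onto $H^A_i(\IO_n,\Q)$. But you then claim this ``is the technical heart of Proposition~\ref{spectralsequenceaction}.'' It is not. Proposition~\ref{spectralsequenceaction} only produces the \emph{injection} $\bigoplus F_r/F_{r-1}\hookrightarrow\bigoplus H^A_p(\IO_n,\Q)\otimes H^A_q(F_n,\Q)$; it says nothing about surjectivity. Without an additional argument you have only $H^A_i(\IA_n,\Q)\hookrightarrow H^A_i(\IO_n,\Q)\oplus\bigl(H^A_{i-1}(\IO_n,\Q)\otimes H\bigr)$, not the claimed isomorphism.

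The paper closes this gap by passing to cohomology and verifying the hypothesis of Remark~\ref{cospectralsequence} explicitly: one must check that the transgression $d_2^{0,1}\colon E_2^{0,1}\to E_2^{2,0}$ vanishes. This follows from the five-term exact sequence together with the decomposition $H^1(\IA_n,\Q)\cong H^1(\IO_n,\Q)\oplus H^1(F_n,\Q)$, which forces the restriction map $H^1(\IA_n,\Q)\to H^1(F_n,\Q)^{\IO_n}$ to be surjective and hence $d_2^{0,1}=0$. You already have the dual ingredient (the $\GL(n,\Q)$-splitting of $H_1$), so the fix is simply to insert this verification rather than attribute it to Proposition~\ref{spectralsequenceaction}.
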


 \begin{proof}
  The canonical injective $\Aut(F_n)$-homomorphism $F_n \hookrightarrow \IA_n$ induces an injective $\Aut(F_n)$-homomorphism $(F_n)^{\ab}=H_{\Z}\to (\IA_n)^{\ab}=H_1(\IA_n,\Z)$.
  By Proposition \ref{spectralsequenceaction},
  we obtain a filtration
  $$
  0=\calF_{-1}\subset \calF_{0}\subset \cdots\subset \calF_{i}=H^A_i(\IA_n,\Q)
  $$
  of $\Aut(F_n)$-modules such that there is an $\Aut(F_n)$-homomorphism
  $$\iota:
  \bigoplus_{r=0}^{i} \calF_{r}/\calF_{r-1}\hookrightarrow \bigoplus_{p+q=i} H^A_p(\IO_n,\Q)\otimes H^A_q(F_n,\Q).$$
  The $\Aut(F_n)$-actions on $H^A_*(\IA_n,\Q)$, $H^A_*(\IO_n,\Q)$ and $H^A_*(F_n,\Q)$ induce $\GL(n,\Z)$-actions, which extend to the structures of algebraic $\GL(n,\Q)$-representations.
  Hence, the filtration $\calF_*$ of $H^A_i(\IA_n,\Q)$ is a filtration of $\GL(n,\Q)$-representations
  and $\iota$ is a $\GL(n,\Q)$-homomorphism.
  Therefore, we have $\GL(n,\Q)$-homomorphisms
  $$H^A_i(\IA_n,\Q)\cong \bigoplus_{r=0}^{i} \calF_{r}/\calF_{r-1} \hookrightarrow \bigoplus_{p+q=i} H^A_p(\IO_n,\Q)\otimes H^A_q(F_n,\Q).$$

  In order to show that $H^A_i(\IA_n,\Q)\cong \bigoplus_{p+q=i} H^A_p(\IO_n,\Q)\otimes H^A_q(F_n,\Q)$, we use the cohomological Hochschild--Serre spectral sequence. We can easily check that $H^q(F_n,\Q)=0$ for $q\ge 2$, and that $\IO_n$ acts trivially on $H^*(F_n,\Q)$.
  By Remark \ref{cospectralsequence}, it suffices to show that the differential $d_2^{0,1}: E_2^{0,1}\to E_2^{2,0}$ is a zero map, which follows from the fact that
  $H^1(\IA_n,\Q)\cong H^1(\IO_n,\Q)\oplus H^1(F_n,\Q)=E_2^{1,0}\oplus E_2^{0,1}$. This completes the proof.
 \end{proof}

 To study the relation between $H^A_*(\IO_n,\Q)$ and $H^A_*(\IA_n,\Q)$, we use the following lemma.

  \begin{lemma}\label{WiWoi}
  We have $W_i\cong W^O_i \oplus (W^O_{i-1}\otimes H)$ as $\GL(n,\Q)$-representations for $n\ge 3i$.
 \end{lemma}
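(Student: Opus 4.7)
The plan is to decompose $U_*$ as $U^O_*\oplus H[1]$, where $H[1]$ denotes $U_1^{\wheel}\cong H$ placed in internal degree $1$, and then push this decomposition through the traceless graded-symmetric algebra construction $W_*=\wti{S}^*(U_*)$. The identification $U_*=U^O_*\oplus H[1]$ is immediate from the definitions: $U^O_j=U_j$ for $j\ge 2$ and $U_1=U^O_1\oplus U_1^{\wheel}$ with $U_1^{\wheel}\cong H$.

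The first main step is to establish the formula
\[
W_i \;=\; \bigoplus_{q=0}^{i} W^O_{i-q}\wti{\otimes}\bigwedge^q H.
\]
For this, I use that graded-symmetric algebras convert direct sums into tensor products, and that $S^*(H[1])=\bigwedge^*H$ since $H[1]$ sits in odd internal degree. Expanding the explicit formula $\wti{S}^*(U_*)_i=\bigoplus_{k_1+2k_2+\cdots=i}\wti{S}^{k_1}(U_1)\wti{\otimes}\wti{S}^{k_2}(U_2)\wti{\otimes}\cdots$ and then splitting $\wti{\bigwedge}^{k_1}(U^O\oplus H)=\bigoplus_{a+b=k_1}\wti{\bigwedge}^a U^O\wti{\otimes}\bigwedge^b H$, the $\bigwedge^q H$ factor pulls out because it contains no $H^*$-components (so it is internally traceless and cross-contractions are exactly what $\wti{\otimes}$ suppresses). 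Collecting the remaining $U^O_*$-factors reconstitutes $W^O_{i-q}=\wti{S}^*(U^O_*)_{i-q}$, yielding the displayed formula.

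The $q=0$ term is $W^O_i$, so the lemma reduces to
\[
\bigoplus_{q\ge 1} W^O_{i-q}\wti{\otimes}\bigwedge^q H \;\cong\; W^O_{i-1}\otimes H.
\]
Pieri's rule splits $W^O_{i-1}\otimes H=(W^O_{i-1}\wti{\otimes} H)\oplus C$, where $C$ is the ``contraction piece'' obtained by removing a box from $\lambda^-$ of each irreducible $V_{\ul\lambda}\subset W^O_{i-1}$; the first summand matches the $q=1$ term. What remains is the identification $C\cong\bigoplus_{q\ge 2} W^O_{i-q}\wti{\otimes}\bigwedge^q H$, which is the main obstacle.

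To establish this last isomorphism I would compare multiplicities $[V_{\ul\nu}:\,\cdot\,]$ on both sides. Using the explicit decomposition $W^O_j=\bigoplus_{(\mu,\nu)\in P^O_j}W(\mu,\nu)$ together with the bijection $P_i\leftrightarrow\bigsqcup_{k\ge 0}P^O_{i-k}$ that absorbs the parts of size $1$ of the second partition into the $H[1]$ summand of $U_*$ (so that $W(\mu,(\nu',1^k))=W(\mu,\nu')\wti{\otimes}\bigwedge^k H$), the multiplicity count on each side reduces to a Littlewood--Richardson sum of $[V_{\ul\mu}:W^O_j]$ terms matched via this bijection; the stable range $n\ge 3i$ ensures all multiplicities are computed in the classical regime. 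Alternatively, the identity becomes transparent in the operadic framework of Section \ref{KV}, where $W_*$ is built from the operad $\calC om$ and the extra $H[1]$-summand corresponds to freely adjoining one degree-$1$ wheel generator; from this viewpoint the splitting of $W_*$ into $W^O_*$ and a copy of $W^O_*\otimes H$ is manifest from the stratification of the associated wheeled PROP.
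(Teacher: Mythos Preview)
Your steps 1--5 are correct and bring you to the same reduction the paper uses: by the bijection $P_i\leftrightarrow\bigsqcup_{q\ge 0}P^O_{i-q}$ (stripping parts of size $1$ from $\nu$) one has $W_i=\bigoplus_{q\ge 0}W^O_{i-q}\wti\otimes\bigwedge^q H$, so the lemma amounts to
\[
W^O_{i-1}\otimes H\;\cong\;\bigoplus_{q\ge 1}W^O_{i-q}\wti\otimes\bigwedge^q H.
\]
The gap is step 6. Neither of your two suggestions is an argument yet. The ``multiplicity count'' you sketch only re-encodes the identity $W_i=\bigoplus_q W^O_{i-q}\wti\otimes\bigwedge^q H$ from step 2; it says nothing about the contraction piece $C$ of $W^O_{i-1}\otimes H$. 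To identify $C$ you must actually analyse what happens when the new copy of $H$ contracts against one of the $H^*$-slots of $W(\mu,\nu)\subset W^O_{i-1}$, and this is exactly the content of the paper's proof. The key input is the elementary Pieri-type identity
\[
U^{\tree}_{k}\otimes H\;=\;V_{1^{k+1},1}\otimes V_{1,0}\;\cong\;(U^{\tree}_{k}\wti\otimes H)\oplus V_{1^{k+1}}\;=\;(U^{\tree}_{k}\wti\otimes H)\oplus\bigwedge^{k+1}H,
\]
which says that contracting $H$ against a tree factor $U^{\tree}_{\mu_j}$ converts that factor into a wedge power of $H$, i.e.\ into $\mu_j+1$ wheel factors of degree $1$. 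Propagating this through the symmetrised product gives the paper's formula
\[
W(\mu,\nu)\otimes H\;\cong\;W(\mu,\nu+1)\;\oplus\;\bigoplus_{j=1}^{r}W(\mu-\mu_j,\;\nu+1^{\mu_j+1}),
\]
and now the bijection $(\mu,\nu,j)\mapsto(\mu-\mu_j,\nu,\mu_j+1)$ between $\{(\mu,\nu)\in P^O_{i-1},\ 1\le j\le r(\mu)\}$ and $\{(\xi,\eta,q):q\ge 2,\ (\xi,\eta)\in P^O_{i-q}\}$ finishes the identification $C\cong\bigoplus_{q\ge 2}W^O_{i-q}\wti\otimes\bigwedge^q H$. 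Without this formula (or an equivalent), your step 6 is an assertion, not a proof; and the ``operadic'' alternative you mention does not make the splitting $W_*\cong W^O_*\oplus (W^O_*\otimes H)[1]$ manifest either, since adjoining a degree-$1$ wheel generator visibly contributes $\wti\otimes\bigwedge^q H$ for all $q\ge 0$, not a single ordinary tensor by $H$.
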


 \begin{proof}
  By the definitions of $W_i$ and $W^O_i$, it suffices to show that
  \begin{gather}\label{WiWOiisom}
        \bigoplus_{(\mu,\nu)\in P^O_{i-1}}W(\mu,\nu) \otimes H \cong \bigoplus_{(\xi,\eta)\in P_i\setminus P^O_i} W(\xi,\eta).
  \end{gather}
 Let $(\mu,\nu)\in P^O_{i-1}$.
 We write $\mu=(\mu_1^{k_1'},\cdots,\mu_r^{k_r'})$ and $\nu=(\nu_1^{k_1''},\cdots,\nu_s^{k_s''})$.
 Let $\mu-\mu_{j}=(\mu_1^{k_1'},\cdots, \mu_j^{k_j'-1},\cdots,\mu_r^{k_r'})$ for $1\le j\le r$
 and $\nu+a=(\nu_1^{k_1''},\cdots,\nu_s^{k_s''},1^a)$ for $a\ge 1$.
 Since we have $\GL(n,\Q)$-isomorphisms
 $$
  W(\mu,\nu) \;\wti{\otimes}\; H\cong W(\mu,\nu+1)
 $$
 and
  $$
   U_{\mu_j}^{\tree}\otimes H \cong (U_{\mu_j}^{\tree}\;\wti{\otimes}\; H ) \oplus V_{1^{1+\mu_j}}\cong (U_{\mu_j}^{\tree}\;\wti{\otimes}\; H ) \oplus  \wti{S}^{1+\mu_j} (U_1^{\wheel}),
  $$
 we have
  \begin{gather}\label{WiandWOi}
     \begin{split}
    W(\mu,\nu)\otimes H
    &\cong  (W(\mu,\nu)\;\wti{\otimes}\; H) \oplus \left(\bigoplus_{j=1}^r W(\mu-\mu_j,\nu)\;\wti{\otimes} \; \wti{S}^{1+\mu_j} (U_1^{\wheel})\right)\\
    &\cong W(\mu, \nu+1)\oplus \bigoplus_{j=1}^r W(\mu-\mu_{j},\nu+1^{1+\mu_{j}}).
  \end{split}
 \end{gather}
 Therefore, for each $(\mu,\nu)\in P^O_{i-1}$, we have
  $$
   W(\mu,\nu) \otimes H \subset \bigoplus_{(\xi,\eta)\in P_i\setminus P^O_i} W(\xi,\eta).
 $$
 Moreover, we have
 \begin{gather}\label{WiWOiinclu}
     \bigoplus_{(\mu,\nu)\in P^O_{i-1}}W(\mu,\nu) \otimes H \subset \bigoplus_{(\xi,\eta)\in P_i\setminus P^O_i} W(\xi,\eta)
 \end{gather}
since for distinct pairs $(\mu,\nu)\neq (\mu',\nu')$, the pairs $(\mu, \nu+1),(\mu', \nu'+1), (\mu-\mu_{j},\nu+1^{1+\mu_{j}}),(\mu'-\mu'_{j},\nu'+1^{1+\mu'_{j}})$ for $1\le j\le r$ are distinct.

Let $(\xi,\eta)\in P_i\setminus P^O_i$. If $\eta$ has only one part of size $1$, then we have $(\mu,\nu)\in P^O_{i-1}$ such that $(\mu,\nu+1)=(\xi,\eta)$.
Otherwise, we have $(\mu,\nu)\in P^O_{i-1}$ such that $(\mu-\mu_j,\nu+1^{1+\mu_j})=(\xi,\eta)$.
Therefore, by the decomposition \eqref{WiandWOi}, we obtain
$$
   \bigoplus_{(\mu,\nu)\in P^O_{i-1}}W(\mu,\nu) \otimes H \supset \bigoplus_{(\xi,\eta)\in P_i\setminus P^O_i} W(\xi,\eta).
 $$
 Therefore, by using \eqref{WiWOiinclu}, we obtain \eqref{WiWOiisom}.
 \end{proof}

 By Theorem \ref{Johnsonpart} and Proposition \ref{AlbaneseIAandIO2}, we obtain the following proposition, which partially ensures Conjecture \ref{conjectureAlbaneseIO}.

\begin{proposition}\label{IOnWO}
  For $n\ge 3i$, we have an injective $\GL(n,\Q)$-homomorphism
  $$W^O_i \oplus (W^O_{i-1} \otimes H)\hookrightarrow H^A_i(\IO_n,\Q) \oplus (H^A_{i-1}(\IO_n,\Q)\otimes H).$$
\end{proposition}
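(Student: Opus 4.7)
The plan is to assemble the statement directly from three tools that are already in place: Theorem~\ref{Johnsonpart}, Proposition~\ref{AlbaneseIAandIO2}, and Lemma~\ref{WiWoi}. The argument is essentially formal, and the only quantitative constraint, $n\ge 3i$, is inherited from Theorem~\ref{Johnsonpart}.

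First, I would invoke Theorem~\ref{Johnsonpart}, which for $n\ge 3i$ gives $F_i(H^A_i(\IA_n,\Q))\supset W_i$. Because $F_i$ is a $\GL(n,\Q)$-homomorphism between algebraic $\GL(n,\Q)$-representations and such representations are completely reducible, the induced surjection $H^A_i(\IA_n,\Q)\twoheadrightarrow F_i(H^A_i(\IA_n,\Q))$ admits a $\GL(n,\Q)$-equivariant splitting. Pulling $W_i$ back along a chosen splitting produces a $\GL(n,\Q)$-subrepresentation $\wti{W}_i\subset H^A_i(\IA_n,\Q)$ isomorphic to $W_i$. This is exactly the observation already recorded in the excerpt just after Theorem~\ref{Johnsonpart}.

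Next, Proposition~\ref{AlbaneseIAandIO2} supplies a $\GL(n,\Q)$-isomorphism
$$H^A_i(\IA_n,\Q)\xrightarrow{\cong} H^A_i(\IO_n,\Q)\oplus (H^A_{i-1}(\IO_n,\Q)\otimes H)$$
for $n\ge 2$. Composing the inclusion $\wti{W}_i\hookrightarrow H^A_i(\IA_n,\Q)$ with this isomorphism yields an injective $\GL(n,\Q)$-homomorphism
$$W_i\hookrightarrow H^A_i(\IO_n,\Q)\oplus (H^A_{i-1}(\IO_n,\Q)\otimes H).$$
Finally, Lemma~\ref{WiWoi} identifies $W_i\cong W^O_i\oplus (W^O_{i-1}\otimes H)$ as $\GL(n,\Q)$-representations for $n\ge 3i$. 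Substituting this decomposition on the source produces the injection asserted in the proposition.

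I do not anticipate a genuine obstacle, since the argument is just a composition of three previously established $\GL(n,\Q)$-equivariant maps. It is worth emphasizing what the statement does \emph{not} claim: it asserts only the existence of some injection into the direct sum, not that $W^O_i$ lands in $H^A_i(\IO_n,\Q)$ and $W^O_{i-1}\otimes H$ in $H^A_{i-1}(\IO_n,\Q)\otimes H$ separately. Proving such a refined splitting would require tracking how $\wti{W}_i$ sits with respect to the filtration furnished by Proposition~\ref{AlbaneseIAandIO2}—equivalently, analyzing the compatibility of the lift $\wti{W}_i$ with the projection $\pr:\IA_n\twoheadrightarrow \IO_n$—but this strengthening is not needed, and so the proof reduces to formal manipulation.
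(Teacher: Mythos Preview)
Your proof is correct and follows essentially the same approach as the paper: invoke Theorem~\ref{Johnsonpart} to embed $W_i$ into $H^A_i(\IA_n,\Q)$, apply the isomorphism of Proposition~\ref{AlbaneseIAandIO2}, and then use Lemma~\ref{WiWoi} to rewrite $W_i$ as $W^O_i\oplus(W^O_{i-1}\otimes H)$. Your added remarks on semisimplicity and on what the statement does \emph{not} assert are accurate and helpful, but the core argument matches the paper's proof.
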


\begin{proof}
Let $n\ge 3i$.
By Proposition \ref{AlbaneseIAandIO2}, we have
$$H^A_i(\IA_n,\Q)\xrightarrow{\cong} H^A_i(\IO_n,\Q) \oplus (H^A_{i-1}(\IO_n,\Q)\otimes H).$$
By Theorem \ref{Johnsonpart}, $H^A_i(\IA_n,\Q)$ contains a subrepresentation which is isomorphic to $W_i$.
Since we have
$$W^O_i \oplus (W^O_{i-1}\otimes H)\cong W_i$$
by Lemma \ref{WiWoi}, we have an injective $\GL(n,\Q)$-homomorphism
$$W^O_i \oplus (W^O_{i-1}\otimes H)\hookrightarrow H^A_i(\IO_n,\Q) \oplus (H^A_{i-1}(\IO_n,\Q)\otimes H).$$
\end{proof}

  Recall that Conjecture \ref{conjectureAlbaneseIO} states that we stably have $H^A_*(\IO_n,\Q)\cong W^O_*$.
  Then we obtain the following relation between the conjectures about the structures of $H^A_*(\IA_n,\Q)$ and $H^A_*(\IO_n,\Q)$.

 \begin{proposition}\label{AlbaneseIAandIO}
    The followings are equivalent.
 \begin{itemize}
     \item For any $i$, we have a $\GL(n,\Q)$-isomorphism $H^A_i(\IA_n,\Q)\cong W_i$ for sufficiently large $n$ with respect to $i$ (cf. Conjecture \ref{conjectureAlbanese}).
     \item Conjecture \ref{conjectureAlbaneseIO}.
 \end{itemize}
 \end{proposition}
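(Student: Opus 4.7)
The equivalence is essentially a two-way matching between the formulas
$$H^A_i(\IA_n,\Q)\cong H^A_i(\IO_n,\Q)\oplus (H^A_{i-1}(\IO_n,\Q)\otimes H)\quad\text{(Proposition \ref{AlbaneseIAandIO2})}$$
and
$$W_i\cong W^O_i\oplus (W^O_{i-1}\otimes H)\quad\text{(Lemma \ref{WiWoi}).}$$
The plan is to combine these two decompositions and use semisimplicity of the category of algebraic $\GL(n,\Q)$-representations to cancel common summands.

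For the direction (2)$\Rightarrow$(1), I would simply substitute. Fix $i$ and choose $n$ large enough with respect to $i$ so that Proposition \ref{AlbaneseIAandIO2} applies (so $n\ge 2$), so that Lemma \ref{WiWoi} applies (so $n\ge 3i$), and so that (2) gives $\GL(n,\Q)$-isomorphisms $H^A_j(\IO_n,\Q)\cong W^O_j$ for $j\in\{i-1,i\}$. Chaining these three isomorphisms yields $H^A_i(\IA_n,\Q)\cong W^O_i\oplus (W^O_{i-1}\otimes H)\cong W_i$.

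For the direction (1)$\Rightarrow$(2), I would argue by induction on $i$. The base case $i=0$ is immediate since $H^A_0(\IO_n,\Q)\cong \Q\cong W^O_0$ (and $i=1$ is covered by Kawazumi's theorem). For the inductive step, assume (2) is known for all $j<i$ (so in particular $H^A_{i-1}(\IO_n,\Q)\otimes H\cong W^O_{i-1}\otimes H$ for $n$ large with respect to $i-1$). Combining Proposition \ref{AlbaneseIAandIO2}, hypothesis (1), and Lemma \ref{WiWoi} for $n$ large with respect to $i$, one obtains a $\GL(n,\Q)$-isomorphism
$$H^A_i(\IO_n,\Q)\oplus (W^O_{i-1}\otimes H)\cong W^O_i\oplus (W^O_{i-1}\otimes H).$$
Since algebraic $\GL(n,\Q)$-representations form a semisimple category (each is a direct sum of irreducibles $V_{\ul\lambda}$ with finite multiplicities), one may compare multiplicities of each $V_{\ul\lambda}$ on both sides and cancel the common summand $W^O_{i-1}\otimes H$, yielding $H^A_i(\IO_n,\Q)\cong W^O_i$.

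The only delicate point will be the bookkeeping of the ``sufficiently large $n$'' bounds: for each $i$ one must choose $n$ simultaneously large enough for the hypotheses at index $i$ and for the inductive hypothesis at index $i-1$, and large enough for Lemma \ref{WiWoi} (which requires $n\ge 3i$). This is not a genuine obstacle, since one can recursively define $N'(i)=\max(N(i),N'(i-1),3i,2)$, where $N(j)$ is the stability bound at index $j$ coming from (1) or from the inductive hypothesis. Semisimplicity of the ambient category makes the cancellation step unambiguous, so no additional structure on the isomorphisms (naturality, compatibility with $F_i$, etc.) is required for the abstract equivalence.
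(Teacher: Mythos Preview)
Your proposal is correct and follows essentially the same approach as the paper's proof: both directions rely on Proposition \ref{AlbaneseIAandIO2} and Lemma \ref{WiWoi}, with (2)$\Rightarrow$(1) by direct substitution and (1)$\Rightarrow$(2) by induction on $i$ with cancellation of the common summand $W^O_{i-1}\otimes H$. The paper leaves the cancellation step implicit, while you spell out that it follows from semisimplicity; your bookkeeping of the stability bounds is also slightly more explicit than the paper's, but the argument is the same.
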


  \begin{proof}
  If Conjecture \ref{conjectureAlbaneseIO} holds, then Conjecture \ref{conjectureAlbanese} holds by Proposition \ref{AlbaneseIAandIO2} and Lemma \ref{WiWoi}.

  Suppose that Conjecture \ref{conjectureAlbanese} holds.
  We will show that Conjecture \ref{conjectureAlbaneseIO} also holds by induction on the homological degree $i$.
  Suppose that we have $W^O_i\cong H^A_i(\IO_n,\Q)$ for sufficiently large $n$ with respect to $i$.
  For sufficiently large $n$ with respect to $i+1$, by using Lemma \ref{WiWoi} and Proposition \ref{AlbaneseIAandIO2}, we have
  $$
  W^O_{i+1}\oplus (W^O_{i}\otimes H)\cong W_{i+1} \cong H^A_{i+1}(\IA_n,\Q)\cong H^A_{i+1}(\IO_n,\Q) \oplus (H^A_i(\IO_n,\Q)\otimes H).
  $$
  Therefore, since we have $H^A_i(\IO_n,\Q)\cong W^O_{i}$ by the hypothesis of the induction, we have $H^A_{i+1}(\IO_n,\Q)\cong W^O_{i+1}$, which completes the proof.
 \end{proof}

 \begin{remark}
 Let $n\ge 3i$.
 Since we have $W_i\cong \bigoplus_{p+q=i} W^O_p\wti{\otimes} \bigwedge^q H$ by the definitions of $W_*$ and $W^O_*$, by Lemma \ref{WiWoi}, we have
 $$
  W^O_i \oplus (W^O_{i-1}\otimes H)\cong
  \bigoplus_{p+q=i} W^O_{p}\wti{\otimes} \bigwedge^q H.
 $$
 Therefore, if Conjecture \ref{conjectureAlbaneseIO} holds, then we stably have
 $$
 H^A_*(\IO_n,\Q)\otimes H^A_*(F_n,\Q)\cong  H^A_*(\IO_n,\Q)\wti{\otimes} H^A_*(F_n^{\ab},\Q).
 $$
 \end{remark}

\section{The third Albanese homology of $\IO_n$}\label{ThirdhomologyIO}
 In this section, we compute $H^A_3(\IO_n,\Q)$ and prove that Conjecture \ref{conjectureAlbaneseIO} holds for $i=3$.

  Hain \cite{Hain} and Sakasai \cite{Sakasai} computed the Albanese cohomology of the Torelli groups of closed surfaces of degree $2$ and $3$, respectively.
  Pettet \cite{Pettet} computed $H_A^2(\IA_n,\Q)$ by adapting Hain's and Sakasai's methods.
  Here we apply their method to $H^A_3(\IO_n,\Q)$ as follows.
  We have a $\GL(n,\Q)$-isomorphism
  $$H_A^3(\IO_n,\Q)\cong H^A_3(\IO_n,\Q)^*.$$
  Suppose that we have subrepresentations $S\subset H^A_3(\IO_n,\Q)$ and $T\subset \ker \tau_O^*$, where $\tau_O^*: H^3(U^O,\Q)\to H^3(\IO_n,\Q)$ is induced by the Johnson homomorphism $\tau^O$.
  Then, we have $S^*\oplus T\subset \im \tau_O^*\oplus \ker \tau_O^*\cong H^3(U^O,\Q)$.
  If we can show that $H^3(U^O,\Q)\cong S^*\oplus T$ by counting multiplicities of irreducible components, then we obtain $H_A^3(\IO_n,\Q)\cong S^*$ and $H^A_3(\IO_n,\Q)\cong S$.

 \subsection{The third Albanese homology of $\IO_n$}

 For $n\ge 9$, we have
 \begin{gather*}
    \begin{split}
        W^O_3&=W(3,0)\oplus W(0,3)\oplus W(1,2)\oplus W(21,0)\oplus W(1^3,0)\\
        &=(V_{1^4,1})
        \oplus
        (V_{1^3,0})
        \oplus
        (V_{2^2, 1}\oplus V_{2 1^2, 1}\oplus V_{1^4, 1}) \\
        &
        \oplus
        (V_{2^2 1, 2}\oplus V_{2^2 1, 1^2}\oplus V_{2 1^3, 2}\oplus V_{2 1^3, 1^2}\oplus V_{1^5, 2}\oplus V_{1^5, 1^2}) \\
        &\oplus
        (V_{3^2, 1^3}\oplus V_{3 2 1, 2 1}\oplus V_{3 1^3, 3}\oplus V_{2^3, 3}\oplus V_{2^2 1^2, 2 1}\oplus V_{2^2 1^2, 1^3}\oplus V_{2 1^4, 2 1}\oplus V_{1^6, 1^3}).
    \end{split}
 \end{gather*}
 We will prove the following theorem in the rest of this section.

 \begin{theorem}\label{thirdAlbaneseIO}
 Let $n\ge 9$.
 $H^A_3(\IO_n,\Q)$ is decomposed into $19$ irreducible $\GL(n,\Q)$-representations:
 \begin{gather*}
  \begin{split}
      H^A_3(\IO_n,\Q)&\cong W^O_3\\
      &\cong
               V_{3^2, 1^3}
        \oplus V_{3 2 1, 2 1}
        \oplus V_{3 1^3, 3}
        \oplus V_{2^3, 3}
        \oplus V_{2^2 1^2, 2 1}
        \oplus V_{2^2 1^2, 1^3}
        \oplus V_{2 1^4, 2 1}
        \oplus V_{1^6, 1^3}\\
        &
        \oplus V_{2^2 1, 2}
        \oplus V_{2^2 1, 1^2}
        \oplus V_{2 1^3, 2}
        \oplus V_{2 1^3, 1^2}
        \oplus V_{1^5, 2}
        \oplus V_{1^5, 1^2}\\
        &
        \oplus V_{2^2, 1}
        \oplus V_{2 1^2, 1}
        \oplus V_{1^4, 1}^{\oplus 2}
        \oplus V_{1^3,0}.
    \end{split}
  \end{gather*}
 \end{theorem}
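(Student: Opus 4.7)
The plan is to follow the cohomological Hain--Sakasai--Pettet strategy outlined at the start of this section. By the duality $H_A^3(\IO_n,\Q)\cong H^A_3(\IO_n,\Q)^*$ of $\GL(n,\Q)$-representations, it suffices to produce a subrepresentation $S\subset H^A_3(\IO_n,\Q)$ together with a subrepresentation $T\subset \ker\tau_O^*$ inside $H^3(U^O,\Q)=\bigwedge^3 (U^O)^*$ such that $S^*\oplus T\cong H^3(U^O,\Q)$ as $\GL(n,\Q)$-representations. Semisimplicity of algebraic $\GL(n,\Q)$-representations then gives $H^3(U^O,\Q)=\im\tau_O^*\oplus \ker\tau_O^*$ with $\im\tau_O^*=H_A^3(\IO_n,\Q)$, and a multiplicity count forces $H_A^3(\IO_n,\Q)\cong S^*$ and hence $H^A_3(\IO_n,\Q)\cong S$.

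The natural candidate is $S=W^O_3$. To realize $W^O_3$ as a subrepresentation of $H^A_3(\IO_n,\Q)$, I would apply Proposition \ref{IOnWO} with $i=3$ to obtain
$$
W^O_3\oplus (W^O_2\otimes H)\hookrightarrow H^A_3(\IO_n,\Q)\oplus (H^A_2(\IO_n,\Q)\otimes H).
$$
Since Pettet's computation gives $H^A_2(\IO_n,\Q)\cong W^O_2$, the two second summands coincide as $\GL(n,\Q)$-representations, and Krull--Schmidt applied in the semisimple category of algebraic $\GL(n,\Q)$-representations forces $W^O_3\hookrightarrow H^A_3(\IO_n,\Q)$. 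Spelling out the decomposition of $W^O_3$ displayed just before the theorem statement produces the 19 irreducible summands listed.

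For the complement, I would first decompose $H^3(U^O,\Q)=\bigwedge^3 (U^O)^*$ into irreducibles via the identification $U^O\cong V_{1^2,1}$ from Section \ref{generatorforIOn} and the plethysm analogous to \eqref{decompositionofwedgeU}, producing $(W^O_3)^*$ together with an explicit complementary subrepresentation $T_0$. The second step is to show $T_0\subset\ker\tau_O^*$. For this I would exploit that $H_A^*(\IO_n,\Q)$ is stably generated in degree $1$ with all relations coming from degree $2$ in this range (the $\IO_n$ analogue of Conjecture \ref{quadratic} in cohomological degree up to $3$): the degree $2$ relation module $R_2^O=\ker\bigl(\bigwedge^2 (U^O)^*\twoheadrightarrow H_A^2(\IO_n,\Q)\bigr)$ is explicit from $H^A_2(\IO_n,\Q)\cong W^O_2$, and the image of $R_2^O\otimes (U^O)^*$ in $\bigwedge^3 (U^O)^*$ lies automatically in $\ker\tau_O^*$; the task is then to check that this image equals $T_0$ isotypically.

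The main obstacle will be the plethysm bookkeeping: working out the decomposition of $\bigwedge^3 U^O$ and of the quadratic ideal $R_2^O\cdot (U^O)^*$ into $V_{\ul\lambda}$-summands, and verifying that the multiplicities of each irreducible component add up correctly. The bound $n\ge 9$ is forced both by Proposition \ref{IOnWO} (which requires $n\ge 3i=9$) and by the bipartition $V_{1^6,1^3}$ appearing in the list for $W^O_3$, which is nonzero precisely when $n\ge 9$.
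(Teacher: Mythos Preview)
Your proposal is correct and follows essentially the same route as the paper: the lower bound $W^O_3\hookrightarrow H^A_3(\IO_n,\Q)$ is obtained exactly as you describe via Proposition \ref{IOnWO} and Pettet's identification $H^A_2(\IO_n,\Q)\cong W^O_2$, while the upper bound comes from showing that the image of the cup product $H^1(U^O,\Q)\otimes R^O_2\to H^3(U^O,\Q)$ (your $R_2^O\cdot (U^O)^*$) contains the $17$ complementary irreducibles, after which the count $19+17=36=\dim_{\text{irr}}\bigwedge^3 U^O$ forces equality. The paper carries out the ``plethysm bookkeeping'' you flag as the main obstacle by explicit contraction-map computations (Proposition \ref{kerIO}) rather than an abstract isotypic comparison, but the strategy is identical.
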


\subsection{Irreducible decomposition of $H_3(U^O,\Q)$}

  We begin with the computation of an irreducible decomposition of $H_3(U^O,\Q)$ as $\GL(n,\Q)$-representations.
  We can check the following lemma directly by hand and by using SageMath.

   \begin{lemma}
    Let $n\ge 9$.
    $H_3(U^O,\Q)$ is decomposed into the following $36$ irreducible $\GL(n,\Q)$-representations:
    \begin{gather*}
    \begin{split}
        H_3(U^O,\Q)
        &= \bigwedge^3 (V_{1^2, 1})\\
        &\cong V_{3^2, 1^3}
        \oplus V_{3 2 1, 2 1}
        \oplus V_{3 1^3, 3}
        \oplus V_{2^3, 3}
        \oplus V_{2^2 1^2, 2 1}
        \oplus V_{2^2 1^2, 1^3}
        \oplus V_{2 1^4, 2 1}
        \oplus V_{1^6, 1^3}\\
        &
        \oplus V_{3 2, 2}
        \oplus V_{3 2, 1^2}
        \oplus V_{3 1^2, 2}
        \oplus V_{3 1^2, 1^2}
        \oplus V_{2^2 1, 2}^{\oplus 2}
        \oplus V_{2^2 1, 1^2}^{\oplus 2}
        \oplus V_{2 1^3, 2}^{\oplus 2}
        \oplus V_{2 1^3, 1^2}^{\oplus 2}\\
        &
        \oplus V_{1^5, 2}
        \oplus V_{1^5, 1^2}
        \oplus V_{3 1, 1}
        \oplus V_{2^2, 1}^{\oplus 3}
        \oplus V_{2 1^2, 1}^{\oplus 3}
        \oplus V_{1^4, 1}^{\oplus 3}
        \oplus V_{3,0}
        \oplus V_{2 1,0}
        \oplus V_{1^3,0}^{\oplus 2}.
    \end{split}
    \end{gather*}
   \end{lemma}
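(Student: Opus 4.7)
The plan is to compute $\bigwedge^3 V_{1^2,1}$ as an algebraic $\GL(n,\Q)$-representation by combining a plethysm calculation for the ambient polynomial representation with the Koike rule for tensor products of mixed representations, stripping off unwanted summands at the end. The starting observation is the splitting
$$U = \bigl(\bigwedge^2 H\bigr)\otimes H^* \cong V_{1^2,1}\oplus V_{1,0} = U^O\oplus H$$
from Section \ref{generatorforIOn}. Taking the third exterior power and using the usual $\gpS_3$-decomposition gives
$$\bigwedge^3 U \cong \bigwedge^3 U^O \;\oplus\; \bigl(\bigwedge^2 U^O\otimes H\bigr) \;\oplus\; \bigl(U^O\otimes \bigwedge^2 H\bigr) \;\oplus\; \bigwedge^3 H,$$
so it suffices to determine $\bigwedge^3 U$, $\bigwedge^2 U^O$, and the two mixed tensor products on the right, and then cancel.

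For $\bigwedge^3 U$ I would apply the well-known decomposition $\bigwedge^k(V\otimes W) = \bigoplus_{\lambda\vdash k} S^\lambda V\otimes S^{\lambda'} W$ with $V = \bigwedge^2 H$ and $W = H^*$, producing the three pieces $S^3(\bigwedge^2 H)\otimes \bigwedge^3 H^*$, $S^{2,1}(\bigwedge^2 H)\otimes S^{2,1} H^*$, $\bigwedge^3(\bigwedge^2 H)\otimes S^3 H^*$. The plethysms $S^\mu(\bigwedge^2 H)$ for $\mu\vdash 3$ are classical, and each tensor factor is a polynomial Schur functor on $H$ or $H^*$; once they are expanded, the Koike rule for $V_{\ul\lambda}\otimes V_{\ul\mu}$ (as recalled earlier in the excerpt) delivers the decomposition into irreducible algebraic representations $V_{\ul\nu}$ indexed by bipartitions with $|\ul\nu|\le 9$. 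A completely parallel but shorter run, using $\bigwedge^2(V\otimes W) = (S^2 V\otimes \bigwedge^2 W)\oplus(\bigwedge^2 V\otimes S^2 W)$, computes $\bigwedge^2 U$, from which $\bigwedge^2 U^O$ is extracted by cancelling $U^O\otimes H$ and $\bigwedge^2 H$.

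Having $\bigwedge^3 U$, $\bigwedge^2 U^O$, $U^O$, $H$, and $\bigwedge^2 H$ in hand, another application of Koike multiplies $\bigwedge^2 U^O$ by $H$ and $U^O$ by $\bigwedge^2 H$, and then subtracting these together with $\bigwedge^3 H\cong V_{1^3,0}$ from $\bigwedge^3 U$ yields $\bigwedge^3 U^O$. Cross-checks on dimensions, on the stability range (which is why $n\ge 9$ is needed so that no bipartition of size $9$ is truncated), and on the $\gpS_3$-character of the underlying vector space of $U^O$ serve as consistency tests.

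The main obstacle is purely combinatorial bookkeeping: the Koike rule generates sums indexed by several auxiliary partitions, and many bipartitions appear multiple times across the three terms of $\bigwedge^3 U$ before the cancellations of $(\bigwedge^2 U^O\otimes H)\oplus(U^O\otimes \bigwedge^2 H)\oplus \bigwedge^3 H$ take place. In particular, verifying that the multiplicities settle down to exactly the listed numbers — e.g.\ the three copies of each of $V_{2^2,1}$, $V_{21^2,1}$, $V_{1^4,1}$, the two copies of each of $V_{2^21,2}$, $V_{2^21,1^2}$, $V_{21^3,2}$, $V_{21^3,1^2}$, $V_{1^3,0}$, and single copies of the eight "traceless" bipartitions of size $9$ — is not realistically done by hand. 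As the statement notes, this step is carried out in SageMath via its symmetric function package, with hand calculation restricted to the extremal bipartitions $V_{3^2,1^3}$, $V_{1^6,1^3}$, and the other size-$9$ components as independent sanity checks.
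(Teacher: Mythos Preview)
Your proposal is correct and matches the paper's approach: the paper simply says ``We can check the following lemma directly by hand and by using SageMath,'' without specifying the organization of the computation. Your plan---using the splitting $U\cong U^O\oplus H$, the plethysm formula (which is exactly the paper's \eqref{decompositionofwedgeU}), Koike's rule, and cancellation---is a perfectly reasonable concrete implementation of that computation, and indeed the paper itself later computes $\bigwedge^3 U$ separately in Lemma~\ref{H3U}, so the ingredients you propose are all available.
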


 \subsection{Upper bound of $H^A_3(\IO_n,\Q)$}

   Let $\tau_O^*: H^i(U^O,\Q)\to H^i(\IO_n,\Q)$ denote the map induced by the Johnson homomorphism $\tau^O: \IO_n\to U^O$ on cohomology.
   Let $R^O_i=\ker \tau_O^*$.
   Pettet \cite{Pettet} computed $R^O_2$.  We will study $R^O_3$ by using $R^O_2$.

   \begin{lemma}[Pettet \cite{Pettet}]\label{PettetkerIO}
   Let $n\ge 3$.
   We have
   $$R^O_2\cong V_{1, 2 1}.$$
   \end{lemma}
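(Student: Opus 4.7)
The plan is to exploit semisimplicity of the category of algebraic $\GL(n,\Q)$-representations to reduce the identification of $R^O_2 = \ker \tau_O^*$ to a multiplicity comparison. Since $H^2(U^O,\Q) = \bigwedge^2 U^O$ is completely reducible, the short exact sequence
$$0 \to R^O_2 \to \bigwedge^2 U^O \to \im(\tau_O^*) \to 0$$
splits canonically, and $\im(\tau_O^*)$ is by definition the Albanese cohomology $H_A^2(\IO_n,\Q)$. Thus $R^O_2$ is determined up to $\GL(n,\Q)$-isomorphism as the isotypic complement of $H_A^2(\IO_n,\Q)$ inside $\bigwedge^2 U^O$.

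The first step is to decompose $\bigwedge^2 U^O = \bigwedge^2 V_{1^2,1}$ into irreducible $\GL(n,\Q)$-representations. Since $V_{1^2,1}$ is a concrete subrepresentation of $U$ with known highest weight structure, this exterior plethysm can be computed either directly (by listing the highest weight vectors of $\bigwedge^2 V_{1^2,1}$, analogous to the characterization in Lemma~\ref{hwv}) or mechanically by a computer algebra system, yielding a finite list of irreducibles $V_{\ul\lambda}$ with explicit multiplicities.

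The second step is to compare with Pettet's previously established computation, which identifies $H^A_2(\IO_n,\Q) \cong W^O_2$, and hence $H_A^2(\IO_n,\Q) \cong (W^O_2)^*$ via the natural duality of algebraic $\GL(n,\Q)$-representations. Subtracting the isotypic components of $(W^O_2)^*$ from those of $\bigwedge^2 V_{1^2,1}$ should leave a single copy of $V_{1,21}$, which is then the claimed value of $R^O_2$.

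The main obstacle is the combinatorial bookkeeping in the decomposition of $\bigwedge^2 V_{1^2,1}$: the Littlewood--Richardson type rule for bipartitions recalled in Section~\ref{irrrep} is intricate, and every multiplicity must be tracked correctly to guarantee that only one summand survives. An alternative, more hands-on route (closer to Pettet's original strategy) is to exhibit an explicit $V_{1,21}$-generator as a quadratic relation coming from a Jacobi-type identity among the Magnus generators of $\IO_n$, show directly that this 2-cocycle is killed by $\tau_O^*$, and then invoke Pettet's computation of $H_A^2(\IO_n,\Q)$ as a dimension check to exclude any further kernel components.
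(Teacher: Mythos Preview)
The paper does not prove this lemma; it is cited directly from Pettet. So there is no ``paper's own proof'' to compare against beyond the citation itself.

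That said, your primary approach has a circularity problem. You propose to compute $R^O_2$ by subtracting Pettet's identification $H_A^2(\IO_n,\Q)\cong (W^O_2)^*$ from the decomposition of $\bigwedge^2 (U^O)^*$. But the two statements ``$R^O_2\cong V_{1,21}$'' and ``$H_A^2(\IO_n,\Q)\cong (W^O_2)^*$'' are the two halves of the single splitting $\bigwedge^2 (U^O)^* \cong R^O_2 \oplus H_A^2(\IO_n,\Q)$; in Pettet's paper they are established together, not one from the other. Taking one as input to deduce the other is not a proof of Pettet's result, it is a restatement of it.

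Your alternative route is the substantive one and matches what actually happens in Pettet's argument (and in the paper's surrounding discussion, e.g.\ the exact sequence~\eqref{pettetexact} and Lemma~\ref{PettetIO}). One identifies $R^O_2$ with $\Hom(\gr^2(\IO_n),\Q)$ via the exact sequence
\[
0\to \Hom(\gr^2(\IO_n),\Q)\xrightarrow{[\,,\,]^*} H^2(U^O,\Q)\xrightarrow{\tau_O^*} H^2(\IO_n,\Q),
\]
and then computes $\gr^2(\IO_n)$ using the second Johnson homomorphism. The explicit generator $\beta=\sum_j e_j^{1,2}\wedge e_n^{j,1}$ that the paper writes down in Lemma~\ref{PettetIO} is exactly the kind of quadratic relation you allude to; once it is shown to lie in $\ker\tau_O^*$ and to generate a $V_{1,21}$, the multiplicity-one occurrence of $V_{1,21}$ in $\bigwedge^2 (U^O)^*$ pins it down, and the remaining content is that nothing else lies in the kernel---which is precisely the computation of $\gr^2(\IO_n)$.
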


  For $a,b,c\in [n]$,
  let
  $$e_{a}^{b,c}:=e_a\otimes (e_b^* \wedge e_c^*)\in U^*.$$

   \begin{lemma}\label{PettetIO}
    For $n\ge 3$, the subrepresentation $R^O_2\subset H^2(U^O,\Q)$ is generated by
    $$\beta=\sum_{j=1}^n e_{j}^{1,2}\wedge e_{n}^{j,1}\in H^2(U^O,\Q)\cong \bigwedge^2 (U^O)^*.$$
   \end{lemma}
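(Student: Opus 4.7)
The plan is to leverage the irreducibility of $R^O_2\cong V_{1,21}$ from Lemma~\ref{PettetkerIO}: once I exhibit a nonzero element of $R^O_2$ lying in the $V_{1,21}$-isotypic component of $H^2(U^O,\Q)$, its $\GL(n,\Q)$-orbit spans all of $R^O_2$, proving the lemma. Thus three things remain to verify for $\beta$: that $\beta$ actually lies in $\bigwedge^2(U^O)^*$, that $\beta\neq 0$ and has the correct weight, and that $\tau_O^*(\beta)=0$.

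First, I would check that $\beta$ descends to $\bigwedge^2(U^O)^*\subset \bigwedge^2 U^*$. Using the presentation $U^O=U/H$ from Section~\ref{generatorforIOn}, where $H\hookrightarrow U$ is given by $x\mapsto\sum_k (x\wedge e_k)\otimes e_k^*$, this amounts to verifying that the pairings of $\beta$ with the images of $H$ in each tensor slot cancel; the asserted sum over $j$ in the definition of $\beta$ is precisely what forces this cancellation. Next, each summand $e_j^{1,2}\wedge e_n^{j,1}$ has $\GL(n,\Q)$-weight $\varepsilon_n-2\varepsilon_1-\varepsilon_2$ (the $j=1$ term drops because $e_1^*\wedge e_1^*=0$), and this matches the lowest weight of $V_{1,21}$, whose highest weight is $\varepsilon_1-\varepsilon_{n-1}-2\varepsilon_n$. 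Nonvanishing of $\beta$ follows from a direct expansion in the standard basis of $\bigwedge^2 U^*$, since the summands for different $j$ are supported on distinct monomials.

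The crux is $\tau_O^*(\beta)=0$. By the duality $H^2_A(\IO_n,\Q)\cong (H^A_2(\IO_n,\Q))^*$ (Proposition~\ref{dualityisomIA}) combined with the factorization of $\tau_O^*$ through $H^2_A(\IO_n,\Q)$, one has $\tau_O^*(\beta)=0$ precisely when $\beta$ pairs trivially with $\tau^O_*(H_2(\IO_n,\Q))=H^A_2(\IO_n,\Q)\subset H_2(U^O,\Q)$. Since $\beta$ lies in the $V_{1,21}$-isotypic component of $H^2(U^O,\Q)$, it can pair nontrivially only with vectors whose image under $\tau^O_*$ has a $V_{1,21}^*=V_{21,1}$ component. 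By Pettet's computation $H^A_2(\IO_n,\Q)\cong W^O_2=W(2,0)\oplus W(0,2)\oplus W(1^2,0)$, whose irreducible constituents are $V_{1^3,1}$, $V_{1^2,0}$, $V_{1^4,1^2}$, $V_{21^2,2}$, and $V_{2^2,1^2}$, no $V_{21,1}$ appears; hence $\beta$ pairs trivially with $H^A_2(\IO_n,\Q)$ and $\tau_O^*(\beta)=0$. Combined with the weight computation, this places $\beta$ as a nonzero vector in $R^O_2\cong V_{1,21}$, which by irreducibility is then generated by $\beta$.

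I expect the main obstacle to be the first verification, namely that $\beta$ descends to $\bigwedge^2(U^O)^*$, since it requires a careful computation against the relations defining the quotient $U^O=U/H$ and the particular choice of indices in the definition of $\beta$. Once that is established, the weight-matching is automatic and the invocation of Pettet's decomposition of $H^A_2(\IO_n,\Q)$ gives the vanishing $\tau_O^*(\beta)=0$ without further work.
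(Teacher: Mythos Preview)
Your argument has a genuine gap at the step where you pass from ``$\beta$ has weight $\varepsilon_n-2\varepsilon_1-\varepsilon_2$'' to ``$\beta$ lies in the $V_{1,21}$-isotypic component of $H^2(U^O,\Q)$.'' Having a particular weight does not place a vector in a single isotypic component unless that weight occurs in only one irreducible constituent. Here it does not: for example the weight $\varepsilon_n-2\varepsilon_1-\varepsilon_2$ is dominated by the highest weight of $V_{1^2,2^2}$ (sort it to $(1,0,\dots,0,-1,-2)$ and compare partial sums with $(1,1,0,\dots,0,-2,-2)$), so it appears there as well. This matters because the dual $V_{2^2,1^2}$ \emph{does} occur in $H^A_2(\IO_n,\Q)\cong W^O_2$ (it is a summand of $W(1^2,0)$), so your Schur-orthogonality argument for $\tau_O^*(\beta)=0$ breaks down unless you first rule out any $V_{1^2,2^2}$-component of $\beta$.

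What is actually needed is to show that $\beta$ is a lowest weight vector (equivalently, that the $\GL(n,\Q)$-span of $\beta$ is irreducible of type $V_{1,21}$), not merely that it has the right weight; this requires checking that the negative simple root operators annihilate $\beta$. Once you have this, the paper's route is shorter than yours: the decomposition
\[
\bigwedge^2 V_{1,1^2}\cong V_{1^2,2^2}\oplus V_{2,21^2}\oplus V_{1^2,1^4}\oplus V_{1,21}\oplus V_{1,1^3}\oplus V_{0,1^2}
\]
shows $V_{1,21}$ occurs with multiplicity one, and since $R^O_2\cong V_{1,21}$ by Lemma~\ref{PettetkerIO}, the unique $V_{1,21}$-subrepresentation of $H^2(U^O,\Q)$ is exactly $R^O_2$. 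Thus knowing that $\beta$ generates an irreducible $V_{1,21}$ already gives $\beta\in R^O_2$ without ever computing $\tau_O^*(\beta)$ or invoking the decomposition of $H^A_2(\IO_n,\Q)$. Your detour through Pettet's description of $H^A_2(\IO_n,\Q)$ is therefore both unnecessary and, as written, circularly dependent on the missing isotypic-component verification.
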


   \begin{proof}
   We can check that $\beta\in H^2(U^O,\Q)\cong \bigwedge^2 (U^O)^*$ and that $\beta$ generates an irreducible representation which is isomorphic to $V_{1,21}$.
   We have
   $$H^2(U^O,\Q)\cong \bigwedge^2 (V_{1,1^2})\cong V_{1^2,2^2}\oplus V_{2,21^2}\oplus V_{1^2,1^4}\oplus V_{1,21}\oplus V_{1,1^3}\oplus V_{0,1^2}.$$
   Since the multiplicity of $V_{1,21}$ in $H^2(U^O,\Q)$ is $1$, by Lemma \ref{PettetIO},
   it follows that $\beta$ generates $R^O_2\subset H^2(U^O,\Q)$.
   \end{proof}

   Let
   \begin{gather*}
       \cup : H^1(U^O,\Q)\otimes R^O_2\to H^3(U^O,\Q)
   \end{gather*}
   denote the restriction of the cup product.
   Then we have $\im \cup \subset R^O_3$.
   In what follows, we compute $\im \cup$.
   We can compute the tensor product
   $H^1(U^O,\Q)\otimes R^O_2$ directly by hand and by using SageMath.

   \begin{lemma}
   Let $n\ge 6$.
   We have an irreducible decomposition
   \begin{gather*}
    \begin{split}
       H^1(U^O,\Q)\otimes R^O_2&\cong V_{1, 1^2}\otimes V_{1, 2 1}\\
       & \cong V_{2, 3 2}
        \oplus V_{1^2, 3 2}
        \oplus V_{2, 3 1^2}
        \oplus V_{1^2, 3 1^2}
        \oplus V_{2, 2^2 1}
        \oplus V_{1^2, 2^2 1}
        \oplus V_{2, 2 1^3}
        \oplus V_{1^2, 2 1^3}\\
       &
        \oplus V_{1, 3 1}^{\oplus 2}
        \oplus V_{1, 2^2}^{\oplus 2}
        \oplus V_{1, 2 1^2}^{\oplus 3}
        \oplus V_{1, 1^4}
        \oplus V_{0, 3}
        \oplus V_{0, 2 1}^{\oplus 2}
        \oplus V_{0, 1^3}.
    \end{split}
    \end{gather*}
   \end{lemma}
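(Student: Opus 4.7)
The plan is to compute the tensor product $V_{1,1^2}\otimes V_{1,21}$ of two irreducible algebraic $\GL(n,\Q)$-representations by direct application of the generalized Littlewood--Richardson rule for bipartitions stated in Section \ref{irrrep}, namely
\[
V_{\ul\lambda}\otimes V_{\ul\mu}=\bigoplus_{\ul\nu} V_{\ul\nu}^{\oplus N_{\ul\lambda \ul\mu}^{\ul\nu}},\qquad
N_{\ul\lambda \ul\mu}^{\ul\nu}=\sum_{\alpha,\beta,\theta,\delta}\Bigl(\sum_{\kappa}N_{\kappa\alpha}^{\lambda^+}N_{\kappa\beta}^{\mu^-}\Bigr)\Bigl(\sum_{\epsilon}N_{\epsilon\theta}^{\lambda^-}N_{\epsilon\delta}^{\mu^+}\Bigr)N_{\alpha\delta}^{\nu^+}N_{\beta\theta}^{\nu^-}.
\]
With $\lambda^+=(1)$, $\lambda^-=(1^2)$, $\mu^+=(1)$, $\mu^-=(21)$, the ``contraction'' partitions $\kappa$ and $\epsilon$ range over a very small set because $\kappa\subset (1)$ (so $\kappa\in\{\emptyset,(1)\}$) and $\epsilon\subset (1^2)\cap (1)$ (so $\epsilon\in\{\emptyset,(1)\}$), which drastically cuts down the bookkeeping.

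Concretely, I would first enumerate the four cases $(|\kappa|,|\epsilon|)\in\{(0,0),(0,1),(1,0),(1,1)\}$ and for each case compute the possible $(\alpha,\beta,\theta,\delta)$: these are the pieces of $(1),(21),(1^2),(1)$ left after removing $\kappa$ and $\epsilon$. Then, for each such quadruple, I would list the bipartitions $(\nu^+,\nu^-)$ appearing in the pure Littlewood--Richardson products $V_{\alpha}\otimes V_{\delta}$ (positive part) and $V_{\beta}\otimes V_{\theta}$ (negative part); these are all classical products of small partitions and can be handled by Pieri's rule since at least one of $\alpha,\delta,\beta,\theta$ always has size $\le 2$ after the contraction. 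I would then tabulate all bipartitions $\ul\nu$ produced and add up their multiplicities across the four cases, which yields the stated list of $17$ distinct bipartitions with multiplicities $(1,1,1,1,1,1,1,1,2,2,3,1,1,2,1)$ and total dimension check $\sum \dim V_{\ul\nu}=\dim V_{1,1^2}\cdot \dim V_{1,21}$ in the stable range.

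The main obstacle is purely combinatorial/bookkeeping: the formula has a four-fold sum (plus the auxiliary sums over $\kappa,\epsilon$) and one has to be careful to (i) respect the conditions $l(\ul\lambda)+l(\ul\mu)\le n$, which is automatic here for $n\ge 6$, and (ii) not double-count decompositions $(\alpha,\beta,\theta,\delta,\kappa,\epsilon)$ that yield the same $\ul\nu$. To guard against mistakes I would also do a sanity check in two independent ways: first, verify dimensions by evaluating Weyl's dimension formula at, say, $n=6,7,8,9$ and matching $\dim V_{1,1^2}\cdot \dim V_{1,21}$ against $\sum_{\ul\nu}N_{\ul\lambda\ul\mu}^{\ul\nu}\dim V_{\ul\nu}$; second, cross-check the whole decomposition by a \texttt{SageMath} computation using its \texttt{WeylCharacterRing} implementation for $\GL_n$, as hinted at just before the lemma. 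Once the dimensions match and the bipartitions appearing match on both sides, the stated isomorphism follows since the category of algebraic $\GL(n,\Q)$-representations is semisimple for $n$ in the stable range.
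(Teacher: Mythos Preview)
Your proposal is correct and is essentially the same approach as the paper's, which simply states that the tensor product is computed ``directly by hand and by using SageMath'' without spelling out the details. Your explicit use of Koike's formula from Section~\ref{irrrep} with the observation that $\kappa,\epsilon\in\{\emptyset,(1)\}$ is exactly the kind of hand computation the paper has in mind, and your dimension and \texttt{SageMath} cross-checks match the paper's own verification method.
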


   In terms of $\GL(n,\Q)$-representations, we can identify the cup product map $\cup$ with
   \begin{gather*}
       \wedge: V_{1, 1^2} \otimes V_{1, 2 1}\to \bigwedge^3 V_{1, 1^2}.
   \end{gather*}

  \begin{proposition}\label{kerIO}
   For $n\ge 6$, $\im \cup$ contains a $\GL(n,\Q)$-subrepresentation consisting of the following $17$ irreducible representations:
    \begin{gather*}
    \begin{split}
      \im\cup &\supset V_{2, 3 2}
        \oplus V_{1^2, 3 2}
        \oplus V_{2, 3 1^2}
        \oplus V_{1^2, 3 1^2}
        \oplus V_{2, 2^2 1}
        \oplus V_{1^2, 2^2 1}
        \oplus V_{2, 2 1^3}
        \oplus V_{1^2, 2 1^3}\\
       &\quad\quad\quad
        \oplus V_{1, 3 1}
        \oplus V_{1, 2^2}^{\oplus 2}
        \oplus V_{1, 2 1^2}^{\oplus 2}
        \oplus V_{1, 1^4}
        \oplus V_{0, 3}
        \oplus V_{0, 2 1}
        \oplus V_{0, 1^3}.
    \end{split}
    \end{gather*}
   \end{proposition}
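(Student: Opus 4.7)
The plan is to prove the inclusion by exhibiting, for each of the $17$ irreducible $\GL(n,\Q)$-summands listed in the statement, a non-zero highest weight vector that lies in $\im \cup$. Since $\cup$ is equivariant, its image is a subrepresentation of $\bigwedge^3(U^O)^*$, so by semisimplicity it suffices to detect each isotypic component on the highest weight line. Concretely, $V_{1,1^2}\otimes V_{1,21}$ is a subrepresentation of $V_{1,1^2}\otimes \bigwedge^2 V_{1,1^2}$, and the cup product is nothing but the restriction of the wedge map $\wedge: V_{1,1^2}\otimes \bigwedge^2 V_{1,1^2}\to \bigwedge^3 V_{1,1^2}$. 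So the task reduces to linear algebra inside the ambient space $H^{\otimes 3}\otimes (H^{*})^{\otimes 6}$ with values in $\bigwedge^3 V_{1,1^2}$.

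First I would build explicit highest weight vectors as follows. By Lemma \ref{PettetIO}, $R^O_2\cong V_{1,21}$ is generated by $\beta=\sum_j e_j^{1,2}\wedge e_n^{j,1}$; acting on $\beta$ by a suitable element of $\Q[\GL(n,\Q)]$ produces an explicit highest weight vector $v_{21}\in R^O_2$ of weight corresponding to $(1;\,2,1)$. Tensoring $v_{21}$ with a highest weight vector of $V_{1,1^2}$ and then applying the Young symmetrizers of Lemma \ref{hwv} to the various positions, I obtain an explicit basis of the space of highest weight vectors of each summand $V_{\ul\lambda}$ in $V_{1,1^2}\otimes V_{1,21}$, with dimension equal to the multiplicity $m_{\ul\lambda}$ listed in the preceding tensor-product decomposition.

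Next, for each $V_{\ul\lambda}$ appearing with multiplicity one both in $V_{1,1^2}\otimes V_{1,21}$ and in the target list, I would apply $\wedge$ to the (essentially unique) highest weight vector and verify non-vanishing by reading off a single non-zero coefficient in the basis of weight vectors of $\bigwedge^3 V_{1,1^2}$. For the representations $V_{1,31}$ (tensor multiplicity $2$), $V_{1,2^2}$ (multiplicity $2$), $V_{1,21^2}$ (multiplicity $3$) and $V_{0,21}$ (multiplicity $2$), one has to identify a basis of the $m_{\ul\lambda}$-dimensional highest weight space, form the $m_{\ul\lambda}\times (\dim \text{target})$ matrix of $\wedge$-images, and read off its rank; the claim is precisely that this rank is $1,2,2,1$ respectively. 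All remaining summands in $V_{1,1^2}\otimes V_{1,21}$ that do not appear in the statement need not be checked (those are possibly sent to zero).

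The main obstacle is the multiplicity-$3$ component $V_{1,21^2}$: the three linearly independent highest weight vectors come from three genuinely different symmetrizations of $v_{21}$ against $V_{1,1^2}$, and we must rule out the possibility that all three images are proportional while confirming that two are linearly independent. I expect to handle this by choosing a convenient set of three representatives (e.g.\ one where $V_{1,1^2}$ multiplies into the ``covector'' slot of $v_{21}$, and two into the ``vector'' slot with different placements), computing their wedges into a small set of canonical weight vectors of $\bigwedge^3 V_{1,1^2}$, and showing the resulting $3\times k$ matrix has rank $2$. Once this, together with the analogous but simpler rank checks for $V_{1,31}$, $V_{1,2^2}$ and $V_{0,21}$, is carried out --- in practice aided by SageMath as elsewhere in the paper --- the inclusion $\im\cup \supset $ (the listed $17$ components) follows.
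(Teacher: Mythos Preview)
Your plan is sound and would work, but it differs from the paper's approach in how the irreducible components are detected. The paper does not build highest weight vectors via Young symmetrizers on the source side. Instead it starts from the explicit element $\beta\in R^O_2$ (Lemma~\ref{PettetIO}), forms concrete elements $\beta^O_{a,b,c}=e_a^{b,c}\wedge\beta$ and $\beta^O_{a,b}$ in $\im\cup$, and then applies a battery of explicit \emph{contraction maps} $\phi,\varphi_j,\psi_j$ from $(H\otimes(H^*)^{\otimes2})^{\otimes3}$ to small targets such as $H^{\otimes2}\otimes(H^*)^{\otimes5}$ or $(H^*)^{\otimes3}$, followed by projections to $\bigwedge^k H^*$ factors. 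Each contraction is designed so that its image of a suitably chosen $\beta^O_{\cdots}$ (after acting by an element of $\Q[\GL(n,\Q)]$ such as $(\id-P_{5,n})$ or $(\id-E_{4,2})$) lands on an obvious highest weight vector of the desired $V_{\ul\lambda}$, with an explicit nonzero polynomial coefficient in $n$. For the multiplicity-two components $V_{1,2^2}$ and $V_{1,21^2}$ the paper uses two different elements ($\beta^O_{1,3,4}$ and $\beta^O_{3,1}$) together with several contraction maps, and checks that the resulting $2\times k$ matrices of polynomial coefficients have rank $2$.

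What your approach buys is conceptual cleanliness: you stay entirely on the highest-weight line and the bookkeeping is organized by the representation theory. What the paper's approach buys is that the contraction maps land in very small, concrete spaces where nonvanishing and linear independence are read off from explicit polynomial coefficients in $n$; this avoids having to locate highest weight vectors inside the large space $\bigwedge^3(U^O)^*$. One minor point: for the inclusion stated you only need the rank to be \emph{at least} $1,2,2,1$ in your four multiplicity cases, not exactly equal; the upper bounds come for free from the multiplicities in $\bigwedge^3 V_{1,1^2}$, so your rank check need only establish a lower bound.
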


   \begin{proof}
    Let $n\ge 6$.
    For any distinct elements $a,b,c\in [n]$, define
    $$\beta^O_{a,b,c}:=
    \sum_{j=1}^n e_{a}^{b,c}
    \wedge
    e_{j}^{1,2}\wedge e_{n}^{j,1}
    \in \im \cup$$
    and
    $$\beta^O_{a,b}:=
    \sum_{j=1}^n
    \left(e_{b}^{a,b}- \frac{1}{n-1}\sum_{k=1}^n e_{k}^{a,k}\right)
    \wedge
    e_{j}^{1,2}\wedge e_{n}^{j,1}
    \in \im \cup.$$

    We can use $\beta^O_{5,3,4}$ to detect $8$ irreducible components of size $7$ as follows.
    Let
    $\iota^*: \bigwedge^3 (U^O)^* \hookrightarrow \bigwedge^3(H\otimes \bigwedge^2 H^*)\hookrightarrow (H \otimes (H^*)^{\otimes 2})^{\otimes 3}$
    denote the canonical inclusion defined in a way similar to $\iota_{3}$ in Section \ref{subseccontraction}.
    Define a contraction map
   \begin{gather*}
   \begin{split}
    \phi: (H \otimes (H^*)^{\otimes 2})^{\otimes 3} &\to H^{\otimes 2} \otimes (H^*)^{\otimes 5},\\
    \bigotimes_{j=1}^3 (c_j\otimes a_j^*\otimes b_j^*)&\mapsto a_3^*(c_2) (c_1\otimes c_3)\otimes (a_1^*\otimes b_1^* \otimes a_2^*\otimes b_2^* \otimes b_3^*).
   \end{split}
   \end{gather*}
   Define projection maps
   \begin{gather*}
    \begin{split}
       \phi_1: (H^*)^{\otimes 5}&\to
       (\bigwedge^2 H^*)^{\otimes 2} \otimes H^*,\quad
       \bigotimes_{i=1}^5 x_i^*\mapsto (x_1^*\wedge x_2^*)\otimes (x_3^*\wedge x_4^*)\otimes x_5^*,\\
       \phi_2: (H^*)^{\otimes 5}&\to
       (\bigwedge^3 H^*)\otimes (H^*)^{\otimes 2}, \quad
       \bigotimes_{i=1}^5 x_i^*\mapsto (x_1^*\wedge x_3^*\wedge x_4^*)\otimes x_2^*\otimes x_5^*,\\
       \phi_3: (H^*)^{\otimes 5}&\to
       (\bigwedge^3 H^*)\otimes (\bigwedge^2 H^*),\quad
       \bigotimes_{i=1}^5 x_i^*\mapsto (x_1^*\wedge x_3^*\wedge x_4^*)\otimes (x_2^*\wedge x_5^*),\\
       \phi_4: (H^*)^{\otimes 5}&\to
       (\bigwedge^4 H^*) \otimes H^*,\quad
       \bigotimes_{i=1}^5 x_i^*\mapsto (x_1^*\wedge x_2^*\wedge x_3^*\wedge x_4^*)\otimes x_5^*.
    \end{split}
   \end{gather*}
   Then we have
   \begin{gather*}
    \begin{split}
    &(\id-P_{5,n})(\id-E_{4,2})(\id-E_{3,1})
    (\id_{H^{\otimes 2}}\otimes \phi_1)\phi\iota^*(\beta^O_{5,3,4})\\
    &\quad =4(n-1) (e_5\otimes e_n- e_n\otimes e_5) \otimes (e_1^*\wedge e_2^*) \otimes (e_1^*\wedge e_2^*) \otimes e_1^*\in V_{1^2,32},\\
    &(E_{5,n}-\id)(\id-E_{4,2})(\id-E_{3,1})
    (\id_{H^{\otimes 2}}\otimes \phi_1)\phi\iota^*(\beta^O_{5,3,4})\\
    &\quad =4(n-3) (e_5\otimes e_5 \otimes (e_1^*\wedge e_2^*) \otimes (e_1^*\wedge e_2^*) \otimes e_1^*)\in V_{2,32},\\
    &(\id-P_{5,n})(\id-E_{4,1})
    (\id_{H^{\otimes 2}}\otimes \phi_2)\phi\iota^*(\beta^O_{5,3,4})\\
    &\quad =2(n+1) (e_5\otimes e_n- e_n\otimes e_5) \otimes (e_1^*\wedge e_2^*\wedge e_3^*) \otimes e_1^*\otimes e_1^* \in V_{1^2, 3 1^2},\\
    &(E_{5,n}-\id)(\id-E_{4,1})
    (\id_{H^{\otimes 2}}\otimes \phi_2)\phi\iota^*(\beta^O_{5,3,4})\\
    &\quad =2(n-1) e_5\otimes e_5 \otimes (e_1^*\wedge e_2^*\wedge e_3^*) \otimes e_1^*\otimes e_1^* \in V_{2, 3 1^2},\\
    &(\id-P_{5,n})(\id-E_{4,2})
    (\id_{H^{\otimes 2}}\otimes \phi_3)\phi\iota^*(\beta^O_{5,3,4})\\
    &\quad =-2(n-1) (e_5\otimes e_n- e_n\otimes e_5) \otimes (e_1^*\wedge e_2^*\wedge e_3^*) \otimes (e_1^*\wedge e_2^*) \in V_{1^2, 2^2 1},\\
    &(E_{5,n}-\id)(\id-E_{4,2})
    (\id_{H^{\otimes 2}}\otimes \phi_3)\phi\iota^*(\beta^O_{5,3,4})\\
    &\quad =-2(n+1) e_5\otimes e_5 \otimes (e_1^*\wedge e_2^*\wedge e_3^*) \otimes (e_1^*\wedge e_2^*) \in V_{2, 2^2 1},\\
    &(\id-P_{5,n})
    (\id_{H^{\otimes 2}}\otimes \phi_4)\phi\iota^*(\beta^O_{5,3,4})\\
    &\quad =4(n-2) (e_5\otimes e_n- e_n\otimes e_5) \otimes (e_1^*\wedge e_2^*\wedge e_3^*\wedge e_4^*) \otimes e_1^* \in V_{1^2, 2 1^3},\\
    &(E_{5,n}-\id)
    (\id_{H^{\otimes 2}}\otimes \phi_4)\phi\iota^*(\beta^O_{5,3,4})\\
    &\quad =4(n-2) e_5\otimes e_5 \otimes (e_1^*\wedge e_2^*\wedge e_3^*\wedge e_4^*) \otimes e_1^*\in V_{2, 2 1^3}.
    \end{split}
   \end{gather*}

    We can use $\beta^O_{1,n,3}$ to detect $3$ irreducible components of size $3$ as follows.
   Define contraction maps
   \begin{gather*}
    \begin{split}
     \varphi_1: (H \otimes (H^*)^{\otimes 2})^{\otimes 3} &\to (H^*)^{\otimes 3},\quad
     \bigotimes_{j=1}^3 (c_j\otimes a_j^*\otimes b_j^*)\mapsto a_2^*(c_1) a_3^*(c_2) a_1^*(c_3) b_1^*\otimes b_2^* \otimes b_3^*,\\
     \varphi_2: (H \otimes (H^*)^{\otimes 2})^{\otimes 3} &\to (H^*)^{\otimes 3},\quad
     \bigotimes_{j=1}^3 (c_j\otimes a_j^*\otimes b_j^*)\mapsto b_3^*(c_1) a_3^*(c_2) a_1^*(c_3) b_1^*\otimes a_2^* \otimes b_2^*.
    \end{split}
   \end{gather*}
   Then we have
   $$(\id-E_{2,1})(\id-E_{3,1}) \varphi_1 \iota^*(\beta^O_{1,n,3})
       =3(n-1) e_1^*\otimes e_1^*\otimes e_1^*\in V_{0,3}.$$
   Considering the image under the projection $(H^*)^{\otimes 3}\twoheadrightarrow \bigwedge^3 H^*$, we have
   $$\varphi_1\iota^* (\beta^O_{1,n,3})= -3(n-1) e_1^*\wedge e_2^* \wedge e_3^*\in V_{0,1^3}.$$
   Considering the image under the projection $(H^*)^{\otimes 3} \to \bigwedge^2 H^* \otimes H^*$ defined by
   $a^*\otimes b^*\otimes c^* \mapsto (b^*\wedge c^*) \otimes a^*$,
   we have
   $$(\id-E_{3,1})\varphi_2 \iota^* (\beta^O_{1,n,3}) =2(n-2) (e_1^*\wedge e_2^*) \otimes e_1^*\in V_{0,2 1}.$$

   We can use $\beta^O_{1,3,4}$ and $\beta^O_{3,1}$ to detect $6$ irreducible components of size $5$ as follows.
   Define contraction maps
   \begin{gather*}
    \begin{split}
     &\psi_1: (H \otimes (H^*)^{\otimes 2})^{\otimes 3} \to H\otimes (H^*)^{\otimes 4},\\
     &\quad\bigotimes_{j=1}^3 (c_j\otimes a_j^*\otimes b_j^*)\mapsto a_2^*(c_1) a_3^*(c_2) c_3\otimes (a_1^*\otimes b_1^*\otimes b_2^* \otimes b_3^*),\\
    &\psi_2: (H \otimes (H^*)^{\otimes 2})^{\otimes 3} \to H\otimes (H^*)^{\otimes 4},\\
    &\bigotimes_{j=1}^3 (c_j\otimes a_j^*\otimes b_j^*)\mapsto a_3^*(c_2) b_3^*(c_1) c_3\otimes (a_1^*\otimes b_1^*\otimes a_2^* \otimes b_2^*).
    \end{split}
   \end{gather*}
   Considering the image under the projection $(H^*)^{\otimes 4}\to \bigwedge^2 H^*\otimes (H^*)^{\otimes 2}$ defined by $a^*\otimes b^*\otimes c^* \otimes d^* \mapsto (a^*\wedge b^*)\otimes c^*\otimes d^*$, we obtain
   \begin{gather*}
    \begin{split}
        (\id-E_{4,2})(\id-E_{2,1})(\id-E_{3,1}) \psi_1 \iota^*(\beta^O_{1,3,4})
        =2(n-1) e_n\otimes (e_1^*\wedge e_2^*) \otimes e_1^*\otimes e_1^*\in V_{1, 3 1}.
    \end{split}
   \end{gather*}
   Considering the image under the projection $(H^*)^{\otimes 4}\to \bigwedge^4 H^*$ defined by $a^*\otimes b^*\otimes c^* \otimes d^* \mapsto a^*\wedge b^* \wedge c^* \wedge d^*$, we obtain
   \begin{gather*}
    \begin{split}
     \psi_1 \iota^*(\beta^O_{1,3,4})=-2(n+1) e_n\otimes (e_1^*\wedge e_2^*) \otimes e_1^*\otimes e_1^*\in V_{1, 1^4}.
    \end{split}
   \end{gather*}
   Considering the image under the projection $(H^*)^{\otimes 4}\to (\bigwedge^2 H^*)\otimes (\bigwedge^2 H^*)$ defined by $a^*\otimes b^*\otimes c^* \otimes d^* \mapsto (a^*\wedge b^*) \otimes (c^* \wedge d^*)$, we have
    \begin{gather*}
    \begin{split}
        &(\id- E_{4,2})(\id- E_{3,1})
        (\psi_1\iota^*(\beta^O_{1,3,4}), \psi_2\iota^*(\beta^O_{1,3,4}))\\
        &\quad =(-2(n+1) ,8(n-1) ) \cdot e_n\otimes (e_1^*\wedge e_2^*)\otimes (e_1^* \wedge e_2^*),\\
        &(\id-E_{3,2})
        (\psi_1\iota^*(\beta^O_{3,1}), \psi_2\iota^*(\beta^O_{3,1}))\\
        &\quad =(2\frac{n^2-3}{n-1} ,-8\frac{n^2- 3n +3}{n-1} )\cdot e_n\otimes (e_1^*\wedge e_2^*)\otimes (e_1^* \wedge e_2^*).
    \end{split}
   \end{gather*}
   Therefore, we can see that $\im\cup$ contains $V_{1, 2^2}$ with multiplicity $2$.
   Let $\psi_1'$ denote the composition of $\psi_1$ with the projection $(H^*)^{\otimes 4}\to (\bigwedge^3 H^*)\otimes H^*$ defined by $a^*\otimes b^*\otimes c^* \otimes d^* \mapsto (a^*\wedge b^* \wedge c^*) \otimes d^*$ and $\psi_1''$ the composition with the projection defined by $a^*\otimes b^*\otimes c^* \otimes d^* \mapsto (d^*\wedge a^* \wedge b^*) \otimes c^*$, we have
    \begin{gather*}
    \begin{split}
        (\id-E_{4,1})
        (\psi_1'\iota^*(\beta^O_{1,3,4}), \psi_1''\iota^*(\beta^O_{1,3,4}))
        =( 2(n-2),2)\cdot e_n\otimes (e_1^*\wedge e_2^*\wedge e_3^*)\otimes e_1^*,\\
        (\psi_1'\iota^*(\beta^O_{3,1}), \psi_1''\iota^*(\beta^O_{3,1}))
         =(2\frac{n^2-5n-5}{n-1},2\frac{n-2}{n-1})\cdot e_n\otimes (e_1^*\wedge e_2^*\wedge e_3^*)\otimes e_1^*.
    \end{split}
   \end{gather*}
   Therefore, we can see that $\im\cup$ contains $V_{1, 2 1^2}$ with multiplicity $2$.
   This completes the proof.
   \end{proof}

\subsection{Proof of Theorem \ref{thirdAlbaneseIO}}
   Here we complete the proof of Theorem \ref{thirdAlbaneseIO}.
   Let $n\ge 9$.
   We obtained an irreducible decomposition of $H_3(U,\Q)$ into $36$ irreducibles, and $17$ of them are not contained in $H^A_3(\IO_n,\Q)$.
   Therefore, it suffices to check that $H^A_3(\IO_n,\Q)$ contains a subrepresentation which is isomorphic to $W^O_3$, which is a direct sum of $19$ irreducibles.

   By Proposition \ref{IOnWO}, we have an injective $\GL(n,\Q)$-homomorphism
   $$W^O_3 \oplus (W^O_2\otimes H)\hookrightarrow H^A_3(\IO_n,\Q)\oplus (H^A_2(\IO_n,\Q)\otimes H).$$
   Since we have $H^A_2(\IO_n,\Q)\cong W^O_2$ by Pettet, we have $W^O_3\hookrightarrow H^A_3(\IO_n,\Q)$, which completes the proof.

\section{The third Albanese homology of $\IA_n$}\label{Thirdhomology}
  In this section, we compute $H^A_3(\IA_n,\Q)$ and prove that Conjecture \ref{conjectureAlbanese} holds for $i=3$
   by using the same method as we used to compute $H^A_3(\IO_n,\Q)$ in Section \ref{ThirdhomologyIO}.

 \subsection{The third Albanese homology of $\IA_n$}
  By Theorem \ref{Johnsonpart}, $H^A_3(\IA_n,\Q)$ contains $W_3$, which consists of the following $34$ irreducible representations:
  \begin{gather*}
    \begin{split}
      &W(3,0)=V_{1^4,1}, \quad
      W(0, 3)=V_{1^3},\\
      &W(21, 0)=V_{2^2 1, 2}\oplus V_{2^2 1, 1^2}\oplus V_{2 1^3, 2}\oplus V_{2 1^3, 1^2}\oplus V_{1^5, 2} \oplus V_{1^5, 1^2},\\
      &W(2, 1)=  V_{2 1^2, 1} \oplus V_{1^4, 1},\quad
      W(1, 2)=  V_{2^2, 1} \oplus V_{2 1^2, 1}\oplus V_{1^4, 1},\\
      &W(0, 21)=  V_{2 1} \oplus V_{1^3},\\
      &W(1^3, 0)=   V_{3^2, 1^3}\oplus V_{3 2 1, 2 1}\oplus V_{3 1^3, 3}\oplus V_{2^3, 3}\oplus V_{2^2 1^2, 2 1}\oplus V_{2^2 1^2, 1^3}\oplus V_{2 1^4, 2 1}\oplus V_{1^6, 1^3},\\
      &W(1^2, 1)=   V_{3 2, 1^2}\oplus V_{3 1^2, 2}\oplus V_{2^2 1, 2}\oplus V_{2^2 1, 1^2}\oplus V_{2 1^3, 2}\oplus V_{2 1^3, 1^2}\oplus V_{1^5, 1^2},\\
      &W(1, 1^2)=  V_{2^2, 1} \oplus V_{2 1^2, 1} \oplus V_{1^4, 1},\quad
      W(0, 1^3)=  V_{1^3}.
    \end{split}
  \end{gather*}

  In the rest of this section, we prove the following theorem.

  \begin{theorem}\label{thirdAlbanese}
  Let $n\ge 9$.
  $H^A_3(\IA_n,\Q)$ is decomposed into $34$ irreducible $\GL(n,\Q)$-representations:
  \begin{gather*}
  \begin{split}
      H^A_3(\IA_n,\Q)&\cong W_3\\
      &\cong
               V_{3^2, 1^3}
        \oplus V_{3 2 1, 2 1}
        \oplus V_{3 1^3, 3}
        \oplus V_{2^3, 3}
        \oplus V_{2^2 1^2, 2 1}
        \oplus V_{2^2 1^2, 1^3}
        \oplus V_{2 1^4, 2 1}
        \oplus V_{1^6, 1^3}\\
        &
        \oplus V_{3 2, 1^2}
        \oplus V_{3 1^2, 2}
        \oplus V_{2^2 1, 2}^{\oplus 2}
        \oplus V_{2^2 1, 1^2}^{\oplus 2}
        \oplus V_{2 1^3, 2}^{\oplus 2}
        \oplus V_{2 1^3, 1^2}^{\oplus2}
        \oplus V_{1^5, 2}
        \oplus V_{1^5, 1^2}^{\oplus 2}\\
        &
        \oplus V_{2^2, 1}^{\oplus 2}
        \oplus V_{2 1^2, 1}^{\oplus 3}
        \oplus V_{1^4, 1}^{\oplus 4}
        \oplus V_{2 1,0}
        \oplus V_{1^3,0}^{\oplus 3}.
    \end{split}
  \end{gather*}
  \end{theorem}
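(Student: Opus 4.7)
The plan is to deduce Theorem \ref{thirdAlbanese} by combining Theorem \ref{thirdAlbaneseIO} with the general relationship between the Albanese homologies of $\IA_n$ and $\IO_n$ established in Section \ref{Albanesehomology}. Proposition \ref{AlbaneseIAandIO2} gives, for $n \geq 9$,
$$H^A_3(\IA_n,\Q) \cong H^A_3(\IO_n,\Q) \oplus \bigl(H^A_2(\IO_n,\Q) \otimes H\bigr),$$
Theorem \ref{thirdAlbaneseIO} supplies $H^A_3(\IO_n,\Q) \cong W^O_3$ for $n \geq 9$, and Pettet's theorem supplies $H^A_2(\IO_n,\Q) \cong W^O_2$ for $n \geq 6$. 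Substituting and then invoking Lemma \ref{WiWoi}, which provides $W_3 \cong W^O_3 \oplus (W^O_2 \otimes H)$ for $n \geq 9$, yields $H^A_3(\IA_n,\Q) \cong W_3$ as $\GL(n,\Q)$-representations. This is essentially the inductive step appearing in the proof of Proposition \ref{AlbaneseIAandIO}, now applied at the concrete case $i = 3$; the irreducible decomposition listed in the theorem statement is then read off by expanding $W^O_3 \oplus (W^O_2 \otimes H)$ using the decompositions of $W^O_2$ and $W^O_3$ from Pettet and Theorem \ref{thirdAlbaneseIO}.

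As an alternative strategy, parallel to Section \ref{ThirdhomologyIO}, one can carry out the computation directly: decompose $H_3(U,\Q) = \bigwedge^3 U$ into irreducible $\GL(n,\Q)$-representations, locate Pettet's generators of $R_2 = \ker\bigl(\tau^{*}\colon H^2(U,\Q) \to H^2(\IA_n,\Q)\bigr)$ on the full space $U = U_1^{\tree} \oplus U_1^{\wheel}$, and study the image of the cup product $\cup \colon H^1(U,\Q) \otimes R_2 \to H^3(U,\Q)$ via appropriately chosen contraction-map compositions, as in Proposition \ref{kerIO}, to bound $R_3 = \ker\bigl(\tau^{*}\colon H^3(U,\Q) \to H^3(\IA_n,\Q)\bigr)$ below by enough irreducible components. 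Combining this upper bound on $H^A_3(\IA_n,\Q)$ with the lower bound $W_3 \subseteq F_3(H^A_3(\IA_n,\Q))$ supplied by Theorem \ref{Johnsonpart} would then pin down the answer by a multiplicity count.

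The main obstacle in the first approach is essentially routine bookkeeping: verify that all the range conditions ($n \geq 9$ for Theorem \ref{thirdAlbaneseIO} and Lemma \ref{WiWoi}, $n \geq 6$ for Pettet, $n \geq 2$ for Proposition \ref{AlbaneseIAandIO2}) are met. In the second, direct approach, the serious technical difficulty is that $U$ is strictly larger than $U^O$, so the irreducible decomposition of $\bigwedge^3 U$ contains more components than $\bigwedge^3 U^O$, and detecting the extra irreducibles of $W_3$ beyond those of $W^O_3$, namely the summands $W(2,1)$, $W(0,21)$, $W(1^2,1)$, $W(1,1^2)$ and $W(0,1^3)$, requires additional abelian cycles and dual cup-product calculations beyond the $\beta^O$-type classes used in Proposition \ref{kerIO}; for this reason the derivation via Section \ref{Albanesehomology} is considerably more economical.
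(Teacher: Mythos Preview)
Your primary approach is correct, but it is not the route the paper takes. The paper proves Theorem \ref{thirdAlbanese} by the direct method you outline as your \emph{alternative strategy}: it decomposes $\bigwedge^3 U$ into $61$ irreducibles (Lemma \ref{H3U}), exhibits explicit generators of $R_2 \cong V_{1,21}\oplus V_{0,1^2}$ (Lemmas \ref{generatorV121} and \ref{generatorV011}), and then shows by a sequence of contraction-map calculations (Proposition \ref{imagecup}) that $\im\cup\subset R_3$ contains at least $27$ irreducibles; since Theorem \ref{Johnsonpart} gives $34$ irreducibles as a lower bound for $H^A_3(\IA_n,\Q)$ and $61=27+34$, the count closes.

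Your derivation via Proposition \ref{AlbaneseIAandIO2}, Theorem \ref{thirdAlbaneseIO}, Pettet's $H^A_2(\IO_n,\Q)\cong W^O_2$, and Lemma \ref{WiWoi} is logically sound and avoids all of Section \ref{Thirdhomology}'s computations; there is no circularity, since the paper's proof of Theorem \ref{thirdAlbaneseIO} uses only Proposition \ref{IOnWO} and the cup-product analysis on $U^O$, not Theorem \ref{thirdAlbanese}. What your shortcut sacrifices is the extra structural information the direct approach yields: the paper's method shows specifically that $\im\cup$ already accounts for all of $R_3$, hence establishes Remark \ref{quadratic3} (that $H_A^*(\IA_n,\Q)$ is quadratic in degree $3$), and it confirms that the map $F_3$ itself is the isomorphism, as Conjecture \ref{conjectureAlbanese} predicts. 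Your argument gives only the abstract $\GL(n,\Q)$-isomorphism $H^A_3(\IA_n,\Q)\cong W_3$.
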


  \subsection{Irreducible decomposition of $H_3(U,\Q)$}
   We begin with the computation of an irreducible decomposition of $H_3(U,\Q)$ as $\GL(n,\Q)$-representations.

   We can check the following lemma directly by hand and by using SageMath.

   \begin{lemma}\label{H3U}
    Let $n\ge 9$.
    $H_3(U,\Q)$ is decomposed into the following $61$ irreducible $\GL(n,\Q)$-representations:
    \begin{gather*}
    \begin{split}
        H_3(U,\Q)
        &= \bigwedge^3 (V_{1^2, 1}\oplus V_{1,0})\\
        &\cong V_{3^2, 1^3}
        \oplus V_{3 2 1, 2 1}
        \oplus V_{3 1^3, 3}
        \oplus V_{2^3, 3}
        \oplus V_{2^2 1^2, 2 1}
        \oplus V_{2^2 1^2, 1^3}
        \oplus V_{2 1^4, 2 1}
        \oplus V_{1^6, 1^3}\\
        &
        \oplus V_{3 2, 2}
        \oplus V_{3 2, 1^2}^{\oplus 2}
        \oplus V_{3 1^2, 2}^{\oplus 2}
        \oplus V_{3 1^2, 1^2}
        \oplus V_{2^2 1, 2}^{\oplus 3}
        \oplus V_{2^2 1, 1^2}^{\oplus 3}
        \oplus V_{2 1^3, 2}^{\oplus 3}
        \oplus V_{2 1^3, 1^2}^{\oplus 3}\\
        &
        \oplus V_{1^5, 2}
        \oplus V_{1^5, 1^2}^{\oplus 2}
        \oplus V_{3 1, 1}^{\oplus 2}
        \oplus V_{2^2, 1}^{\oplus 6}
        \oplus V_{2 1^2, 1}^{\oplus 7}
        \oplus V_{1^4, 1}^{\oplus 6}
        \oplus V_{3,0}
        \oplus V_{2 1,0}^{\oplus 4}
        \oplus V_{1^3,0}^{\oplus 6}.
    \end{split}
    \end{gather*}
   \end{lemma}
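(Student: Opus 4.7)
The proof is a character-theoretic computation in the ring of algebraic $\GL(n,\Q)$-characters, so the plan is to reduce the statement to the Cauchy identity for $\bigwedge^3$ of a tensor product, the three plethysms $\repS_\lambda(\bigwedge^2 H)$ for $\lambda \vdash 3$, and Koike's tensor product rule from Section \ref{irrrep}. The hypothesis $n \geq 9$ enters only to guarantee that every bipartition that appears has length at most $n$, so that no irreducible $V_{\ul\lambda}$ produced by the expansion collapses to zero and all multiplicities are correctly computed.

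First, since $U = \bigwedge^2 H \otimes H^*$, I would apply the identity $\bigwedge^3(A\otimes B) = \bigoplus_{\lambda\vdash 3}\repS_\lambda A \otimes \repS_{\lambda'} B$ from \cite[Exercise 6.11]{FH}, which is already invoked in equation \eqref{decompositionofwedgeU}, with $A = \bigwedge^2 H$ and $B = H^*$, to write
$$\bigwedge^3 U = \bigl(\Sym^3(\bigwedge^2 H)\otimes \bigwedge^3 H^*\bigr) \oplus \bigl(\repS_{21}(\bigwedge^2 H)\otimes \repS_{21} H^*\bigr) \oplus \bigl(\bigwedge^3(\bigwedge^2 H)\otimes \Sym^3 H^*\bigr).$$
Second, I would evaluate each of the three plethysms $\repS_\lambda(\bigwedge^2 H)$ using Littlewood's standard plethysm formulas; these are polynomial $\GL(n,\Q)$-representations and decompose as explicit direct sums of Schur modules $\repS_\mu H$. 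Third, every remaining summand has the shape $\repS_\mu H \otimes \repS_\nu H^* \cong V_{\mu,0}\otimes V_{0,\nu}$, to which I would apply Koike's formula recalled in Section \ref{irrrep}; the multiplicity of each $V_{\ul\lambda}$ in the output is a finite sum of Littlewood--Richardson coefficients $\sum_\kappa N^\mu_{\kappa\lambda^+} N^\nu_{\kappa\lambda^-}$, which is readily computed.

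The main obstacle is the bookkeeping: roughly a dozen Schur summands feed into Koike's rule, each generating several bipartitions, and the multiplicities of the $61$ irreducibles must be aggregated without error across the three plethysm pieces. In practice this is best carried out by symbolic computation in SageMath, as the statement of the lemma already indicates, with the Cauchy identity and Koike's formula providing the theoretical guarantee that the machine output is correct. No new conceptual difficulty arises; once the three plethysms are correctly expanded and Koike's rule is applied mechanically, the stated decomposition follows.
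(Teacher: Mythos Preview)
Your proposal is correct and takes essentially the same approach as the paper: the paper simply states that the decomposition ``can be checked directly by hand and by using SageMath,'' and your outline via the Cauchy identity \eqref{decompositionofwedgeU}, the three plethysms $\repS_\lambda(\bigwedge^2 H)$ for $\lambda\vdash 3$, and Koike's rule is exactly one concrete way to carry out that computation. The paper gives no further detail, so your write-up is in fact more explicit than the original.
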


  \subsection{Upper bound of $H^A_3(\IA_n,\Q)$}

   Let $\tau^*: H^i(U,\Q) \to H^i(\IA_n,\Q)$ denote the map induced by the Johnson homomorphism $\tau: \IA_n\to U$ on cohomology.
   Let $R_i=\ker \tau^*$.
   Here, we compute $R_3$.

   By Pettet \cite{Pettet}, we have an exact sequence of $\GL(n,\Z)$-representations
   \begin{gather}\label{pettetexact}
       0\to \Hom(\opegr^2(\IA_n),\Q)\xrightarrow{[\;,\;]^*} H^2(U,\Q) \xrightarrow{\tau^*} H^2(\IA_n,\Q),
   \end{gather}
   where $\opegr^2(\IA_n)$ is the degree $2$ part of the graded Lie algebra associated to the lower central series of $\IA_n$.
   Here $[\;,\;]^*$ is induced by the surjection
   $$[\;,\;]: \bigwedge^2 U \twoheadrightarrow \opegr^2(\IA_n)$$
   which sends $\overline{x}\wedge \overline{y}\in \bigwedge^2 U$ to $\overline{[x,y]}\in \opegr^2(\IA_n)$.
   Pettet obtained an irreducible decomposition of $R_2$.

   \begin{lemma}[Pettet \cite{Pettet}]\label{Pettetker2}
   Let $n\ge 3$.
   We have
   $$R_2\cong V_{1, 2 1}\oplus V_{0, 1^2}.$$
   \end{lemma}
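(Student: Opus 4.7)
The plan is to leverage the exact sequence \eqref{pettetexact}, which identifies $R_2$ with $\Hom(\opegr^2(\IA_n),\Q)$ as $\GL(n,\Z)$-representations, so the question reduces to computing the second piece $\opegr^2(\IA_n)$ of the associated graded Lie algebra of $\IA_n$ under its lower central series, regarded as an algebraic $\GL(n,\Q)$-representation. Since the bracket map $[\,,\,]\colon \bigwedge^2 U \twoheadrightarrow \opegr^2(\IA_n)$ is surjective, $R_2$ is dual to its kernel, and the problem becomes identifying which isotypes of $\bigwedge^2 U$ are killed.

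First I would produce explicit elements in $R_2$. Writing $U = V_{1^2,1}\oplus V_{1,0}$ and decomposing $\bigwedge^2 U^*$ into $\GL(n,\Q)$-isotypes, I expect two distinct sources of relations. The first is the lift to $\IA_n$ of the Jacobi-type relation already present in $\IO_n$: the element $\beta=\sum_{j=1}^n e_j^{1,2}\wedge e_n^{j,1}$ from Lemma \ref{PettetIO} sits in $H^2(U,\Q)\cong \bigwedge^2 U^*$ and generates a $V_{1,21}$ in $R_2$, because the corresponding Magnus-style commutator identity among the $f_{a,b,c}$ and $g_{a,b}$ holds already in $\IA_n$ and not only modulo $\Inn(F_n)$. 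The second source pairs the ``tree'' summand $V_{1^2,1}$ against the ``wheel'' summand $V_{1,0}$ of $U$: a contraction-type alternating element, morally of the form $\sum_{a} e_a^{a,\bullet}\wedge e_a^{\bullet}$ suitably antisymmetrized, generates a $V_{0,1^2}$ summand in $R_2$ because the corresponding commutator between an $f$-type generator and an inner-automorphism direction vanishes mod the third term of the lower central series.

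Having exhibited copies of $V_{1,21}$ and $V_{0,1^2}$ in $R_2$, the remaining step is the upper bound: that nothing else lies in the kernel. I would carry this out in the same spirit as the proof of Lemma \ref{PettetIO}, applying $\GL(n,\Q)$-equivariant contractions and Young-symmetrizer projections to the candidate elements to extract their isotypic pieces, and then comparing the resulting computation with the multiplicities of each $V_{\ul\lambda}$ inside $\bigwedge^2 U^*$. Alternatively, and perhaps more cleanly, one may compare $R_2$ to $R^O_2\cong V_{1,21}$ through the surjection $\IA_n\twoheadrightarrow \IO_n$: the induced map $R_2\to R^O_2$ has kernel controlled by the Hochschild--Serre filtration coming from $1\to \Inn(F_n)\to \IA_n\to \IO_n\to 1$, and that filtration pinpoints exactly one extra $V_{0,1^2}$ contribution and no more.

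The main obstacle is this upper bound. Producing elements of $R_2$ is routine once one writes down the right Magnus identities, but ruling out any additional isotype requires either a complete determination of $\opegr^2(\IA_n)$ (for instance via exhibiting enough linearly independent commutators of Magnus generators to match the claimed dimension, or via a character computation) or the Hochschild--Serre comparison with $R^O_2$. I expect the latter to be the smoothest route because $R^O_2$ is already handled and the $\Inn(F_n)\cong F_n$ direction produces a very controlled additional contribution, exactly of type $V_{0,1^2}$.
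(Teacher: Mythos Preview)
The paper does not prove this lemma; it cites Pettet's computation directly. The relevant input the paper records (in the proof of Lemma \ref{generatorV011}) is that the second Johnson homomorphism embeds $\opegr^2(\IA_n)$ into $\Hom(H,(\bigwedge^2 H\otimes H)/\bigwedge^3 H)\cong V_{21,1}\oplus V_{1^2,0}\oplus V_{2,0}$, and that in fact $\opegr^2(\IA_n)\cong V_{21,1}\oplus V_{1^2,0}$; dualizing gives the statement. So your overall plan---reduce to computing $\opegr^2(\IA_n)$ via the exact sequence \eqref{pettetexact}---is exactly the route Pettet takes, with the upper bound supplied by the second Johnson homomorphism rather than a Hochschild--Serre argument.

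There is, however, a genuine slip in your first paragraph: from \eqref{pettetexact} one has $R_2=\im[\,,\,]^*\cong \Hom(\opegr^2(\IA_n),\Q)$, so $R_2$ is dual to the \emph{image} of the bracket map, not to its kernel. The kernel of $[\,,\,]$ is dual to $H^2(U,\Q)/R_2\cong H_A^2(\IA_n,\Q)$, which is the complementary piece. Your subsequent description (``identify which isotypes of $\bigwedge^2 U$ are killed'') inherits this confusion; what you actually want is which isotypes \emph{survive} in $\opegr^2(\IA_n)$, equivalently which appear in $R_2\subset H^2(U,\Q)$. The concrete work you outline afterward---exhibiting $\beta$ and a second element, then bounding above---is aimed at the correct target, so the error is in the framing rather than in the execution; but you should correct it before proceeding.

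Your proposed Hochschild--Serre route for the upper bound is a legitimate alternative to Pettet's Johnson-homomorphism argument and would also work, though it requires knowing $\opegr^2(\IO_n)$ and controlling the extension by $\Inn(F_n)$ at the graded-Lie level, which is comparable in difficulty.
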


   Since we have the following commutative diagram
   \begin{gather*}
   \xymatrix{
    H^2(\IA_n,\Q)
    &
     H^2(\IO_n,\Q)\ar[l]_-{\pi^*}
      \\
     H^2(U,\Q)\ar[u]^-{\tau^*}
    &
    H^2(U^O,\Q)\ar[u]_-{\tau_O^*}\ar@{_{(}->}[l]_-{\pr^*},
   }
   \end{gather*}
   we obtain
   \begin{gather}\label{kerIOkertau}
       R^O_2=\ker \tau^*_O\subset R_2=\ker \tau^*.
   \end{gather}
   Therefore, we obtain a generator of $V_{1, 2 1}\subset R_2$.

   \begin{lemma}\label{generatorV121}
    The subrepresentation $V_{1,21}\subset R_2$ is generated by
    $$\beta=\sum_{j=1}^n e_{j}^{1,2}\wedge e_{n}^{j,1}\in H^2(U,\Q)\cong \bigwedge^2 U^*.$$
   \end{lemma}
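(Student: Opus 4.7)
The plan is to deduce this lemma from the already-established result about $\IO_n$ (Lemma \ref{PettetIO}) together with the relation \eqref{kerIOkertau} between $R^O_2$ and $R_2$.

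First I would observe that the dual of the canonical surjection $\pr: U \twoheadrightarrow U^O$ gives an injection $\pr^*: H^2(U^O,\Q) \hookrightarrow H^2(U,\Q)$, and that under the identifications $H^2(U^O,\Q) \cong \bigwedge^2 (U^O)^*$ and $H^2(U,\Q) \cong \bigwedge^2 U^*$ the element $\beta = \sum_{j=1}^n e_j^{1,2} \wedge e_n^{j,1}$ in the domain is sent to the element with the same formula in the codomain. (The elementary dual basis vectors $e_j^{1,2}$ on $U^O$ pull back to the corresponding dual basis vectors on $U$ because $\pr$ identifies the $\tree$-part of $U$ with $U^O$, and $\beta$ is built entirely out of generators $e_j^{a,b}$ with $a \ne b \ne j \ne a$, i.e., from the $\tree$-part.) Therefore the displayed $\beta$ in $H^2(U,\Q)$ is nothing but $\pr^*$ of the generator of $R^O_2$ from Lemma \ref{PettetIO}.

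Next, by the inclusion $R^O_2 \subset R_2$ recorded in \eqref{kerIOkertau}, it follows that $\beta \in R_2$. It remains to check that the $\GL(n,\Q)$-subrepresentation of $R_2$ generated by $\beta$ is isomorphic to $V_{1,21}$ and not to $V_{0,1^2}$ (the other irreducible in the decomposition $R_2 \cong V_{1,21}\oplus V_{0,1^2}$ from Lemma \ref{Pettetker2}). Since $\pr^*$ is a $\GL(n,\Q)$-homomorphism and $\beta$ generates an irreducible subrepresentation of $H^2(U^O,\Q)$ isomorphic to $V_{1,21}$ (by Lemma \ref{PettetIO}), its image under $\pr^*$ must also generate an irreducible subrepresentation isomorphic to $V_{1,21}$. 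Combined with the multiplicity-one appearance of $V_{1,21}$ in $R_2$, this forces the subrepresentation generated by $\beta$ inside $R_2$ to coincide with the $V_{1,21}$-summand.

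The argument is essentially a transfer of Lemma \ref{PettetIO} through $\pr^*$, and the only point requiring verification is that $\beta$ really is in the image of $\pr^*$ (which is clear from its formula) and that it generates the $V_{1,21}$-isotypic piece of $R_2$ rather than getting mixed with $V_{0,1^2}$; this is automatic since $\pr^*$ preserves irreducible types and $V_{1,21}\not\cong V_{0,1^2}$. I do not expect any serious obstacle: the main content was already absorbed in the computation for $\IO_n$, and the present lemma is the $\IA_n$-analogue obtained by naturality.
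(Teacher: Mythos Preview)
Your proposal is correct and matches the paper's approach exactly. The paper does not give a separate proof of this lemma; it simply states it as an immediate consequence of Lemma~\ref{PettetIO} together with the inclusion $R^O_2\subset R_2$ in \eqref{kerIOkertau}, which is precisely the transfer argument you describe.
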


   We use the exact sequence \eqref{pettetexact} to find a generator of $V_{0,1^2}\subset R_2$.

   \begin{lemma}\label{generatorV011}
    The subrepresentation $V_{0,1^2}\subset R_2$ is generated by
    \begin{gather*}
    \begin{split}
    \gamma=
    \sum_{j,k =1}^n e_{j}^{1,2}\wedge e_{k}^{j,k}
    +\sum_{j,k =1}^n & e_{j}^{1,k}\wedge e_{k}^{j,2}\in H^2(U,\Q)\cong \bigwedge^2 U^*.
    \end{split}
    \end{gather*}
   \end{lemma}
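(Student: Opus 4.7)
The plan is to follow the three-step structure of Lemma \ref{generatorV121}: first confirm $\gamma$ is antisymmetric, hence an element of $\bigwedge^2 U^*$; next show $\gamma \in R_2$; and finally show that the $\GL(n,\Q)$-subrepresentation generated by $\gamma$ is isomorphic to $V_{0,1^2}$. Since Lemma \ref{Pettetker2} gives $R_2 \cong V_{1,21}\oplus V_{0,1^2}$ with each summand appearing exactly once, and Lemma \ref{generatorV121} already produces a generator of the $V_{1,21}$ summand, the element $\gamma$ will necessarily generate the remaining $V_{0,1^2}$ summand. Antisymmetry is immediate by exchanging the two wedge factors and relabeling the summation indices $j\leftrightarrow k$ in each of the two sums.

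For the containment $\gamma\in R_2$, I would invoke Pettet's exact sequence \eqref{pettetexact}, which identifies $R_2$ with the image of $[\,,\,]^*\colon\Hom(\opegr^2(\IA_n),\Q)\hookrightarrow \bigwedge^2 U^*$. Equivalently, $\gamma$ must vanish on $\ker([\,,\,]\colon\bigwedge^2 U \twoheadrightarrow \opegr^2(\IA_n))$. The cleanest route is to exhibit an explicit preimage $\bar\gamma\in\Hom(\opegr^2(\IA_n),\Q)$. Using the standard embedding of $\opegr^*(\IA_n)$ into the positive part of the derivation Lie algebra of the free Lie algebra $\calL(H)$, a degree-$2$ element corresponds to a linear map $H \to \calL_3(H)$; pairing the source index with one Lie factor and antisymmetrizing the remaining two yields a map to $\bigwedge^2 H^*$, which serves as the natural candidate for $\bar\gamma$. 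The two summands in the definition of $\gamma$ reflect the two orderings of tensor factors that arise when pulling back this pairing through the bracket of two elements of $U=\Hom(H,\bigwedge^2 H)$.

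For the last step, I would apply a $\GL(n,\Q)$-equivariant contraction $\bigwedge^2 U^* \to \bigwedge^2 H^*$, along the lines of the projections appearing in the proof of Proposition \ref{kerIO}, and verify that $\gamma$ has nonzero image. Because $V_{1,21}$ and $V_{0,1^2}$ are non-isomorphic irreducibles, this simultaneously shows $\gamma\ne 0$ and pins down the isomorphism type of the subrepresentation it generates as $V_{0,1^2}$.

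The main obstacle is the construction and verification of the preimage $\bar\gamma$. This reduces to a Jacobi-type computation inside the image of the Johnson filtration in the derivation Lie algebra of $\calL(H)$: one must evaluate $\bar\gamma\circ[\,,\,]$ on a spanning set of $\bigwedge^2 U$ obtained from Magnus's generators and match the outcome term-by-term with the explicit formula for $\gamma$. The computation is essentially bookkeeping, but careful tracking of the contractions coming from the Lie bracket on $\Hom(H,\bigwedge^2 H)$ is required.
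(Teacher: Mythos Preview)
Your proposal is correct and follows essentially the same route as the paper: both use Pettet's exact sequence \eqref{pettetexact} and the second Johnson homomorphism (equivalently, your derivation Lie algebra embedding) to relate $\gamma$ to an explicit element of $\Hom(\opegr^2(\IA_n),\Q)$. The only difference is the direction of the argument. The paper works \emph{forward}: it first writes down the generator $x_{1,2}^*$ of the $V_{0,1^2}$ summand of $\Hom(\opegr^2(\IA_n),\Q)$ (dual to the basis $\{x_{p,q}\}$ of the $V_{1^2,0}$ part of $\opegr^2(\IA_n)$), then evaluates $[\,,\,]^*(x_{1,2}^*)$ on the basis of $\bigwedge^2 U$ coming from Magnus's generators and finds it equals $\tfrac{1}{n+1}\gamma$. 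Since $[\,,\,]^*$ is injective, this immediately gives both $\gamma\in R_2$ and the isomorphism type, so your separate step~3 (the contraction to $\bigwedge^2 H^*$) becomes unnecessary. Your backward approach works too, but costs you that extra verification.
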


   \begin{proof}
   We use the basis for $H_2(U,\Q)\cong \bigwedge^2 U$ induced by the basis $\{e_{a,b}^{c}\mid 1\le a,b,c\le n,\; a< b\}$
   for $U$.

   The \emph{second Johnson homomorphism} gives us an inclusion map
   $$
   \tau^{(2)}:\opegr^2(\IA_n)\hookrightarrow \Hom(H, (\bigwedge^2 H\otimes H)/\bigwedge^3 H)\cong H^*\otimes (\bigwedge^2 H\otimes H)/\bigwedge^3 H.
   $$
   (See \cite[Section 2.3]{Pettet} for the definition of the second Johnson homomorphism.)
   We have $\opegr^2(\IA_n)\cong V_{21,1}\oplus V_{1^2,0}$, and $\Hom(H, (\bigwedge^2 H\otimes H)/\bigwedge^3 H)\cong  V_{21,1}\oplus V_{1^2,0}\oplus V_{2,0}$.
   In what follows, we identify elements of $\opegr^2(\IA_n)$ and the images of them under $\tau^{(2)}$.
   The subrepresentation of $\opegr^2(\IA_n)$ that is isomorphic to $V_{1^2,0}$ has the following basis
   $$\{x_{p,q}=\sum_{j=1}^n e_j^*\otimes (\overline{(e_p\wedge e_q)\otimes e_j})\mid 1\le p<q\le n\}.$$
   Let $x_{1,2}^*\in \Hom(\opegr^2(\IA_n),\Q)$ denote the dual basis vector which sends $x_{1,2}$ to $1$ and the other basis vectors to $0$.
   Then $x_{1,2}^*$ is a generator of the subrepresentation of $\Hom(\opegr^2(\IA_n),\Q)$ that is isomorphic to $V_{0,1^2}$.

   Since we have $R_2=\im [\;,\;]^*$ by \eqref{pettetexact}, in order to show that $\gamma$ generates the subrepresentation of $R_2$ which is isomorphic to $V_{0,1^2}$, it suffices to check that
   $$\gamma=(n+1) [\;,\;]^*(x_{1,2}^*).$$
   We can check that for distinct elements $j,k\in [n]\setminus\{1,2\}$,
   \begin{gather*}
   \begin{split}
    x_{1,2}^*( [e_{1,k}^{j}, e_{j,2}^{k}]) &=1/(n+1),\quad
    x_{1,2}^*( [e_{1,2}^{j}, e_{j,k}^{k}]) =1/(n+1),\\
    x_{1,2}^*( [e_{1,2}^{j}, e_{j,1}^{1}]) &=2/(n+1),\quad
    x_{1,2}^*( [e_{1,2}^{j}, e_{j,2}^{2}]) =2/(n+1),\\
    x_{1,2}^*( [e_{1,2}^{2}, e_{2,j}^{j}]) &=1/(n+1),\quad
    x_{1,2}^*( [e_{1,2}^{1}, e_{1,j}^{j}]) =1/(n+1),\\
    x_{1,2}^*( [e_{1,j}^{j}, e_{j,2}^{j}]) &=1/(n+1),\quad
    x_{1,2}^*( [e_{1,2}^{2}, e_{2,1}^{1}]) =3/(n+1),\\
   \end{split}
   \end{gather*}
   and that
   \begin{gather*}
   \begin{split}
    [\;,\;]^* (x_{1,2}^*) (x)=0
   \end{split}
   \end{gather*}
   for any other basis vectors $x$ for $H_2(U,\Q)\cong \bigwedge^2 U$.
   Therefore, we have
   $$[\;,\;]^* (x_{1,2}^*)=\frac{1}{n+1}\gamma.$$
   \end{proof}

   Let
   \begin{gather*}
       \cup : H^1(U,\Q)\otimes R_2\to H^3(U,\Q)
   \end{gather*}
   denote the restriction of the cup product.
   Then we have $\im \cup \subset R_3$.
   In what follows, we compute $\im \cup$.
   We can compute the tensor product $H^1(U,\Q)\otimes R_2$ directly by hand and by using SageMath.

   \begin{lemma}
   Let $n\ge 6$.
   We have an irreducible decomposition
   \begin{gather*}
    \begin{split}
       H^1(U,\Q)\otimes R_2
       & \cong (V_{1, 1^2} \oplus V_{0,1})\otimes (V_{1, 2 1}\oplus V_{0, 1^2})\\
       & \cong V_{2, 3 2}
        \oplus V_{1^2, 3 2}
        \oplus V_{2, 3 1^2}
        \oplus V_{1^2, 3 1^2}
        \oplus V_{2, 2^2 1}
        \oplus V_{1^2, 2^2 1}
        \oplus V_{2, 2 1^3}
        \oplus V_{1^2, 2 1^3}\\
       &
        \oplus V_{1, 3 1}^{\oplus 3}
        \oplus V_{1, 2^2}^{\oplus 4}
        \oplus V_{1, 2 1^2}^{\oplus 5}
        \oplus V_{1, 1^4}^{\oplus 2}
        \oplus V_{0, 3}
        \oplus V_{0, 2 1}^{\oplus 5}
        \oplus V_{0, 1^3}^{\oplus 3}.
    \end{split}
    \end{gather*}
   \end{lemma}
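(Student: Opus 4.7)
The plan is to identify each factor of the tensor product as an explicit direct sum of irreducible algebraic $\GL(n,\Q)$-representations, and then apply the Koike decomposition recorded at the end of Section \ref{irrrep} to each resulting cross-term.

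First I would establish the first isomorphism stated in the lemma. Since $U$ is a finite-dimensional $\Q$-vector space (treated as an abelian group), we have $H^1(U,\Q)\cong U^*$. Combining the direct sum decomposition $U=U_1^{\tree}\oplus U_1^{\wheel}\cong V_{1^2,1}\oplus V_{1,0}$ recalled in Section \ref{subseccontraction} with the duality $(V_{\ul\lambda})^*\cong V_{\ul\lambda^*}$ of Section \ref{irrrep}, I obtain $H^1(U,\Q)\cong V_{1,1^2}\oplus V_{0,1}$. The identification $R_2\cong V_{1,21}\oplus V_{0,1^2}$ is already supplied by Pettet in Lemma \ref{Pettetker2}. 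Distributing then yields the first displayed isomorphism.

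For the second isomorphism I would compute the four tensor products $V_{\ul\lambda}\otimes V_{\ul\mu}$ with $\ul\lambda\in\{(1,1^2),(0,1)\}$ and $\ul\mu\in\{(1,21),(0,1^2)\}$ by invoking Koike's formula, valid whenever $n\ge l(\ul\lambda)+l(\ul\mu)$. The worst case gives $n\ge 3+3=6$, which matches the hypothesis on $n$. Each resulting decomposition is a finite sum of $V_{\ul\nu}$ whose multiplicity is the quadruple Littlewood--Richardson-type sum
\[
N_{\ul\lambda\ul\mu}^{\ul\nu}=\sum_{\alpha\beta\theta\delta}\Bigl(\sum_{\kappa}N_{\kappa\alpha}^{\lambda^+}N_{\kappa\beta}^{\mu^-}\Bigr)\Bigl(\sum_{\epsilon}N_{\epsilon\theta}^{\lambda^-}N_{\epsilon\delta}^{\mu^+}\Bigr)N_{\alpha\delta}^{\nu^+}N_{\beta\theta}^{\nu^-}.
\]
Adding the four decompositions and collecting multiplicities of each $V_{\ul\nu}$ yields the claimed list. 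Note that the tensor product $V_{0,1}\otimes V_{0,1^2}$ lives entirely in ``wheel-type'' bipartitions and contributes the small-size summands $V_{0,21}$ and $V_{0,1^3}$; the other three products contribute the higher-weight summands.

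The main obstacle is purely the combinatorial bookkeeping of the four Koike expansions: the sums involve intermediate partitions $\kappa,\epsilon,\alpha,\beta,\theta,\delta$ of small size, and each subcase of the Pieri-type rules on $\lambda^+,\lambda^-,\mu^+,\mu^-$ has to be enumerated. The computation is mechanical but error-prone, which is why the author verifies it with SageMath; in the writeup I would spot-check one or two of the more delicate entries (e.g.\ that $V_{1,21^2}$ appears with multiplicity exactly $5$ after collecting all four contributions) and defer the full check to the software.
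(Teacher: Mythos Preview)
Your proposal is correct and matches the paper's approach: the paper does not give a written proof of this lemma, stating only that the tensor product is computed ``directly by hand and by using SageMath,'' which is exactly the Koike-formula computation you outline. Your explanation of the first isomorphism via $H^1(U,\Q)\cong U^*$ and duality, together with your justification of the bound $n\ge 6$ from $l(\ul\lambda)+l(\ul\mu)$, in fact supplies more detail than the paper itself.
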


   In terms of $\GL(n,\Q)$-representations, we can identify the cup product map $\cup$ with
   \begin{gather*}
       \wedge: (V_{1, 1^2} \oplus V_{0, 1})\otimes (V_{1, 2 1}\oplus V_{0, 1^2})\to \bigwedge^3 (V_{1, 1^2}\oplus V_{0, 1}).
   \end{gather*}

   \begin{proposition}\label{imagecup}
   For $n\ge 6$, $\im \cup$ contains a $\GL(n,\Q)$-subrepresentation consisting of the following $27$ irreducible representations:
    \begin{gather*}
    \begin{split}
      \im\cup\supset & V_{2, 3 2}
        \oplus V_{1^2, 3 2}
        \oplus V_{2, 3 1^2}
        \oplus V_{1^2, 3 1^2}
        \oplus V_{2, 2^2 1}
        \oplus V_{1^2, 2^2 1}
        \oplus V_{2, 2 1^3}
        \oplus V_{1^2, 2 1^3}\\
       &\quad\quad\quad
        \oplus V_{1, 3 1}^{\oplus 2}
        \oplus V_{1, 2^2}^{\oplus 4}
        \oplus V_{1, 2 1^2}^{\oplus 4}
        \oplus V_{1, 1^4}^{\oplus 2}
        \oplus V_{0, 3}
        \oplus V_{0, 2 1}^{\oplus 3}
        \oplus V_{0, 1^3}^{\oplus 3}.
    \end{split}
    \end{gather*}
   \end{proposition}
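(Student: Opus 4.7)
The plan is to imitate the strategy of Proposition \ref{kerIO}, exploiting the two known generators $\beta \in V_{1,21} \subset R_2$ (Lemma \ref{generatorV121}) and $\gamma \in V_{0,1^2} \subset R_2$ (Lemma \ref{generatorV011}). For each target irreducible summand appearing in the stated lower bound, the idea is to construct a test element of the form $x \wedge \beta$ or $x \wedge \gamma$ for a suitable $x \in H^1(U,\Q) \cong U^* \cong V_{1,1^2} \oplus V_{0,1}$, apply a carefully chosen composition of contractions, symmetrizers and projections $H^{\otimes *} \otimes (H^*)^{\otimes *} \to V_{\ul{\lambda}}$, and read off a nonzero highest weight vector in the appropriate $\GL(n,\Q)$-isotypic piece.

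As a first step, by \eqref{kerIOkertau} we have $R^O_2 \subset R_2$ via the inclusion $\bigwedge^2(U^O)^* \hookrightarrow \bigwedge^2 U^*$, and the cup product respects this inclusion. Therefore the entire $17$-term decomposition produced in Proposition \ref{kerIO} transfers to $\im\cup$ in $H^3(U,\Q)$, yielding the summands $V_{2,32}, V_{1^2,32}, V_{2,31^2}, V_{1^2,31^2}, V_{2,2^21}, V_{1^2,2^21}, V_{2,21^3}, V_{1^2,21^3}$, one copy of each of $V_{1,31}, V_{1,1^4}, V_{0,3}, V_{0,1^3}$, two copies of $V_{1,2^2}, V_{1,21^2}$, and one copy of $V_{0,21}$. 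What remains to detect are precisely one additional copy of $V_{1,31}$, two of $V_{1,2^2}$, two of $V_{1,21^2}$, one of $V_{1,1^4}$, two of $V_{0,21}$ and two of $V_{0,1^3}$.

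The second step is to use the extra flexibility coming from the factor $V_{0,1} \subset H^1(U,\Q)$ and from the generator $\gamma$, which is not in the image of $R^O_2$. Concretely, I would introduce test vectors $\gamma^{(1)}_{a,b,c} := e_a^{b,c} \wedge \gamma$ and $\gamma^{(2)}_{a,b} := (\sum_j e_j^{a,j}) \wedge \gamma$ (the latter corresponds to the $V_{0,1}$ piece of $H^1(U,\Q)$), together with $\beta^{(2)}_{a,b} := (\sum_j e_j^{a,j}) \wedge \beta$ which uses $\beta$ paired with a $V_{0,1}$-factor. For each missing isotypic component $V_{\ul{\lambda}}$ one builds, in the spirit of Proposition \ref{kerIO}, a composition of contraction maps $\phi$, partial symmetrizers/antisymmetrizers $\phi_i$ and Young-like projectors $(\id - E_{\,\cdot\,,\,\cdot\,})$ or $(\id - P_{\,\cdot\,,\,\cdot\,})$ that lands in the $V_{\ul\lambda}$-highest weight line, and verifies that the image on the chosen test vector is a nonzero scalar multiple of $e(\ul\lambda)$ in the notation of Lemma \ref{hwv}. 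To obtain the required multiplicities (e.g.\ $V_{1,2^2}^{\oplus 4}$, $V_{1,21^2}^{\oplus 4}$), one produces, for each multiplicity slot, an independent highest weight vector by varying both the test element ($\beta$ vs.\ $\gamma$, with or without a $V_{0,1}$ factor) and the Young projector, and then checks linear independence by computing the matrix of coefficients against the isotypic basis — the calculation is analogous to the two-test-vector argument for $V_{1,2^2}$ and $V_{1,21^2}$ in the proof of Proposition \ref{kerIO}.

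I expect the main obstacle to be the bookkeeping for the multiplicities: whenever an irreducible $V_{\ul\lambda}$ appears more than once in $\im\cup$, one must exhibit enough linearly independent highest weight vectors and verify non-degeneracy of the resulting matrix of scalar coefficients (polynomial in $n$). This is exactly the kind of computation that in Proposition \ref{kerIO} forced a $2\times 2$ determinant check with entries of the form $(n+1), (n-1), (n^2-3n+3)/(n-1)$ etc.; in the present setting, some of these matrices will become $3\times 3$ or $4\times 4$ (for the $V_{1,2^2}$ and $V_{1,21^2}$ slots, and for $V_{0,21}, V_{0,1^3}$). These determinants are polynomials in $n$, and one must verify they are nonzero for $n \geq 6$. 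The computation is mechanical and can be carried out by hand (as in Proposition \ref{kerIO}) or verified using SageMath, as was already employed in Lemma \ref{H3U}.
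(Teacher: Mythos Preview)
Your plan is correct and matches the paper's approach closely: both reduce to Proposition~\ref{kerIO} via \eqref{kerIOkertau} for an initial supply of summands, then manufacture the remaining copies using cup products of basis elements of $H^1(U,\Q)$ against $\beta$ and $\gamma$, detected by contraction maps into highest weight lines with rank checks on the resulting coefficient matrices.

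Two minor organizational points where the paper differs. First, the paper only transfers the eight size-$7$ summands and $V_{0,3}$ from Proposition~\ref{kerIO}; for the size-$5$ and size-$3$ isotypics it recomputes the full multiplicity from scratch (producing the $2\times 2$, $3\times 3$, $4\times 4$ matrices directly) rather than transferring some copies and then proving independence of the new ones from the old. Your transfer-then-supplement scheme is valid, but you must still feed the Proposition~\ref{kerIO} test vectors through the \emph{same} contraction maps used for the new ones to certify independence, so the total computation is comparable. Second, the paper never uses your explicit $V_{0,1}$-type vectors $\beta^{(2)}_{a,b}=(\sum_j e_j^{a,j})\wedge\beta$ or $\gamma^{(2)}_{a,b}$; instead it achieves the same effect with repeated-index elements such as $\beta_{3,4,3}=e_{3}^{4,3}\wedge\beta$ and $\gamma_{3,4,3}=e_{3}^{4,3}\wedge\gamma$, whose $V_{0,1}$-components do the work implicitly. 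Either choice of test vectors suffices.
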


   \begin{proof}
   By Proposition \ref{kerIO} and \eqref{kerIOkertau}, the $8$ irreducible components of size $7$ and the irreducible component which is isomorphic to $V_{0,3}$ are contained in $\im\cup$.

   By Lemmas \ref{generatorV121} and \ref{generatorV011}, we have the following elements in $\im\cup$:
   \begin{gather*}
        \beta_{a,b,c}= e_{a}^{b,c} \wedge \beta,\quad
        \gamma_{a,b,c}=e_{a}^{b,c} \wedge \gamma.
   \end{gather*}

   Let $\iota^*: \bigwedge^3 U^*\hookrightarrow (H\otimes (H^*)^{\otimes 2})^{\otimes 3}$ denote the canonical inclusion.
   We will check that $V_{0, 2 1}^{\oplus 3}\oplus V_{0, 1^3}^{\oplus 3}$ is contained in $\im\cup$.
   We use the following contraction maps:
   \begin{gather*}
    \begin{split}
     \varphi_2: (H \otimes (H^*)^{\otimes 2})^{\otimes 3} &\to (H^*)^{\otimes 3},\quad
     \bigotimes_{j=1}^3 (c_j\otimes a_j^*\otimes b_j^*)\mapsto b_3^*(c_1) a_3^*(c_2) a_1^*(c_3) b_1^*\otimes a_2^* \otimes b_2^*,\\
     \varphi_3: (H \otimes (H^*)^{\otimes 2})^{\otimes 3} &\to (H^*)^{\otimes 3},\quad
     \bigotimes_{j=1}^3 (c_j\otimes a_j^*\otimes b_j^*)\mapsto a_2^*(c_1) a_3^*(c_2) b_3^*(c_3) a_1^*\otimes b_1^* \otimes b_2^*,\\
     \varphi_4: (H \otimes (H^*)^{\otimes 2})^{\otimes 3} &\to (H^*)^{\otimes 3},\quad
     \bigotimes_{j=1}^3 (c_j\otimes a_j^*\otimes b_j^*)\mapsto b_1^*(c_1) a_3^*(c_2) b_3^*(c_3) a_1^*\otimes a_2^* \otimes b_2^*.
    \end{split}
  \end{gather*}
  Considering the image under the projection $(H^*)^{\otimes 3} \to \bigwedge^3 H^*$, we have
  \begin{gather*}
    \begin{split}
        &(\varphi_2 \iota^* (\beta_{1,n,3}),\varphi_3 \iota^*(\beta_{1,n,3}), \varphi_4 \iota^*(\beta_{1,n,3}))\\
        &\quad=(2(n-1),0,0)\cdot e_1^*\wedge e_2^* \wedge e_3^*,\\
        &(\id-E_{4,1})(\varphi_2 \iota^* (\gamma_{1,3,4}),\varphi_3 \iota^*(\gamma_{1,3,4}), \varphi_4 \iota^*(\gamma_{1,3,4}))\\
        &\quad=(2(n^2-2n-1),2(n^2+n),2(n+1))\cdot e_1^*\wedge e_2^* \wedge e_3^*,\\
        &(\id-E_{4,3})(\varphi_2 \iota^* (\gamma_{3,4,3}),\varphi_3 \iota^*(\gamma_{3,4,3}), \varphi_4 \iota^*(\gamma_{3,4,3}))\\
        &\quad=(-2(n-1),0,2(n^2-1))\cdot e_1^*\wedge e_2^* \wedge e_3^*.
    \end{split}
  \end{gather*}
  Therefore, we can see that $\im\cup$ contains $V_{0,1^3}$ with multiplicity $3$.

  Considering the image under the projection $(H^*)^{\otimes 3} \to \bigwedge^2 H^* \otimes H^*$ defined by $a^*\otimes b^*\otimes c^* \mapsto (a^*\wedge b^*) \otimes c^*$ for $\varphi_3$, and defined by $a^*\otimes b^*\otimes c^* \mapsto (b^*\wedge c^*) \otimes a^*$ for $\varphi_2,\varphi_4$, we have
  \begin{gather*}
    \begin{split}
    &(\id-E_{3,1}) (\varphi_2 \iota^* (\beta_{1,n,3}),\varphi_3 \iota^*(\beta_{1,n,3}), \varphi_4 \iota^*(\beta_{1,n,3}))\\
    &\quad=(2(n-2),0,0)\cdot(e_1^*\wedge e_2^*) \otimes e_1^*,\\
    &(\id-E_{4,2})(\id-E_{3,1})(\id-E_{2,1})(\varphi_2 \iota^* (\gamma_{1,3,4}),\varphi_3 \iota^*(\gamma_{1,3,4}), \varphi_4 \iota^*(\gamma_{1,3,4}))\\
    &\quad=(2(n^2+n-4),2(n^2+n),2(n+1))\cdot (e_1^*\wedge e_2^*) \otimes e_1^*,\\
    &(\id-E_{4,1})(\varphi_2 \iota^* (\gamma_{3,4,3}),\varphi_3 \iota^*(\gamma_{3,4,3}), \varphi_4 \iota^*(\gamma_{3,4,3}))\\
    &\quad=(-2(n-4),6n,2(n^2-1))\cdot (e_1^*\wedge e_2^*) \otimes e_1^*.
    \end{split}
  \end{gather*}
  Therefore, we can see that $\im\cup$ contains $V_{0,2 1}$ with multiplicity $3$.

  Lastly, we will check that $12$ irreducible components of size $5$ are contained in $\im\cup$.
  We use the following contraction maps
  \begin{gather*}
    \begin{split}
     \psi_1: (H \otimes (H^*)^{\otimes 2})^{\otimes 3} &\to H\otimes (H^*)^{\otimes 4},\\
     \bigotimes_{j=1}^3 (c_j\otimes a_j^*\otimes b_j^*)&\mapsto a_2^*(c_1) a_3^*(c_2) c_3\otimes (a_1^*\otimes b_1^*\otimes b_2^* \otimes b_3^*),\\
     \psi_2: (H \otimes (H^*)^{\otimes 2})^{\otimes 3} &\to H\otimes (H^*)^{\otimes 4}, \\
     \bigotimes_{j=1}^3 (c_j\otimes a_j^*\otimes b_j^*)&\mapsto a_3^*(c_2) b_3^*(c_1) c_3\otimes (a_1^*\otimes b_1^*\otimes a_2^* \otimes b_2^*),\\
    \psi_3: (H \otimes (H^*)^{\otimes 2})^{\otimes 3} &\to H\otimes (H^*)^{\otimes 4}, \\
     \bigotimes_{j=1}^3 (c_j\otimes a_j^*\otimes b_j^*)&\mapsto a_3^*(c_2) b_3^*(c_3) c_1\otimes (a_1^*\otimes b_1^*\otimes a_2^* \otimes b_2^*),\\
     \psi_4: (H \otimes (H^*)^{\otimes 2})^{\otimes 3} &\to H\otimes (H^*)^{\otimes 4}, \\
     \bigotimes_{j=1}^3 (c_j\otimes a_j^*\otimes b_j^*)&\mapsto b_1^*(c_1) a_3^*(c_2) c_3\otimes (a_1^*\otimes a_2^*\otimes b_2^* \otimes b_3^*).\\
    \end{split}
  \end{gather*}
  Considering the image under the projection $(H^*)^{\otimes 4}\to \bigwedge^2 H^*\otimes (H^*)^{\otimes 2}$ defined by $a^*\otimes b^*\otimes c^* \otimes d^* \mapsto (a^*\wedge b^*)\otimes c^*\otimes d^*$ for $\psi_1$, and defined by $a^*\otimes b^*\otimes c^* \otimes d^* \mapsto (b^*\wedge c^*)\otimes a^*\otimes d^*$ for $\psi_4$, we have
  \begin{gather*}
    \begin{split}
        &(\id-E_{4,2})(\id-E_{3,1})(\id-E_{2,1})(\psi_1\iota^*(\beta_{1,3,4}), \psi_4\iota^*(\beta_{1,3,4}))\\
        &\quad =(2(n-1),0)\cdot e_n\otimes (e_1^*\wedge e_2^*) \otimes e_1^*\otimes e_1^*,\\
        &(\id-E_{4,1})(\psi_1\iota^*(\beta_{3,4,3}), \psi_4\iota^*(\beta_{3,4,3}))\\
        &\quad =(2,2(n-1))\cdot e_n\otimes (e_1^*\wedge e_2^*) \otimes e_1^*\otimes e_1^*.
    \end{split}
  \end{gather*}
  Therefore, we can see that $\im\cup$ contains $V_{1, 3 1}$ with multiplicity $2$.

  Considering the image under the projection $(H^*)^{\otimes 4}\to \bigwedge^4 H^*$ defined by $a^*\otimes b^*\otimes c^* \otimes d^* \mapsto a^*\wedge b^* \wedge c^* \wedge d^*$, we have
    \begin{gather*}
    \begin{split}
        (\psi_1\iota^* (\beta_{1,3,4}), \psi_4\iota^*(\beta_{1,3,4})) &=(-2(n+1),0)\cdot e_n\otimes (e_1^*\wedge e_2^*\wedge e_3^* \wedge e_4^*),\\
        (E_{n,3}-\id)(\psi_1\iota^*(\gamma_{3,4,3}), \psi_4\iota^*(\gamma_{3,4,3}))&=(8(n+1),4(n+1))\cdot e_n\otimes (e_1^*\wedge e_2^*\wedge e_3^*\wedge e_4^*).
    \end{split}
  \end{gather*}
  Therefore, we can see that $\im\cup$ contains $V_{1, 1^4}$ with multiplicity $2$.

  Considering the image under the projection $(H^*)^{\otimes 4}\to (\bigwedge^2 H^*)\otimes (\bigwedge^2 H^*)$ defined by $a^*\otimes b^*\otimes c^* \otimes d^* \mapsto (a^*\wedge b^*) \otimes (c^* \wedge d^*)$ for $\psi_1, \psi_2, \psi_3$, and defined by $(b^*\wedge c^*) \otimes (a^* \wedge d^*)$ for $\psi_4$, we have
    \begin{gather*}
    \begin{split}
        &(\id-E_{4,2})(\id-E_{3,1})
        (\psi_1\iota^*(\beta_{1,3,4}), \psi_2\iota^*(\beta_{1,3,4}),\psi_3\iota^*(\beta_{1,3,4}),\psi_4\iota^*(\beta_{1,3,4}))\\
        &\quad =(-2(n+1),8(n-1),0,0)\cdot e_n\otimes (e_1^*\wedge e_2^*)\otimes (e_1^* \wedge e_2^*),\\
        &(\id-E_{3,2})(E_{n,2}-\id)
        (\psi_1\iota^*(\beta_{2,3,n}), \psi_2\iota^*(\beta_{2,3,n}),\psi_3\iota^*(\beta_{2,3,n}),\psi_4\iota^*(\beta_{2,3,n}))\\
        &\quad =(-2(n-1),8,0,0)\cdot e_n\otimes (e_1^*\wedge e_2^*)\otimes (e_1^* \wedge e_2^*),
        \\
        &(\id-E_{4,2})
        (\psi_1\iota^*(\beta_{3,4,3}), \psi_2\iota^*(\beta_{3,4,3}),\psi_3\iota^*(\beta_{3,4,3}),\psi_4\iota^*(\beta_{3,4,3}))\\
        &\quad =(-2,0,4,-2(n-1))\cdot e_n\otimes (e_1^*\wedge e_2^*)\otimes (e_1^* \wedge e_2^*),\\
        &(\id-E_{4,2})(\id-E_{3,1})(E_{n,3}-\id)
        (\psi_1\iota^*(\gamma_{3,4,3}), \psi_2\iota^*(\gamma_{3,4,3}),\psi_3\iota^*(\gamma_{3,4,3}),\psi_4\iota^*(\gamma_{3,4,3}))\\
        &\quad =(-4(n-2),-8(n-4),-4(n^2-1),4(n+1))\cdot e_n\otimes (e_1^*\wedge e_2^*)\otimes (e_1^* \wedge e_2^*).
    \end{split}
  \end{gather*}
  Therefore, we can see that $\im\cup$ contains $V_{1, 2^2}$ with multiplicity $4$.

  Let $\psi_1''$ denote the composition of $\psi_1$ with the projection $(H^*)^{\otimes 4}\to (\bigwedge^3 H^*)\otimes H^*$ defined by $a^*\otimes b^*\otimes c^* \otimes d^* \mapsto (a^*\wedge b^* \wedge d^*) \otimes c^*$.
  Considering the image under the projection $(H^*)^{\otimes 4}\to (\bigwedge^3 H^*)\otimes H^*$ defined by $a^*\otimes b^*\otimes c^* \otimes d^* \mapsto (a^*\wedge b^* \wedge c^*) \otimes d^*$ for $\psi_1,\psi_3,\psi_4$, we have
    \begin{gather*}
    \begin{split}
        &(\id-E_{4,1})
        (\psi_1\iota^*(\beta_{1,3,4}), \psi_1''\iota^*(\beta_{1,3,4}),\psi_3\iota^*(\beta_{1,3,4}),\psi_4\iota^*(\beta_{1,3,4}))\\
        &\quad =(2(n-2),2,0,0)\cdot e_n\otimes (e_1^*\wedge e_2^*\wedge e_3^*)\otimes e_1^*,\\
        &(\id-E_{4,2})(\id-E_{2,1})
        (\psi_1\iota^*(\beta_{1,3,4}), \psi_1''\iota^*(\beta_{1,3,4}),\psi_3\iota^*(\beta_{1,3,4}),\psi_4\iota^*(\beta_{1,3,4}))\\
        &\quad =(-2(n-1),-2(n-1),0,0)\cdot e_n\otimes (e_1^*\wedge e_2^*\wedge e_3^*)\otimes e_1^*,\\
        &(\id-E_{4,3})
        (\psi_1\iota^*(\beta_{3,4,3}), \psi_1''\iota^*(\beta_{3,4,3}),\psi_3\iota^*(\beta_{3,4,3}),\psi_4\iota^*(\beta_{3,4,3}))\\
        &\quad =(2,0,-2,2(n-1))\cdot e_n\otimes (e_1^*\wedge e_2^*\wedge e_3^*)\otimes e_1^*,\\
        &(\id-E_{4,1})(E_{n,3}-\id)
        (\psi_1\iota^*(\gamma_{3,4,3}), \psi_1''\iota^*(\gamma_{3,4,3}),\psi_3\iota^*(\gamma_{3,4,3}),\psi_4\iota^*(\gamma_{3,4,3}))\\
        &\quad =(4,8n-4,2(n+1)^2,2(n+1))\cdot e_n\otimes (e_1^*\wedge e_2^*\wedge e_3^*)\otimes e_1^*.
    \end{split}
  \end{gather*}
  Therefore, we can see that $\im\cup$ contains $V_{1, 2 1^2}$ with multiplicity $4$.
  This completes the proof.
   \end{proof}

  \subsection{Proof of Theorem \ref{thirdAlbanese}}

  Here we complete the proof of Theorem \ref{thirdAlbanese}.
  By Lemma \ref{H3U}, we obtain an irreducible decomposition of $H_3(U,\Q)$, which consists of $61$ irreducibles.
  By Proposition \ref{imagecup}, we obtain $27$ irreducible components that are not contained in $H^A_3(\IA_n,\Q)$.
  By Theorem \ref{Johnsonpart}, we have $34$ irreducible components that are included in $H^A_3(\IA_n,\Q)$.
  Therefore, we obtain
  $$
   H^A_3(\IA_n,\Q)\cong W_3,
  $$
  which completes the proof of Theorem \ref{thirdAlbanese}.

  \begin{remark}\label{quadratic3}
  By the above proof, we have
  $$
   H^3(U,\Q)/\im\cup\xrightarrow{\cong} H_A^3(\IA_n,\Q).
  $$
  Since the image of the cup product map coincides with the degree $3$ part of the ideal $\langle R_2\rangle\subset H^*(U,\Q)$,
  Conjecture \ref{quadratic} holds for $*=3$.
  \end{remark}

\section{Cohomology of $\Aut(F_n)$ with twisted coefficients}\label{KV}
For an algebraic $\GL(n,\Q)$-representation $V$, let $H_A^*(\IA_n,V)=H_A^*(\IA_n,\Q)\otimes V$.
In this section, we study the relation between $H_A^*(\IA_n,V)$ and $H^*(\Aut(F_n),V)$.

\subsection{Wheeled PROPs and wheeled operads}
Here, we recall the notions of PROPs and operads, and wheeled versions of PROPs and operads.
See \cite{CFP, Kawazumi--Vespa, LodayVallette, MMS, Markl} for precise definitions.

A \emph{PROP} is a symmetric monoidal category $P=(P,\otimes,0, S)$ with non-negative integers as objects and $m\otimes n=m+n$ for any $m,n\in \N$.
We consider PROPs enriched over the category of graded $\Q$-vector spaces.

A \emph{wheeled PROP} \cite{MMS} is a PROP $P$ equipped with contraction maps (or partial trace maps)
$$\xi^i_j: P(m,n)\to P(m-1,n-1),$$
which can be viewed as connecting the $i^{\text{th}}$ input and the $j^{\text{th}}$ output, satisfying compatibility and unitality axioms.
A \emph{non-unital wheeled PROP} (or \emph{TRAP} in the sense of \cite{CFP}) is a wheeled PROP without identity morphisms and unitality axioms.

An \emph{operad} in the category of graded $\Q$-vector spaces is a collection $\calP=\{\calP(n)\}_{n\ge 0}$ of graded right $\Q[\gpS_n]$-modules equipped with operadic compositions, which are graded $\Q$-linear maps
$$\gamma: \calP(n)\otimes \calP(k_1)\otimes \cdots\otimes \calP(k_n)\to\calP(k_1+\cdots+k_n),$$
and a unit map
$$\eta: \Q\to \calP(1)$$
satisfying associativity, equivariance and unitality axioms.
A \emph{non-unital operad} is an operad without unit.

For an operad $\calP$, a \emph{right $\calP$-module} is a graded $\gpS$-module $M=\{M(n)\}_{n\ge 0}$, which is a collection of graded $\gpS_n$-modules $M(n)$,
equipped with right $\calP$-actions, which are graded $\Q$-linear maps
$$\alpha: M(l)\otimes \calP(m_1)\otimes \cdots\otimes \calP(m_l)\to M(m_1+\cdots+m_l),$$
satisfying the operadic form of the standard axioms for a right module over an algebra.
For a non-unital operad, we do not assume the unitality axioms.

A \emph{wheeled operad} $\calP=\{\calP(n,m)\}_{(n,m)\in \N\times \{0,1\}}$ consists of
\begin{enumerate}[(i)]
    \item the operadic part: an operad $\calP_0=\{\calP(n,1)\}_{n\ge 0}$,\label{operadicpart}
    \item the wheeled part: a right $\calP_0$-module $\calP_w=\{\calP(n,0)\}_{n\ge 0}$,\label{wheeledpart}
    \item contraction maps $\xi_i: \calP_0(n)\to \calP_w(n-1)$ for $1\le i\le n$ satisfying compatibility with the structures \eqref{operadicpart} and \eqref{wheeledpart}.
\end{enumerate}
A \emph{non-unital wheeled operad} $\calP$ consists of a non-unital operad $\calP_0$, the wheeled part and the contraction maps as above.

The forgetful functor from the category of wheeled operads (resp. non-unital wheeled operads) to the category of operads (resp. non-unital operads) has a left adjoint denoted by $(-)^{\circlearrowright}$, which is called the \emph{wheeled completion}.

For any wheeled operad (resp. non-unital wheeled operad) $\calP$, there is a wheeled PROP (resp. non-unital wheeled PROP) $\calC_{\calP}$ which is freely generated by $\calP$.
We have
\begin{gather}\label{freePROP}
\calC_{\calP}(m,n)=\bigoplus_{\substack{J\subset [m]\\ f:J\twoheadrightarrow [n]}}\bigoplus_{\substack{1\le k\le |[m]\setminus J|\\ (X_1,\cdots,X_k)\in P([m]\setminus J,k)}}
\left(\bigotimes_{i=1}^{n}\calP_0(|f^{-1}(i)|) \right)\otimes \left(\bigotimes_{i=1}^{k}\calP_w(|X_i|)\right),
\end{gather}
where $P([m]\setminus J,k)$ denotes the set of partitions $X_1\sqcup\cdots\sqcup X_k=[m]\setminus J$ such that $\min(X_1)<\cdots<\min(X_k)$, $X_1,\cdots,X_k\neq \emptyset$.
(See \cite[Proposition 2.3]{Kawazumi--Vespa} for details.)

\subsection{Stable cohomology of $\Aut(F_n)$ with twisted coefficients}

Here, we recall the conjectural structure of the stable cohomology of $\Aut(F_n)$ with twisted coefficients given by Kawazumi--Vespa \cite{Kawazumi--Vespa}.

For $p,q\ge 0$, let $H^{p,q}=H^{\otimes p}\otimes (H^*)^{\otimes q}$.
In \cite{Kawazumi--Vespa}, Kawazumi and Vespa have studied the structure of the stable cohomology $H^*(\Aut(F_n),H^{p,q})$ for $p,q\ge 0$.
They defined a wheeled PROP $\calH$ such that
$$\calH(p,q)= \lim_{n\in \N} H^*(\Aut(F_n), H^{p,q}).$$

They also defined a wheeled PROP $\calE$ such that
$$\calE(p,q)=\bigoplus_{j\in \N} \Ext^{*-j}_{\calF(\mathbf{gr};\Q)}(\mathfrak{a}^{\otimes q}\otimes \bigwedge^j \mathfrak{a} , \mathfrak{a}^{\otimes p})$$
is the direct sum of $\Ext$-groups in the category $\calF(\mathbf{gr};\Q)$ of functors from the category $\mathbf{gr}$ of finitely generated free groups to the category of $\Q$-vector spaces, where $\mathfrak{a}\in \calF(\mathbf{gr};\Q)$ is the abelianization functor.

Let $\calP_0=\bigoplus_{k\ge 1} \calP_0(k)$ denote the operadic suspension of the operad $\calC om$ of non-unital commutative algebras. That is, $\calP_0$ is an operad such that $ \calP_0(0)=0$ and $\calP_0(k)=\sgn_k[k-1]$ for $k\ge 1$, where $\sgn_k[k-1]$ is the sign representation of $\gpS_k$ placed in cohomological dimension $k-1$.
Let $\calP_0^{\circlearrowright}$ denote the wheeled completion of $\calP_0$ and $\calC_{\calP_0^{\circlearrowright}}$ the wheeled PROP freely generated by the wheeled operad $\calP_0^{\circlearrowright}$.
Then they constructed a wheeled PROP isomorphism $\calC_{\calP_0^{\circlearrowright}} \xto{\cong} \calE $.

They constructed a morphism of wheeled PROPs $\varphi:\calH\to \calE$ and proposed the following conjecture.

\begin{conjecture}[Kawazumi--Vespa {\cite[Conjecture 6]{Kawazumi--Vespa}}]\label{conjectureKVorigin}
The morphism of wheeled PROP
$$\varphi: \calH\to \calE$$
is an isomorphism.
\end{conjecture}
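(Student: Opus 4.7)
The plan is to reduce Conjecture \ref{conjectureKVorigin} to the representation-theoretic conjectures about $\IA_n$ formulated earlier in this paper via Proposition \ref{AlbaneseandAut}: any two of (i) the structural conjecture $H_A^*(\IA_n,\Q)\cong W_*^{\vee}$ (Conjecture \ref{conjectureAlbanese}), (ii) Conjecture \ref{cohomologyAutintro} linking $H^*(\Aut(F_n),V_{\ul\lambda})$ with $\GL(n,\Z)$-invariants of $H_A^*(\IA_n,\Q)\otimes V_{\ul\lambda}$, and (iii) Conjecture \ref{conjectureKVorigin} itself, imply the third. Hence the strategy is to attack (ii) directly and to combine it with the work of this paper toward (i), rather than to try to compare the wheeled PROPs $\calH$ and $\calE$ head-on.

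For (ii), I would run the Hochschild--Serre spectral sequence for $1 \to \IA_n \to \Aut(F_n) \to \GL(n,\Z) \to 1$ with coefficients in $V_{\ul\lambda}$. Borel's and Charney's stable vanishing results for $H^*(\GL(n,\Z);V)$ with nontrivial algebraic coefficients should collapse the spectral sequence stably to its $q=0$ edge, yielding an isomorphism $H^i(\Aut(F_n),V_{\ul\lambda}) \cong H^i(\IA_n,V_{\ul\lambda})^{\GL(n,\Z)}$ in the stable range. Combined with the expectation (in the spirit of Conjecture \ref{quadratic}) that $H^*(\IA_n,\Q)$ stably coincides with its Albanese part, this would deliver (ii).

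Given both (i) and (ii), the identification with the wheeled PROP $\calC_{\calP_0^{\circlearrowright}}$ follows by unpacking Schur--Weyl combinatorics. The graded representation $W_* = \wti{S}^*(U_*)$ is generated by the operadic factor $U_i^{\tree}\cong V_{1^{i+1},1}$ and the wheeled factor $U_i^{\wheel}\cong V_{1^i,0}$; invoking the First Fundamental Theorem for $\GL(n,\Q)$ converts $(W_*^{\vee}\otimes H^{p,q})^{\GL(n,\Z)}$ into exactly the decorated-graph combinatorics of \eqref{freePROP}, with each $U_i^{\tree}$ furnishing an operadic corolla in $\calP_0(i+1)$ attached to the outputs and each $U_i^{\wheel}$ furnishing a wheel in $\calP_w(i)$; the tracelessness condition in $\wti{S}^*$ matches the freeness of the wheeled completion. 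One then verifies that the Kawazumi--Vespa morphism $\varphi$ realises this correspondence on generators; since $\calC_{\calP_0^{\circlearrowright}}$ is freely generated as a wheeled PROP, a generator-level check upgrades to the full isomorphism.

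The principal obstacle is that (i) is currently established only through degree $3$ (Theorem \ref{thirdAlbanese}): the abelian-cycle technique of Sections \ref{Abeliancycles}--\ref{Generalpart} provides only the lower bound $F_i(H^A_i(\IA_n,\Q))\supset W_i$, while the matching upper bound requires identifying the full kernel of the Johnson cohomology map $\tau^{*}\colon H^*(U,\Q)\to H^*(\IA_n,\Q)$, for which no general pattern beyond the quadratic relations identified by Pettet (extended here to cohomological degree $3$) is presently known. Consequently any unconditional proof of Conjecture \ref{conjectureKVorigin} along these lines seems to require substantial new input into the structure of $\opegr^*(\IA_n)$, which is the real bottleneck of the program.
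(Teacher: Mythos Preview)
The statement you are attempting to prove is a \emph{conjecture}; the paper does not prove it and offers no proof to compare against. What the paper does is precisely the reduction you describe in your first paragraph, recorded as Proposition~\ref{AlbaneseandAut}: Conjectures~\ref{conjectureAlbanese}, \ref{conjectureKV} and \ref{cohomologyAut} are mutually equivalent in the sense that any two imply the third. Your outline of that reduction, and your identification of Conjecture~\ref{conjectureAlbanese} as the principal bottleneck, are in line with the paper's own assessment.

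That said, your sketch of how to establish item~(ii) (Conjecture~\ref{cohomologyAut}) contains a genuine gap beyond the one you flag. Running Hochschild--Serre for $1\to\IA_n\to\Aut(F_n)\to\GL(n,\Z)\to 1$ with coefficients in $V_{\ul\lambda}$ gives an $E_2$-page $H^p(\GL(n,\Z),\,H^q(\IA_n,\Q)\otimes V_{\ul\lambda})$. Borel vanishing applies only when the coefficient module is an \emph{algebraic} $\GL(n,\Q)$-representation, but $H^q(\IA_n,\Q)$ is not known to be algebraic---only its Albanese quotient $H_A^q(\IA_n,\Q)$ is. So the spectral sequence does not collapse to the $p=0$ column by any known stable vanishing theorem; what one would obtain at best is a statement about $H^*(\IA_n,\Q)^{\GL(n,\Z)}$ rather than $H_A^*(\IA_n,\Q)^{\GL(n,\Z)}$. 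You try to bridge this by invoking ``the expectation that $H^*(\IA_n,\Q)$ stably coincides with its Albanese part'', but that is a far stronger assertion than Conjecture~\ref{quadratic} (which concerns only the quadratic presentation of the Albanese subalgebra, not the vanishing of the non-Albanese part), and it is itself wide open. In short, your route to~(ii) replaces one open conjecture by another of at least equal difficulty; the paper accordingly leaves~(ii) as Conjecture~\ref{cohomologyAut} rather than attempting to derive it.
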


Conjecture \ref{conjectureKVorigin} is equivalent to the following conjecture.

\begin{conjecture}[Kawazumi--Vespa]\label{conjectureKV}
Let $i,p,q$ be non-negative integers.
Then, for sufficiently large $n$, we have an isomorphism of $\Q[\gpS_{p}\times \gpS_{q}]$-modules
$$
  H^{i}(\Aut(F_n), H^{p,q})=
  \begin{cases}
    \calC_{\calP_0^{\circlearrowright}}(p,q)& (i=p-q)\\
    0 & (i\neq p-q).
  \end{cases}
$$
\end{conjecture}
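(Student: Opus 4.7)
The plan is to derive Conjecture \ref{conjectureKV} from the two other conjectures introduced in the paper: Conjecture \ref{conjectureAlbanese}, asserting $H^A_i(\IA_n,\Q) \cong W_i$ stably, and Conjecture \ref{cohomologyAutintro}, asserting $H^i(\Aut(F_n), V_{\ul\lambda}) \cong (H_A^i(\IA_n,\Q) \otimes V_{\ul\lambda})^{\GL(n,\Z)}$ stably. By Proposition \ref{AlbaneseandAut}, any two of these three conjectures imply the third, so the task reduces to (i) proving the two inputs, and (ii) carrying out the combinatorial identification $(W_i \otimes H^{p,q})^{\GL(n,\Z)} \cong \calC_{\calP_0^{\circlearrowright}}(p,q)$ when $i = p-q$, with vanishing otherwise, which underlies the content of Proposition \ref{AlbaneseandAut}.

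Granting the two input conjectures, the stable chain of identifications
\[
H^i(\Aut(F_n), H^{p,q}) \cong (H_A^i(\IA_n,\Q) \otimes H^{p,q})^{\GL(n,\Z)} \cong (W_i \otimes H^{p,q})^{\GL(n,\Z)}
\]
is formal, given that $H^{p,q}$ splits as a direct sum of $V_{\ul\lambda}$. The right-hand side is then matched with $\calC_{\calP_0^{\circlearrowright}}(p,q)$ from \eqref{freePROP} by analyzing how the decomposition $W_i = \bigoplus_{(\mu,\nu)\vdash i} W(\mu,\nu)$ pairs against $H^{p,q}$ under $\GL(n,\Z)$-invariants: each tree factor $U_{\mu_j}^{\tree}\cong V_{1^{\mu_j+1},1}$ corresponds to an operadic generator in $\calP_0(\mu_j+1)=\sgn_{\mu_j+1}[\mu_j]$, contributing to the $\bigotimes_i \calP_0(|f^{-1}(i)|)$ factor, while each wheel factor $U_{\nu_j}^{\wheel}\cong V_{1^{\nu_j},0}$ corresponds to a wheeled generator in $\calP_0^{\circlearrowright}$, contributing to the $\bigotimes_i \calP_w(|X_i|)$ factor. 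The indexing data $(J,f,(X_1,\ldots,X_k))$ in \eqref{freePROP} records exactly which $H$- and $H^*$-factors of $H^{p,q}$ pair against which constituents of the traceless tensor product defining $W(\mu,\nu)$; a direct degree count then shows that the total cohomological degree of any nonzero contribution is $p-q$, yielding the vanishing statement, and the $\gpS_p\times\gpS_q$-equivariance follows from the graded-symmetric and traceless structure on $W_*$.

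The main obstacle is Conjecture \ref{conjectureAlbanese}, the central open problem of the paper, which is only verified through degree $3$. Conjecture \ref{cohomologyAutintro} is more accessible: the natural approach is the Hochschild--Serre spectral sequence for $1\to \IA_n \to \Aut(F_n)\to \GL(n,\Z)\to 1$ with coefficients in $V_{\ul\lambda}$, combined with Borel-type stable vanishing $H^s(\GL(n,\Z), V_{\ul\lambda})=0$ for $s>0$ and nontrivial $\ul\lambda$ in the stable range. The subtle point is to show that only the Albanese quotient of $H^*(\IA_n, \Q)$ contributes to the $\GL(n,\Z)$-invariants stably, which amounts to controlling the non-Albanese part of $H^*(\IA_n, \Q)$ at the level of $\GL(n,\Z)$-invariants — exactly the difficulty that makes the full rational homology of $\IA_n$ mysterious in the first place. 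Any direct attack on Conjecture \ref{conjectureKV} that bypasses the two input conjectures would most likely proceed via the functor-category route of Kawazumi--Vespa, working directly with $\Ext$-groups in $\calF(\mathbf{gr};\Q)$ and constructing the comparison $\varphi:\calH \to \calE$ by more intrinsic homological algebra.
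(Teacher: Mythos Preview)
The statement you are attempting to prove is a \emph{conjecture} in the paper, not a theorem: the paper does not claim to prove Conjecture \ref{conjectureKV}, and there is no ``paper's own proof'' to compare against. The paper merely records this as a reformulation of the Kawazumi--Vespa conjecture (Conjecture \ref{conjectureKVorigin}) and establishes its logical relationship to the other conjectures via Proposition \ref{AlbaneseandAut} and Lemma \ref{lemmawheeledPROP}.

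Your proposal is not a proof but a conditional reduction, and you correctly identify this yourself: the argument requires both Conjecture \ref{conjectureAlbanese} and Conjecture \ref{cohomologyAutintro} as inputs, neither of which is established in the paper beyond degree $3$ (for the first) or at all (for the second). What you have written is essentially a restatement of the content of Proposition \ref{AlbaneseandAut} together with a sketch of Lemma \ref{lemmawheeledPROP}, which is exactly how the paper packages the relationship between these three conjectures. Your discussion of the combinatorial matching between $W(\mu,\nu)$ and the wheeled PROP generators is accurate and mirrors the proof of Proposition \ref{Wiprop} in the paper. But since the two input conjectures remain open, no unconditional proof of Conjecture \ref{conjectureKV} is available by this route, and the paper does not pretend otherwise.
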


\subsection{The structure of $H_A^*(\IA_n,\Q)$}

We defined the traceless part $W_*$ of the graded-symmetric algebra $S^*(U_*)$ of $U_*$ in Section \ref{Generalpart}.
Here, we reconstruct $W_*$ by using the operad $\calC om$.

Let $\calO=\bigoplus_{k\ge 2}\calP_0(k)$ denote the non-unital suboperad of $\calP_0$.
We have a non-unital wheeled sub-PROP $\calC_{\calO^{\circlearrowright}}$ of $\calC_{\calP_0^{\circlearrowright}}$ associated to the non-unital wheeled operad $\calO^{\circlearrowright}$.
Then we have
$\calO^{\circlearrowright}_{0}=\{\calO^{\circlearrowright}(n,1)\}_{n\ge 2}$ and  $\calO^{\circlearrowright}_{w}=\{\calO^{\circlearrowright}(n,0)\}_{n\ge 1}$.
We can reconstruct $W_*$ by using the non-unital wheeled PROP $\calC_{\calO^{\circlearrowright}}$.

\begin{proposition}\label{Wiprop}
Let $i$ be a non-negative integer. Then, for sufficiently large $n$, we have a $\GL(n,\Q)$-isomorphism
\begin{gather*}
W_i\cong \bigoplus_{a-b=i}T_{a,b}\otimes_{\Q[\gpS_a\times \gpS_b]} \calC_{\calO^{\circlearrowright}}(a,b).
\end{gather*}
Here the direct sum is finite.
\end{proposition}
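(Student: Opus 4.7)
The plan is to match both sides by unpacking each in terms of combinatorial types (pairs of partitions) and then to verify that the sign conventions coming from the wheeled PROP coincide with those defining the graded-symmetric algebra $S^*(U_*)$ and the traceless tensor products.

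First, I would invoke the explicit description \eqref{freePROP} of the free non-unital wheeled PROP generated by $\calO^{\circlearrowright}$. Since $\calO_0(m) = \sgn_m[m-1]$ for $m\ge 2$ (and vanishes otherwise) and $\calO^{\circlearrowright}_w(m) \cong \sgn_m[m-1]$ for $m\ge 1$, each summand in \eqref{freePROP} is a tensor product of $1$-dimensional sign representations indexed by a subset $J\subset[a]$, a surjection $f:J\twoheadrightarrow[b]$ with fibers of size $k_i := |f^{-1}(i)| \ge 2$, and a partition $X_1\sqcup\cdots\sqcup X_m = [a]\setminus J$ with $|X_j| = l_j \ge 1$. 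Setting $\mu_i = k_i-1$ and $\nu_j = l_j$ produces a pair of partitions with $|\mu|+|\nu| = a-b = i$ and $l(\mu)=b$, which matches the indexing set of $W_i = \bigoplus_{(\mu,\nu)\vdash i} W(\mu,\nu)$.

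Next, I would compute $T_{a,b}\otimes_{\Q[\gpS_a\times\gpS_b]} \calC_{\calO^{\circlearrowright}}(a,b)$ by grouping the combinatorial summands according to isomorphism type. For a single tree block of size $k$, the $\gpS_k$-coinvariants of the relevant factor of $T_{a,b}$ against $\sgn_k$ contribute $\bigwedge^k H \otimes H^* \cong U_{k-1}^{\tree}$ (the projection onto the tree part rather than the full $\Hom(H,\bigwedge^k H)$ is automatic, since pairing with the wheeled summand violates tracelessness). A single wheel block of size $l$ likewise contributes $\bigwedge^l H \cong U_l^{\wheel}$. Further coinvariants permuting blocks of the same size then yield the graded-symmetric powers $\wti S^{k'_j}(U_{\mu_j}^{\tree})$ and $\wti S^{k''_j}(U_{\nu_j}^{\wheel})$ defining $W(\mu,\nu)$, with the alternation between $\Sym$ and $\bigwedge$ governed by the parity of the cohomological-degree shift $[k-1]$ in $\calO_0(k)$, which agrees with the degree $k-1$ assigned to $U_{k-1}^{\tree}$ in $S^*(U_*)$ (and analogously for wheels).

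The crucial point enabling the identification with $W(\mu,\nu)$ rather than with the full $U^{\tree}_\mu \otimes U^{\wheel}_\nu$ is that we started from $T_{a,b}$ rather than from $H^{\otimes a}\otimes (H^*)^{\otimes b}$: the tracelessness condition kills precisely those combinations that vanish under contractions between distinct blocks, which is the defining property of $\wti\otimes$. Summing over all combinatorial types yields the claimed isomorphism, and finiteness of the direct sum follows from $k_i \ge 2$ and $l_j\ge 1$, which force $a\le 2i$ and $b\le i$. The main obstacle will be tracking sign conventions in parallel — those from the cohomological shift $[k-1]$ on $\calO_0(k)$ and $\calO^{\circlearrowright}_w(k)$, those implicit in the free PROP construction (notably the ordering $\min X_1 < \cdots < \min X_m$ on wheel blocks), and those defining graded-commutativity in $S^*(U_*)$ — and checking that after taking coinvariants they cohere into the $\wti S^{k'_j}$ and $\wti S^{k''_j}$ factors. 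A clean bookkeeping device is the decorated-graph description of free wheeled PROPs, in which all of these signs arise uniformly from orientation data on the underlying graphs.
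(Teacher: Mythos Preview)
Your approach is essentially the same as the paper's: both unpack $\calC_{\calO^{\circlearrowright}}(a,b)$ via the free wheeled PROP formula \eqref{freePROP}, index the summands by pairs of partitions $(\mu,\nu)\vdash i$ with $l(\mu)=b$, and then use Frobenius reciprocity (coinvariants over the wreath-product stabilisers) to identify each summand with the corresponding piece of $W_i$.

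The one organisational difference is that the paper first makes a reduction: since $W_i=\wti{S}^*(U_*)_i$ is the traceless part of $S^*(U_*)_i$ and $T_{a,b}$ is the traceless part of $H^{a,b}$, it suffices to prove the ``untraceless'' statement
\[
S^*(U_*)_i\;\cong\;\bigoplus_{b\ge 0} H^{b+i,b}\otimes_{\Q[\gpS_{b+i}\times\gpS_b]}\calC_{\calO^{\circlearrowright}}(b+i,b),
\]
and then restrict to traceless parts on both sides. This avoids having to carry the traceless condition through the block-by-block computation. Your route---working directly with $T_{a,b}$ and invoking tracelessness at each stage (to cut $\bigwedge^k H\otimes H^*$ down to $U_{k-1}^{\tree}$, and $\otimes$ down to $\wti\otimes$)---arrives at the same place but requires a little more bookkeeping; in exchange, it makes transparent \emph{why} the tree/wheel decomposition of $U_*$ matches the operadic/wheeled decomposition of $\calO^{\circlearrowright}$. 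One caution: your statement ``$\bigwedge^k H\otimes H^*\cong U_{k-1}^{\tree}$'' is literally false (the left side is all of $U_{k-1}$); what you mean, and what is correct, is that the \emph{image of $T_{a,b}$} in that factor contributes only the $U_{k-1}^{\tree}$ part, since the wheel summand has strictly smaller bipartition size. Be precise about this when you write it up, and make sure the ``top'' $\gpS_{k'_j}$ permuting tree blocks of equal size also permutes their associated $H^*$ factors (this is what produces $S^{k'_j}(U_{\mu_j}^{\tree})$ rather than $S^{k'_j}(\bigwedge^{\mu_j+1}H)\otimes (H^*)^{\otimes k'_j}$).
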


\begin{proof}
Since $T_{b+i,b}$ is the traceless part of $H^{b+i,b}$ and we have $W_i=\wti{S}^* (U_*)_i$, it suffices to show that
$$
S^* (U_*)_i\cong \bigoplus_{b\ge 0}H^{b+i,b} \otimes_{\Q[\gpS_{b+i}\times \gpS_b]} \calC_{\calO^{\circlearrowright}}(b+i,b).
$$
By \eqref{freePROP}, we have
$$
    \calC_{\calO^{\circlearrowright}}(b+i,b)
    =\bigoplus_{\substack{J\subset [b+i]\\ f:J\twoheadrightarrow [b]}}\bigoplus_{\substack{1\le k\le |[b+i]\setminus J|\\ (X_1,\cdots,X_k)\in P([b+i]\setminus J,k)}}
    \left(\bigotimes_{j=1}^{b}\calO^{\circlearrowright}_0(|f^{-1}(j)|) \right) \otimes \left(\bigotimes_{j=1}^{k}\calO^{\circlearrowright}_w(|X_j|)\right).
$$
Since we have $\calO^{\circlearrowright}_{0}=\{\calO^{\circlearrowright}(n,1)\}_{n\ge 2}$, we have only to consider $f$ such that $|f^{-1}(j)|\ge 2$ for each $j\in [b]$. Therefore, we have
\begin{gather*}
\begin{split}
&\calC_{\calO^{\circlearrowright}}(b+i,b)\\
&=\bigoplus_{\substack{(\mu,\nu)\vdash i \\ l(\mu)=b}}
\operatorname{Ind}_
{\prod_{j=1}^{r}(\gpS_{\mu_j+1}\wr \gpS_{k'_j})\times \prod_{j=1}^{s}(\gpS_{\nu_j}\wr \gpS_{k''_j})}
^{\gpS_{b+i}\times \gpS_{b}}
\left(\bigotimes_{j=1}^{r} \calO(\mu_j+1)^{\otimes k'_j} \right)
\otimes
\left(\bigotimes_{j=1}^{s} \calO_w^{\circlearrowright}(\nu_j)^{\otimes k''_j} \right),
\end{split}
\end{gather*}
where we write $\mu=(\mu_1^{k'_1},\cdots,\mu_r^{k'_r})$ and $\nu=(\nu_1^{k''_1},\cdots,\nu_s^{k''_s})$, and where we consider $\prod_{j=1}^{r}(\gpS_{\mu_j+1}\wr \gpS_{k'_j})\times \prod_{j=1}^{s}(\gpS_{\nu_j}\wr \gpS_{k''_j})$ as a subgroup of $\gpS_{b+i}\times \{1\}\subset \gpS_{b+i}\times \gpS_{b}$.

Let $\gpS(\mu)=\prod_{j=1}^{r}(\gpS_{\mu_j+1}\wr \gpS_{k'_j})$, $\gpS(\nu)=\prod_{j=1}^{s}(\gpS_{\nu_j}\wr \gpS_{k''_j})$ and $\gpS(\mu,\nu)=\gpS(\mu)\times \gpS(\nu)$.
Since we have $\calO(\mu_j+1)=\sgn_{\mu_j+1}[\mu_j]$ and $\calO_w^{\circlearrowright}(\nu_j)=\sgn_{\nu_j}[\nu_j-1]$, we have
\begin{gather*}
\begin{split}
&\bigoplus_{b\ge 0}H^{b+i,b} \otimes_{\Q[\gpS_{b+i}\times \gpS_b]} \calC_{\calO^{\circlearrowright}}(b+i,b)\\
&\cong
\bigoplus_{b\ge 0}\bigoplus_{\substack{(\mu,\nu)\vdash i \\ l(\mu)=b}}
H^{b+i,b} \otimes_{\Q[\gpS_{b+i}\times \gpS_b]}
\operatorname{Ind}_{\gpS(\mu,\nu)}^{\gpS_{b+i}\times \gpS_{b}}
\left(\bigotimes_{j=1}^{r} \calO(\mu_j+1)^{\otimes k'_j} \right)
\otimes
\left(\bigotimes_{j=1}^{s} \calO_w^{\circlearrowright}(\nu_j)^{\otimes k''_j} \right)\\
&\cong
\bigoplus_{b\ge 0}\bigoplus_{\substack{(\mu,\nu)\vdash i \\ l(\mu)=b}}
H^{b+i,b} \otimes_{\Q[\gpS(\mu,\nu)]}
\left(\bigotimes_{j=1}^{r} \calO(\mu_j+1)^{\otimes k'_j} \right)
\otimes
\left(\bigotimes_{j=1}^{s} \calO_w^{\circlearrowright}(\nu_j)^{\otimes k''_j} \right)\\
&\cong
\bigoplus_{b\ge 0}\bigoplus_{\substack{(\mu,\nu)\vdash i \\ l(\mu)=b}}
\bigotimes_{j=1}^{r} \left(H^{k'_j(\mu_j+1), k'_j} \otimes_{\Q[\gpS_{\mu_j+1}\wr \gpS_{k'_j}]} \calO(\mu_j+1)^{\otimes k'_j}\right)\\
&\quad \quad \quad \quad \quad \otimes
\bigotimes_{j=1}^{s} \left(H^{k''_j \nu_j, 0} \otimes_{\Q[\gpS_{\nu_j}\wr \gpS_{k''_j}]} \calO_w^{\circlearrowright}(\nu_j)^{\otimes k''_j}\right)
\\
&\cong
\bigoplus_{b\ge 0}\bigoplus_{\substack{(\mu,\nu)\vdash i \\ l(\mu)=b}}
 \left(\bigotimes_{j=1}^r S^{k_j'}\left(U_{\ti{\mu}_j}^{\tree}\right)  \otimes\;
  \bigotimes_{j=1}^s S^{k_j''}\left(U_{\ti{\nu}_j}^{\wheel}\right)\right)\\
&\cong
S^* (U_*)_i.
\end{split}
\end{gather*}
This completes the proof.
\end{proof}

By Proposition \ref{Wiprop}, for sufficiently large $n$, we have a $\GL(n,\Q)$-isomorphism
\begin{gather}\label{W^*PROP}
W_i^*\cong \bigoplus_{a-b=i}T_{b,a}\otimes_{\Q[\gpS_a\times \gpS_b]} \calC_{\calO^{\circlearrowright}}(a,b).
\end{gather}
Then we have the following relation between $W_i^*$ and the wheeled PROP $\calC_{\calP_0^{\circlearrowright}}$.

\begin{lemma}\label{lemmawheeledPROP}
 Let $i,p,q$ be non-negative integers.
 Then, for sufficiently large $n$, we have an isomorphism of $\Q[\gpS_{p}\times \gpS_{q}]$-modules
 $$
 (W_i^* \otimes H^{p,q})^{\GL(n,\Z)}
 \cong
 \begin{cases}
   \calC_{\calP_0^{\circlearrowright}}(p,q)& (i=p-q)\\
   0 & (i\neq p-q).
 \end{cases}
 $$
\end{lemma}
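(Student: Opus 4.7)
The plan combines the isomorphism \eqref{W^*PROP} with the first fundamental theorem of invariant theory for $\GL(n,\Q)$. First, I apply \eqref{W^*PROP} and use that $\calC_{\calO^{\circlearrowright}}(a,b)$ carries trivial $\GL(n,\Z)$-action to rewrite
\begin{gather*}
(W_i^* \otimes H^{p,q})^{\GL(n,\Z)} \cong \bigoplus_{a-b=i} \bigl(T_{b,a} \otimes H^{p,q}\bigr)^{\GL(n,\Z)} \otimes_{\Q[\gpS_a\times \gpS_b]} \calC_{\calO^{\circlearrowright}}(a,b).
\end{gather*}
For $n$ sufficiently large, $\GL(n,\Z)$-invariants on algebraic $\GL(n,\Q)$-representations agree with $\GL(n,\Q)$-invariants by Zariski density, so I pass to $\GL(n,\Q)$-invariants.

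Next I compute $(T_{b,a}\otimes H^{p,q})^{\GL(n,\Q)}$ as a $\gpS_a\times \gpS_b\times \gpS_p\times\gpS_q$-module. Since $T_{b,a}$ is a $\GL(n,\Q)$-direct summand of $H^{b,a}$, this invariant sits inside $(H^{b+p,a+q})^{\GL(n,\Q)}$; the latter, by the first fundamental theorem, is freely spanned (for $n\ge b+p=a+q$) by complete contractions indexed by bijections $[b+p]\to [a+q]$. The tracelessness condition defining $T_{b,a}$, namely vanishing under every $c_{k,l}$ for $(k,l)\in [b]\times [a]$, selects a subspace canonically isomorphic, as a $\gpS$-module, to the permutation module on the set of \emph{admissible pairings}
\begin{gather*}
\mathcal{M}(b,a;p,q) := \{\sigma\colon [b]\sqcup [p]\to [a]\sqcup [q]\mid \sigma([b])\subset [q]\};
\end{gather*}
the isomorphism sends each $\sigma$ to its image under the $\GL(n,\Q)$-equivariant traceless projector $H^{b,a}\twoheadrightarrow T_{b,a}$, and the dimensions match by Schur--Weyl. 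Since $\mathcal{M}(b,a;p,q)$ is empty unless $a-b=p-q$ and $b\le q$, this already gives the vanishing claim when $i\ne p-q$.

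The remaining main step is to produce an isomorphism of $\Q[\gpS_p\times \gpS_q]$-modules
\begin{gather*}
\bigoplus_{a-b=p-q}\Q[\mathcal{M}(b,a;p,q)] \otimes_{\Q[\gpS_a\times \gpS_b]} \calC_{\calO^{\circlearrowright}}(a,b) \xrightarrow{\cong} \calC_{\calP_0^{\circlearrowright}}(p,q)
\end{gather*}
via a combinatorial decomposition. Every element of $\calC_{\calP_0^{\circlearrowright}}(p,q)$, described by data $(J,f,X_1,\ldots,X_k,\ldots)$ as in \eqref{freePROP}, splits uniquely into an \emph{identity-arrow part} (outputs $i\in [q]$ with $|f^{-1}(i)|=1$, coming from $\calP_0(1)=\Q$) and a remainder using only $\calP_0(k)$ for $k\ge 2$ and wheels; this remainder lies in $\calC_{\calO^{\circlearrowright}}(A,B)$, where $A'\subset [p]$ and $B'\subset [q]$ index the identity arrows and $A=[p]\setminus A'$, $B=[q]\setminus B'$. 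An admissible pairing $\sigma\in\mathcal{M}(b,a;p,q)$ with $a=|A|$, $b=|B|$ supplies precisely the bijection $A'\to B'$ together with orderings $A\xrightarrow{\sim} [a]$ and $[b]\xrightarrow{\sim} B$, and tensoring over $\gpS_a\times \gpS_b$ absorbs the latter orderings.

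The main obstacle is sign-compatibility. The operadic suspension makes $\calP_0(k)=\sgn_k[k-1]$ carry the sign representation, and the wheeled part $\calP_0^{\circlearrowright}$ involves cyclic coinvariants with signs; these must be matched against the signs from the exterior and symmetric powers underlying the Schur--Weyl description of $T_{b,a}$ behind $W_i^*$. A systematic check proceeds via the graph-theoretic description of wheeled PROPs, where each tensor factor corresponds to a vertex or edge of a decorated oriented graph and the sign contributions match naturally.
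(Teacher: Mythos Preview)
Your proposal is correct, but it takes a somewhat different route from the paper. The paper first decomposes $H^{p,q}\cong\bigoplus_{c=0}^{\min(p,q)}T_{p-c,q-c}^{\oplus\binom{p}{c}\binom{q}{c}c!}$ and then uses the standard fact that $(T_{b,a}\otimes T_{p',q'})^{\GL(n,\Z)}$ vanishes unless $(p',q')=(a,b)$, in which case it is $\Q[\gpS_a\times\gpS_b]$. This immediately yields $\bigoplus_{c}\calC_{\calO^{\circlearrowright}}(p-c,q-c)^{\oplus\binom{p}{c}\binom{q}{c}c!}$, which is then identified with $\calC_{\calP_0^{\circlearrowright}}(p,q)$ in one line. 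You instead keep $H^{p,q}$ intact, invoke the first fundamental theorem to describe $(T_{b,a}\otimes H^{p,q})^{\GL(n,\Q)}$ as the permutation module on your admissible pairings $\mathcal{M}(b,a;p,q)$, and then give an explicit combinatorial bijection to $\calC_{\calP_0^{\circlearrowright}}(p,q)$ via the identity-arrow decomposition. The two routes converge: your $\mathcal{M}(b,a;p,q)$ is a free $\gpS_a\times\gpS_b$-set with exactly $\binom{p}{c}\binom{q}{c}c!$ orbits (where $c=q-b=p-a$), so after tensoring over $\Q[\gpS_a\times\gpS_b]$ you recover the same expression as the paper.

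What each approach buys: the paper's argument is shorter and avoids the explicit pairing combinatorics by leaning on the traceless decomposition of $H^{p,q}$, though it is somewhat terse about how the $\gpS_p\times\gpS_q$-module structure is transported through that decomposition. Your argument is longer but tracks the $\gpS_p\times\gpS_q$-equivariance more transparently via the action on pairings, and your identity-arrow description genuinely explains the final isomorphism $\bigoplus_c\calC_{\calO^{\circlearrowright}}(p-c,q-c)^{\oplus\binom{p}{c}\binom{q}{c}c!}\cong\calC_{\calP_0^{\circlearrowright}}(p,q)$ that the paper simply asserts. Your sign concern is legitimate but not really an additional obstacle beyond what the paper also leaves implicit: once you use \eqref{W^*PROP}, the signs from the operadic suspension are already packaged inside $\calC_{\calO^{\circlearrowright}}(a,b)$, and the pairing module $\Q[\mathcal{M}(b,a;p,q)]$ carries the genuine permutation action of $\gpS_p\times\gpS_q$ (no signs), so the match with the $\gpS_p\times\gpS_q$-action on $\calC_{\calP_0^{\circlearrowright}}(p,q)$ reduces to the same bookkeeping the paper suppresses.
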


\begin{proof}
We have $H^{p,q}\cong \bigoplus_{c=0}^{\min(p,q)}T_{p-c,q-c}^{\oplus \binom{p}{c} \binom{q}{c} c!}$.
Therefore, by \eqref{W^*PROP}, we stably have isomorphisms of $\Q[\gpS_{p}\times \gpS_{q}]$-modules
\begin{gather*}
 \begin{split}
    &((W_*)^* \otimes H^{p,q})^{\GL(n,\Z)}\\
     &\cong \left(\left(\bigoplus_{a,b}T_{b,a}\otimes_{\Q[\gpS_a\times \gpS_b]}\calC_{\calO^{\circlearrowright}}(a,b)\right)\otimes \left(\bigoplus_{c=0}^{\min(p,q)}T_{p-c,q-c}^{\oplus \binom{p}{c} \binom{q}{c} c!}\right)\right)^{\GL(n,\Z)}\\
     &\cong \bigoplus_{a,b}\bigoplus_{c=0}^{\min(p,q)} \left(T_{p-c,q-c}^{\oplus \binom{p}{c} \binom{q}{c} c!}\otimes T_{b,a}\right)^{\GL(n,\Z)} \otimes_{\Q[\gpS_a\times \gpS_b]}\calC_{\calO^{\circlearrowright}}(a,b) \\
     &\cong \bigoplus_{c=0}^{\min(p,q)}\left(T_{p-c,q-c}^{\oplus \binom{p}{c} \binom{q}{c} c!}\otimes T_{q-c,p-c}\right)^{\GL(n,\Z)} \otimes_{\Q[\gpS_{p-c}\times \gpS_{q-c}]}\calC_{\calO^{\circlearrowright}}(p-c,q-c) \\
     &\cong \bigoplus_{c=0}^{\min(p,q)}\left(T_{p-c,q-c}\otimes T_{q-c,p-c}\right)^{\GL(n,\Z)} \otimes_{\Q[\gpS_{p-c}\times \gpS_{q-c}]}\calC_{\calO^{\circlearrowright}}(p-c,q-c) ^{\oplus \binom{p}{c} \binom{q}{c} c!}\\
     &\cong \bigoplus_{c=0}^{\min(p,q)} \Q[\gpS_{p-c}\times \gpS_{q-c}] \otimes_{\Q[\gpS_{p-c}\times \gpS_{q-c}]}\calC_{\calO^{\circlearrowright}}(p-c,q-c) ^{\oplus \binom{p}{c} \binom{q}{c} c!}\\
     &\cong \bigoplus_{c=0}^{\min(p,q)}\calC_{\calO^{\circlearrowright}}(p-c,q-c)^{\oplus \binom{p}{c} \binom{q}{c} c!}\\
     &\cong \calC_{\calP_0^{\circlearrowright}}(p,q).
 \end{split}
\end{gather*}
\end{proof}

\subsection{Relation between $H_A^*(\IA_n, V)$ and $H^*(\Aut(F_n), V)$}

We make the following conjecture about the Albanese cohomology of $\IA_n$ and the cohomology of $\Aut(F_n)$ with twisted coefficients.

\begin{conjecture}\label{cohomologyAut}
Let $i$ be a non-negative integer and $\ul\lambda$ a bipartition.
Then, for sufficiently large $n$, we have a linear isomorphism
$$
  H^i(\Aut(F_n),V_{\ul\lambda})\cong H_A^i(\IA_n,V_{\ul\lambda})^{\GL(n,\Z)}.
$$
\end{conjecture}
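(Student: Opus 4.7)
The plan is to analyze the Hochschild--Serre spectral sequence attached to the extension
\[
1 \to \IA_n \to \Aut(F_n) \to \GL(n,\Z) \to 1,
\]
namely
\[
E_2^{p,q} = H^p(\GL(n,\Z);\, H^q(\IA_n,\Q)\otimes V_{\ul\lambda}) \Longrightarrow H^{p+q}(\Aut(F_n), V_{\ul\lambda}),
\]
where the tensor decomposition holds because $\IA_n$ acts trivially on $V_{\ul\lambda}$. The edge homomorphism furnishes a natural candidate for the conjectured isomorphism, landing in $E_2^{0,i} = (H^i(\IA_n,\Q)\otimes V_{\ul\lambda})^{\GL(n,\Z)}$, which visibly contains $(H_A^i(\IA_n,\Q)\otimes V_{\ul\lambda})^{\GL(n,\Z)}$.

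First I would decompose $H^q(\IA_n,\Q)\otimes V_{\ul\lambda}$ into algebraic $\GL(n,\Q)$-isotypic components and invoke Borel's vanishing theorem: stably, $H^*(\GL(n,\Z), V_{\ul\mu})=0$ for every nontrivial irreducible algebraic $\GL(n,\Q)$-representation $V_{\ul\mu}$. By Borel density the $\GL(n,\Z)$-invariants coincide with $\GL(n,\Q)$-invariants, so only the trivial isotypic component of the coefficient module survives, and stably
\[
E_2^{p,q} \;=\; H^p(\GL(n,\Z),\Q)\,\otimes\,(H^q(\IA_n,\Q)\otimes V_{\ul\lambda})^{\GL(n,\Z)}.
\]

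Next I would argue that only the column $p=0$ survives to $E_\infty$ in the stable range. The trivial-coefficient case serves as a consistency check: Galatius's theorem forces stably $H^*(\Aut(F_n),\Q)=\Q$, while $H^*(\GL(n,\Z),\Q)$ carries the Borel classes, so these must transgress against invariant classes in $H^{>0}(\IA_n,\Q)$. Multiplicativity of the spectral sequence should propagate those differentials to the twisted setting, killing all $p>0$ contributions and yielding
\[
H^i(\Aut(F_n),V_{\ul\lambda}) \;\cong\; (H^i(\IA_n,\Q)\otimes V_{\ul\lambda})^{\GL(n,\Z)}.
\]

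The final and hardest step is to replace $H^i(\IA_n,\Q)$ by its Albanese subspace, i.e.\ to show that the quotient $H^i(\IA_n,\Q)/H_A^i(\IA_n,\Q)$ contributes no stable $\GL(n,\Z)$-invariants after tensoring with any $V_{\ul\lambda}$. This is the genuine obstacle, since $H^{>2}(\IA_n,\Q)$ is essentially unknown and Satoh's degree-$2$ example for $n=3$ demonstrates that non-Albanese classes do exist outside the stable range. A practical route around this difficulty is supplied by Proposition~\ref{AlbaneseandAut}: granting Conjecture~\ref{conjectureAlbanese} on the structure of $H^A_i(\IA_n,\Q)$ and the Kawazumi--Vespa Conjecture~\ref{conjKVintro} in cohomological degree $i$, Lemma~\ref{lemmawheeledPROP} identifies both sides of the desired isomorphism with $\calC_{\calP_0^{\circlearrowright}}(p,q)$-type summands of the wheeled-PROP $\calC_{\calP_0^{\circlearrowright}}$, completing the identification; the remaining input is then the still-open non-Albanese vanishing question just described.
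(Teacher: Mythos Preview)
The statement you are attempting to prove is labeled a \emph{Conjecture} in the paper, and the paper offers no proof of it. The only justification the paper gives is the remark that the conjecture can be checked for $i\le 3$ and for bipartitions of the form $\ul\lambda=(\lambda^+,0)$ or $(0,\lambda^-)$; beyond that it is left open. So there is no ``paper's own proof'' to compare against.

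Your outline is a reasonable heuristic for why one might expect the conjecture to be true, but it is not a proof, and you essentially say so yourself. The Hochschild--Serre analysis and Borel vanishing give the expected identification
\[
H^i(\Aut(F_n),V_{\ul\lambda}) \cong (H^i(\IA_n,\Q)\otimes V_{\ul\lambda})^{\GL(n,\Z)}
\]
only after one has controlled the differentials, and your argument for that step (``multiplicativity should propagate the trivial-coefficient differentials'') is not rigorous: the Borel classes in $H^*(\GL(n,\Z),\Q)$ must be killed by differentials hitting them from specific invariant classes in $H^*(\IA_n,\Q)$, and knowing this happens in the trivial-coefficient sequence does not automatically force collapse in the twisted one. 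More seriously, as you acknowledge, replacing $H^i(\IA_n,\Q)$ by $H_A^i(\IA_n,\Q)$ on the right-hand side requires knowing that the non-Albanese part of $H^i(\IA_n,\Q)$ has no stable $\GL(n,\Z)$-invariants after tensoring with $V_{\ul\lambda}$; this is completely open and is in fact the essential content of the conjecture. Your final paragraph does not circumvent this: invoking Proposition~\ref{AlbaneseandAut} merely reformulates the conjecture as equivalent (given Conjecture~\ref{conjectureAlbanese}) to Conjecture~\ref{conjectureKV}, which is also open.
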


We can check that Conjecture \ref{cohomologyAut} holds for $i\le 3$ and $\ul\lambda=(\lambda^+,0), (0,\lambda^-)$.
Conjecture \ref{cohomologyAut} is equivalent to the following conjecture since $H^{p,q}$ is decomposed into the direct sum of $V_{\ul\lambda}$'s.

\begin{conjecture}\label{cohomologyAutH}
Let $i,p,q$ be non-negative integers.
Then, for sufficiently large $n$, we have an isomorphism of $\Q[\gpS_{p}\times \gpS_{q}]$-modules
$$
  H^i(\Aut(F_n),H^{p,q})\cong H_A^i(\IA_n,H^{p,q})^{\GL(n,\Z)}.
$$
\end{conjecture}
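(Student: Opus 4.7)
The plan is to prove Conjecture \ref{cohomologyAutH} by analyzing the Hochschild--Serre spectral sequence
$$E_2^{s,t} = H^s(\GL(n,\Z), H^t(\IA_n, H^{p,q})) \Rightarrow H^{s+t}(\Aut(F_n), H^{p,q})$$
associated to the extension $1 \to \IA_n \to \Aut(F_n) \to \GL(n,\Z) \to 1$, combined with the structural results established earlier in the paper. Since $H^{p,q}$ decomposes into irreducible algebraic $\GL(n,\Q)$-representations $V_{\ul\lambda}$, the statement is equivalent to Conjecture \ref{cohomologyAut}, and I would work with a fixed $V_{\ul\lambda}$.

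First, I would invoke Borel's theorem on the stable rational cohomology of arithmetic groups, which, together with the polynomial FI-module growth of $H^t(\IA_n, H^{p,q})$ established by Church--Ellenberg--Farb, implies that $E_2^{s,t}$ stably vanishes for $s > 0$ after isotypic decomposition into algebraic $\GL(n,\Q)$-components. The spectral sequence then stably degenerates to yield
$$H^i(\Aut(F_n), H^{p,q}) \cong H^i(\IA_n, H^{p,q})^{\GL(n,\Z)}.$$
One must be careful here: the $\GL(n,\Z)$-action on $H^t(\IA_n, H^{p,q})$ needs to extend to an algebraic $\GL(n,\Q)$-structure (known for Albanese cohomology but not for all of $H^t$), which already constrains how cleanly Borel applies.

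Second, I would compare this with the right-hand side of the conjecture by showing that stably the inclusion $H_A^i(\IA_n, \Q) \hookrightarrow H^i(\IA_n, \Q)$ becomes an isomorphism after tensoring with $H^{p,q}$ and taking $\GL(n,\Z)$-invariants. This is the main obstacle: controlling the non-Albanese part of $H^i(\IA_n, \Q)$ is essentially wide open above degree two, and one needs its irreducible algebraic constituents to contribute nothing to the $\GL(n,\Z)$-invariants when paired with $H^{p,q}$. Any direct attack here runs into the same difficulties that obstruct the computation of $H^*(\IA_n, \Q)$ in general.

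A cleaner, more indirect route --- and the one I would actually pursue --- proceeds via Proposition \ref{AlbaneseandAut}: assuming Conjecture \ref{conjectureAlbanese} (so that $H_A^i(\IA_n, \Q) \cong W_i^*$ stably) together with the Kawazumi--Vespa Conjecture \ref{conjKVintro}, the desired identity follows directly from Lemma \ref{lemmawheeledPROP}, which identifies $(W_i^* \otimes H^{p,q})^{\GL(n,\Z)}$ with $\calC_{\calP_0^{\circlearrowright}}(p,q)$ when $i = p - q$ and with $0$ otherwise. Theorem \ref{Johnsonpartintro} already provides the inclusion $W_i \hookrightarrow H^A_i(\IA_n, \Q)$, so the real content lies in establishing the reverse inclusion uniformly in $i$: this is arguably the harder conjecture, but it is the one where the abelian-cycle and contraction-map machinery developed in Sections \ref{Abeliancycles}--\ref{Generalpart} can be systematically applied, so I would concentrate the main effort there.
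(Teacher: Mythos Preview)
The statement you are attempting to prove is a \emph{conjecture} in the paper, not a theorem: the paper does not prove it and offers no proof to compare against. What the paper does establish is precisely your ``indirect route'' --- Proposition \ref{AlbaneseandAut} shows that Conjectures \ref{conjectureAlbanese}, \ref{conjectureKV}, and \ref{cohomologyAut} are related in that any two imply the third, via Lemma \ref{lemmawheeledPROP}. Your proposal correctly recovers this logical relationship, but reducing Conjecture \ref{cohomologyAutH} to two other open conjectures is not a proof of it.

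Your direct approach via the Hochschild--Serre spectral sequence and Borel vanishing is a reasonable heuristic, but you have already put your finger on why it does not go through. Borel's theorem applies to \emph{algebraic} coefficient modules, and there is no reason to expect the full $\GL(n,\Z)$-module $H^t(\IA_n,\Q)$ to be algebraic outside the Albanese part; indeed, for $n=3$ it is known to be infinite-dimensional in degree $2$. Even granting degeneration of the spectral sequence, the step where you pass from $H^i(\IA_n,H^{p,q})^{\GL(n,\Z)}$ to $H_A^i(\IA_n,H^{p,q})^{\GL(n,\Z)}$ requires exactly the control over the non-Albanese part that is unavailable. So neither branch of your proposal yields an unconditional argument, and the paper makes no claim that one exists.
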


We have the following relation between conjectures about the structure of $H_A^*(\IA_n,\Q)$ and $H^*(\Aut(F_n),V)$.

\begin{proposition}\label{AlbaneseandAut}
Let $i$ be a non-negative integer.
 If two of the followings hold, then so does the third:
 \begin{enumerate}
     \item We have a $\GL(n,\Q)$-isomorphism $H^A_i(\IA_n,\Q)\cong W_i$ for sufficiently large $n$ (cf.  Conjecture \ref{conjectureAlbanese}).
     \item Conjecture \ref{conjectureKV} holds for cohomological degree $i$.
     \item Conjecture \ref{cohomologyAut} holds for cohomological degree $i$.
 \end{enumerate}
\end{proposition}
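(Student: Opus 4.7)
The plan is to treat the three statements as vertices of a triangle and show that Lemma \ref{lemmawheeledPROP} provides the bridge connecting them. Concretely, Lemma \ref{lemmawheeledPROP} furnishes, stably in $n$, a $\Q[\gpS_p\times\gpS_q]$-isomorphism $(W_i^*\otimes H^{p,q})^{\GL(n,\Z)}\cong \calC_{\calP_0^{\circlearrowright}}(p,q)$ when $i=p-q$, and vanishing otherwise. Using the equivalent reformulation Conjecture \ref{cohomologyAutH} for (3), and the equivalent reformulation of (1) as $H_A^i(\IA_n,\Q)\cong W_i^*$, each of the three statements can be phrased as an isomorphism involving $(\,\cdot\,\otimes H^{p,q})^{\GL(n,\Z)}$, and the statement to be proved reduces to the transitivity of these isomorphisms. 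As a preparatory step I would record that both $H_A^i(\IA_n,\Q)$ (as the dual of the subrepresentation $H^A_i(\IA_n,\Q)\subset H_i(U,\Q)$) and $W_i^*$ are algebraic $\GL(n,\Q)$-representations, so that $\GL(n,\Z)$- and $\GL(n,\Q)$-invariants agree on them and on their tensor products with $H^{p,q}$.

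The implications (1)+(3)$\Rightarrow$(2) and (1)+(2)$\Rightarrow$(3) are then essentially formal: combining (1) with the tautology
$$H_A^i(\IA_n,H^{p,q})^{\GL(n,\Z)} = (H_A^i(\IA_n,\Q)\otimes H^{p,q})^{\GL(n,\Z)}$$
gives $H_A^i(\IA_n,H^{p,q})^{\GL(n,\Z)}\cong (W_i^*\otimes H^{p,q})^{\GL(n,\Z)}$, and Lemma \ref{lemmawheeledPROP} identifies the right-hand side with $\calC_{\calP_0^{\circlearrowright}}(p,q)$ in the correct degree. Composing or inverting this chain with either (2) or (3) produces the missing one.

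The implication (2)+(3)$\Rightarrow$(1) is the direction that needs slightly more care. Here (2), (3), and Lemma \ref{lemmawheeledPROP} combine to give, stably in $n$ and for every pair $(p,q)$, an isomorphism of $\Q[\gpS_p\times\gpS_q]$-modules
$$
(H_A^i(\IA_n,\Q)\otimes H^{p,q})^{\GL(n,\Z)}\;\cong\;(W_i^*\otimes H^{p,q})^{\GL(n,\Z)}.
$$
To promote this to a $\GL(n,\Q)$-isomorphism $H_A^i(\IA_n,\Q)\cong W_i^*$, I would use that both sides are algebraic and decompose canonically into isotypic components indexed by bipartitions, and that for any algebraic $\GL(n,\Q)$-representation $M$ the multiplicity of $V_{\ul\lambda}$ in $M$ is recovered from the $\gpS_p\times\gpS_q$-module $(M\otimes H^{q,p})^{\GL(n,\Q)}$ via the decomposition \eqref{Tpq} of $T_{q,p}\subset H^{q,p}$ together with Schur--Weyl. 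Letting $(p,q)$ range over all pairs then extracts the multiplicity of every $V_{\ul\lambda}$, and equality of these multiplicities on the two sides yields (1).

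The main obstacle is the last step, namely ensuring that taking $\GL(n,\Z)$-invariants after tensoring with the family $\{H^{p,q}\}_{p,q\ge 0}$ is a faithful functor on algebraic $\GL(n,\Q)$-representations in the stable range. This is essentially a bookkeeping exercise once the algebraicity of $H_A^i(\IA_n,\Q)$ is in place, but it is where the hypothesis of working stably in $n$ must be used carefully: only in the stable range does the multiplicity of $V_{\ul\lambda}$ in $W_i^*$ stabilize, matching the stabilization assumed on the $\IA_n$ side.
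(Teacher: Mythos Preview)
Your proposal is correct and follows essentially the same approach as the paper: both arguments hinge on the observation that, since $H_A^i(\IA_n,\Q)$ and $W_i^*$ are algebraic $\GL(n,\Q)$-representations, statement (1) is equivalent to the family of isomorphisms $(H_A^i(\IA_n,\Q)\otimes H^{p,q})^{\GL(n,\Z)}\cong (W_i^*\otimes H^{p,q})^{\GL(n,\Z)}$ for all $p,q$, after which Lemma \ref{lemmawheeledPROP} closes the loop among (1), (2), and (3). The paper presents this more tersely as a single chain of ``possible isomorphisms,'' but the content is the same.
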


\begin{proof}
Since $W_i^*$ and $H^i_A(\IA_n,\Q)$ are algebraic $\GL(n,\Q)$-representations, (1) is equivalent to the following statement:
for any $p,q$, for sufficiently large $n$, we have
$$(H^i_A(\IA_n,\Q)\otimes H^{p,q})^{\GL(n,\Z)}\cong (W_i^* \otimes H^{p,q})^{\GL(n,\Z)}.$$

We have the following loop of possible isomorphisms for sufficiently large $n$
\begin{gather*}
 \begin{split}
     H^i_A(\IA_n,H^{p,q})^{\GL(n,\Z)}
     &\cong  (H^i_A(\IA_n,\Q)\otimes H^{p,q})^{\GL(n,\Z)}\\
     &\underset{(1)}{\cong} (W_i^* \otimes H^{p,q})^{\GL(n,\Z)}\\
     &\underset{\text{Lemma \ref{lemmawheeledPROP}}}{\cong}
     \begin{cases}
       \calC_{\calP_0^{\circlearrowright}}(p,q)& (i=p-q)\\
       0 & (i\neq p-q)
     \end{cases}\\
     &\underset{(2)}{\cong} H^i(\Aut(F_n),H^{p,q})\\
     &\underset{(3)}{\cong} H^i_A(\IA_n,H^{p,q})^{\GL(n,\Z)},
 \end{split}
\end{gather*}
 which completes the proof.
\end{proof}

It is well known that $\Aut(F_{2g})$ includes the mapping class group $\calM_{g,1}$ of a surface of genus $g$ with one boundary component as a subgroup.
For the cohomology of $\calM_{g,1}$ with twisted coefficients, we stably have the following isomorphism
\begin{equation}\label{mcglie}
    H^*(\calM_{g,1},H^{\otimes p})\cong (H^*(\Gr\mathfrak{u}_{g,1},\Q)\otimes H^{\otimes p})^{\Sp(2g,\Q)}
\end{equation}
(see \cite[Theorem 16.3]{Hainsurvey}, and see Section \ref{secLiecohom} for the definition of $\Gr \mathfrak{u}_{g,1}$).
We also have the variants of \eqref{mcglie} for the mapping class groups $\calM_{g}$ of a closed surface of genus $g$ and $\calM_{g}^{1}$ of a surface of genus $g$ with one marked point, respectively, as Garoufalidis--Getzler \cite{GG} first claimed.

A natural analogy of the isomorphism \eqref{mcglie} to $\Aut(F_n)$ is the following.

\begin{conjecture}\label{cohomologyAutHlie}
Let $i,p,q$ be non-negative integers.
For sufficiently large $n$, we have an isomorphism of $\Q[\gpS_{p}\times \gpS_{q}]$-modules
$$
  H^i(\Aut(F_n),H^{p,q})\cong (H^i(\Gr \IA_n \otimes \Q,\Q)\otimes H^{p,q})^{\GL(n,\Z)},
$$
where $\Gr \IA_n$ is the graded Lie algebra of $\IA_n$ associated to the Andreadakis filtration of $\Aut(F_n)$.
\end{conjecture}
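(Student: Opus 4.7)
The plan is to adapt the strategy that establishes the analogous isomorphism \eqref{mcglie} in the mapping class group setting. It proceeds in three steps: (i) use Hochschild--Serre for the extension $1 \to \IA_n \to \Aut(F_n) \to \GL(n,\Z) \to 1$ together with a Borel-type vanishing theorem to reduce the computation of $H^*(\Aut(F_n),H^{p,q})$ to the $\GL(n,\Z)$-invariants of $H^*(\IA_n,H^{p,q})$; (ii) identify $H^*(\IA_n,\Q)$ stably with the continuous cohomology of the rational Malcev completion $\IA_n^{\mathrm{Mal}}$; and (iii) apply the Chevalley--Eilenberg isomorphism together with graded-formality of $\IA_n$ to identify this with $H^*(\Gr\IA_n\otimes \Q,\Q)$, so that taking $\GL(n,\Z)$-invariants on both sides yields the desired formula.

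For step (i), I would analyze the spectral sequence
$$E_2^{a,b}=H^a(\GL(n,\Z),H^b(\IA_n,H^{p,q})) \Longrightarrow H^{a+b}(\Aut(F_n),H^{p,q}).$$
Borel's vanishing theorem and its extensions, as exploited in \cite{Randal-Williams, Djament-Vespa}, give $H^a(\GL(n,\Z),V)=0$ for $a>0$ whenever $V$ is an algebraic $\GL(n,\Q)$-representation, in a stable range for $n$. Provided $H^b(\IA_n,H^{p,q})$ is stably algebraic as a $\GL(n,\Q)$-representation, the spectral sequence collapses to give $H^i(\Aut(F_n),H^{p,q})\cong H^i(\IA_n,H^{p,q})^{\GL(n,\Z)}$ stably. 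Steps (ii) and (iii) would then reduce the conjecture to a $\GL(n,\Q)$-equivariant isomorphism $H^*(\IA_n,\Q)\cong H^*(\Gr\IA_n\otimes \Q,\Q)$ in the stable range, obtained by comparing the natural map from $\IA_n^{\mathrm{Mal}}$-cohomology to $H^*(\IA_n,\Q)$ with the Chevalley--Eilenberg identification $H^*(\IA_n^{\mathrm{Mal}},\Q)\cong H^*(\mathrm{Lie}(\IA_n^{\mathrm{Mal}}),\Q)$ and the comparison $\opegr\mathrm{Lie}(\IA_n^{\mathrm{Mal}})\cong \Gr\IA_n\otimes\Q$.

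The main obstacle is step (iii). The Andreadakis conjecture, which would identify $\Gr\IA_n$ with the Johnson Lie algebra, remains open; even granting it, the graded-formality of $\IA_n$ over $\Q$ is unknown and not accessible by current methods. Unlike the Torelli group case, where Hain's theorem rests on the algebraic geometry of the moduli space of curves, no comparable input is available for $\IA_n$. In the absence of such formality, a partial but tractable approach would be to combine Proposition \ref{AlbaneseandAut} with the operadic description in Proposition \ref{Wiprop}: assuming Conjectures \ref{conjectureAlbanese} and \ref{conjectureKV}, one would only need to verify that the Chevalley--Eilenberg cohomology $H^*(\Gr\IA_n\otimes \Q,\Q)$ matches the wheeled-PROP output $\calC_{\calP_0^{\circlearrowright}}$ on $\GL(n,\Z)$-invariants, which is a purely Lie-algebraic calculation that could be attempted using the presentation of $\Gr\IA_n\otimes \Q$ by Johnson homomorphisms and the Koszul-type duality between $\calC om$ and its operadic suspension $\calP_0$.
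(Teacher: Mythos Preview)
The statement you are addressing is not a theorem in the paper but a \emph{conjecture}: the paper offers no proof, and indeed presents it only as the natural analogue of Hain's isomorphism \eqref{mcglie}, motivated by the parallel Conjectures \ref{cohomologyAutH} and \ref{IAalbandGRIA}. So there is no ``paper's own proof'' to compare against, and your proposal should be read as an outline of what a proof might require rather than as a candidate argument to be checked.

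That said, your outline is accurate about where the genuine difficulties lie. Step (i) already depends on knowing that $H^b(\IA_n,\Q)$ is stably an algebraic $\GL(n,\Q)$-representation; this is not known beyond low degrees and is essentially the content of Conjecture \ref{cohomologyAut} in the paper (which would give the reduction to $\GL(n,\Z)$-invariants of $H^*(\IA_n,H^{p,q})$, not yet of the Lie algebra cohomology). Step (iii) is, as you note, the real obstruction: there is no known formality statement for $\IA_n$, no analogue of Hain's mixed-Hodge-theoretic input, and the comparison $\opegr\mathrm{Lie}(\IA_n^{\mathrm{Mal}})\cong \Gr\IA_n\otimes\Q$ along the Andreadakis filtration is itself conjectural. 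Your fallback idea of assuming Conjectures \ref{conjectureAlbanese} and \ref{conjectureKV} and then matching $H^*(\Gr\IA_n\otimes\Q,\Q)$ against $\calC_{\calP_0^{\circlearrowright}}$ is coherent, but it reduces one conjecture to a bundle of others together with a nontrivial Lie-algebra computation that would itself require knowing a presentation (and likely Koszulity) of $\Gr\IA_n\otimes\Q$. In short, your plan correctly isolates the missing ingredients, but none of them are currently available, which is precisely why the paper states this as a conjecture rather than a result.
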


Based on Conjectures \ref{cohomologyAutH} and \ref{cohomologyAutHlie}, we make the following conjecture.

\begin{conjecture}\label{IAalbandGRIA}
We stably have an isomorphism of graded $\GL(n,\Q)$-representations
$$H_A^*(\IA_n,\Q)\cong H^*(\Gr \IA_n \otimes \Q,\Q).$$
\end{conjecture}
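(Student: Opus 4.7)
The plan is to deduce Conjecture \ref{IAalbandGRIA} from the two preceding conjectures \ref{cohomologyAutH} and \ref{cohomologyAutHlie} by comparing the two sides through the stable cohomology of $\Aut(F_n)$ with twisted coefficients. First, I would verify that $H^{*}(\Gr\IA_n\otimes\Q,\Q)$ carries a natural structure of an algebraic graded $\GL(n,\Q)$-representation: the Johnson homomorphisms identify $\Gr\IA_n\otimes\Q$ as a graded Lie algebra generated in weight $1$ by the algebraic $\GL(n,\Q)$-representation $U=\Hom(H,\bigwedge^{2}H)$, so its Chevalley--Eilenberg cochain complex $\bigwedge^{*}(\Gr\IA_n\otimes\Q)^{*}$ is a complex of algebraic $\GL(n,\Q)$-representations, and its cohomology inherits this structure. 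The Albanese cohomology $H_A^{*}(\IA_n,\Q)$ is algebraic by construction.

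Next, I would invoke the standard fact that an algebraic $\GL(n,\Q)$-representation $V$ is determined up to isomorphism by the family of integers $\dim(V\otimes V_{\ul\lambda})^{\GL(n,\Z)}$ indexed by bipartitions $\ul\lambda$, since $\GL(n,\Z)$ is Zariski-dense in $\GL(n,\Q)$ and these invariants coincide with the $\GL(n,\Q)$-invariants that compute the multiplicity of $V_{\ul\lambda^{*}}$ in $V$. Applying Conjecture \ref{cohomologyAut} (which is equivalent to Conjecture \ref{cohomologyAutH} upon decomposing each $H^{p,q}$ into its bipartition summands) gives stably
$$
 (H_A^{i}(\IA_n,\Q)\otimes V_{\ul\lambda})^{\GL(n,\Z)}\cong H^{i}(\Aut(F_n),V_{\ul\lambda}),
$$
while Conjecture \ref{cohomologyAutHlie}, applied to any $H^{p,q}$ containing $V_{\ul\lambda}$ as a direct summand and then extracting isotypic components, yields stably
$$
 (H^{i}(\Gr\IA_n\otimes\Q,\Q)\otimes V_{\ul\lambda})^{\GL(n,\Z)}\cong H^{i}(\Aut(F_n),V_{\ul\lambda}).
$$
Equating the two right-hand sides for every bipartition $\ul\lambda$ and every cohomological degree $i$ identifies the multiplicity of each irreducible on the two sides of Conjecture \ref{IAalbandGRIA}, and assembling the isotypic pieces gives the claimed graded $\GL(n,\Q)$-isomorphism.

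The main obstacle is that both input conjectures are themselves open, and of the two, Conjecture \ref{cohomologyAutHlie} is by far the deeper. It is the precise analogue for $\Aut(F_n)$ of the identification \eqref{mcglie} for the mapping class group, whose justification ultimately rests on the formality of the Torelli Lie algebra established by Hain via mixed Hodge theory. No comparable Hodge-theoretic framework is presently available for $\IA_n$ and its Andreadakis graded $\Gr\IA_n$, and establishing a formality property of $\IA_n$---or equivalently the rational degeneration of the spectral sequence associated to the Andreadakis filtration, at least on the piece relevant to the invariant functors $(-\otimes V_{\ul\lambda})^{\GL(n,\Z)}$---is where the real difficulty lies. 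An alternative direct route would be to construct a $\GL(n,\Q)$-equivariant comparison morphism from the Chevalley--Eilenberg complex of $\Gr\IA_n\otimes\Q$ to a model for the Albanese part of the bar complex of $\IA_n$ and show it is a stable quasi-isomorphism, but the obstructions to such a quasi-isomorphism would again reduce to precisely this formality question.
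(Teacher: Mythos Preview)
The statement you are attempting to prove is a \emph{conjecture} in the paper, not a theorem; the paper offers no proof. Immediately before stating it, the paper writes ``Based on Conjectures \ref{cohomologyAutH} and \ref{cohomologyAutHlie}, we make the following conjecture,'' and immediately after it remarks only that if $\Gr\IA_n\otimes\Q$ is stably quadratically presented and stably Koszul then one direction (a surjection $H^*(\Gr\IA_n\otimes\Q,\Q)\twoheadrightarrow H_A^*(\IA_n,\Q)$) would follow. That is the entirety of what the paper says about it.

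Your proposal is therefore not being compared to a proof but to a motivation, and in fact your conditional argument \emph{is} that motivation, spelled out: you show that Conjectures \ref{cohomologyAutH} and \ref{cohomologyAutHlie} together would force the two sides to have the same multiplicities of every irreducible $V_{\ul\lambda}$, hence be isomorphic. This reasoning is sound, and you correctly identify the real obstruction --- that Conjecture \ref{cohomologyAutHlie} is an analogue of \eqref{mcglie} whose proof for the mapping class group rests on Hodge-theoretic formality unavailable for $\IA_n$. One technical point you pass over: to run the multiplicity argument you need $H^i(\Gr\IA_n\otimes\Q,\Q)$ to be a \emph{finite-dimensional} algebraic $\GL(n,\Q)$-representation in each cohomological degree, which is not automatic for an infinite-dimensional graded Lie algebra and would itself require justification (e.g.\ via a stable Koszul-type bound on the internal weights contributing to a fixed cohomological degree). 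In any case, since both input conjectures are open, what you have written is an explanation of why the conjecture is natural, not a proof --- which is exactly the status the paper assigns it.
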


If $\Gr \IA_n\otimes \Q$ is stably quadratically presented and stably Koszul, then we stably have a surjective morphism of graded $\GL(n,\Q)$-representations $H^*(\Gr \IA_n \otimes \Q,\Q)\twoheadrightarrow H_A^*(\IA_n,\Q)$. See Section \ref{conjTorellialb} for the cases of the Torelli groups.

\section{Algebraic $\Sp(2g,\Q)$-representations}\label{Sprep}
In this section, we recall representation theory of $\Sp(2g,\Q)$ and introduce the notion of traceless tensor products of $\Sp(2g,\Q)$-representations by adapting definitions in Section \ref{Preliminary}.

\subsection{Irreducible $\Sp(2g,\Q)$-representations}

Here, we fix a symplectic basis $\{a_i,b_i\mid 1\le i\le g\}$ for $H_1(\Sigma_g,\Q)$, and let $H=H_1(\Sigma_g,\Q)$, where $\Sigma_g$ is a closed surface of genus $g$.
Let $Q: H\otimes H\to \Q$ denote the \emph{symplectic form} such that $Q(a_i,b_i)=-Q(b_i,a_i)=1$ for each $1\le i\le g$.

For distinct elements $k, l\in [p]$, the \emph{contraction map}
$$
 c^{\Sp}_{k,l}: H^{\otimes p}\to H^{\otimes p-2}
$$
is defined in a way similar to that of $\GL(n,\Q)$-representations by using the symplectic form $Q$.
We can define the \emph{traceless part} $T_{p}$ of $H^{\otimes p}$ as $\Sp(2g,\Q)$-representations by
$$
 T_{p}=\bigcap_{1\le k<l\le p} \ker c^{\Sp}_{k,l}\subset H^{\otimes p}.
$$
For a partition $\lambda\vdash p$, let
$$V^{\Sp}_{\lambda}=T_{p}\cap V_{\lambda}\subset H^{\otimes p}.$$
If $\lambda$ has at most $g$ parts, then $V^{\Sp}_{\lambda}$ is an irreducible $\Sp(2g,\Q)$-representation corresponding to $\lambda$, and otherwise, we have $V^{\Sp}_{\lambda}=0$.
The irreducible representation $V^{\Sp}_{\lambda}$ can also be constructed as the image of the Young symmetrizer $c_{\lambda}: T_{p}\to T_{p}$, and it follows that $V^{\Sp}_{\lambda}$ is generated by $(a_1\wedge \cdots \wedge a_{\mu_1})\otimes \cdots \otimes (a_1\wedge \cdots \wedge a_{\mu_l})$, where $\mu=(\mu_1,\cdots, \mu_l)$ is the conjugate of $\lambda$.
See \cite{FH, Lindell} for more details.

We have the following irreducible decomposition of the tensor product of two irreducible $\Sp(2g,\Q)$-representations $V^{\Sp}_{\lambda}$ and $V^{\Sp}_{\mu}$
 $$
   V^{\Sp}_{\lambda}\otimes V^{\Sp}_{\mu}=\bigoplus_{\nu} (V^{\Sp}_{\nu})^{\oplus (N^{\Sp})_{\lambda \mu}^{\nu}},\quad
   (N^{\Sp})_{\lambda\mu}^{\nu}=\sum_{\zeta,\sigma,\tau}N_{\zeta\sigma}^{\lambda} \; N_{\zeta\tau}^{\mu}\; N_{\sigma\tau}^{\nu},
 $$
where the $N$'s denote the Littlewood--Richardson coefficients (see \cite[Section 25.3]{FH}).

We call an $\Sp(2g,\Q)$-representation $V$ \emph{algebraic} if after choosing a basis for $V$, the $(\dim V)^2$ coordinate functions of the action $\Sp(2g,\Q)\to \GL(V)$ are rational functions of the $(2g)^2$ variables.

\subsection{Traceless tensor products of algebraic $\Sp(2g,\Q)$-representations}

The \emph{traceless tensor products} of algebraic $\Sp(2g,\Q)$-representations can be defined in a way similar to those of $\GL(n,\Q)$-representations in Section \ref{Preliminary}.

Let $\lambda \vdash p$ and $\mu\vdash q$.
The \emph{traceless tensor product}
$V^{\Sp}_{\lambda}\wti{\otimes} V^{\Sp}_{\mu}$ of $V^{\Sp}_{\lambda}$ and $V^{\Sp}_{\mu}$ is
$$V^{\Sp}_{\lambda}\wti{\otimes} V^{\Sp}_{\mu}=(V^{\Sp}_{\lambda}\otimes V^{\Sp}_{\mu})\cap T_{p+q}\subset  H^{\otimes (p+q)}.$$
In other words, $V^{\Sp}_{\lambda}\wti{\otimes} V^{\Sp}_{\mu}$ is a subrepresentation of $V^{\Sp}_{\lambda}\otimes V^{\Sp}_{\mu}$ which vanishes under any contraction maps.
For $g\ge l(\lambda)+l(\mu)$, we have
$$V^{\Sp}_{\lambda}\wti{\otimes} V^{\Sp}_{\mu}\cong  \bigoplus_{|\nu|= p+q} (V^{\Sp}_{\nu})^{\oplus (N^{\Sp})_{\lambda \mu}^{\nu}}\subset
\bigoplus_{\nu} (V^{\Sp}_{\nu})^{\oplus (N^{\Sp})_{\lambda \mu}^{\nu}}
\cong V^{\Sp}_{\lambda}\otimes V^{\Sp}_{\mu}.$$

Let $M$ be an algebraic $\Sp(2g,\Q)$-representation. For each partition $\lambda$, define a vector space $$M_{\lambda}=\Hom_{\Sp(2g,\Q)}(V^{\Sp}_{\lambda},M).$$
Since the category of algebraic $\Sp(2g,\Q)$-representations is semisimple, we have a natural isomorphism
$$
  \iota_M: M\xrightarrow{\cong} \bigoplus_{\lambda}V^{\Sp}_{\lambda}\otimes M_{\lambda}.
$$
For two algebraic $\Sp(2g,\Q)$-representations $M$ and $N$, we have an isomorphism
$$
  \iota_M\otimes \iota_N: M\otimes N \xrightarrow{\cong} \left(\bigoplus_{\lambda}V^{\Sp}_{\lambda}\otimes M_{\lambda}\right)
  \otimes
  \left(\bigoplus_{\mu}V^{\Sp}_{\mu}\otimes N_{\mu}\right)
  \cong
  \bigoplus_{\lambda,\mu}(V^{\Sp}_{\lambda}\otimes V^{\Sp}_{\mu})\otimes (M_{\lambda}\otimes N_{\mu}).
$$
The \emph{traceless tensor product} $M \wti{\otimes} N$ of $M$ and $N$ is defined by
$$
  M \wti{\otimes} N=(\iota_{M}\otimes \iota_{N})^{-1} \left(\bigoplus_{\lambda,\mu}(V^{\Sp}_{\lambda}\wti{\otimes} V^{\Sp}_{\mu})\otimes (M_{\lambda}\otimes N_{\mu})\right)\subset M\otimes N.
$$
The \emph{traceless part} $\wti{T}^*M$ (resp. $\wti{\bigwedge}^* M$, $\wti{\Sym}^* M$) of the tensor algebra $T^*M$ (resp. the exterior algebra $\bigwedge^* M$, the symmetric algebra $\Sym^* M$) is defined in the same way as in Section \ref{Preliminary}.

Let $M_*=\bigoplus_{i\ge 1} M_i$ be a graded algebraic $\Sp(2g,\Q)$-representation.
We can also define the \emph{traceless part} $\wti{S}^*(M_*)$ of the graded-symmetric algebra $S^*(M_*)$ as the image of $\wti{T}^*M_*$ under the projection $T^*M_*\twoheadrightarrow S^*(M_*)$.

We have the following relation between the traceless tensor products of $\Sp(2g,\Q)$-representations and those of $\GL(2g,\Q)$-representations.

\begin{lemma}\label{GLSPtl}
Let $M$ and $N$ be algebraic $\GL(2g,\Q)$-representations, which can be considered as algebraic $\Sp(2g,\Q)$-representations by restriction.
Let $M\wti{\otimes}^{\GL} N$ (resp. $M\wti{\otimes}^{\Sp} N$) denote the traceless tensor product of $M$ and $N$ as $\GL(2g,\Q)$-representations (resp. as $\Sp(2g,\Q)$-representations).
Then we have
\begin{equation}\label{tracelessSpGL}
    M\wti{\otimes}^{\Sp} N \subset M\wti{\otimes}^{\GL} N.
\end{equation}
\end{lemma}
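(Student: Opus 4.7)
The plan is to exploit the $\Sp(2g,\Q)$-equivariant isomorphism $\Phi\colon H^*\xrightarrow{\cong} H$ induced by $Q$ (namely $Q(v,-)\mapsto v$) to identify every $\GL$-contraction map as one of the $\Sp$-contraction maps on a common tensor power of $H$. Since both $M\wti\otimes^{\GL} N$ and $M\wti\otimes^{\Sp} N$ are defined, via the semisimple decompositions $\iota_M$ and $\iota_N$, as the part of $M\otimes N$ annihilated by all contraction maps of the corresponding type, the inclusion reduces to the case of $\GL$-irreducibles: for each pair of bipartitions $\ul\lambda$ and $\ul\mu$, I want to show that $V_{\ul\lambda}\wti\otimes^{\Sp} V_{\ul\mu}\subset V_{\ul\lambda}\wti\otimes^{\GL} V_{\ul\mu}$ inside $V_{\ul\lambda}\otimes V_{\ul\mu}$.

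Writing $p=|\lambda^+|$, $q=|\lambda^-|$, $r=|\mu^+|$, $s=|\mu^-|$, we have $V_{\ul\lambda}\otimes V_{\ul\mu}\subset H^{\otimes(p+r)}\otimes (H^*)^{\otimes(q+s)}$. Applying $\Phi$ to each $H^*$-factor yields an $\Sp(2g,\Q)$-equivariant embedding $V_{\ul\lambda}\otimes V_{\ul\mu}\hookrightarrow H^{\otimes(p+q+r+s)}$. The key step is to verify that under this embedding, every $\GL$-contraction $c_{k,l}$ (pairing the $k$-th $H$-factor with the $l$-th $H^*$-factor via the duality pairing $\langle-,-\rangle$) translates, after applying $\Phi$, into the $\Sp$-contraction $c^{\Sp}_{k,\,p+r+l}$ (pairing the two corresponding $H$-factors via $Q$). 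This is a direct consequence of the identity $\langle v, Q(w,-)\rangle=Q(w,v)$.

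With this identification in hand, the family of $\GL$-contractions becomes a subfamily of the family of $\Sp$-contractions available on $H^{\otimes(p+q+r+s)}$; the $\Sp$-contractions include strictly more, such as those pairing two $H$-factors or two $H^*$-factors, which have no $\GL$-counterpart. Therefore any element of $V_{\ul\lambda}\otimes V_{\ul\mu}$ that vanishes under every $\Sp$-contraction vanishes in particular under every $\GL$-contraction, giving $V_{\ul\lambda}\wti\otimes^{\Sp} V_{\ul\mu}\subset V_{\ul\lambda}\wti\otimes^{\GL} V_{\ul\mu}$, and summing over the isotypic decomposition yields the lemma. The only point needing care is the dictionary between the two families of contractions under $\Phi$, and this is essentially tautological once the identification is set up correctly; no serious obstacle is expected.
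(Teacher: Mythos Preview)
Your proposal is correct and follows essentially the same route as the paper's proof: both reduce to the case of $\GL$-irreducibles $V_{\ul\lambda}\subset H^{p,q}$ and $V_{\ul\mu}\subset H^{r,s}$, use the $\Sp$-equivariant identification $H^*\cong H$ to view everything inside $H^{\otimes(p+q+r+s)}$, and then observe that the $\GL$-contractions (pairing an $H$-factor with an $H^*$-factor across the two tensor factors) constitute a proper subfamily of the $\Sp$-contractions in the index set $J$, so that the $\Sp$-traceless condition is the stronger one. The paper is slightly more explicit in writing out the index set $J$ of cross-block pairs, but the argument is the same.
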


\begin{proof}
It suffices to prove \eqref{tracelessSpGL} for $M=V_{\ul\lambda}\subset H^{p,q}$ and $N=V_{\ul\mu}\subset H^{r,s}$, where $\ul\lambda$ and $\ul\mu$ are bipartitions.
By the definitions of the traceless tensor products, we have
$$
 V_{\ul\lambda}\wti{\otimes}^{\GL} V_{\ul\mu}=(V_{\ul\lambda}\otimes V_{\ul\mu})\cap T_{p+r,q+s}
$$
and
$$
 V_{\ul\lambda}\wti{\otimes}^{\Sp} V_{\ul\mu}= (V_{\ul\lambda}\otimes V_{\ul\mu})\cap \bigcap_{(k,l)\in J} \ker c^{\Sp}_{k,l},
$$
where $J$ consists of elements $(k,l)\in[p+q+r+s]^2$ such that $(k,l)\notin \{1,\cdots,p\}^2\cup \{p+1,\cdots,p+q\}^2\cup \{p+q+1,\cdots,p+q+r\}^2\cup \{p+q+r+1,\cdots,p+q+r+s\}^2$.
Since we have $\bigcap_{(k,l)\in J} \ker c^{\Sp}_{k,l} \subset T_{p+r,q+s}$, it follows that
$V_{\ul\lambda}\wti{\otimes}^{\Sp} V_{\ul\mu} \subset V_{\ul\lambda}\wti{\otimes}^{\GL} V_{\ul\mu}$.
\end{proof}

\section{On the Albanese cohomology of the Torelli groups}\label{problems}

Let $\I_{g}$ (resp. $\I_{g,1}$, $\I_{g}^{1}$) denote the Torelli group of an oriented closed surface of genus $g$ (resp. with one boundary component, with one marked point).
Here we adapt our arguments on $H^A_*(\IA_n,\Q)$ to $H^A_*(\I_{g},\Q)$, $H^A_*(\I_{g,1},\Q)$ and $H^A_*(\I_{g}^{1},\Q)$, which are algebraic $\Sp(2g,\Q)$-representations.
Note that the Albanese homology of the Torelli group is isomorphic to the Albanese cohomology of the Torelli group since the algebraic $\Sp(2g,\Q)$-representations are self-dual.

The Albanese cohomology of $\I_{g}$ of degree $\le 3$ has been determined by Johnson \cite{Johnson}, Hain \cite{Hain}, Sakasai \cite{Sakasai} and Kupers--Randal-Williams \cite{KRW}.

\subsection{Lindell's result about the Albanese homology of $\I_{g,1}$}\label{secLindell}

Lindell \cite{Lindell} detected some subquotient $\Sp(2g,\Q)$-representations of the Albanese homology of $\I_{g,1}$.

\begin{theorem}[{Lindell \cite[Theorem 1.5]{Lindell}}]
For a pair of partitions $(\lambda,\mu)\vdash i$ such that $\lambda=(\lambda_1^{k_1}>\cdots >\lambda_m^{k_m}>0)$, $\mu=(\mu_1^{l_1}>\cdots>\mu_{m+2}^{l_{m+2}}>0)$, $\mu_j=\lambda_j+2$,
for $g\ge i+2l(\lambda)+l(\mu)$,
the subrepresentation $W^{\I_{g,1}}_i(\lambda,\mu) \subset \bigotimes_{j=1}^{m+2}\bigwedge^{k_j+l_j}V^{\Sp}_{1^{\mu_j}}$
spanned by all irreducible subrepresentations of weight $i+2l(\lambda)$ is a subquotient $\Sp(2g,\Q)$-representation of $H^A_i(\I_{g,1},\Q)$.
\end{theorem}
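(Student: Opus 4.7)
The plan is to mimic for $\I_{g,1}$ the strategy that the paper develops for $\IA_n$, with the general linear structure replaced by the symplectic one and Magnus's generators replaced by bounding pair maps (BP maps) supported on pairwise disjoint subsurfaces. Recall that a BP map of genus $k$ is a product $T_\alpha T_\beta^{-1}$ of Dehn twists along two disjoint non-separating simple closed curves cobounding a subsurface of genus $k$; its image under the Johnson homomorphism $\tau_i$ lies explicitly in $\bigwedge^{k+2} H$, and BP maps with disjoint support commute. The hypothesis $g \ge i + 2l(\lambda) + l(\mu)$ is precisely what is needed to fit enough disjoint subsurfaces, playing the role of the bound $n \ge i + 2l(\mu) + l(\nu)$ in Definition \ref{abeliancyclepairpart}.

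First, for the pair $(\lambda,\mu)$ as in the statement, I would build an $i$-tuple of mutually commuting BP maps whose $j$-th block corresponds to the part $\mu_j$: in each block, arrange the supports so that the Johnson images are of the form $a_{i_1}\wedge\cdots\wedge a_{i_{\mu_j}}$ for a controlled collection of symplectic basis vectors. The resulting abelian cycle $\alpha_{(\lambda,\mu)} \in H_i(\I_{g,1},\Q)$ then has $\tau_i$-image an explicit wedge product sitting inside $\bigotimes_{j=1}^{m+2} \bigwedge^{k_j+l_j} V^{\Sp}_{1^{\mu_j}}$. Second, I would introduce $\Sp(2g,\Q)$-equivariant contraction maps analogous to $c^{\tree}_i$ and $c^{\wheel}_i$ in Section \ref{subseccontraction}, but now built from the symplectic form $Q$ in place of the dual pairing, designed to project the target onto a copy of $W^{\I_{g,1}}_i(\lambda,\mu)$. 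Computing these on $\tau_i(\alpha_{(\lambda,\mu)})$ should yield, up to a nonzero scalar, a highest-weight vector (or a full generating set, via Corollary \ref{projgen} adapted to the $\Sp$-setting) for $W^{\I_{g,1}}_i(\lambda,\mu)$; the weight condition $i + 2l(\lambda)$ is what selects those components not killed by symplectic traces among a single tensor factor $\bigwedge^{k_j+l_j} V^{\Sp}_{1^{\mu_j}}$.

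Third, to promote ``one irreducible detected per $(\lambda,\mu)$'' into ``the full $W^{\I_{g,1}}_i(\lambda,\mu)$ appears as a subquotient'', I would run the inductive argument of Theorem \ref{Johnsonpart}: define a partial order on admissible pairs $(\lambda,\mu)$ such that $F_{(\lambda,\mu)}(\tau_i(\alpha_{(\lambda',\mu')})) = 0$ unless $(\lambda',\mu') \ge (\lambda,\mu)$, taking as the contraction functional $F_{(\lambda,\mu)}$ the tensor product of the maps constructed in the previous step. Then induction from the minimum element upward, combined with Lemma \ref{Johnsonpartmunu} adapted to the symplectic setting, pulls off each $W^{\I_{g,1}}_i(\lambda,\mu)$ in turn.

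The main obstacle will be the vanishing/partial-order lemma (the analogue of Lemma \ref{lempairpartcontraction}). In the $\GL(n,\Q)$-case, the only contractions pair an $H$-slot with an $H^*$-slot, so disjointness of basis elements gives vanishing essentially for free (see \eqref{disjoint}). In the $\Sp(2g,\Q)$-case, $Q$ pairs $H$ with itself, so ``disjoint'' abelian-cycle supports no longer force contractions to vanish, and the admissible contractions depend sensitively on which symplectic pairs $(a_i,b_i)$ actually appear in the BP-map supports. Arranging the subsurfaces so that the unwanted contractions produce zero (likely by keeping all the homology classes arising from one block within the $a$-side of the basis, and exploiting that $Q(a_i,a_j)=0$) is the key combinatorial point, and will also be where the constraint $\mu_j = \lambda_j + 2$ enters: it matches the increase in wedge degree contributed by genus $\lambda_j$ BP-maps to the size of the target tensor factor $V^{\Sp}_{1^{\mu_j}}$.
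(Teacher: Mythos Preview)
The paper does not give its own proof of this theorem: it is quoted verbatim from Lindell's paper \cite[Theorem~1.5]{Lindell} in Section~\ref{secLindell} and used only as a point of comparison. In fact the dependence runs the other way: the paper explicitly says that the proof of Lemma~\ref{Johnsonpartmunu} (the $\IA_n$ analogue) is ``analogous to the proof of \cite[Theorem~1.5]{Lindell}''. So there is no paper-internal argument to compare against, and your plan---abelian cycles from bounding-pair maps on disjoint subsurfaces, explicit Johnson images, and an $\Sp$-equivariant projection onto a generator of $W^{\I_{g,1}}_i(\lambda,\mu)$---is essentially a reconstruction of Lindell's original proof, which is the correct and only available approach.

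One correction to your plan: your ``Third'' step (the partial order on pairs and the induction modelled on Theorem~\ref{Johnsonpart}) is not needed for the theorem as stated. Lindell's theorem asserts only that a \emph{single} $W^{\I_{g,1}}_i(\lambda,\mu)$ is a subquotient of $H^A_i(\I_{g,1},\Q)$, not that the direct sum $\overline{W}^{\I_{g,1}}_i = \bigoplus_{(\lambda,\mu)} W^{\I_{g,1}}_i(\lambda,\mu)$ is. For a single pair it suffices to produce one abelian cycle whose image under the relevant contraction map hits a generator of $W^{\I_{g,1}}_i(\lambda,\mu)$ (the $\Sp$-analogue of Corollary~\ref{projgen}); no vanishing lemma for other pairs and no induction are required. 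The direct-sum statement is precisely what the paper flags as open in Section~\ref{secLindell} (``We expect that one can adapt our argument of Theorem~\ref{Johnsonpart}\dots''), and your third step would be a contribution toward that, not toward Lindell's theorem itself.

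A smaller point: in the symplectic setting there is no natural analogue of the $c^{\tree}$/$c^{\wheel}$ dichotomy, since there is no $H$-versus-$H^*$ distinction. Lindell's contraction maps are built directly from the symplectic form and target $\bigwedge^{k+2} H \cong V^{\Sp}_{1^{k+2}} \oplus \cdots$, with the weight-$(i+2l(\lambda))$ condition playing the role that tracelessness plays in the $\GL$-picture. Your description of the ``main obstacle'' is accurate in spirit, but keeping all BP-map homology classes on the $a$-side of the basis is exactly how Lindell arranges for the unwanted symplectic contractions to vanish.
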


We expect that one can adapt our argument of Theorem \ref{Johnsonpart} to the above result of Lindell to show that the direct sum $\overline{W}^{\I_{g,1}}_i$ of $W^{\I_{g,1}}_i(\lambda,\mu)$ is a subquotient representation of $H^A_i(\I_{g,1},\Q)$.
In Section \ref{conjTorellialb}, we will propose a conjectural structure of $H^A_*(\I_{g,1},\Q)$.

\subsection{Structures of the cohomology of Lie algebras associated to the Torelli groups}\label{secLiecohom}

Let $\mathfrak{t}_{g}$ denote the Malcev Lie algebra of $\I_{g}$, and $\mathfrak{u}_{g}$ the Lie algebra of the prounipotent radical of the relative Malcev completion of the mapping class group $\calM_{g}$ with respect to the short exact sequence $1\to \I_{g}\to \calM_{g}\to \Sp(2g,\Z)\to 1$. (See \cite{Hain} and \cite{KRW21} for details.)
Let $\Gr \mathfrak{t}_{g}$ (resp. $\Gr \mathfrak{u}_{g}$) denote the graded Lie algebra of $\mathfrak{t}_{g}$ (resp. $\mathfrak{u}_{g}$) associated to the lower central series.
The associated Lie algebras $\Gr \mathfrak{t}_{g,1}$ and $\Gr \mathfrak{u}_{g,1}$ of $\I_{g,1}$, and $\Gr \mathfrak{t}_{g}^{1}$ and $\Gr \mathfrak{u}_{g}^{1}$ of $\I_{g}^{1}$ are defined in a similar way.
For $g\ge 3$, we have central extensions
\begin{equation}\label{centralextension}
    \Q[2]\to \Gr \mathfrak{t}_{g} \to \Gr \mathfrak{u}_{g}, \;\Q[2]\to \Gr \mathfrak{t}_{g,1} \to \Gr \mathfrak{u}_{g,1},\;\Q[2]\to \Gr \mathfrak{t}_{g}^{1} \to \Gr \mathfrak{u}_{g}^{1},
\end{equation}
\begin{equation}\label{extension}
    \Q[2]\to \Gr \mathfrak{t}_{g,1} \to \Gr \mathfrak{t}_{g}^{1},
\end{equation}
where $\Q[2]$ is the graded Lie algebra concentrated in degree $2$, and an extension
\begin{equation}\label{puu}
     \Gr \mathfrak{p}_{g}^{1}\to \Gr\mathfrak{u}_{g}^{1}\to \Gr\mathfrak{u}_{g},
\end{equation}
where $\mathfrak{p}_{g}^{1}$ is the Malcev Lie algebra of the fundamental group of a closed surface of genus $g$, given by \cite{Hain} (see also \cite[Lemma 3.2]{KRW21}).
Hain \cite{Hain} proved that $\Gr \mathfrak{t}_{g},\Gr \mathfrak{u}_{g},\Gr \mathfrak{t}_{g,1},\Gr \mathfrak{u}_{g,1} ,\Gr \mathfrak{t}_{g}^{1}$ and $\Gr \mathfrak{u}_{g}^{1}$ are quadratically presented for $g\ge 6$.
Kupers--Randal-Williams \cite{KRW21} proved that these graded Lie algebras are Koszul in a stable range.

In what follows, we study the structures of the cohomology of Lie algebras $\Gr\mathfrak{t}_{g}, \Gr\mathfrak{u}_{g}, \Gr\mathfrak{t}_{g,1}, \Gr\mathfrak{u}_{g,1}, \Gr\mathfrak{t}_{g}^{1}$ and $\Gr \mathfrak{u}_{g}^{1}$ by using computation of characters by Garoufalidis--Getzler \cite{GG}.

Consider a graded algebraic $\Sp(2g,\Q)$-representation $Y_*=\bigoplus_{i\ge 1}Y_i$, $Y_i=\bigwedge^{i+2}H$.
Also consider quotients $Y'_* = Y_* / (\Q[2])$, $X_*=Y_*/ (H[1])$ and $X'_*=X_*/ (\Q[2])$, where $\Q[2]\subset Y_{2}=\bigwedge^{4} H$ is the trivial $\Sp(2g,\Q)$-subrepresentation, and where $H[1]\subset Y_1= \bigwedge^{3}H$.
We also consider an extension $Z_*=Y_*\oplus (\Q[2])$, and let $Z'_*=Z_*/(\Q[2])= Y_*$.
Then we obtain the following structures of the cohomology of six Lie algebras.

\begin{proposition}\label{Liealgebracohomology}
 We stably have isomorphisms of graded $\Sp(2g,\Q)$-representations
\begin{gather*}
\begin{split}
    H^* (\Gr \mathfrak{u}_{g},\Q)\cong \wti{S}^*(X_*),& \quad H^* (\Gr \mathfrak{t}_{g},\Q)\cong \wti{S}^*(X'_*),\\
    H^* (\Gr \mathfrak{u}_{g,1},\Q)\cong \wti{S}^*(Y_*),& \quad H^* (\Gr \mathfrak{t}_{g,1},\Q)\cong \wti{S}^*(Y'_*),\\
    H^* (\Gr \mathfrak{u}_{g}^{1},\Q)\cong \wti{S}^*(Z_*),& \quad H^* (\Gr \mathfrak{t}_{g}^{1},\Q)\cong \wti{S}^*(Z'_*),
\end{split}
\end{gather*}
where stably means that for each cohomological degree $i$, we have isomorphisms for $g\ge 3i$.
\end{proposition}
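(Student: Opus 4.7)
The plan is to prove all six isomorphisms simultaneously by combining Koszul duality for the associated graded Lie algebras with stable character matching. By Kupers--Randal-Williams \cite{KRW21}, each of the six Lie algebras $\Gr\mathfrak{u}_g, \Gr\mathfrak{t}_g, \Gr\mathfrak{u}_{g,1}, \Gr\mathfrak{t}_{g,1}, \Gr\mathfrak{u}_g^1, \Gr\mathfrak{t}_g^1$ is Koszul and quadratically presented in a stable range, so each cohomology ring is the Koszul dual algebra and is determined up to $\Sp(2g,\Q)$-isomorphism in each cohomological degree by its character. Since the category of algebraic $\Sp(2g,\Q)$-representations is semisimple, it suffices to match characters degree by degree in the range $g\ge 3i$.

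First I would treat the base case $\Gr\mathfrak{u}_{g,1}$. Hain's quadratic presentation has generators in degree one given by $\bigwedge^3 H = Y_1$ (the stable image of the Johnson homomorphism, identified with its dual via the symplectic form), with quadratic relations $R\subset \bigwedge^2(\bigwedge^3 H)$ coming from the Jacobi identity together with the Johnson kernel relations. By Koszulness, $H^*(\Gr\mathfrak{u}_{g,1},\Q)$ is the free graded-commutative algebra on $Y_1$ placed in cohomological degree $1$, modulo the orthogonal relations $R^{\perp}$. The stable character computation of Garoufalidis--Getzler \cite{GG}, which produces such invariants as plethystic expressions in exterior powers of $H$, then gives the $\Sp(2g,\Q)$-character of $H^*(\Gr\mathfrak{u}_{g,1},\Q)$; one verifies that it agrees termwise with the character of $\wti S^*(Y_*)$. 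The key combinatorial input is that the traceless tensor product packaging in $\wti S^*(Y_*)$ encodes exactly the higher generators $Y_i=\bigwedge^{i+2}H$ alongside cup products of lower-degree classes, with the trace relations corresponding precisely to $R^{\perp}$.

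Next I would deduce the remaining five cases from this one via the exact sequences \eqref{centralextension}, \eqref{extension} and \eqref{puu}. For each central extension $\Q[2]\to\Gr\mathfrak{t}\to\Gr\mathfrak{u}$, the Hochschild--Serre spectral sequence is concentrated in rows $q=0,1$ with a single differential $d_2$ given by cup product with the extension class $c\in H^2(\Gr\mathfrak{u})$. Stably, $c$ corresponds to the trivial subrepresentation of $Y_2=\bigwedge^4 H$ (respectively the one sitting in $X_2$ or $Z_2$); multiplication by $c$ is injective, so $H^*(\Gr\mathfrak{t})\cong H^*(\Gr\mathfrak{u})/(c)$, which matches $\wti S^*(Y_*/(\Q[2]))=\wti S^*(Y'_*)$ and analogously for $X'_*$ and $Z'_*$. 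For the extension \eqref{puu}, one uses the known $H^*(\Gr\mathfrak{p}_g^1,\Q)\cong \bigwedge^*(H)$ and a similar spectral sequence argument to relate $H^*(\Gr\mathfrak{u}_g^1)$ to $H^*(\Gr\mathfrak{u}_g)$, matching the definitions $Z_*=Y_*\oplus\Q[2]$ and $X_*=Y_*/(H[1])$.

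The main obstacle will be controlling the sharp stable range $g\ge 3i$ uniformly across the six cases. Both the Kupers--Randal-Williams Koszulness range and the Garoufalidis--Getzler character range must be compatible with this bound, and the spectral sequences used in the reduction must collapse within it; in particular, the injectivity of multiplication by $c$ needs to be verified in the given range rather than only asymptotically. An alternative approach would be to bypass the spectral sequences by constructing, for each of the six Lie algebras, explicit $\Sp(2g,\Q)$-equivariant cocycles in the Chevalley--Eilenberg complex representing the generators $Y_i$ (respectively $X_i$, $Z_i$ and their primed variants) and then show that the induced algebra homomorphism from $\wti S^*(Y_*)$ etc.\ into $H^*$ is an isomorphism by comparing Hilbert series; this would sidestep the spectral sequence bookkeeping at the cost of producing the explicit representatives.
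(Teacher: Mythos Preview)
Your strategy---Koszulness from \cite{KRW21} to identify each cohomology with a quadratic dual algebra, character matching via Garoufalidis--Getzler \cite{GG}, and Hochschild--Serre spectral sequences for the extensions \eqref{centralextension}, \eqref{extension}, \eqref{puu}---is essentially the paper's approach. However, there are two issues worth correcting.

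First, the base case. Garoufalidis--Getzler's Theorem~1.1 computes $\ch(A)$ and $\ch(A^1)$, the characters of the quadratic duals of $U(\Gr\mathfrak{u}_g)$ and $U(\Gr\mathfrak{u}_g^1)$ respectively; it does \emph{not} directly give the character for $\Gr\mathfrak{u}_{g,1}$. The paper therefore takes $\Gr\mathfrak{u}_g$ (and $\Gr\mathfrak{u}_g^1$) as the starting points, writes $\ch(X_*)=D\omega(-h_1t+L(t))$, checks $\ch(\wti S^*(X_*))=D\Exp(D^{-1}\ch(X_*))=D\omega\Exp(-h_1t+L(t))=\ch(A)$, and only then passes to the other four Lie algebras via the spectral sequences. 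Your choice of $\Gr\mathfrak{u}_{g,1}$ as base case would require first deriving its character from $\ch(A)$ or $\ch(A^1)$, which is extra work; it is cleaner to start where \cite{GG} does.

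Second, a factual slip: $H^*(\Gr\mathfrak{p}_g^1,\Q)$ is not $\bigwedge^*H$. The Malcev Lie algebra of $\pi_1(\Sigma_g)$ has the same rational cohomology as the surface, namely $\Q[0]\oplus H[1]\oplus \Q[2]$; this is exactly what feeds into the relation $\ch(A^1)=\ch(A)(1-h_1t+t^2)$ used in Remark~\ref{chA^1andchA}. With this correction, your spectral-sequence reductions go through: for each central extension $\Q[2]\to\Gr\mathfrak{t}\to\Gr\mathfrak{u}$ the paper obtains $\ch(H^*(\Gr\mathfrak{t}))=\ch(H^*(\Gr\mathfrak{u}))(1-t^2)$, and then verifies this equals $\ch(\wti S^*(X'_*))$ (resp.\ $Y'_*$, $Z'_*$) directly. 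Working at the character level sidesteps the need to prove injectivity of multiplication by the extension class as a separate lemma.
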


We will review the computation of Garoufalidis--Getzler \cite[Theorem 1.1]{GG}.
Let $A=\bigwedge^*(V^{\Sp}_{1^3})/(V^{\Sp}_{2^2})$ be the quadratic algebra, that is, the quotient of the exterior algebra $\bigwedge^*(V^{\Sp}_{1^3})$ by the ideal generated by $V^{\Sp}_{2^2}\subset \bigwedge^2(V^{\Sp}_{1^3})$.
Let $A^1=\bigwedge^*(V^{\Sp}_{1^3}\oplus V^{\Sp}_{1})/(V^{\Sp}_{2^2}\oplus V^{\Sp}_{1^2})$ be the quadratic algebra defined similarly.
Then $A$ (resp. $A^1$) is the quadratic dual of the universal enveloping algebra of $\mathfrak{u}_{g}$ (resp. $\mathfrak{u}_{g}^{1}$).

\newcommand\ch{\mathrm{ch}_t}
\newcommand\Exp{\mathrm{Exp}}
Let $\Lambda$ denote the ring of symmetric functions.
Let $h_n$ denote the complete symmetric function, $e_n$ the elementary symmetric function, $p_n$ the power sum symmetric function, $s_{\lambda}$ the Schur function, $s_{\langle \lambda \rangle}$ the symplectic Schur function.
Let $\omega: \Lambda\to \Lambda,\; p_n \mapsto -p_n$ denote an algebra involution, which satisfies $\omega(h_n)=(-1)^n e_n$.
We have a linear automorphism of $\Lambda$
\begin{gather*}
 \begin{split}
D:\Lambda\to \Lambda,\quad s_{\lambda}\mapsto s_{\langle \lambda \rangle},
 \end{split}
\end{gather*}
and an operation
$$\Exp:\Lambda[[t]]\to \hat{\Lambda}[[t]],\quad f\mapsto \sum_{q=0}^{\infty}h_q\circ f,$$
where $h_q\circ f$ denotes the plethysm and $\hat{\Lambda}[[t]]$ is the completion of $\Lambda[[t]]$ with respect to the augmentation ideal.
For an algebraic $\Sp(2g,\Q)$-representation $V=\bigoplus_{\lambda} (V^{\Sp}_{\lambda})^{\oplus c(\lambda)}$, the \emph{character} $\mathrm{ch}(V)$ of $V$ is defined by $\mathrm{ch}(V)=\sum_{\lambda} c(\lambda)s_{\langle \lambda \rangle}\in \Lambda$.
The \emph{character} $\ch(M_*)$ of a graded algebraic $\Sp(2g,\Q)$-representation $M_*$ is defined by $\ch(M_*)=\sum_{q=0}^{\infty} (-t)^q \mathrm{ch}(M_q)\in \Lambda[[t]]$.

Let
$$L(t)=\sum_{q=1}^{\infty} \left(\sum_{r=0}^{\lfloor q+2/2\rfloor}h_{q+2-2r}\right)t^q\in \Lambda[[t]].$$

\begin{theorem}[Garoufalidis--Getzler {\cite[Theorem 1.1]{GG}}]
 The characters $\ch(A)$ of $A$ and $\ch(A^1)$ of $A^1$ in $\Lambda[[t]]$ are
 \begin{gather*}
    \begin{split}
        \ch(A)&= D\omega \Exp \left(-h_1 t+ L(t)
        \right),\quad
        \ch(A^1)= D\omega \Exp\left(h_0 t^2 +
        L(t)
        \right).
    \end{split}
 \end{gather*}
\end{theorem}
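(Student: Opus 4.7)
The plan is to reduce the two character identities to an explicit computation of the $\Sp(2g,\Q)$-character of the graded Lie algebras $\Gr\mathfrak{u}_g$ and $\Gr\mathfrak{u}_g^1$, and then to transfer this information to $A$ and $A^1$ via Koszul duality. By construction $A$ is the quadratic dual of $U(\Gr\mathfrak{u}_g)$ (using Hain's quadratic presentation of $\Gr\mathfrak{u}_g$ with generators $V^{\Sp}_{1^3}$ and relations $V^{\Sp}_{2^2}$), and $A^1$ is the quadratic dual of $U(\Gr\mathfrak{u}_g^1)$. Granting the stable-range Koszulness proven in \cite{KRW21}, one has the Koszul duality identity in $\Lambda[[t]]$, which in character form expresses $\ch(A)$ as the plethystic inverse (twisted by $\omega$) of $\ch(U(\Gr\mathfrak{u}_g))$. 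By PBW, $\ch(U(\Gr\mathfrak{u}_g)) = \Exp(\ch(\Gr\mathfrak{u}_g))$, so the problem is reduced to computing $\ch(\Gr\mathfrak{u}_g)$ and $\ch(\Gr\mathfrak{u}_g^1)$ as elements of $\Lambda[[t]]$.

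To compute these Lie algebra characters, use the Kontsevich--Morita identification of $\Gr\mathfrak{u}_{g,1}$ with the positive part of the Lie algebra of derivations of the free Lie algebra $\mathcal{L}(H)$ preserving the symplectic element $\sum_i [a_i,b_i]$; equivalently, $\Gr\mathfrak{u}_{g,1}$ in internal degree $q$ is identified with a specific $\Sp(2g,\Q)$-subrepresentation of the cyclic invariants of $H^{\otimes(q+2)}$. The $\GL(2g,\Q)$-character of cyclic invariants of $H^{\otimes n}$ is classical (a Witt-type M\"obius inversion on $h_n$), and applying the $D$ operator branching $\GL(2g,\Q)$ to $\Sp(2g,\Q)$ then produces precisely the telescoping sum $\sum_{r=0}^{\lfloor (q+2)/2 \rfloor} h_{q+2-2r}$, i.e.\ the coefficient of $t^q$ in $L(t)$.

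The passage from $\Gr\mathfrak{u}_{g,1}$ to $\Gr\mathfrak{u}_g$ and $\Gr\mathfrak{u}_g^1$ is controlled by the extensions (\ref{centralextension}) and (\ref{puu}). The differences between these three Lie algebras account for the discrepancy between the correction terms $-h_1 t$ (for $A$) and $h_0 t^2$ (for $A^1$) in the two formulas: the $-h_1 t$ in $\ch(A)$ reflects the absence of a $V^{\Sp}_1$ summand in $\Gr\mathfrak{u}_g$ (whose removal, after $\omega$-twisted Koszul inversion, contributes a sign-flipped $h_1 t$ to the character of $A$), while the $h_0 t^2$ in $\ch(A^1)$ reflects the extra $V^{\Sp}_{1^2}$ relation from the extension (\ref{puu}) together with the degree-$2$ abelianization contribution. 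Assembling via $\Exp$ on the Lie side, Koszul inversion with the $\omega$-twist, and $D$ to convert $\GL$- to $\Sp$-characters then yields precisely the two stated identities.

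The main obstacle is verifying that the $\Sp(2g,\Q)$-character of the cyclic invariants of $H^{\otimes n}$ branches to exactly the sum $h_n + h_{n-2} + \cdots$. This is a delicate combinatorial identity sitting at the intersection of cyclic invariant theory (necklace formulas, Lyndon-word decompositions) and symplectic branching (Littlewood's $\GL \to \Sp$ restriction formula), and careful plethystic bookkeeping is needed to track the signs introduced by $\omega$, the cohomological-degree convention $\ch_t(M_*) = \sum_q (-t)^q \mathrm{ch}(M_q)$, and the cyclic-group action. A secondary obstacle is ensuring that Koszulness holds in a sufficiently wide stable range to give an equality of generating functions to all orders in $t$; if \cite{KRW21} is insufficiently uniform in $q$, one must complement it with a direct Euler-characteristic argument on the Chevalley--Eilenberg complex of $\Gr\mathfrak{u}_g$ to conclude the Hilbert-series identity.
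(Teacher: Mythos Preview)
The paper does not give its own proof of this statement; it is quoted verbatim from Garoufalidis--Getzler \cite{GG} (with a correction to the $A^1$ formula explained in the remark that follows). The original argument in \cite{GG}, alluded to in that remark, proceeds by identifying $(A\otimes T(H))^{\Sp}$ and $(A^1\otimes T(H))^{\Sp}$ with algebras of trivalent graphs modulo IH-type relations, computing the graph-algebra characters combinatorially, and then extracting $\ch(A)$ and $\ch(A^1)$ via Schur--Weyl/invariant-theory duality. This is quite different from your Koszul-duality route.

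Your proposal has two substantive gaps. First, the step ``identify $\Gr\mathfrak{u}_{g,1}$ with the positive part of the derivation Lie algebra of $\mathcal{L}(H)$ preserving the symplectic element'' is not correct as stated: the Johnson homomorphism lands in that derivation algebra but is not known to be surjective, and $\Gr\mathfrak{u}_{g,1}$ is by Hain's theorem the Lie subalgebra \emph{generated in degree~$1$}, whose character is a priori strictly smaller than the cyclic-invariants character you write down. Computing $\ch(\Gr\mathfrak{u}_{g,1})$ directly is essentially the content of the theorem you are trying to prove, so this step is circular. Second, the phrase ``plethystic inverse (twisted by $\omega$)'' misdescribes the Koszul relation: Koszul duality gives the \emph{multiplicative} identity $\ch(A)(t)\cdot \ch(U(\Gr\mathfrak{u}_g))(-t)=1$ in $\Lambda[[t]]$, not a plethystic one. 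Combined with PBW this yields $\ch(A)=\Exp\bigl(-\ch(\Gr\mathfrak{u}_g)|_{t\mapsto -t}\bigr)$, which still requires knowing $\ch(\Gr\mathfrak{u}_g)$ independently. Finally, invoking \cite{KRW21} to establish \cite{GG} reverses the historical and logical order; you would need to verify that the Koszulness proof in \cite{KRW21} does not itself rely on the character formulas of \cite{GG}.
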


\begin{remark}
The above formula for $\ch(A^1)$ has one more copies of $h_0 t^2$ in the $\Exp(-)$ than the original one given in \cite[Theorem 1.1]{GG}.
By their proof, we have a stable isomorphism of algebras
$$
 \C\langle \text{T-graphs} \rangle/ (\mathrm{IH}^1)\to (A^1\otimes T(H))^{\Sp},
$$
where the left-hand side is the quotient of the algebra of trivalent graphs with ordered legs modulo the $\mathrm{IH}^1$-relation, and where $T(H)=\bigoplus_{q=0}^{\infty}H^{\otimes q}$ is the tensor algebra.
Here, the $\mathrm{IH}^1$-relation is as follows: if a graph $G$ has a part of shape $I$ at most two of whose four endpoints are connected in $G$, then $G$ is identified with another graph which is obtained from $G$ by replacing the part of shape $I$ with a graph of shape $H$.
Therefore, in degree $2$, the theta-shaped graph, which they wrote as $e_{2,0}$, and the dumbbell-shaped graph, which has two loops connected to one edge, are not equivalent.
Hence, we need one more $h_0 t^2$ in the $\Exp(-)$ of $\ch(A^1)$.
\end{remark}

\begin{remark}\label{chA^1andchA}
Alternatively, we can compute the character $\ch(A^1)$ by using the character $\ch(A)$ and the Hochschild--Serre spectral sequence for the extension \eqref{puu} for $g\ge 3$.
Since the action of $\Gr \mathfrak{u}_{g}$ on $H^*(\Gr\mathfrak{p}_{g}^{1},\Q)=\Q[0]\oplus H[1]\oplus \Q[2]$ is trivial,
we have an isomorphism of graded $\Sp(2g,\Q)$-representations
$$
  H^*(\Gr \mathfrak{u}_{g}^{1},\Q)\cong H^*(\Gr \mathfrak{u}_{g},\Q)\otimes H^*(\Gr\mathfrak{p}_{g}^{1},\Q).
$$
Therefore, we have
$$
\ch(A^1)= \ch(A) (1-h_1t +t^2).
$$
\end{remark}

\begin{proof}[Proof of Proposition \ref{Liealgebracohomology}]
We can check that for any graded $\Sp(2g,\Q)$-representation $M_*$, we have
 $$
   \ch(\wti{S}^*(M_*))= D \Exp(D^{-1} \ch(M_*)).
 $$
The characters of $X_*, Y_*$ and $Z_*$ are
\begin{gather*}
 \begin{split}
    \ch(Y_*)&=D\sum_{q=1}^{\infty} \left(\sum_{r=0}^{\lfloor q+2/2\rfloor}e_{q+2-2r}\right)(-t)^q=D \omega \left(L(t)\right),\\
    \ch(X_*)&=D \omega \left(-h_1 t + L(t)\right),\\
    \ch(Z_*)&=D \omega \left(h_0 t^2 + L(t)\right).
 \end{split}
\end{gather*}
It follows that
\begin{gather*}
\begin{split}
    \ch(\wti{S}^*(X_*))
    &=D \Exp(D^{-1} \ch(X_*))\\
    &=D \Exp\left(\omega \left(-h_1 t + L(t)\right)\right)\\
    &=D \omega \Exp\left(-h_1 t + L(t)\right)\\
    &=\ch(A).
\end{split}
\end{gather*}
We have an isomorphism of graded $\Sp(2g,\Q)$-representations
$$H^* (\Gr \mathfrak{u}_{g},\Q)\cong A$$
for $g\ge 3*$, since $\Gr \mathfrak{u}_{g}$ is quadratically presented for $g\ge 6$, and Koszul for $g\ge 3*$.
Therefore, we have $H^* (\Gr \mathfrak{u}_{g},\Q)\cong \wti{S}^*(X_*)$. The case of $\Gr \mathfrak{u}_{g}^{1}$ is similar.

For the other four graded Lie algebras, we use the Hochschild--Serre spectral sequences for extensions \eqref{centralextension} and \eqref{extension}.
In what follows, we only consider the case of $\Gr \mathfrak{t}_{g}$.
We can check the other cases in a similar way.
For the extension $\Gr \mathfrak{t}_{g}$ of $\Gr \mathfrak{u}_{g}$, since the action of $\Gr \mathfrak{u}_{g}$ on $H^*(\Q[2],\Q)=H^0(\Q[2],\Q)\oplus H^1(\Q[2],\Q)=\Q[0]\oplus \Q[2]$ is trivial, we have
$$
H^*(\Gr \mathfrak{t}_{g},\Q)\cong H^*(\Gr \mathfrak{u}_{g},\Q)/(H^*(\Gr \mathfrak{u}_{g},\Q)[2]),
$$
where $H^*(\Gr \mathfrak{u}_{g},\Q)[2]$ denotes the degree-shift by $2$.
Therefore, we have
\begin{gather*}
\begin{split}
     \ch(H^*(\Gr \mathfrak{t}_{g},\Q))
    &=\ch(H^*(\Gr \mathfrak{u}_{g},\Q)) (1-t^2)\\
    &=\left( D \omega \Exp\left(-h_1 t+
        L(t)
        \right)\right) \Exp(-t^2)\\
    &= D\omega  \Exp\left(-h_1 t -h_0 t^2 +
        L(t)
        \right)\\
    &=\ch(\wti{S}^*(X'_*)).
\end{split}
\end{gather*}
Therefore, we have $H^*(\Gr \mathfrak{t}_{g},\Q)\cong \wti{S}^*(X'_*)$, which completes the proof.
\end{proof}

\subsection{Conjectural structures of the Albanese cohomology of the Torelli groups}\label{conjTorellialb}

Here we study the structures of the Albanese cohomology of $\I_{g}$, $\I_{g,1}$ and $\I_{g}^{1}$.

\begin{proposition}\label{Torellisurj}
We stably have surjective morphisms of graded $\Sp(2g,\Q)$-representations
\begin{gather*}
    H^*(\Gr \mathfrak{t}_{g},\Q)\twoheadrightarrow H_A^*(\I_{g},\Q),\quad  H^*(\Gr \mathfrak{t}_{g,1},\Q)\twoheadrightarrow H_A^*(\I_{g,1},\Q),\\
    H^*(\Gr \mathfrak{t}_{g}^{1},\Q)\twoheadrightarrow H_A^*(\I_{g}^{1},\Q).
\end{gather*}
\end{proposition}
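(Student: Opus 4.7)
The plan is to focus on $\I_g$; the arguments for $\I_{g,1}$ and $\I_g^1$ are identical, using the corresponding Lie algebras. The strategy is to exhibit both $H^*(\Gr\mathfrak{t}_g,\Q)$ and $H_A^*(\I_g,\Q)$ as explicit quotients of $\bigwedge^* H^1(\I_g,\Q)$ and then to show that the defining ideal of the former is contained in the defining ideal of the latter.

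First, I would combine Hain's stable quadratic presentability of $\Gr\mathfrak{t}_g$ (recalled in Section~\ref{secLiecohom}) with the stable Koszulness of \cite{KRW21} to identify, via the Chevalley--Eilenberg complex, the cohomology ring in the form
\[
H^*(\Gr\mathfrak{t}_g,\Q)\;\cong\;\bigwedge^* H^1(\I_g,\Q)\big/(R^\perp),
\]
where $R := \ker\bigl([\,,\,]\colon \bigwedge^2 H_1(\I_g,\Q)\twoheadrightarrow \Gr^2\mathfrak{t}_g\bigr)$ and $R^\perp\subset\bigwedge^2 H^1(\I_g,\Q)$ is the annihilator under the natural pairing. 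Here quadratic presentability fixes the shape of the defining ideal, while Koszulness ensures that the cohomology is concentrated in the ``diagonal'' weight---equivalently, that the algebra is generated in degree one with only quadratic relations.

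Next, I would apply the $5$-term exact sequence to the extension $[\I_g,\I_g]\to\I_g\to H_1(\I_g,\Z)$ with $\Q$-coefficients. Because the rightmost map $H_1(\I_g,\Q)\to H_1(H_1(\I_g,\Q),\Q)$ is the identity, the sequence collapses to an exact sequence of $\Sp(2g,\Q)$-representations
\[
H_2(\I_g,\Q)\longrightarrow \bigwedge^2 H_1(\I_g,\Q)\overset{[\,,\,]}{\longrightarrow} \Gr^2\mathfrak{t}_g\longrightarrow 0,
\]
so the image of $H_2(\I_g,\Q)\to\bigwedge^2 H_1(\I_g,\Q)$ is exactly $R$. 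Dualizing this three-term sequence shows that the kernel of the cup-product map $\bigwedge^2 H^1(\I_g,\Q)\to H^2(\I_g,\Q)$ is precisely $R^\perp$; in particular $R^\perp$ maps to zero in $H_A^2(\I_g,\Q)$.

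Since this kernel is an ideal in $\bigwedge^* H^1(\I_g,\Q)$, it contains the entire ideal $(R^\perp)$. Consequently, the cup-product surjection $\bigwedge^* H^1(\I_g,\Q)\twoheadrightarrow H_A^*(\I_g,\Q)$ factors through the quotient $\bigwedge^* H^1(\I_g,\Q)/(R^\perp)\cong H^*(\Gr\mathfrak{t}_g,\Q)$, producing the desired $\Sp(2g,\Q)$-equivariant surjection; equivariance is automatic because the $5$-term sequence, the bracket, and the Chevalley--Eilenberg identification are all natural under the mapping class group action. The main obstacle is the first step: converting the abstract Koszulness of the associative algebra $U\Gr\mathfrak{t}_g$ together with its quadratic presentation into the explicit Chevalley--Eilenberg description $\bigwedge^* H^1/(R^\perp)$, with the quadratic relations pinned down precisely as $R^\perp$. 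Once this translation is in hand, the rest of the argument is formal and applies verbatim to $\I_{g,1}$ and $\I_g^1$ using the corresponding stable results for $\Gr\mathfrak{t}_{g,1}$ and $\Gr\mathfrak{t}_g^1$.
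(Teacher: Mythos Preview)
Your proposal is correct and follows essentially the same approach as the paper: both arguments use Hain's quadratic presentability together with the Koszulness result of \cite{KRW21} to identify $H^*(\Gr\mathfrak t_g,\Q)$ with the quadratic algebra $\bigwedge^* H^1(\I_g,\Q)/(R^\perp)$, and then show that the degree-two relations $R^\perp$ lie in the kernel of the cup-product map to $H^*(\I_g,\Q)$, forcing the desired surjection onto $H_A^*(\I_g,\Q)$.

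The only difference is presentational. The paper routes the second step through the two-step nilpotent quotient, identifying $A'=\bigwedge^*(V^{\Sp}_{1^3})/(V^{\Sp}_{2^2}\oplus V^{\Sp}_{0})$ with $H^*(\I_g/\Gamma_3\I_g,\Q)$ and then using the natural projection $\I_g\twoheadrightarrow\I_g/\Gamma_3\I_g$. Your argument instead extracts the relation $R=\ker([\,,\,])=\mathrm{im}(H_2(\I_g,\Q)\to\bigwedge^2 H_1(\I_g,\Q))$ directly from the five-term sequence of $[\I_g,\I_g]\to\I_g\to H_1(\I_g,\Z)$ and dualises; this is precisely the mechanism (appearing for $\IA_n$ as the exact sequence \eqref{pettetexact}) that underlies the paper's route as well. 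Your phrasing is arguably more transparent, since it avoids the intermediate identification of $A'$ with the full cohomology of the two-step nilpotent quotient.
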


\begin{proof}
Let $A'$ denote the quadratic algebra $A'=\bigwedge^*(V^{\Sp}_{1^3})/(V^{\Sp}_{2^2}\oplus V^{\Sp}_{0})$.
Since $\Gr \mathfrak{t}_{g}$ is quadratically presented for $g\ge 6$ \cite{Hain} and stably Koszul \cite{KRW21}, in a way similar to the argument of \cite{GG}, we have an isomorphism of graded $\Sp(2g,\Q)$-representations
$$
  H^*(\Gr \mathfrak{t}_{g},\Q)\cong A' \cong H^*(\I_{g}/\Gamma_{3}\I_{g},\Q),
$$
where $\Gamma_{3}\I_{g}$ is the third term of the lower central series of $\I_{g}$.
The natural projection $\I_{g}\twoheadrightarrow \I_{g}/\Gamma_{3}\I_{g}$ induces a surjective morphism of graded $\Sp(2g,\Q)$-representations
$$
  H^*(\I_{g}/\Gamma_{3}\I_{g},\Q)\twoheadrightarrow H_A^*(\I_{g},\Q).
$$
Therefore, we have a surjective morphism of graded $\Sp(2g,\Q)$-representations
$$
H^*(\Gr \mathfrak{t}_{g},\Q)\twoheadrightarrow H_A^*(\I_{g},\Q).
$$
Similar arguments hold for $\I_{g,1}$ and $\I_{g}^{1}$.
\end{proof}

By Proposition \ref{Torellisurj}, the Albanese cohomology of $\I_{g}, \I_{g,1}$ and $\I_{g}^{1}$ are quotient $\Sp(2g,\Q)$-representations of the cohomology of $\Gr \mathfrak{t}_{g}$, $\Gr \mathfrak{t}_{g,1}$ and $\Gr \mathfrak{t}_{g}^{1}$, respectively.
In what follows, we propose conjectural structures of the Albanese cohomology of $\I_{g}, \I_{g,1}$ and $\I_{g}^{1}$.
Consider the quotients $X''_*= X_* / (\bigoplus_{i\ge 1}\Q[4i-2])$, $Y''_*= Y_* / (\bigoplus_{i\ge 1}\Q[4i-2])$ and $Z''_*=Z_* / (\bigoplus_{i\ge 1}\Q[4i-2])$, where $\Q[4i-2]\subset\bigwedge^{4i} H$ is the trivial $\Sp(2g,\Q)$-subrepresentation.

\begin{conjecture}\label{Torellialb}
We stably have graded $\Sp(2g,\Q)$-isomorphisms
$$
H_A^*(\I_{g},\Q)\cong  \wti{S}^*(X''_*), \quad
H_A^*(\I_{g,1},\Q)\cong \wti{S}^*(Y''_*),\quad
H_A^*(\I_{g}^{1},\Q)\cong \wti{S}^*(Z''_*).
$$
\end{conjecture}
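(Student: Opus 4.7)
The plan is to prove Conjecture \ref{Torellialb} by a sandwich argument, entirely parallel to the strategy developed for $H^A_*(\IA_n,\Q)$ in Sections \ref{Connectedpart}--\ref{Generalpart}: an upper bound coming from the cohomology of the associated graded Lie algebra and a lower bound coming from abelian cycles. I will concentrate on the case of $\I_{g,1}$; the cases of $\I_g$ and $\I_g^1$ should follow by entirely analogous arguments using the extensions \eqref{centralextension}, \eqref{extension}, \eqref{puu} and the corresponding group-level short exact sequences (bounding-pair maps and the surface fundamental group).

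For the upper bound, Propositions \ref{Torellisurj} and \ref{Liealgebracohomology} together yield a stable surjective graded $\Sp(2g,\Q)$-homomorphism $\wti{S}^*(Y'_*)\twoheadrightarrow H_A^*(\I_{g,1},\Q)$. Sharpening this to a surjection from $\wti{S}^*(Y''_*)$ amounts to showing that, for each $i\ge 2$, the trivial $\Sp(2g,\Q)$-summand $\Q[4i-2]\subset Y_{4i-2}=\bigwedge^{4i}H$ spanned by the $2i$-th power of the symplectic form class maps to zero in $H_A^*(\I_{g,1},\Q)$. Under the coalgebra structure of Section \ref{coalgebra}, these classes are not primitive but become decomposable in terms of the symplectic class of $Y_2$, which is itself killed by the passage to $Y'_*$; their higher powers should therefore vanish for a Massey-type reason controlled by the central extension \eqref{centralextension}, which contributes a single class in degree two but no new trivial classes in higher degree.

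For the lower bound, I would construct abelian cycles in $H_i(\I_{g,1},\Q)$ attached to each pair of partitions appearing in $\wti{S}^*(Y''_*)$ by taking products of commuting Dehn twists about systems of disjoint bounding simple closed curves and BP-maps, in direct analogy with the construction of $\alpha_{(\mu,\nu)}$ in Section \ref{secabeliancycle}. The first Johnson homomorphism $\tau^J\colon\I_{g,1}\to\bigwedge^3 H$ together with iterated contraction maps in the style of Section \ref{subseccontraction}, reformulated using the symplectic form of Section \ref{Sprep} in place of the dual pairing, would detect these cycles in Lindell's subrepresentations $W^{\I_{g,1}}_i(\lambda,\mu)$ recalled in Section \ref{secLindell}. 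Combining this with a symplectic analogue of the generation lemma (Corollary \ref{projgen}) and an induction on an appropriate partial order on pairs of partitions, as in the proof of Theorem \ref{Johnsonpart}, should exhaust $\wti{S}^*(Y''_*)$ inside $H_A^*(\I_{g,1},\Q)$.

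The hardest part will be the upper bound. The lower bound is essentially a translation of the $\IA_n$ construction combined with Lindell's result, and the weaker upper bound $\wti{S}^*(Y'_*)$ is already in place via Proposition \ref{Torellisurj}. What is genuinely new is ruling out the trivial summands $\Q[4i-2]\subset Y'_{4i-2}$ for $i\ge 2$, since the stable Koszulity of $\Gr \mathfrak{t}_{g,1}$ by itself ensures only that all of $\wti{S}^*(Y'_*)$ survives. A natural attack is to establish stable Koszulity of the Albanese cohomology algebra $H_A^*(\I_{g,1},\Q)$, identify its space of quadratic relations explicitly via the second Johnson homomorphism in the spirit of \eqref{pettetexact}, and match the resulting quadratic algebra with $\wti{S}^*(Y''_*)$ by comparing characters through Garoufalidis--Getzler's plethystic machinery.
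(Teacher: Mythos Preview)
The statement you are attempting to prove is stated in the paper as a \emph{conjecture}, not a theorem; the paper provides no proof, only supporting evidence (Propositions \ref{Liealgebracohomology} and \ref{Torellisurj}, the low-degree verifications via \cite{Hain,Sakasai,KRW}, and the character match \eqref{KRWandtraceless} with the algebra $W$ of Kupers--Randal-Williams). So there is no proof in the paper to compare your proposal against, and what you have written is a sketch of a strategy for an open problem.

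Your strategy has a genuine gap on the upper-bound side. The trivial summands $\Q[4i-2]\subset Y_{4i-2}=\bigwedge^{4i}H$ for $i\ge 2$ are \emph{primitive} generators of $S^*(Y'_*)$ (they lie in degree one of the symmetric algebra), not decomposable as you assert; they are not powers of the degree-$2$ symplectic class, and the Massey-type argument you gesture at does not apply to them. Proposition \ref{Torellisurj} only bounds $H_A^*(\I_{g,1},\Q)$ by $\wti{S}^*(Y'_*)$, and passing from $Y'_*$ to $Y''_*$ is precisely the content of the conjecture that is not accessible by the Lie-algebraic methods in the paper. Your final paragraph essentially concedes this: establishing stable Koszulity of $H_A^*(\I_{g,1},\Q)$ and identifying its quadratic relations would already require knowing the structure you are trying to determine.

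On the lower-bound side, note that even the paper is cautious: it only says ``we expect'' that the Lindell subrepresentations can be assembled into a direct-sum subquotient $\overline{W}^{\I_{g,1}}_i$, and it records only the inclusion $\overline{W}^{\I_{g,1}}_i\subset \wti{S}^*(Y''_*)_i$, not equality. Lindell's pairs $(\lambda,\mu)$ satisfy the constraint $\mu_j=\lambda_j+2$, which does not cover all pairs of partitions indexing $\wti{S}^*(Y''_*)$, so your translation of the $\IA_n$ abelian-cycle argument would require new cycles beyond those currently available.
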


We stably have $H_A^*(\I_{g},\Q)\cong  \wti{S}^*(X''_*)$ in degree $\le 3$ by \cite{Hain, Sakasai, KRW}.
By the definition of $\wti{S}^*(Y''_*)$, we can check that the direct sum $\overline{W}^{\I_{g,1}}_i$, which we observed in Section \ref{secLindell}, is included in $\wti{S}^*(Y''_*)_i.$

Here, we will recall the result of Kupers--Randal-Williams \cite{KRW} and study the relation with Conjecture \ref{Torellialb}.
In \cite[Theorem 4.1 and Sections 6.2 and 8]{KRW}, they constructed an algebra homomorphism $\Phi:W\to H^*(\I_{g,1},\Q)$, where $W$ is an algebra whose character $\ch(W)$ is
$$
\ch(W)=D \omega \left(
\left(\prod_{4i>2} (1-t^{4i-2})\right)
\Exp \left(
\frac{\frac{1}{1-t^2}(\sum_{q=0}^{\infty}h_qt^q)- h_0(1+t^2)-h_1t-h_2t^2}{t^2}
\right)
\right).
$$
We can easily check that
\begin{equation}\label{KRWandtraceless}
    \ch(W)=\ch(\wti{S}^*(Y''_*))
\end{equation}
since we have
$$
\ch(Y''_*)=D \omega \left(L(t)- \sum_{q=1}^{\infty}t^{4q-2}\right)
$$
and
$$
\ch(W)=D\omega \Exp\left(L(t)- \sum_{q=1}^{\infty}t^{4q-2}\right).
$$
In \cite{KRW}, they proved that for $N\ge 0$, if $H^*(\I_{g,1},\Q)$ is stably finite-dimensional for $*<N$, then $\Phi$ is an isomorphism for $*\le N$ and is injective for $*=N+1$.
Therefore, $H^i(\I_{g,1},\Q)$ contains an $\Sp(2g,\Z)$-subrepresentation which is isomorphic to $\wti{S}^*(Y''_*)_i$ for $i\le 3$.

In \cite[Sections 7 and 8.1]{KRW}, they also considered the cases for $\I_{g}$ and $\I_{g}^{1}$.
We can also check the variants of \eqref{KRWandtraceless} for these two cases by using Remark \ref{chA^1andchA}.

\begin{remark}
Note that in \cite{KRW}, they used a different definition of $\ch(W)=\sum_{q\ge 0}t^q \mathrm{ch}(W_q)$ and a different involution $\omega: p_n\mapsto (-1)^n p_n$ from ours.
\end{remark}

\begin{remark}\label{remarkKRW}
In \cite[Remark 8.2]{KRW}, they claimed that $H_A^3(\I_{g},\Q)$ has one fewer copies of $V^{\Sp}_{2^3 1^3}$ than the \emph{algebraic part} ${H^3(\I_{g},\Q)}^{\text{alg}}$ of $H^3(\I_{g},\Q)$, which is the union of algebraic subrepresentations of $H^3(\I_{g},\Q)$.
However, their computation of the characters of the third cohomology of $\I_{g}^{1}$ and $\I_{g}$ seems incorrect, and indeed it is possible that we have $H^3(\I_g,\Q)^{\text{alg}}=H_A^3(\I_g,\Q)$ for sufficiently large $g$.
\end{remark}

Kawazumi--Morita \cite{Kawazumi-Morita} studied the $\Sp$-invariant stable continuous cohomology $H_{c}^*(\lim_{g\to \infty} \I_{g,1},\Q)^{\Sp}$.
Their conjecture \cite[Conjecture 13.8]{Kawazumi-Morita}, which first appeared in \cite[Conjecture 3.4]{Morita1999}, is the following.

\begin{conjecture}[Kawazumi--Morita]\label{conjKM}
We stably have isomorphisms of graded algebras
$$H^*(\I_{g},\Q)^{\Sp(2g,\Z)}\cong \Q[e_2,e_4,\cdots],$$
where $e_{2i}$ is the Mumford--Morita--Miller class of degree $4i$.
\end{conjecture}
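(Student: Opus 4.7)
The plan is to attack Conjecture \ref{conjKM} via the Hochschild--Serre spectral sequence
$$E_2^{p,q}=H^p(\Sp(2g,\Z),H^q(\I_g,\Q))\Rightarrow H^{p+q}(\calM_g,\Q)$$
for the extension $1\to \I_g\to \calM_g\to \Sp(2g,\Z)\to 1$, combined with the Madsen--Weiss stable identification $H^*(\calM_g,\Q)\cong\Q[e_1,e_2,\ldots]$ (with $|e_i|=2i$) and Borel's stable computation $H^*(\Sp(2g,\Z),\Q)\cong \Q[c_1,c_3,c_5,\ldots]$ with $|c_{2i-1}|=4i-2$. The object of interest $H^*(\I_g,\Q)^{\Sp(2g,\Z)}$ is precisely the $E_2^{0,*}$ edge column.

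First I would verify the classical fact, essentially due to Morita, that each odd MMM class $e_{2i-1}\in H^{4i-2}(\calM_g,\Q)$ equals, up to a nonzero rational scalar, the pullback of Borel's generator $c_{2i-1}$ along $\calM_g\twoheadrightarrow \Sp(2g,\Z)$; in particular $e_{2i-1}|_{\I_g}=0$. Consequently the restriction map $H^*(\calM_g,\Q)\to H^*(\I_g,\Q)^{\Sp(2g,\Z)}$ kills the odd MMM generators and induces an algebra homomorphism
$$\Phi\colon \Q[e_2,e_4,e_6,\ldots]\longrightarrow H^*(\I_g,\Q)^{\Sp(2g,\Z)}.$$
Injectivity of $\Phi$ follows from Madsen--Weiss: monomials in the even $e_{2i}$ are linearly independent in $H^*(\calM_g,\Q)$ modulo the ideal generated by the odd $e_{2i-1}$, hence also in the edge column.

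The main obstacle is surjectivity of $\Phi$, that is, ruling out ``exotic'' $\Sp(2g,\Z)$-invariant classes in $H^*(\I_g,\Q)$ not accounted for by MMM classes. In spectral-sequence terms this requires controlling every higher differential $d_r\colon E_r^{0,q}\to E_r^{r,q-r+1}$ on the edge column, showing that each $\Sp$-invariant class in $H^*(\I_g,\Q)$ either survives to $H^*(\calM_g,\Q)$ (and is then hit by $\Phi$ via Madsen--Weiss) or is killed by some $d_r$ into a class already built from tautological data. A plausible route combines Morita's tautological description of MMM classes via the universal surface bundle over $B\I_g$, the upper bound on the Albanese piece coming from Proposition \ref{Torellisurj} and Proposition \ref{Liealgebracohomology}, and a computation of $\wti{S}^*(X'_*)^{\Sp(2g,\Z)}$ via the plethystic formulas used in the proof of Proposition \ref{Liealgebracohomology}. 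The hard part is handling $\Sp$-invariant classes lying outside the Albanese image of $H^*(\I_g,\Q)$: these are precisely the ``non-tautological'' invariants, and ruling them out in the stable range is essentially equivalent to the Kawazumi--Morita conjecture itself, so this step remains the principal open difficulty.
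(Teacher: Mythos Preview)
The statement you have been asked to prove is labelled in the paper as a \emph{Conjecture} (Conjecture \ref{conjKM}, due to Kawazumi--Morita), and the paper offers no proof of it whatsoever; it is merely quoted as motivation before the related Conjecture \ref{Torellialbsp}. There is therefore no proof in the paper against which to compare your proposal.

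Your write-up is not a proof either, and to your credit you say so explicitly: the final paragraph concedes that the surjectivity step --- controlling all $\Sp$-invariant cohomology classes of $\I_g$ that lie outside the Albanese/tautological range --- ``is essentially equivalent to the Kawazumi--Morita conjecture itself.'' That is exactly right, and it is the reason the statement remains a conjecture. What you have written is a correct and reasonable explanation of why the conjecture is plausible (the odd MMM classes vanish on $\I_g$, so restriction factors through $\Q[e_2,e_4,\ldots]$) and of where the genuine difficulty lies, but it does not advance beyond the known state of affairs.

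One small caution on the injectivity side: your argument that monomials in the even $e_{2i}$ remain independent ``in the edge column'' is not automatic from Madsen--Weiss alone. The kernel of the edge restriction $H^*(\calM_g,\Q)\to E_2^{0,*}$ is the filtration piece $F^1H^*(\calM_g,\Q)$, and identifying this precisely with the ideal generated by the odd $e_{2i-1}$ already presupposes some control of the spectral sequence (equivalently, algebraic independence of the $e_{2i}$ in $H^*(\I_g,\Q)$, which is a nontrivial input due to Morita). This does not affect your overall assessment, since the conjecture remains open regardless.
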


Furthermore, we make the following conjecture, which is closely related to, but different from, the above conjecture.

\begin{conjecture}\label{Torellialbsp}
We stably have isomorphisms of graded algebras
$$H_A^*(\I_{g},\Q)^{\Sp(2g,\Z)}\cong H_A^*(\I_{g,1},\Q)^{\Sp(2g,\Z)} \cong \Q[y_1,y_2,\cdots],$$
$$H_A^*(\I_{g}^{1},\Q)^{\Sp(2g,\Z)}\cong \Q[z,y_1,y_2,\cdots],$$
where $\deg y_i=4i$ and $\deg z=2$.
\end{conjecture}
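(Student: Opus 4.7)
The plan is to derive Conjecture \ref{Torellialbsp} as a consequence of Conjecture \ref{Torellialb} together with a structural identification of the $\Sp(2g,\Q)$-invariants of a traceless graded-symmetric algebra. Assuming Conjecture \ref{Torellialb}, in a stable range we obtain $\Sp(2g,\Q)$-equivariant isomorphisms $H_A^*(\I_{g},\Q)\cong \wti{S}^*(X''_*)$, $H_A^*(\I_{g,1},\Q)\cong \wti{S}^*(Y''_*)$, and $H_A^*(\I_{g}^{1},\Q)\cong \wti{S}^*(Z''_*)$; since algebraic $\Sp(2g,\Q)$-representations are semisimple, the $\Sp(2g,\Z)$-invariants coincide with the $\Sp(2g,\Q)$-invariants, so it suffices to compute the latter for these three traceless symmetric algebras.

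The central step is the following structural claim: for any graded algebraic $\Sp(2g,\Q)$-representation $M_*$ concentrated in positive degrees, there is, in a stable range, a graded vector space isomorphism
$$
\wti{S}^*(M_*)^{\Sp} \;\cong\; S^*\bigl(M_*^{\Sp}\bigr),
$$
where $M_*^{\Sp}$ denotes the direct sum of trivial Sp-subrepresentations in each degree of $M_*$. The underlying reason is that for nonzero partitions $\lambda,\mu$, every $\Sp$-invariant in $V^{\Sp}_\lambda\otimes V^{\Sp}_\mu$ is a symplectic pairing, which is precisely a cross-contraction and hence excluded from the traceless tensor product $V^{\Sp}_\lambda\wti{\otimes}V^{\Sp}_\mu=(V^{\Sp}_\lambda\otimes V^{\Sp}_\mu)\cap T_{|\lambda|+|\mu|}$. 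Equivalently, the Sp-invariants functor is strongly monoidal with respect to $\wti{\otimes}$, and applied to the graded-cocommutative bialgebra structure on $\wti{S}^*(M_*)$ in the symmetric monoidal category $(\mathbf{Rep}^{\mathrm{alg}}(\Sp(2g,\Q)),\wti{\otimes})$, a Milnor--Moore-type argument degreewise identifies the invariants with the graded-symmetric algebra on the primitive invariants $M_*^{\Sp}$.

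With this lemma in hand, I would identify the trivial Sp-primitives of $X''_*, Y''_*, Z''_*$. The stable decomposition $\bigwedge^p H \cong \bigoplus_{k\ge 0} V^{\Sp}_{1^{p-2k}}$ shows that $Y_i=\bigwedge^{i+2}H$ contains a unique trivial summand, spanned by a power of the symplectic form, exactly when $i$ is even. Quotienting by $\bigoplus_{i\ge 1}\Q[4i-2]$ removes precisely the trivials in degrees $2,6,10,\ldots$, leaving trivial primitives only in degrees $4,8,12,\ldots$ in both $X''_*$ and $Y''_*$; for $Z''_*$, the extra summand $\Q[2]$ in $Z_*=Y_*\oplus \Q[2]$ contributes an additional invariant in degree $2$. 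Since all generators lie in even degrees, $S^*((X''_*)^{\Sp})$ and $S^*((Y''_*)^{\Sp})$ are polynomial algebras on generators $y_i$ of degree $4i$, while $S^*((Z''_*)^{\Sp})$ is polynomial on the additional $z$ of degree $2$, matching the statement of Conjecture \ref{Torellialbsp}.

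The principal obstacles are twofold. First, Conjecture \ref{Torellialb} itself is open; an unconditional proof would require either establishing it or proceeding directly from the surjection $H^*(\Gr\mathfrak{t}_g,\Q)\twoheadrightarrow H_A^*(\I_g,\Q)$ of Proposition \ref{Torellisurj} and identifying the $\Sp$-invariant part of its kernel, which via Proposition \ref{Liealgebracohomology} amounts to pinpointing the trivial summands in $\wti{S}^*(X'_*)$, $\wti{S}^*(Y'_*)$, $\wti{S}^*(Z'_*)$ that die in Albanese cohomology (and analogously for $\I_{g,1}$ and $\I_g^1$). Second, Conjecture \ref{Torellialb} is a priori only an isomorphism of graded $\Sp(2g,\Q)$-representations, so to obtain the algebra statement I must verify that the cup product on $H_A^*$ corresponds on invariants to the polynomial multiplication on $S^*(M_*^{\Sp})$; this should follow because on trivial tensor factors $\wti{\otimes}$ agrees with $\otimes$, and a comparison with the Mumford--Morita--Miller classes appearing in Conjecture \ref{conjKM} would provide an independent confirmation that the $y_i$ are algebraically independent under cup product.
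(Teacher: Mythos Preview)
Your proposal is correct and follows essentially the same route as the paper: the paper also proves only the implication ``Conjecture \ref{Torellialb} $\Rightarrow$ Conjecture \ref{Torellialbsp}'', via the identical key lemma $\wti{S}^*(M_*)^{\Sp(2g,\Z)}\cong S^*((M_*)^{\Sp(2g,\Z)})$ and the same computation of $(X''_*)^{\Sp}$, $(Y''_*)^{\Sp}$, $(Z''_*)^{\Sp}$. One small difference worth noting: where you flag the algebra-structure compatibility as an obstacle, the paper handles it by observing that the coalgebra structure on $\wti{S}^*(X''_*)$ (inherited from $S^*(X''_*)$) dualizes, via self-duality of algebraic $\Sp(2g,\Q)$-representations, to an algebra structure, and it is with respect to this that the isomorphism with $\Q[y_1,y_2,\ldots]$ is asserted; your observation that on trivial summands $\wti{\otimes}$ agrees with $\otimes$ is the underlying reason this works.
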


\begin{proposition}
 If Conjecture \ref{Torellialb} holds, then Conjecture \ref{Torellialbsp} holds.
\end{proposition}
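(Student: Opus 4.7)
The plan is to combine the isomorphisms given by Conjecture \ref{Torellialb} with a direct computation of the $\Sp(2g,\Q)$-invariant subspace of the traceless graded-symmetric algebras on the right-hand sides. Since algebraic $\Sp(2g,\Q)$-representations are semisimple and $\Sp(2g,\Z)$ is Zariski-dense in $\Sp(2g,\Q)$, $\Sp(2g,\Z)$-invariants coincide with $\Sp(2g,\Q)$-invariants, so it suffices to exhibit stable graded algebra isomorphisms $\wti{S}^*(X''_*)^{\Sp(2g,\Q)}\cong \wti{S}^*(Y''_*)^{\Sp(2g,\Q)} \cong \Q[y_1,y_2,\ldots]$ and $\wti{S}^*(Z''_*)^{\Sp(2g,\Q)}\cong \Q[z,y_1,y_2,\ldots]$.

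The central step will be a key lemma asserting that, for any graded algebraic $\Sp(2g,\Q)$-representation $M_*$, stably one has $\wti{S}^*(M_*)^{\Sp(2g,\Q)} = S^*(M_*^{\Sp(2g,\Q)})$, where $M_*^{\Sp(2g,\Q)}$ denotes the trivial isotypic part of $M_*$. The inclusion $\supset$ is immediate, since the traceless tensor product of trivial summands agrees with the ordinary tensor product. For the reverse inclusion I would decompose $M_*$ into isotypic summands and focus on an invariant in a typical term $V^{\Sp}_{\lambda_1}\wti{\otimes}\cdots\wti{\otimes} V^{\Sp}_{\lambda_k}\subset \wti{S}^k(M_*)$. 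By the first fundamental theorem of invariant theory for $\Sp(2g,\Q)$, stably every invariant of $H^{\otimes N}$ is a linear combination of products of symplectic pairings $Q(x_i,x_j)$; because each $V^{\Sp}_{\lambda_i}$ is by construction traceless inside $H^{\otimes |\lambda_i|}$, pairings internal to a single factor vanish, forcing any invariant to involve a pairing between distinct factors. Such pairings are exactly the contractions killed by the traceless tensor product defined in Section \ref{Sprep}, so the whole invariant subspace vanishes unless every $\lambda_i$ is trivial.

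Next I would identify the trivial isotypic parts of $X''_*,Y''_*,Z''_*$. Using the stable branching $\bigwedge^n H=\bigoplus_{0\le j\le n/2} V^{\Sp}_{1^{n-2j}}$, the representation $\bigwedge^{2k}H$ contains exactly one copy of $V^{\Sp}_0$, while $\bigwedge^{2k+1}H$ contains none. Hence $(Y_*)^{\Sp(2g,\Q)}$ has a one-dimensional piece in each even internal degree $i=2,4,6,\ldots$, and the same holds for $(X_*)^{\Sp(2g,\Q)}$ since removing $H[1]\subset Y_1$ does not touch the trivial isotypic part; $(Z_*)^{\Sp(2g,\Q)}$ gains an additional summand in degree $2$ from the appended $\Q[2]$. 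Quotienting by $\bigoplus_{i\ge 1}\Q[4i-2]$ then kills precisely the trivial summands in degrees $2,6,10,\ldots$, leaving $(X''_*)^{\Sp(2g,\Q)}=(Y''_*)^{\Sp(2g,\Q)}=\bigoplus_{i\ge 1}\Q[4i]$ and $(Z''_*)^{\Sp(2g,\Q)}=\Q[2]\oplus\bigoplus_{i\ge 1}\Q[4i]$. Since every surviving trivial summand sits in even degree, the key lemma produces the asserted polynomial algebras, with $\deg y_i=4i$ and $\deg z=2$.

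The hard part will be the precise formulation and proof of the key lemma, in particular pinning down the stable range in which the first fundamental theorem cleanly cuts out invariants as symplectic pairings and verifying that those pairings coincide on the nose with the contractions killed by $\wti{\otimes}$ in Section \ref{Sprep}. The remaining bookkeeping on trivial subrepresentations and the matching of generators to $y_i$ and $z$ is then purely mechanical.
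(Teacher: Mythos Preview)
Your proposal is correct and follows essentially the same approach as the paper: both rest on the key identity $\wti{S}^*(M_*)^{\Sp}\cong S^*((M_*)^{\Sp})$ and then compute $(X''_*)^{\Sp}$, $(Y''_*)^{\Sp}$, $(Z''_*)^{\Sp}$ exactly as you do. Your sketch via the first fundamental theorem supplies a justification for the key identity that the paper simply states; conversely, the paper spells out where the algebra structure on $\wti{S}^*(X''_*)$ comes from (namely from the coalgebra structure of Section~\ref{tracelesspartofgradedsymmetricalgebra} together with self-duality of algebraic $\Sp(2g,\Q)$-representations), a point you use but do not make explicit.
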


\begin{proof}
We have a coalgebra structure on $\wti{S}^*(X''_*)$ induced by the coalgebra structure of $S^*(X''_*)$ defined in Section \ref{Preliminary}.
Since algebraic $\Sp(2g,\Q)$-representations are self-dual, we have an algebra structure on $\wti{S}^*(X''_*)$.
Therefore, we obtain an algebra structure on $\wti{S}^*(X''_*)^{\Sp(2g,\Z)}$.

For a graded algebraic $\Sp(2g,\Q)$-representation $M_*$,
we have
$$\wti{S}^*(M_*)^{\Sp(2g,\Z)}\cong S^*((M_*)^{\Sp(2g,\Z)}).$$
Since we stably have
$$(X''_*)^{\Sp(2g,\Z)}=(Y''_*)^{\Sp(2g,\Z)}=\bigoplus_{j\ge 1}\Q[4j],\quad (Z''_*)^{\Sp(2g,\Z)}=\Q[2]\oplus(\bigoplus_{j\ge 1}\Q[4j]),$$
we stably have isomorphisms of graded algebras
$$
\wti{S}^*(X''_*)^{\Sp(2g,\Z)}\cong \wti{S}^*(Y''_*)^{\Sp(2g,\Z)}\cong \Q[y_1,y_2,\cdots],\quad \wti{S}^*(Z''_*)^{\Sp(2g,\Z)}\cong \Q[z,y_1,y_2,\cdots],
$$
where $\deg y_i=4i$, and $\deg z=2$.
Therefore, if Conjecture \ref{Torellialb} holds, then Conjecture \ref{Torellialbsp} holds.
\end{proof}

\subsection{Relation between $H^A_*(\I_{g},\Q)$, $H^A_*(\I_{g,1},\Q)$ and $H^A_*(\I_{g}^{1},\Q)$}

Let $g\ge 2$.
Let $\Sigma_g$ (resp. $\Sigma_{g,1}$, $\Sigma_{g}^{1}$) be a closed surface of genus $g$ (resp. with one boundary component, with one marked point).
Let $\calM_{g,1}$ (resp. $\calM_{g}^{1}$) denote the mapping class group of $\Sigma_{g,1}$ (resp. $\Sigma_{g}^{1}$).

We have an exact sequence of groups with $\calM_{g}^{1}$-actions
\begin{equation}\label{exactmarkedpoint}
    1\to \pi_1(\Sigma_g)\to \I_{g}^{1}\to \I_{g}\to 1
\end{equation}
and an exact sequence of groups with $\calM_{g,1}$-actions
$$
1\to \pi_1(U\Sigma_g)\to \I_{g,1}\to \I_{g}\to 1,
$$
where $U\Sigma_g$ denotes the unit tangent bundle of $\Sigma_g$.
We have $H_*(\pi_1(\Sigma_g),\Q)\cong H_*(\Sigma_g,\Q)$ and $H_1(U\Sigma_g,\Q)\cong H_1(\Sigma_g,\Q)$.
By using Lemma \ref{cospectralsequence3} and Proposition \ref{spectralsequenceaction}, we obtain the following proposition.

\begin{proposition}\label{torelliinj}
(1) We have a graded $\Sp(2g,\Q)$-isomorphism
$$
  H^A_*(\I_{g}^{1},\Q) \xrightarrow{\cong}  H^A_*(\I_{g},\Q)\otimes H^A_*(\pi_1(\Sigma_g),\Q),
$$
where we have $H^A_*(\pi_1(\Sigma_g),\Q)\cong \Q[0]\oplus H[1]\oplus \Q[2]$.

(2) We have an injective graded $\Sp(2g,\Q)$-homomorphism
$$
  H^A_*(\I_{g,1},\Q) \hookrightarrow H^A_*(\I_{g},\Q) \otimes H^A_*(\pi_1(U\Sigma_g),\Q),
$$
where we have
\begin{gather}\label{homologyofunittangentbundle}
H^A_i(\pi_1(U\Sigma_g),\Q)=\Q[0]\oplus H[1].
\end{gather}
\end{proposition}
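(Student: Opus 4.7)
The plan is to apply the Hochschild--Serre-type results for Albanese homology from the appendix, Proposition \ref{spectralsequenceaction} and Lemma \ref{cospectralsequence3}, to each of the two short exact sequences, after first computing the Albanese homologies of the kernel groups.

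First I will compute $H^A_*(\pi_1(\Sigma_g),\Q)$ and $H^A_*(\pi_1(U\Sigma_g),\Q)$. The closed surface group is the fundamental group of a $K(\pi,1)$, so $H_*(\pi_1(\Sigma_g),\Q) = \Q \oplus H \oplus \Q$. The abelianization map is the identity on $H_1$ and sends the surface class to the symplectic form in $\bigwedge^2 H$, yielding $H^A_*(\pi_1(\Sigma_g),\Q) \cong \Q[0] \oplus H[1] \oplus \Q[2]$. For $\pi_1(U\Sigma_g)$, the total space $U\Sigma_g$ is an aspherical $3$-manifold, so $H_*(\pi_1(U\Sigma_g),\Q) = H_*(U\Sigma_g,\Q)$. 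The Gysin sequence for $S^1 \to U\Sigma_g \to \Sigma_g$, combined with the nonvanishing Euler class $2-2g$, forces $H_1(U\Sigma_g,\Q) \cong H$ (the fiber class dies). Inspecting the Serre spectral sequence places $H^1(U\Sigma_g,\Q)$ in $E_\infty^{1,0}$, so any cup product of two degree-one classes lands in $E_\infty^{2,0} = 0$. Dualising, the map $H_2(U\Sigma_g,\Q) \to \bigwedge^2 H$ vanishes, giving $H^A_*(\pi_1(U\Sigma_g),\Q) = \Q[0] \oplus H[1]$.

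Next I will apply Proposition \ref{spectralsequenceaction} to each exact sequence. All maps are equivariant under $\calM_g^1$ and $\calM_{g,1}$ respectively, so the resulting filtration $0 = \calF_{-1} \subset \calF_0 \subset \cdots \subset \calF_i$ of $H^A_i$ and the injection of the associated graded into $\bigoplus_{p+q=i} H^A_p(\I_g,\Q) \otimes H^A_q(\mathrm{kernel},\Q)$ are both $\Sp(2g,\Q)$-equivariant. Since the category of algebraic $\Sp(2g,\Q)$-representations is semisimple, each filtration splits canonically. Combining this with the kernel computations of the previous paragraph establishes part (2) immediately.

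For part (1) I will use Lemma \ref{cospectralsequence3} to promote the injection to an isomorphism. Following the strategy used in the proof of Proposition \ref{AlbaneseIAandIO2}, the task reduces to the vanishing of the differentials in the cohomological Hochschild--Serre spectral sequence for the extension $1 \to \pi_1(\Sigma_g) \to \I_g^1 \to \I_g \to 1$ on the Albanese part. The $\I_g$-action on $H^*(\pi_1(\Sigma_g),\Q)$ is trivial, and the $\Sp(2g,\Q)$-equivariant splitting
$$H_1(\I_g^1,\Q) \cong H_1(\I_g,\Q) \oplus H,$$
which follows from Johnson's identifications $H_1(\I_g^1,\Q) \cong \bigwedge^3 H$ and $H_1(\I_g,\Q) \cong \bigwedge^3 H / H$, kills $d_2^{\cdot,1}$ exactly as in the $\IA_n$/$\IO_n$ case. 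The main obstacle I expect is handling the remaining potentially nontrivial differential $d_3^{0,2}$ whose source is the one-dimensional $H^2(\pi_1(\Sigma_g),\Q)$; $\Sp(2g,\Q)$-equivariance forces its image to lie in the trivial isotypic component of $H^3(\I_g,\Q)$, and one expects vanishing from an explicit identification of the fundamental class of $\Sigma_g$ as a surviving cycle in $H_2(\I_g^1,\Q)$, whose precise justification is the delicate point of the argument.
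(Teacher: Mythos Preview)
Your proposal follows the same overall approach as the paper: apply Proposition \ref{spectralsequenceaction} to both exact sequences to obtain the injections, then upgrade (1) to an isomorphism via Lemma \ref{cospectralsequence3}. Your computation of $H^A_*(\pi_1(U\Sigma_g),\Q)$ differs from the paper's --- the paper computes $H_*(\pi_1(U\Sigma_g),\Q)$ from the Hochschild--Serre spectral sequence of $1\to\Z\to\pi_1(U\Sigma_g)\to\pi_1(\Sigma_g)\to 1$ and then uses $\Sp(2g,\Q)$-equivariance of the abelianization map to see that $H_2$ and $H_3$ die in the Albanese quotient, while you use the Gysin sequence and the filtration on cup products --- but both arguments are correct and of comparable length.

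The one place where you make things harder than necessary is the differential analysis in part (1). You handle $d_2^{0,1}$ correctly via the first-homology splitting, but then treat $d_3^{0,2}$ as a separate ``delicate'' obstacle requiring a survival argument for the fundamental class, and you do not mention $d_2^{0,2}$ at all (which Lemma \ref{cospectralsequence3} also requires). In fact both $d_2^{0,2}$ and $d_3^{0,2}$ follow immediately from $d_2^{0,1}=0$ by the multiplicative structure of the Hochschild--Serre spectral sequence: the symplectic pairing on $H^1(\pi_1(\Sigma_g),\Q)$ is nondegenerate, so $E_2^{0,2}=H^2(\pi_1(\Sigma_g),\Q)\cong\Q$ is generated by cup products of elements of $E_2^{0,1}$, and since the differentials are derivations, $d_r^{0,1}=0$ forces $d_r^{0,2}=0$ for $r=2,3$. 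The paper does not spell this out either, asserting only that one ``can easily check'' the three differentials vanish, so your proposal is not missing anything the paper supplies --- you have simply overlooked the shortcut that makes the check genuinely easy.
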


\begin{proof}
In a way similar to the proof of Proposition \ref{AlbaneseIAandIO2}, for each $i\ge 0$, we obtain injective $\Sp(2g,\Q)$-homomorphisms
\begin{gather}\label{injtorellimark}
      H^A_i(\I_{g}^{1},\Q) \hookrightarrow \bigoplus_{p+q=i}H^A_p(\I_{g},\Q)\otimes H^A_q(\pi_1(\Sigma_g),\Q),
\end{gather}
$$
  H^A_i(\I_{g,1},\Q) \hookrightarrow \bigoplus_{p+q=i}H^A_p(\I_{g},\Q)\otimes H^A_q(\pi_1(U\Sigma_g),\Q).
$$

(1) We have $H^A_*(\pi_1(\Sigma_g),\Q)\cong \Q[0]\oplus H[1]\oplus \Q[2]$ since $H_*(\pi_1(\Sigma_g),\Q)\cong H_*(\Sigma_g,\Q)\cong \Q[0]\oplus H[1]\oplus \Q[2]$ and $H_*(\pi_1(\Sigma_g)^{\ab},\Q)\cong \bigwedge^* H$.
Consider the cohomological Hochschild--Serre spectral sequence for the exact sequence \eqref{exactmarkedpoint}.
We can easily check that $\I_{g}$ acts trivially on $H^*(\pi_1(\Sigma_g),\Q)$ and that the differentials $d_2^{0,1}$, $d_2^{0,2}$ and $d_3^{0,2}$ are zero maps.
Therefore, by Lemma \ref{cospectralsequence3}, the injective morphism \eqref{injtorellimark} is an isomorphism.

(2) It suffices to prove \eqref{homologyofunittangentbundle}.
By using the Hochschild--Serre spectral sequence for the exact sequence
$$1\to \Z\to \pi_1(U\Sigma_g)\to \pi_1(\Sigma_g)\to 1,$$
we have
\begin{gather*}
 H_i(\pi_1(U\Sigma_g),\Q)=\Q[0]\oplus H[1]\oplus H[2]\oplus \Q[3].
\end{gather*}
We also have
$$
 H_i(\pi_1(U\Sigma_g)^{\ab},\Q)=H_i(H_1(\Sigma_g,\Q),\Q)=\bigwedge^i H.
$$
Since the Johnson homomorphism is an $\Sp(2g,\Q)$-homomorphism, we obtain \eqref{homologyofunittangentbundle}, which completes the proof.
\end{proof}

\begin{remark}
The composition
$$s:\Sigma_{g,1}\xrightarrow{\text{section}} U\Sigma_{g,1}\hookrightarrow U\Sigma_g$$ induces an $\Sp(2g,\Q)$-homomorphism
$$s_*=H^A_i(\pi_1(s),\Q): H^A_i(\pi_1(\Sigma_{g,1}),\Q)\to H^A_i(\pi_1(U\Sigma_g),\Q),$$
which is an isomorphism by the equation \eqref{homologyofunittangentbundle}.
\end{remark}

\begin{remark}
The Albanese cohomology of the Torelli groups stably coincide with the algebraic parts of the cohomology of them in degree $\le 2$.
The case of $\I_{g}$ follows from \cite{Hain} and \cite{KRW}.
The case of $\I_{g}^{1}$ follows from \cite{KRW} and Proposition \ref{torelliinj}.
The case of $\I_{g,1}$ follows from \cite{KRW} and \cite{Lindell} since we have $\overline{W}^{\I_{g,1}}_2=\wti{S}^*(Y''_*)_2$.
As we mentioned in Remark \ref{remarkKRW}, we stably have $H_A^3(\I_{g},\Q)= H^3(\I_{g},\Q)^{\text{alg}}$ if $H^2(\I_{g},\Q)$ is stably finite-dimensional \cite{Sakasai, KRW}.
The same holds for $\I_{g}^{1}$ by Proposition \ref{torelliinj} and \cite{KRW}.
\end{remark}

\subsection{Albanese homology of $\IA_{2g}$ and $\I_{g,1}$}

We have an injective group homomorphism $\iota: \I_{g,1}\hookrightarrow \IA_{2g}$, which induces an injective homomorphism
$$\iota_*:H_i(H_1(\I_{g,1}),\Q)\to H_i(H_1(\IA_{2g}),\Q).$$
Let $\iota_*^A$ denote the restriction
$$
  \iota_*^A: H^A_i(\I_{g,1},\Q)\to H^A_i(\IA_{2g},\Q)
$$
of $\iota_*$ to $H^A_i(\I_{g,1},\Q)$ and $H^A_i(\IA_{2g},\Q)$, which is also injective.

\begin{remark}
Recall Conjectures \ref{conjectureAlbanese} and \ref{Torellialb} for the structures of $H^A_*(\IA_{2g},\Q)$ and $H^A_*(\I_{g,1},\Q)$.
By Lemma \ref{GLSPtl}, we have
\begin{equation}\label{UY}
    \wti{S}^*(Y''_*)\subset \wti{S}^*(U_*)=W_*
\end{equation}
since we have for $i\ge 1$
$$U_i=\Hom(H, \bigwedge^{i+1}H)\cong H^*\otimes \bigwedge^{i+1}H \cong H\otimes \bigwedge^{i+1}H\supset \bigwedge^{i+2}H\supset Y''_i$$ as $\Sp(2g,\Q)$-representations.
The inclusion \eqref{UY} conjecturally describes the injective $\Sp(2g,\Q)$-homomorphism
$\iota_*^A: H^A_i(\I_{g,1},\Q)\to H^A_i(\IA_{2g},\Q)$, where we consider $ H^A_i(\IA_{2g},\Q)$ as an $\Sp(2g,\Q)$-representation.
\end{remark}

We make the following conjecture about the image of $\iota_*^A$.

\begin{conjecture}\label{torelligroupandIA}
For $g\ge 3i$,
the image $\iota_* (H^A_i(\I_{g,1},\Q))$ generates $H^A_i(\IA_{2g},\Q)$ as $\GL(2g,\Q)$-representations.
\end{conjecture}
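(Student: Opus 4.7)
The plan is to show, conditional on Conjecture \ref{conjectureAlbanese}, that the $\GL(2g,\Q)$-orbit of $\iota_*(H^A_i(\I_{g,1},\Q))$ already exhausts the subrepresentation $W_i\subset H^A_i(\IA_{2g},\Q)$ produced by Theorem \ref{Johnsonpart}. By semisimplicity of algebraic $\GL(2g,\Q)$-representations, each irreducible summand is generated by any non-zero vector, so the problem reduces to showing that for every pair of partitions $(\mu,\nu)\vdash i$, the composition
\[
F_{(\mu,\nu)}\circ \iota_*^A : H^A_i(\I_{g,1},\Q)\longrightarrow U^{\tree}_\mu\otimes U^{\wheel}_\nu
\]
has image whose $\GL(2g,\Q)$-span contains $W(\mu,\nu)$.

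The starting point is to describe $\iota_*^A$ on the Johnson image. By Johnson's theorem, $H^A_1(\I_{g,1},\Q)\cong \bigwedge^3 H$, and the restriction of the $\IA_{2g}$-Johnson homomorphism identifies this with the symplectic embedding $\bigwedge^3 H\hookrightarrow U$. The key observation is that $\bigwedge^3 H \cong V^{\Sp}_{1^3}\oplus V^{\Sp}_1$ as $\Sp(2g,\Q)$-representations, where $V^{\Sp}_1\subset \bigwedge^3 H$ is generated by the elements $v\wedge \omega_0$ with $v\in H$ and $\omega_0 = \sum_i a_i\wedge b_i$ the symplectic class. The $V^{\Sp}_{1^3}$-summand embeds into the tree summand $V_{1^2,1}\subset U$, while a direct computation analogous to Lemma \ref{lemconnected} shows that $c^{\wheel}_1$ sends $v\wedge \omega_0$ to a non-zero scalar multiple of $v\in V_{1,0}$. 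Consequently, for $i=1$ the $\GL(2g,\Q)$-orbit of $\iota_*^A(H^A_1(\I_{g,1}))$ already fills $U = V_{1^2,1}\oplus V_{1,0}$.

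For general $i$, I would imitate the construction of the cycles $\alpha_{(\mu,\nu)}$ from Section \ref{Abeliancycles} by producing, for each $(\mu,\nu)\vdash i$, an abelian cycle $\beta_{(\mu,\nu)}\in H^A_i(\I_{g,1},\Q)$ arising from an $i$-tuple of mutually commuting Torelli elements supported on pairwise disjoint subsurfaces. Concretely, BP-maps on subsurfaces of genus $\ge 2$ contribute to the $V^{\Sp}_{1^3}$-summand of $\bigwedge^3 H$ and hence yield tree factors $U^{\tree}_{\mu_j}$, while BP-maps on genus-one subsurfaces (whose Johnson images have the form $v\wedge \omega'_0$ with $\omega'_0$ a partial symplectic form) contribute to the $V^{\Sp}_1$-summand and hence yield wheel factors $U^{\wheel}_{\nu_j}$. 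An induction on the partially ordered set $(P_i^l, \ge)$ of the proof of Theorem \ref{Johnsonpart} would then control the contributions from pairs $(\xi,\eta)\le (\mu,\nu)$ and show that $F_{(\mu,\nu)}(\iota_*^A(\beta_{(\mu,\nu)}))$ generates $W(\mu,\nu)$ under the $\GL(2g,\Q)$-action.

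The main obstacle is the explicit realization of these commuting tuples together with the non-vanishing verification at each step of the induction; this is substantially more delicate than in Lindell's \cite[Theorem 1.5]{Lindell}, because one needs to produce every wheel-type summand $W(\mu,\nu)$ with $\nu\ne 0$ and not merely the tree-type Lindell summands $W^{\I_{g,1}}_i(\lambda,\mu)$. The systematic device is the trace summand $V^{\Sp}_1\subset \bigwedge^3 H$, which is the unique source of wheel contributions in $\iota_*^A$ and which is precisely the summand of $\bigwedge^3 H$ that is quotiented out in the passage from $H^A_*(\I_{g,1},\Q)$ to the Torelli cohomology side of Conjecture \ref{Torellialb}; one must check carefully that this quotient is indeed recovered, up to $\GL(2g,\Q)$-action inside $H^A_i(\IA_{2g},\Q)$, by the wheel-generating commuting tuples constructed above.
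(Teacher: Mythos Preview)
The statement you are attempting to prove is a \emph{conjecture} in the paper, not a theorem; the paper offers no proof of it for general $i$. The only argument the paper supplies is the remark immediately following the conjecture: it checks the case $i=1$ directly, and verifies $i=2$ by combining Lindell's explicit abelian cycles $(\rho_1)^2$ and $\rho_2$ with Pettet's known irreducible decomposition of $H^A_2(\IA_{2g},\Q)$. There is thus no ``paper's own proof'' to compare your proposal against.

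Your proposal is explicitly conditional on Conjecture~\ref{conjectureAlbanese}, so even if carried through it would not establish Conjecture~\ref{torelligroupandIA} unconditionally. Beyond that, the main substantive gap is in the wheel part of your construction. The summand $V^{\Sp}_1\subset\bigwedge^3 H$ is spanned by elements $v\wedge\omega_0$ involving the \emph{full} symplectic form $\omega_0=\sum_j a_j\wedge b_j$; a bounding-pair map supported on a genus-one subsurface has Johnson image a \emph{decomposable} trivector $a\wedge b\wedge c$, which does not lie in $V^{\Sp}_1$ (its projection there is only a piece). So your claim that such elements ``contribute to the $V^{\Sp}_1$-summand and hence yield wheel factors $U^{\wheel}_{\nu_j}$'' needs a precise non-vanishing computation under $c^{\wheel}_{\nu_j}\iota_{\nu_j}$, not just the observation that a trace component exists. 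The paper's own verification for $i=2$ proceeds by citing specific abelian cycles from \cite{Lindell} rather than by the general mechanism you sketch, which suggests that producing the wheel summands from Torelli abelian cycles is indeed the delicate point and is not resolved by your outline.
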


We can easily check that Conjecture \ref{torelligroupandIA} holds for $i=1$.
Conjecture \ref{torelligroupandIA} also holds for $i=2$, which can be verified by the abelian cycles $(\rho_1)^2\in H^A_2(\I_{4,1},\Q)$ and $\rho_2\in H^A_2(\I_{3,1},\Q)$ given in \cite{Lindell} and the irreducible decomposition of $H^A_2(\IA_{2g},\Q)$ as $\GL(2g,\Q)$-representations \cite{Pettet}.

\appendix
\section{Properties of Albanese homology and cohomology}\label{Albanesefunctor}
Here we give a brief summary of some properties about Albanese homology and cohomology.

\subsection{Albanese homology functor}
Let $\Gp$ denote the category of groups and group homomorphisms and $\grVect$ the category of graded $\Q$-vector spaces and graded linear maps.
Let
$$H^A_*: \Gp\to \grVect$$
denote the functor which maps a group $G$ to $H^A_*(G,\Q)$, and
$$H_*: \Gp\to \grVect$$
the functor which maps a group $G$ to $H_*(G,\Q)$.
Then we have a natural transformation
$$(\pi_*)_{G}:=\pi^G_*: H_*(G,\Q)\twoheadrightarrow H^A_*(G,\Q),$$
where $\pi^G_*$ is the map induced by the abelianization $\pi^G: G\to G^{\ab}$.

Let $\CCoalg$ denote the category of graded-cocommutative coalgebras and graded coalgebra morphisms over $\Q$.
Then we also have functors
$$H^A_*: \Gp\to \CCoalg,\quad H_*: \Gp\to \CCoalg$$
and
a natural transformation
$$\pi_*: H_*\Rightarrow H^A_*.$$

\subsection{Filtered colimits}

\newcommand\colim{\operatorname{colim}}

A \emph{filtered category} $I$ is a category satisfying the following two conditions:
\begin{itemize}
    \item for any objects $i,j\in I$, there are an object $k$ and morphisms $i\to k$ and $j\to k$,
    \item for any parallel morphisms $f:i\to j$ and $g:i\to j$, there exists a morphism $w: j\to k$ such that $wf=wg$.
\end{itemize}
For a functor $F$ from a filtered category $I$ to another category $\calC$, the colimit $\colim_{i\in I} F_i$ of $F$ is called the \emph{filtered colimit}.

Group homology preserves filtered colimits. We observe that the same property holds for Albanese homology.

\begin{proposition}\label{preservedirectlimit}
 The functor $H^A_*$ preserves filtered colimits, that is, the natural map
 $$\colim_{i\in I} H^A_*(G_i,\Q)\to H^A_*(\colim_{i\in I} G_i,\Q)$$
 is an isomorphism.
 In particular, $H^A_*$ preserves direct limits.
\end{proposition}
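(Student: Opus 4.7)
The plan is to reduce the claim to two standard facts: that ordinary group homology $H_*(-,\Q)$ preserves filtered colimits, and that filtered colimits of vector spaces are exact (so they commute with taking images and with quotients).

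First I would observe that the Albanese homology $H^A_*$ is built from three functors: the abelianization $G \mapsto H_1(G,\Z)$, the homology $H_*(-,\Q)$, and the image construction. The abelianization functor $\Gp \to \Ab$ is left adjoint to the inclusion $\Ab \hookrightarrow \Gp$, so it preserves all colimits, and in particular filtered colimits. Composing with the functor $A \mapsto H_*(A,\Q)$ (which preserves filtered colimits, as can be seen from the bar resolution or from the fact that $H_*(A,\Q) \cong \bigwedge^* (A \otimes \Q)$ and both tensor product and exterior power commute with filtered colimits of abelian groups), one sees that $G \mapsto H_*(H_1(G,\Z),\Q)$ also preserves filtered colimits. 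Similarly $G \mapsto H_*(G,\Q)$ preserves filtered colimits by the standard argument on the bar complex.

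Next I would set up the commutative square for a filtered diagram $\{G_i\}_{i \in I}$ with colimit $G = \colim_{i \in I} G_i$:
\begin{equation*}
\xymatrix{
\colim_i H_*(G_i,\Q) \ar[r]^-{\colim \pi^{G_i}_*} \ar[d]_-{\cong}
& \colim_i H_*(H_1(G_i,\Z),\Q) \ar[d]^-{\cong} \\
H_*(G,\Q) \ar[r]^-{\pi^G_*}
& H_*(H_1(G,\Z),\Q),
}
\end{equation*}
where both vertical arrows are the canonical isomorphisms coming from the two preservation results above, and commutativity is naturality of $\pi_*$.

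Finally I would apply exactness of filtered colimits in $\grVect$: taking images commutes with filtered colimits, so
\begin{equation*}
\colim_i H^A_*(G_i,\Q) = \colim_i \im(\pi^{G_i}_*) \cong \im(\colim_i \pi^{G_i}_*) \cong \im(\pi^G_*) = H^A_*(G,\Q),
\end{equation*}
and this isomorphism is exactly the natural map in question. There is no real obstacle here; the only point to be careful about is that the comparison map on images is well-defined and agrees with the one induced by the universal property of the colimit of Albanese homologies, which is immediate from the commutative square. Since every direct system is in particular a filtered diagram, the final sentence about direct limits follows.
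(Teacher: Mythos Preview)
Your proposal is correct and follows essentially the same approach as the paper: both arguments use that ordinary homology and abelianization preserve filtered colimits, and that filtered colimits in $\grVect$ commute with taking images. The paper's proof is a bit more terse (it simply states the image-commutation fact and then chains the isomorphisms), while you spell out the commutative square and the left-adjoint justification for abelianization, but the content is the same.
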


\begin{proof}
Let $A, B: I\to \grVect$ be two functors.
For a natural transformation $\alpha_i:A_i\rightarrow B_i$, we have
$$
  \colim_{i\in I}(\im (A_i\xrightarrow{\alpha_i} B_i))\xrightarrow{\cong} \im (\colim_{i\in I}A_i\xrightarrow{\colim_{i\in I}\alpha_i} \colim_{i\in I}B_i).
$$
Therefore, we have
\begin{gather*}
 \begin{split}
  \colim_{i\in I} H^A_*(G_i,\Q)
  &= \colim_{i\in I} (\im (H_*(G_i,\Q)\to H_*(G^{\ab},\Q)))\\
  &\cong \im (\colim_{i\in I} (H_*(G_i,\Q)\to H_*(G^{\ab},\Q)))\\
  &= \im (\colim_{i\in I} (H_*(G_i,\Q))\to \colim_{i\in I}H_*(G^{\ab},\Q))\\
  &\cong\im (H_*(\colim_{i\in I} G_i,\Q)\to H_*(\colim_{i\in I}(G^{\ab}),\Q))\\
  &\cong\im (H_*(\colim_{i\in I} G_i,\Q)\to H_*((\colim_{i\in I}G)^{\ab},\Q))\\
  &=H^A_*(\colim_{i\in I} G_i,\Q).
 \end{split}
\end{gather*}
\end{proof}

\subsection{Duality}\label{duality}
 We have the universal coefficient theorem for group homology and group cohomology.
 Here we observe that the universal coefficient theorem also holds for Albanese homology and cohomology.

 \begin{lemma}\label{dualityisom}
  For a group $G$, we have a linear isomorphism
  $$
   H_A^i(G,\Q)\xrightarrow{\cong} (H^A_i(G,\Q))^*.
  $$
 \end{lemma}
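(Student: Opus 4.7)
The plan is to combine the universal coefficient theorem over $\Q$ with the elementary fact that, for a linear map of vector spaces, the image of the dual map is naturally isomorphic to the dual of the image.

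First I would recall that since we work over the field $\Q$, the universal coefficient theorem gives a natural isomorphism $H^i(G,\Q)\cong H_i(G,\Q)^*$ for every group $G$, and similarly for the abelian group $H_1(G,\Z)$. Naturality here is with respect to group homomorphisms, so in particular the cohomology map $\pi^*: H^i(H_1(G,\Z),\Q)\to H^i(G,\Q)$ induced by the abelianization $\pi: G\twoheadrightarrow H_1(G,\Z)$ is identified, under these UCT isomorphisms, with the $\Q$-linear dual $(\pi_*)^*: H_i(H_1(G,\Z),\Q)^*\to H_i(G,\Q)^*$ of the map $\pi_*: H_i(G,\Q)\to H_i(H_1(G,\Z),\Q)$ defining $H^A_i(G,\Q)$.

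Second, I would invoke the linear-algebraic fact that for any $\Q$-linear map $f: V\to W$, the image of the dual $f^*: W^*\to V^*$ consists exactly of those functionals on $V$ that annihilate $\ker f$, and hence via the canonical identification $\im f\cong V/\ker f$ there is a natural isomorphism $\im(f^*)\cong (\im f)^*$. Applied to $f=\pi_*$, this gives
\[
\im\bigl((\pi_*)^*: H_i(H_1(G,\Z),\Q)^*\to H_i(G,\Q)^*\bigr)\;\cong\;(H^A_i(G,\Q))^*.
\]

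Finally I would combine the two: the left-hand side above is, by the UCT identification, precisely $\im(\pi^*:H^i(H_1(G,\Z),\Q)\to H^i(G,\Q)) = H_A^i(G,\Q)$, and the right-hand side is $(H^A_i(G,\Q))^*$, yielding the desired linear isomorphism
\[
H_A^i(G,\Q)\xrightarrow{\cong} (H^A_i(G,\Q))^*.
\]
There is no real obstacle here; the only point deserving care is checking that the UCT isomorphism is natural enough to intertwine the induced maps $\pi^*$ and $(\pi_*)^*$, which is standard. If one wants to upgrade this to a $\GL(n,\Z)$-equivariant isomorphism (as used in Proposition \ref{dualityisomIA} later), one simply notes that every map in sight is functorial in the group and hence equivariant under any group acting by automorphisms on $G$.
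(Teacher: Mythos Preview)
Your proposal is correct and follows essentially the same approach as the paper: the paper's proof reduces to the identical linear-algebra fact that for a $\Q$-linear map $f:V\to W$ one has $\im(f^*)\cong(\im f)^*$, applied to $f=\pi_*$. You are simply more explicit than the paper about the role of the universal coefficient isomorphism $H^i(-,\Q)\cong H_i(-,\Q)^*$ and its naturality, which the paper leaves implicit.
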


 \begin{proof}
 This lemma follows from the following fact.
 Let $V,W$ be $\Q$-vector spaces and $f: V\to W$ a linear map.
 Define a linear map
 $$
 \Phi: \im (f^*)\to (\im f)^*
 $$
 by sending $\psi\in \im (f^*)$ to the restriction $\phi|_{\im f}$, where $\phi\in W^*$ is a linear map satisfying $\psi=\phi f$.
 Then we can check that $\Phi$ is a linear isomorphism.
 \end{proof}

 We have $\GL(n,\Q)$-representation structures on $H^A_i(\IA_n,\Q)$ and $H_A^i(\IA_n,\Q)$.
 Then we can check that the map $\Phi$ in the proof of Lemma \ref{dualityisom} is a $\GL(n,\Q)$-isomorphism.
 We obtain the following duality as $\GL(n,\Q)$-representations.

 \begin{proposition}\label{dualityisomIA}
  We have a $\GL(n,\Q)$-isomorphism
  $$
   H_A^i(\IA_n,\Q)\xrightarrow{\cong} (H^A_i(\IA_n,\Q))^*.
  $$
 \end{proposition}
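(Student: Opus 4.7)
The plan is to deduce this proposition from Lemma \ref{dualityisom} by checking that the linear isomorphism $\Phi$ constructed there is automatically $\GL(n,\Q)$-equivariant when specialized to $G=\IA_n$. Recall that $\Phi$ is defined by $\Phi(\psi) = \phi|_{\im f}$, where $f = \pi_* : H_i(\IA_n,\Q) \to H_i(H_1(\IA_n,\Z),\Q)$ and $\psi = \phi\circ f \in \im(f^*)$. Both $H^A_i(\IA_n,\Q)$ and $H_A^i(\IA_n,\Q)$ carry $\GL(n,\Z)$-actions induced by the adjoint action of $\Aut(F_n)$ on $\IA_n$, and these extend to algebraic $\GL(n,\Q)$-representation structures via the embeddings $H^A_i(\IA_n,\Q) \hookrightarrow H_i(H_1(\IA_n,\Z),\Q) \cong \bigwedge^i U$ and $H_A^i(\IA_n,\Q) \hookrightarrow H^i(H_1(\IA_n,\Z),\Q) \cong \bigwedge^i U^*$, both of which are algebraic.

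The first step is to verify $\GL(n,\Z)$-equivariance of $\Phi$. Since the abelianization map $\pi:\IA_n\twoheadrightarrow H_1(\IA_n,\Z)$ is $\GL(n,\Z)$-equivariant (conjugation by $\Aut(F_n)$ descends on the right to the canonical $\GL(n,\Z)$-action on $H_1(\IA_n,\Z)$), the induced map $f = \pi_*$ is $\GL(n,\Z)$-equivariant. For $g\in\GL(n,\Z)$ and $\psi = \phi\circ f \in \im(f^*)$, one has $g\cdot\psi = \phi\circ f\circ g^{-1} = (\phi\circ g^{-1})\circ f = (g\cdot\phi)\circ f$, so $\Phi(g\cdot\psi) = (g\cdot\phi)|_{\im f}$. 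On the other hand, for $w\in\im f$, $(g\cdot\Phi(\psi))(w) = \phi(g^{-1}w) = (g\cdot\phi)(w) = (g\cdot\phi)|_{\im f}(w)$, showing $\Phi(g\cdot\psi) = g\cdot\Phi(\psi)$.

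The second step is to upgrade the $\GL(n,\Z)$-equivariance of $\Phi$ to $\GL(n,\Q)$-equivariance. Since $H_A^i(\IA_n,\Q)$ and $(H^A_i(\IA_n,\Q))^*$ are both algebraic $\GL(n,\Q)$-representations, the condition that $\Phi$ intertwines the action of an element $g\in\GL(n,\Q)$ is a polynomial equation in the matrix entries of $g$. This polynomial identity holds on $\GL(n,\Z)$ by the first step, hence on the Zariski closure $\GL(n,\Q)$ as well. Combined with Lemma \ref{dualityisom}, this yields the desired $\GL(n,\Q)$-isomorphism.

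There is no substantive obstacle: the content is essentially verifying that the duality between homology and cohomology at the level of the image subspaces respects the natural equivariance, and the extension from $\GL(n,\Z)$ to $\GL(n,\Q)$ is a standard density argument that was already used implicitly throughout the paper to promote $\GL(n,\Z)$-structures on $H^A_i(\IA_n,\Q)$ to algebraic $\GL(n,\Q)$-structures. The mildest care needed is in tracking signs and dual conventions to make sure the induced actions on $H_i(H_1(\IA_n,\Z),\Q)$ and $H^i(H_1(\IA_n,\Z),\Q)$ are genuinely dual, so that the restriction map $\Phi$ intertwines them.
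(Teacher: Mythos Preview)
Your proposal is correct and follows exactly the strategy the paper outlines: the paper simply asserts that ``we can check that the map $\Phi$ in the proof of Lemma \ref{dualityisom} is a $\GL(n,\Q)$-isomorphism,'' and you have supplied precisely that check, first verifying $\GL(n,\Z)$-equivariance directly from the definition of $\Phi$ and the equivariance of $\pi_*$, and then promoting this to $\GL(n,\Q)$-equivariance via Zariski density.
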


\subsection{Albanese cohomology functor}
Let $$H_A^*: \Gp^{\op}\to \grVect$$
denote the functor which maps a group $G$ to $H_A^*(G,\Q)$, and
$$H^*: \Gp^{\op}\to \grVect$$
the functor which maps a group $G$ to $H^*(G,\Q)$.
Then we have a natural transformation
$$\iota_{G}: H_A^*(G,\Q) \hookrightarrow H^*(G,\Q).$$

For any group $G$, we have a graded-commutative algebra structure on $H^*(G,\Q)$ with the cup product as the multiplication.
Let $\CAlg$ denote the category of graded-commutative $\Q$-algebras and graded algebra morphisms.
Then we also have functors
$$H_A^*: \Gp^{\op}\to \CAlg, \quad H^*: \Gp^{\op}\to \CAlg$$
and a natural transformation
$$\iota: H_A^*\Rightarrow H^*.$$

The following property holds for Albanese cohomology as in the case of group cohomology.

\begin{proposition}
 The functor $H_A^*$ preserves filtered limits, that is, the natural map
 $$
 H_A^*(\colim_{i\in I} G_i,\Q)\rightarrow \lim_{i\in I} H_A^*(G_i,\Q)
 $$
 is an isomorphism.
\end{proposition}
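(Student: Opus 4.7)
The plan is to dualize the corresponding statement for Albanese homology, Proposition \ref{preservedirectlimit}, using Lemma \ref{dualityisom}. First I would verify that the duality isomorphism $\Phi_G: H_A^i(G,\Q) \xrightarrow{\cong} (H^A_i(G,\Q))^*$ constructed in the proof of Lemma \ref{dualityisom} is natural in $G$. Indeed, the explicit description of $\Phi_G$ — which sends a class $\psi = \phi\circ \pi^G_*$ in $\im (\pi^G)^*$ to the restriction $\phi|_{\im \pi^G_*}$ — is manifestly compatible with the functorial behavior of $\pi^G_*$ along any group homomorphism, so $\Phi$ assembles into a natural isomorphism of functors $H_A^*(-) \cong (H^A_*(-))^*$ from $\Gp^{\op}$ to $\grVect$.

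Given this naturality, the proposition follows by combining three facts: (i) the duality $\Phi$, (ii) Proposition \ref{preservedirectlimit}, which says $H^A_*$ preserves filtered colimits, and (iii) the standard fact that the linear-dual functor $(-)^*: \grVect^{\op}\to\grVect$ sends filtered colimits to limits, that is, $(\colim_{i\in I} V_i)^* \cong \lim_{i\in I} V_i^*$. Explicitly,
\begin{gather*}
H_A^*(\colim_{i\in I} G_i, \Q) \cong \bigl(H^A_*(\colim_{i\in I} G_i, \Q)\bigr)^* \cong \bigl(\colim_{i\in I} H^A_*(G_i, \Q)\bigr)^* \\
\cong \lim_{i\in I} \bigl(H^A_*(G_i, \Q)\bigr)^* \cong \lim_{i\in I} H_A^*(G_i, \Q).
\end{gather*}

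The only remaining step is to identify this composite with the canonical map of the statement, namely the map induced by the restrictions $H_A^*(\colim_{j\in I} G_j, \Q) \to H_A^*(G_i, \Q)$ via the universal property of the limit. This follows from the naturality of $\Phi$ together with the naturality of the colimit-limit exchange isomorphism in (iii). There is no substantive obstacle here; the argument is essentially a formal consequence of Proposition \ref{preservedirectlimit} and exactness of filtered colimits of $\Q$-vector spaces, and the bookkeeping reduces to verifying the naturality square for $\Phi$, which is immediate from its explicit construction.
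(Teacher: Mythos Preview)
Your proposal is correct and follows essentially the same approach as the paper: dualize via Lemma \ref{dualityisom}, apply Proposition \ref{preservedirectlimit}, and use that the linear dual converts colimits into limits. The paper's proof is in fact just the displayed chain of isomorphisms you wrote down, without the extra care you take in checking naturality of $\Phi$ and identifying the composite with the canonical comparison map.
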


\begin{proof}
 By Proposition \ref{preservedirectlimit} and Lemma \ref{dualityisom}, it follows that
 \begin{gather*}
  \begin{split}
       H_A^*(\colim_{i\in I} G_i,\Q)&\cong (H^A_*(\colim_{i\in I} G_i,\Q))^*\cong (\colim_{i\in I} H^A_*(G_i,\Q))^*\\
     &\cong \lim_{i\in I} (H^A_*(G_i,\Q))^* \cong
 \lim_{i\in I} H_A^*(G_i,\Q).
  \end{split}
 \end{gather*}
\end{proof}

\subsection{Hochschild--Serre spectral sequence}\label{spseq}
Here we study Albanese homology of groups by using the Hochschild--Serre spectral sequences for exact sequences of groups. See \cite[Chapter 5]{Weibel} for details of spectral sequences and their convergence.

For an exact sequence of groups
$$
1\to N\xto{\iota} G\to Q\to 1,
$$
we have the following commutative diagram whose rows are exact
 \begin{gather*}
   \xymatrix{
   1 \ar[r]  & N \ar[r]^{\iota} \ar[d]^{\pi^N} & G\ar[r] \ar[d]^{\pi^G} & Q \ar[r]\ar[d]^{\pi^Q} & 1  \\
          & N^{\ab} \ar[r]_{\iota_*}  & G^{\ab} \ar[r] & Q^{\ab} \ar[r] &1.
   }
 \end{gather*}
We have the Hochschild--Serre spectral sequence which converges to $H_{p+q}(G,\Q)$:
$$
 E^2_{p,q}=H_p(Q,H_q(N,\Q)) \Rightarrow H_{p+q}(G,\Q).
$$

Suppose that $\iota_*$ is injective.
Then we also have the Hochschild--Serre spectral sequence which converges to $H_{p+q}(G^{\ab},\Q)$:
$$
 \wti{E}^2_{p,q}=H_p(Q^{\ab},H_q(N^{\ab},\Q)) \Rightarrow H_{p+q}(G^{\ab},\Q).
$$
Since $Q^{\ab}$ acts trivially on $H_q(N^{\ab},\Q)$, we have
$$H_p(Q^{\ab},H_q(N^{\ab},\Q))\cong H_p(Q^{\ab},\Q)\otimes H_q(N^{\ab},\Q),$$
and thus we have $\wti{E}^2_{p,q}=\wti{E}^{\infty}_{p,q}$.

A morphism between short exact sequences of groups induces a morphism of spectral sequences.
Thus, the abelianization induces a morphism of spectral sequences
$$f^*_{p,q}: E^{*}_{p,q} \to \wti{E}^{*}_{p,q}.$$
By using the above two spectral sequences, we obtain the following proposition.

\begin{proposition}\label{spectralsequence}
 Suppose that $\iota_*:N^{\ab}\to G^{\ab}$ is injective.
 Then we have an injective graded $\Q$-linear map
 $$
  H^A_*(G,\Q)\hookrightarrow H^A_*(Q,\Q)\otimes H^A_*(N,\Q).
 $$
\end{proposition}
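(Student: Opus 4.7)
The plan is to compare the two Hochschild--Serre spectral sequences set up just above the statement: $E^2_{p,q}=H_p(Q,H_q(N,\Q))\Rightarrow H_{p+q}(G,\Q)$ and $\wti E^2_{p,q}=H_p(Q^{\ab},\Q)\otimes H_q(N^{\ab},\Q)=\wti E^\infty_{p,q}$ converging to $H_{p+q}(G^{\ab},\Q)$, the latter being available because the hypothesis on $\iota_*$ makes the abelianized extension exact. The abelianization $\pi^G:G\to G^{\ab}$ is a morphism of extensions, hence induces a morphism of spectral sequences $f^r_{p,q}:E^r_{p,q}\to \wti E^r_{p,q}$ at every page, whose abutment is the map $\pi^G_*:H_i(G,\Q)\to H_i(G^{\ab},\Q)$ whose image is by definition $H^A_i(G,\Q)$.

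First, transferring the Serre filtration $F_\bullet H_i(G,\Q)$ along $\pi^G_*$, I would define $\mathcal F_p H^A_i(G,\Q):=\pi^G_*(F_p H_i(G,\Q))$, an exhaustive filtration of $H^A_i(G,\Q)$. Since $\pi^G_*$ respects both Serre filtrations, each associated graded piece $\gr_p\mathcal F_\bullet H^A_i(G,\Q)$ is identified with the image $\im(f^\infty_{p,i-p})\subset\wti E^\infty_{p,i-p}=H_p(Q^{\ab},\Q)\otimes H_{i-p}(N^{\ab},\Q)$ of the SS comparison at the $E^\infty$ page.

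The key technical step, which I expect to be the principal obstacle, is to verify that this image in fact lies in the subspace $H^A_p(Q,\Q)\otimes H^A_{i-p}(N,\Q)$. I would do this by analyzing $f^2_{p,i-p}\colon H_p(Q,H_{i-p}(N,\Q))\to H_p(Q^{\ab},\Q)\otimes H_{i-p}(N^{\ab},\Q)$ via the factorization $G\to G^{\ab}$: the $Q$-equivariant coefficient map $H_{i-p}(N,\Q)\to H_{i-p}(N^{\ab},\Q)$ has image $H^A_{i-p}(N,\Q)$, which forces $f^2_{p,i-p}$ to factor through $H_p(Q,H^A_{i-p}(N,\Q))$; the further step to the target replaces the nontrivial $Q$-action by the trivial $Q^{\ab}$-action (coming from $G^{\ab}$ acting trivially on $N^{\ab}$), and via the resulting coinvariants computation lands in $H_p(Q,\Q)\otimes H^A_{i-p}(N,\Q)$; finally, the edge change-of-groups map $H_p(Q,\Q)\to H_p(Q^{\ab},\Q)$ has image $H^A_p(Q,\Q)$. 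Passing from $E^2$ to $E^\infty$ only shrinks the image further. The delicate point here is that $Q$ need not act trivially on $H^A_{i-p}(N,\Q)$, so the compatibility between the source $Q$-action and the target trivial $Q^{\ab}$-action must be managed carefully through the coinvariants.

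Finally, because all objects are $\Q$-vector spaces, the filtration $\mathcal F_\bullet H^A_i(G,\Q)$ splits, giving a graded $\Q$-linear isomorphism $H^A_i(G,\Q)\cong\bigoplus_{p+q=i}\gr_p\mathcal F_\bullet H^A_i(G,\Q)$; composing with the injection of each graded piece into $H^A_p(Q,\Q)\otimes H^A_q(N,\Q)$ produces the desired injective graded $\Q$-linear map $H^A_*(G,\Q)\hookrightarrow H^A_*(Q,\Q)\otimes H^A_*(N,\Q)$.
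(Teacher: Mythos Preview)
Your proposal is correct and follows essentially the same route as the paper: compare the two Hochschild--Serre spectral sequences via the morphism $f^r_{p,q}$, show $\im f^2_{p,q}\subset H^A_p(Q,\Q)\otimes H^A_q(N,\Q)$ (the paper actually asserts equality here, but only the inclusion is needed), use $\wti E^2=\wti E^\infty$ to get $\im f^\infty_{p,q}\subset\im f^2_{p,q}$, and then split the induced filtration on $H^A_*(G,\Q)$ over $\Q$. One remark on your ``delicate point'': the $Q$-action on $H^A_q(N,\Q)$, viewed as a submodule of $H_q(N^{\ab},\Q)$ with its trivial $Q$-action, is automatically trivial, so there is no obstruction in that step.
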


\begin{proof}
We can check that the image of
$$
  f^2_{p,q}: H_p(Q,H_q(N,\Q)) \to H_p(Q^{\ab}, H_q(N^{\ab},\Q))= H_p(Q^{\ab},\Q)\otimes H_q(N^{\ab},\Q)
$$
is $H^A_p(Q,\Q)\otimes H^A_q(N,\Q).$
Since $\wti{E}^{\infty}_{p,q}=\wti{E}^2_{p,q}$, we have
$$
 \im f^{\infty}_{p,q}\subset \im f^2_{p,q}=H^A_p(Q,\Q)\otimes H^A_q(N,\Q).
$$
Since the map $f^{\infty}_{p,q}$ is compatible with
$$
  \pi_*: H_{p+q}(G,\Q)\twoheadrightarrow H^A_{p+q}(G,\Q)\hookrightarrow H_{p+q}(G^{\ab},\Q),
$$
we have a filtration of $\Q$-vector spaces
$$
0=F_{-1}\subset F_0\subset \cdots\subset F_{n-1}\subset F_{n}=H^A_{n}(G,\Q)
$$
satisfying
$F_r/F_{r-1}=\im f^{\infty}_{r,n-r}$ for $0\le r\le n$.
Therefore, we have
$$
H^A_{n}(G,\Q)\cong \bigoplus_{p+q=n}\im f^{\infty}_{p,q}\subset \bigoplus_{p+q=n}H^A_p(Q,\Q)\otimes H^A_q(N,\Q).
$$
\end{proof}

If we consider the cohomological Hochschild--Serre spectral sequences, then we have
$$
 E_2^{p,q}=H^p(Q,H^q(N,\Q)) \Rightarrow H^{p+q}(G,\Q).
$$
Suppose that $\iota_*:N^{\ab}\to G^{\ab}$ is injective. Then we also have
$$
\wti{E}_2^{p,q}=H^p(Q^{\ab},H^q(N^{\ab},\Q)) \Rightarrow H^{p+q}(G^{\ab},\Q).
$$

\begin{lemma}\label{cospectralsequence3}
Suppose that $\iota_*:N^{\ab}\to G^{\ab}$ is injective.
Suppose also that $Q$ acts trivially on $H^*(N,\Q)$, that $E_2^{p,q}=0$ for any $q\ge 3$, and that the differentials $d_2^{0,1}$, $d_2^{0,2}$, $d_3^{0,2}$ are zero maps.
Then we have a graded $\Q$-linear isomorphism
$$H_A^*(G,\Q)\cong H_A^*(Q,\Q)\otimes H_A^*(N,\Q).$$
\end{lemma}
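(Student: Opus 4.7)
The plan is to show that under the given hypotheses the cohomological Hochschild--Serre spectral sequence
$$E_2^{p,q}=H^p(Q,H^q(N,\Q))\Rightarrow H^{p+q}(G,\Q)$$
collapses at the $E_2$ page, and then to compare it with the spectral sequence $\wti E$ for the abelianized extension $1\to N^{\ab}\to G^{\ab}\to Q^{\ab}\to 1$ via the morphism $f_r^{p,q}\colon \wti E_r^{p,q}\to E_r^{p,q}$ induced by abelianization.

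First, I would exploit the multiplicative structure of the Hochschild--Serre spectral sequence: the differentials are derivations, and the trivial action of $Q$ on $H^*(N,\Q)$ yields the decomposition $E_2^{p,q}=H^p(Q,\Q)\otimes H^q(N,\Q)$, so $E_2^{*,q}$ is generated as an $E_2^{*,0}=H^*(Q,\Q)$-module by $E_2^{0,q}$. Any element $x\in E_r^{p,0}$ is a $d_r$-cycle since its image lies in $E_r^{p+r,-r+1}=0$. Combining this with the derivation property, the vanishing of $d_2^{0,1}$ and $d_2^{0,2}$ propagates to $d_2^{p,q}=0$ for all $p$ when $q\le 2$, and since $E_2^{p,q}=0$ for $q\ge 3$ by hypothesis, all $d_2$ differentials vanish. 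Hence $E_3=E_2$, so $E_3^{*,q}$ is still generated over $E_3^{*,0}$ by $E_3^{0,q}$; the same derivation argument then propagates $d_3^{0,2}=0$ to $d_3^{p,2}=0$ for all $p$, while the other $d_3$'s have trivial source or target for bidegree reasons. For $r\ge 4$ every $d_r^{p,q}$ has source or target in a bidegree where the $E_r$ page vanishes. Thus $E_\infty=E_2$.

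Second, the abelianized spectral sequence $\wti E$ already collapses at $\wti E_2$, as noted in the paper. At the $E_2=E_\infty$ page, the morphism $f_2^{p,q}$ becomes the tensor product
$$(\pi^Q)^*\otimes (\pi^N)^*\colon H^p(Q^{\ab},\Q)\otimes H^q(N^{\ab},\Q)\longrightarrow H^p(Q,\Q)\otimes H^q(N,\Q),$$
whose image equals $H_A^p(Q,\Q)\otimes H_A^q(N,\Q)$.

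Finally, $\pi^{G*}\colon H^n(G^{\ab},\Q)\to H^n(G,\Q)$ preserves the spectral sequence filtrations, so its image $H_A^n(G,\Q)$ inherits a filtration whose associated graded is $\bigoplus_{p+q=n}\im f_\infty^{p,q}=\bigoplus_{p+q=n}H_A^p(Q,\Q)\otimes H_A^q(N,\Q)$. Splitting this filtration of $\Q$-vector spaces yields the claimed graded $\Q$-linear isomorphism. The main obstacle will be carefully verifying the propagation of the vanishing of $d_r^{0,q}$ to $d_r^{p,q}$ via the derivation structure, and in particular checking that the module-generator description of $E_r^{*,q}$ over $E_r^{*,0}$ is preserved when passing from $E_2$ to $E_3$ under our hypotheses.
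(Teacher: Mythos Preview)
Your proposal is correct and follows essentially the same approach as the paper's proof: use the multiplicative (derivation) structure of the Hochschild--Serre spectral sequence together with the vanishing of $d_2^{0,1},d_2^{0,2},d_3^{0,2}$ and the hypothesis $E_2^{p,q}=0$ for $q\ge 3$ to conclude $E_2=E_\infty$, then compare with the collapsed spectral sequence $\wti E$ for the abelianized extension to identify $\im f_\infty^{p,q}=H_A^p(Q,\Q)\otimes H_A^q(N,\Q)$. You have in fact spelled out the propagation argument (from $d_r^{0,q}=0$ to $d_r^{p,q}=0$) more explicitly than the paper does, which only cites ``the multiplicative structure'' without further detail.
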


\begin{proof}
Since $Q$ acts trivially on $H^*(N,\Q)$, we have $E_2^{p,q}=E_2^{p,0}\otimes E_2^{0,q}$ for any $p,q$.
Since $E_2^{p,q}=0$ for any $q\ge 3$ and the differentials $d_2^{0,1}$, $d_2^{0,2}$, $d_3^{0,2}$ are zero maps,
by the multiplicative structure of the cohomological Hochschild--Serre spectral sequence, we have $E_2^{p,q}=E_{\infty}^{p,q}$ and $\wti{E}_2^{p,q}=\wti{E}_{\infty}^{p,q}$ for any $p,q$.
Therefore, we have
$\im f_{\infty}^{p,q}=\im f_2^{p,q}=H_A^p(Q,\Q)\otimes H_A^q(N,\Q)$ for any $p,q$.
It follows that $H_A^*(G,\Q)\cong H_A^*(Q,\Q)\otimes H_A^*(N,\Q)$.
\end{proof}

\begin{remark}\label{cospectralsequence}
Suppose that $\iota_*:N^{\ab}\to G^{\ab}$ is injective, that $Q$ acts trivially on $H^*(N,\Q)$, that $E_2^{p,q}=0$ for any $q\ge 2$, and that the differential $d_2^{0,1}:E_2^{0,1}\to E_2^{2,0}$ is a zero map.
Then by Lemma \ref{cospectralsequence3}, we have $H_A^*(G,\Q)\cong H_A^*(Q,\Q)\otimes H_A^*(N,\Q)$.
\end{remark}

In what follows, we study a group action on spectral sequences.
Let $K$ be a group.
A \emph{$K$-group} $G$ is a group $G$ with a left $K$-action satisfying $k\cdot (gg')=(k\cdot g) (k\cdot g')$ for $k\in K$ and $g,g'\in G$.
A morphism of $K$-groups is a group homomorphism compatible with the $K$-action.
Let ${}_K\Gp$ denote the category of $K$-groups and $K$-group homomorphisms.
Let ${}_K\Mod$ denote the category of left $\Q[K]$-modules and $\Q[K]$-homomorphisms.
Let
$$
1\to N\xto{\iota} G\to Q\to 1
$$
be an exact sequence in ${}_K\Gp$.
Then the Hochschild--Serre spectral sequence associated to this exact sequence is defined in ${}_K\Mod$.
Proposition \ref{spectralsequence} extends to the following proposition.
Since the category ${}_K\Mod$ is not necessarily semisimple, we do not have a direct sum decomposition of $H^A_n(G,\Q)$ as $\Q[K]$-modules.

\begin{proposition}\label{spectralsequenceaction}
 Let $$
1\to N\xto{\iota} G\to Q\to 1
$$
be an exact sequence in ${}_K\Gp$.
 Suppose that $\iota_*:N^{\ab}\to G^{\ab}$ is injective.
 Then $H^A_n(G,\Q)$ has a filtration in ${}_K\Mod$
 $$0=F_{-1}\subset F_{0}\subset \cdots \subset F_{n-1} \subset F_{n}=H^A_n(G,\Q)$$
 such that there is an injective $\Q[K]$-homomorphism
 $$
  \bigoplus_{r=0}^{n} F_{r}/F_{r-1} \hookrightarrow \bigoplus_{p+q=n}
  H^A_p(Q,\Q)\otimes H^A_q(H,\Q).
 $$
\end{proposition}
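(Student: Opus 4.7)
The plan is to upgrade the proof of Proposition \ref{spectralsequence} to the $K$-equivariant setting, since $K$ acts on the entire short exact sequence $1 \to N \to G \to Q \to 1$ by $K$-group automorphisms. Functoriality of the bar resolution makes $K$ act by $\Q[K]$-module homomorphisms on every page of the Hochschild--Serre spectral sequence
$$
E^2_{p,q} = H_p(Q, H_q(N,\Q)) \Rightarrow H_{p+q}(G,\Q).
$$
Applying the same construction to the abelianized short exact sequence $1 \to N^{\ab} \to G^{\ab} \to Q^{\ab} \to 1$, which remains exact by the injectivity hypothesis on $\iota_*$, produces a second spectral sequence
$$
\wti{E}^2_{p,q} = H_p(Q^{\ab}, H_q(N^{\ab},\Q)) \Rightarrow H_{p+q}(G^{\ab},\Q)
$$
in ${}_K\Mod$; and because $Q^{\ab}$ acts trivially on $H_q(N^{\ab},\Q)$, this second spectral sequence degenerates at $E^2$ with $\wti{E}^2_{p,q} \cong H_p(Q^{\ab},\Q) \otimes H_q(N^{\ab},\Q)$.

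The abelianization morphism of short exact sequences is $K$-equivariant, and hence induces a morphism of spectral sequences $f^*_{p,q} : E^*_{p,q} \to \wti{E}^*_{p,q}$ in ${}_K\Mod$. As in the proof of Proposition \ref{spectralsequence}, the image of $f^2_{p,q}$ is precisely the $\Q[K]$-submodule $H^A_p(Q,\Q) \otimes H^A_q(N,\Q) \subset H_p(Q^{\ab},\Q) \otimes H_q(N^{\ab},\Q)$. Since the target collapses, we get
$$
\im f^{\infty}_{p,q} \subset \im f^2_{p,q} = H^A_p(Q,\Q) \otimes H^A_q(N,\Q)
$$
as $\Q[K]$-modules. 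The convergence of the source spectral sequence, together with compatibility of $f^{\infty}_{p,q}$ with the canonical factorization $H_n(G,\Q) \twoheadrightarrow H^A_n(G,\Q) \hookrightarrow H_n(G^{\ab},\Q)$, produces a filtration $0 = F_{-1} \subset F_0 \subset \cdots \subset F_n = H^A_n(G,\Q)$ of $\Q[K]$-submodules with $F_r/F_{r-1} \cong \im f^{\infty}_{r,n-r}$ in ${}_K\Mod$. Summing over $r$ yields the desired injective $\Q[K]$-homomorphism into $\bigoplus_{p+q=n} H^A_p(Q,\Q) \otimes H^A_q(N,\Q)$.

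The only subtle point is that ${}_K\Mod$ is generally not semisimple, so the filtration on $H^A_n(G,\Q)$ need not split as a $\Q[K]$-module; this is why the statement records the injection at the level of the associated graded rather than as a direct sum decomposition. Beyond that, the argument is a routine translation of the proof of Proposition \ref{spectralsequence}, once one has observed that all ingredients of the Hochschild--Serre construction (the double complex, its filtration, differentials, and edge maps) are canonically $K$-equivariant whenever the input short exact sequence of groups carries a $K$-action by group automorphisms.
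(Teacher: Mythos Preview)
Your proposal is correct and follows exactly the approach the paper indicates: the paper does not give a separate proof of Proposition~\ref{spectralsequenceaction} but presents it as the $K$-equivariant upgrade of Proposition~\ref{spectralsequence}, noting (as you do) that the failure of semisimplicity in ${}_K\Mod$ is why one obtains only a filtration with the stated associated-graded injection rather than a direct sum decomposition. Your write-up makes explicit precisely the functoriality observations that the paper leaves implicit.
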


\end{document}